 \theoremstyle{plain}
\newtheorem{theo}{Theorem}[subsection]
\newtheorem{pr}[theo]{Proposition}
 \newtheorem{lem}[theo]{Lemma}
 \newtheorem{coro}[theo]{Corollary}
\theoremstyle{remark}
\newtheorem{rema}[theo]{Remark}
\theoremstyle{definition}
\newtheorem{defi}[theo]{Definition}
 \newcommand\lan{\langle}
\newcommand\ra{\rangle}
\newcommand\gd{\mathfrak{D}}
\newcommand\gdp{\mathfrak{D}'}
\newcommand\ob{^{-1}}
\newcommand\dmge{DM^{eff}_{gm}{}}
\newcommand\dme{DM_-^{eff}{}}
\newcommand\dm{DM}
\newcommand\cha{\operatorname{char}}
\newcommand\proo{\operatorname{Pro}}
\newcommand\mg{M_{gm}}
\newcommand\obj{Obj}
\newcommand\id{id}
\newcommand\cu{\underline{C}}
\newcommand\du{\underline{D}}
\newcommand\au{\underline{A}}
\newcommand\cupr{\underline{C}'}
\newcommand\cuperp{\underline{C}_{\perp}}
\newcommand\com{\mathbb{C}}
\newcommand\z{{\mathbb{Z}}}
\newcommand\q{{\mathbb{Q}}}
\newcommand\af{\mathbb{A}}
\newcommand\p{\mathbb{P}}
\newcommand\pt{\operatorname{pt}}
\newcommand\al{\alpha}
\newcommand\be{\beta}
\newcommand\si{\sigma}
\newcommand\ns{\{0\}}
\DeclareMathOperator\homm{\operatorname{Hom}}
\DeclareMathOperator\prli{\varprojlim}
\DeclareMathOperator\inli{\varinjlim}
\DeclareMathOperator\codim{\operatorname{codim}}
\newcommand\chow{Chow}
\newcommand\ab{\underline{Ab}}
\newcommand\var{Var}
\newcommand\sv{SmVar}
\newcommand\spv{SmPrVar}
\newcommand\spe{\operatorname{Spec}}
\newcommand\modd{\operatorname{Mod}}
\newcommand\brj{\lan j\ra}
\newcommand\brjj{\{ j\}}
\newcommand\gmm{\mathbb{G}_m}
\newcommand\gmmpl{\mathbb{G}_{m+}}
\newcommand\zoh{\z[2\ob]}
\newcommand\sh{SH^{S^1}(k)}
\newcommand\shc{SH^{S^1}(k)^c}
\newcommand\psh{PSH^{S^1}(k)}
\newcommand\sht{SH(k)}
\newcommand\shtc{SH(k)^c}
\newcommand\shmgl{SH^{\mgl}(k)}
\newcommand\shctpl{SH^{+}(k)^c}
\newcommand\psht{PSH^{T}(k)}
\newcommand\shtoh{SH(k)[2\ob]}
\newcommand\gdb{\gd^{big}}
\newcommand\gdt{\gd^T}
\newcommand\gdtpl{\gd^{+}}
\newcommand\prpl{pr^+}
\newcommand\prplgd{pr^+_{\gdt}}
\newcommand\shtpl{SH^+(k)}
\newcommand\shtmi{SH^-(k)}
\newcommand\sinf{\Sigma^{\infty}}
\newcommand\sinft{\Sigma_T^{\infty}}
\newcommand\om{\sinf}
\newcommand\omt{\sinft}
\newcommand\omtpl{\Sigma_{T+}^{\infty}}
\newcommand\pom{P\sinf}
\newcommand\pomt{P\sinft}
\newcommand\sm{\underline{SmVar}}
\newcommand\opa{\mathcal{OP}}
\newcommand\popa{\proo-\mathcal{OP}}
\newcommand\afo{\mathbb{A}^1}
\newcommand\dopsh{\Delta^{op}Pre_{\bullet}}
\newcommand\dosh{\Delta^{op}Shv_{{\bullet}}}
\newcommand\doshp{\Delta^{op}Shv_{{\bullet}}'}
\newcommand\dopshs{H_s\Delta^{op}Pre_{\bullet}}
\newcommand\doshs{H_s\Delta^{op}Shv_{{\bullet}}}
\newcommand\hk{\mathcal{H}_k}
\newcommand\pspt{Pre_{\bullet}}
\newcommand\spt{Shv_{{\bullet}}}
\newcommand\ho{\operatorname{Ho}}
\newcommand\hogd{\ho_\gd}
\newcommand\hosh{\ho_{\sh}}
\newcommand\mz{MZ}
\newcommand\mgl{MGl}
\newcommand\mglmod{MGl-\modd(k)}
\newcommand\gdmgl{\gd^{\mgl}}
\newcommand\gdm{\gd^{mot}}
\newcommand\tcho{t_{Chow}}
 \DeclareMathOperator\ke{\operatorname{Ker}}
\DeclareMathOperator\imm{\operatorname{Im}}
\DeclareMathOperator\co{\operatorname{Cone}}
\DeclareMathOperator\kar{\operatorname{Kar}}
\DeclareMathOperator\adfu{\operatorname{AddFun}}
\newcommand\hrt{{\underline{Ht}}}
\newcommand\hrtt{{\underline{Ht}}^T}
\newcommand\hrtpl{{\underline{Ht}}^+}
\newcommand\hw{{\underline{Hw}}}
\begin{document}

 \title{Gersten weight structures for motivic homotopy categories; direct summands of cohomology of function fields,  and 
 coniveau spectral sequences } 
 %for cohomology;
 %direct summands of %(co)motives
 %cohomology of function fields}
 \author{M.V. Bondarko
   \thanks{ %%!!!!
 The work is supported by RFBR
(grants no. 14-01-00393-a and 15-01-03034-a), by  Dmitry Zimin's Foundation "Dynasty", and by the Scientific schools grant no. 3856.2014.1.}}\maketitle

\begin{abstract}
In this paper for any cohomology theory $H$ that can  be
factored through (the Morel-Voevodsky's triangulated motivic homotopy category) $\sh$
we establish  the $\sh$-functoriality of  coniveau spectral sequences for $H$.
We also prove the following interesting result: for any affine essentially smooth semi-local $S$ the Cousin complex for $H^*(S)$ splits; if $H$ also factors through $\shtpl$ or $\shmgl$, then this is also true for any primitive $S$. % (if $-1$ is a sum of squares).
Moreover, the cohomology of such an $S$ is a direct summand of the cohomology of any its open dense subscheme.

In order to prove these facts we construct %  certain
a triangulated category $\gd$ of {\it motivic pro-spectra} that contains $\shc$ as well as certain 
pro-spectra of function fields over $k$ (along with its $\sht$-version $\gdt$, and certain $\gdtpl$ and $\gdmgl$). We decompose the  $\sh$-spectrum of a smooth
variety
 (in the sense of Postnikov towers) into twisted (pro)spectra of its
points. %; this is generalized to arbitrary objects of $\sh$.
 %In order to study the functoriality of this construction, we use the formalism of weight structures. 
 % and of orthogonal weight and $t$-structures. 
 Those belong to the heart of a {\it Gersten} weight structure $w$ on $\gd$, whereas 
  {\it weight spectral
  sequences} corresponding to $w$ generalize the 
 'classical' coniveau ones
  (to the cohomology of arbitrary objects of $\shc$). When a cohomological functor
   is represented
 by a $Y\in \operatorname{Obj} \sh$,  the corresponding coniveau spectral
 sequences can be expressed in  terms of the (homotopy)
$t$-truncations of $Y$; this extends to motivic spectra the seminal coniveau
spectral sequence computations of Bloch and Ogus.
We also prove several (other) direct summand results. In particular,  the pro-spectra of function
fields (in $\gd$ or $\gdt$) contain twisted pro-spectra of their residue fields (for all
geometric valuations); hence the same is true for any cohomology of these fields. 
% Moreover,   $H^*(S)$ is a retract of the cohomology of any its open dense subscheme if $S$ is semi-local or if it is primitive and $H$ factorizes through $\shtpl$ or $\mglmod$.

The analogues of these results for Voevodsky's motives (i.e., for $\dme(k)$ instead of $\sh$) over countable fields were proved by the author in a previous paper. In the current paper we use model categories (instead of differential graded ones) for the construction of the corresponding $\gd$, and apply a new weight structure construction result (for cocompactly cogenerated triangulated categories). 
We also prove a certain '$\shtpl$-acyclity' statement for primitive schemes; this result could be interesting in itself. 

%primitive??!

\end{abstract}
%\MSC 14F42, 14C35, 18G40, 19E15, 14F20, 14C25, 14C35.
%%%%%    2000 Mathematics Subject Classification:
%\EndMSC \KEY Motives, coniveau, weight structure, t-structure,
%triangulated category, semi-local scheme, cohomology.
%%%%%    Keywords and Phrases:
%\EndKEY

\tableofcontents

 \section*{Introduction}

 In  \cite{bger}  for a countable perfect field $k$  and any cohomology theory $H$ that factors through the Voevodsky's  $\dmge(k)$ the following results were established: %of effective motives that 
 coniveau spectral sequences (for the cohomology of smooth varieties) can be functorially extended to $\dmge(k)$;  the cohomology of an essentially smooth affine semi-local scheme  $S/k$  %(see Definition \ref{dprim} below; in particular, any essentially smooth semi-local scheme is primitive) %; %this generalizes the corresponding results for smooth semi-local schemes) 
 (and more generally, of a {\it primitive pro-scheme})
   is a retract of the cohomology of any its pro-open dense subscheme. %function field (whereas previously it was only known that there is a pure monomorphism for a  semi-local $S$). 
  Note that both of these results are far from being obvious; the second one yields % that the Gersten resolution of $H$ is a resolution of sheaves
  that the augmented Cousin complex for the cohomology of an (affine essentially smooth)  semi-local (or primitive) $S$ splits. %is contractible. %It also yields a resolution of $F(S)$ for an $F:\dmge\to \au$??! p. 35 of {bevo}!
   Also, for an $H$ represented by an object $C$ of $\dme(k)$ (a {\it motivic complex}) it was proved that  coniveau spectral sequences for $H$
 can be 'motivically functorially' expressed in terms of the $t$-truncations of $C$ (with respect to the homotopy $t$-structure on $\dme(k)$); this is an extension of an important result of \cite{blog}.
 
The goal of the current paper is to extend these results to arbitrary perfect base fields and to a much wider class of cohomology theories. %(that are not necessarily 'ordinary' or 'orientable'); this 
This class includes $K$-theory of various sorts, algebraic, semi-topological, and complex cobordism, and Balmer's Witt cohomology.
%??!
We succeed in proving these results via considering  certain triangulated categories $\gd\supset \shc$ and $\gdt\supset \sht^c$ (as well as $\gdtpl\supset \shtpl^c$ and $\gd^{\mgl}\supset \shmgl{}^c =\ho(\mglmod)^c$), and introducing certain {\it Gersten weight structures} on them. These weight structures are {\it orthogonal} to the homotopy $t$-structures for the corresponding motivic categories (via
certain {\it nice dualities} that we construct). 
The formalism of weight structures easily yields %We obtain
 several functoriality and direct summand results (cf. Theorem \ref{tshtt}(II\ref{ids1}--\ref{isplc}, III) below).
 Note that these statements have a variety of versions,  depending on the choice of a 'motivic' category through which a given cohomology theory $H$ factors. We discuss this matter (along with several 'concrete' examples of  cohomology theories) in \S\ref{sexamp}; since all oriented cohomology theories factor through $\shmgl$, one can apply the strongest versions of our results to them.
   In any case, our direct summand results are much stronger than the {\it universal exactness} statement
 of \cite{suger} (see Remark \ref{runiv} for more details). Besides, though the setting of ibid. seems to be somewhat more general, the author does not know of any concrete example of a cohomology theory that satisfies the conditions of ibid. and is not representable by an object of $\sh$.

We also note that our consideration of (triangulated)  pro-spectral categories (together with the purity distinguished triangle for {\it pro-schemes}; see Proposition \ref{pinfgy}) can be interesting in itself independently from the rest of the paper. It seems that the only triangulated motivic categories of pro-objects considered in the existing literature are the {\it comotivic} one %of comotives 
%considered 
of \cite{bger} and the pro-spectral motivic categories studied  in the current paper; note in contrast that the category of {\it pro-motives} introduced in \S3.1 of \cite{deggenmot} is not triangulated. Our methods of constructing motivic pro-spectra are far from being original (we mostly use the results of \cite{tmodel} and related papers); yet this has %some advantages
the following advantage: they certainly allow the construction of motivic pro-spectra over an arbitrary base scheme $S$.

Another interesting result is our % Proposition \ref{pacycl} 
Proposition \ref{pshtt}(\ref{ipshinvp}) (on the '$\tau$-positive spectral acyclity' of primitive schemes).

Besides, in the current paper we
study (in detail) %of
 weight structures for cocompactly cogenerated triangulated categories (certainly, this can be dualized); this seems to be a rather important new piece of 
%The main contribution of the
 % to 
(more or less) abstract %(starting from \S\ref{}) 
homological algebra. %We %is the 
 Whereas the %construction 
 (first) existence of a weight structure statement %of (part I of) 
 in Theorem \ref{tnews} is not quite new (%the basic existence statement 
 and was essentially proved earlier by D. Pauksztello; yet we state it in a more explicit form), several other results are. Note also that our current %and the 
 methods %s of their proofs 
 for working with these weight structures are much more interesting than their (motivic 'bounded') analogues in \cite{bger} (and so can be used in various settings); see \S\ref{sdm} below for more remarks on this matter. In particular, it seems that the dual of Theorem \ref{tnews} (on weight structures in a compactly generated triangulated category) can be useful for various applications.
Also, Proposition \ref{pextpure} describing {\it pure extended} cohomology theories is quite new. % (yet it is an easy consequence of our other results).

% Now we describe the %connection 
 We also mention the relation of our results to
%related results %of other authors??!
 certain ones  of F. Deglise (see %\cite{deggenmot}, \cite{de2}, and 
  \cite{ndegl} and \cite{degorient}). %?!!!!
  The methods and results of the papers mentioned (seem to) allow the description of coniveau spectral sequences for $\sh$-representable 
 theories in terms of the homotopy $t$-structure on $\sh$ (see Remark 4.4.2 of \cite{degorient} for the $\sht$-version of this result). Yet the corresponding isomorphisms depend on %certain non-canonical 
 choices of certain 'enhancements' for cohomology; so they %it seems difficult to establish the functoriality of these isomorphisms using the methods of ibid. 
only yield the functoriality of these isomorphisms with respect to flat pullbacks and proper pushforwards (note that several previous results on this subject also have similar drawbacks). 
 Besides, the results of Deglise definitely do not yield our direct summands statements. 
  
Lastly, note that we prove (see Proposition \ref{rwss}(II.3) and Theorem \ref{tshtt}(II.\ref{itsht})) the following statement: any Noetherian subobject of any possible image of %the $i$-th 
any {\it (extended}) cohomology of a compact motivic spectrum belonging to $\sh^{t\le -r}$ or to $\sht^{t^T\le -r}$ (i.e., of a spectrum that is $r-1$-connected in the sense of \cite{morintao}, for any $r>0$) in the cohomology of a smooth variety is supported in codimension $\ge r$. % (for any $i\in \z$). 

%Note here: one could associate to a pro-scheme $\inli X_i$ the homotopy colimit of $\sinf(X_+)$?? in $\sh$; yet it does not have nice properties??! Since $\sinf (X_+)$ is not cocompact for $X\in \sv$??!!

Let us  describe the contents  of the paper. Some more information of this sort can be found at the beginnings of sections (also we  list some of the main notation in the end of \S\ref{snot}).

We begin \S\ref{sra1} with some notation (for triangulated categories and related matters).  We also introduce {\it pro-schemes} (certain projective limits of pointed smooth varieties) and  recall certain properties of the $\af^1$-stable homotopy category, %$\sh$,
 of the homotopy $t$-structure on it,  
and list some %the 
related properties of the corresponding cohomology of %smooth
 semi-local schemes.

In \S\ref{sws} we recall the formalism of weight structures, their relation to ({\it orthogonal}) $t$-structures, weight filtrations, and spectral sequences. We also prove %a  (somewhat) 
several new results on %the existence of 
weight structures in cocompactly cogenerated triangulated categories, and give a description of {\it pure} cohomology for triangulated categories endowed with weight structures. 

In \S\ref{scomot}
we embed $\sh$ into a certain  triangulated  category
$\gdb$ of  {\it pro-spectra}. In this section we only give an 'axiomatic' description of pro-spectra; 
its construction will be described in \S\ref{sconstr}. 
We use the  main properties of
$\gdb$ for the construction of a certain {\it Gersten
weight structure} $w$ on  $\gd\subset \gdb$ ($\gd$ is the subcategory of $\gdb$ cogenerated by $\shc$; hence it is the largest subcategory of $\gdb$ 'detected by the Gersten weight structure'). 
%Those
$w$ possesses several nice properties; in particular, the pro-spectra of function fields and (affine essentially smooth) semi-local schemes over $k$
 %(tensored by $\z(i)[i]$ for any $i\ge 0$)
  belong to  $\gd_{w= 0}$ (and 'cogenerate' it), whereas the pro-spectra of arbitrary pro-schemes belong to $\gd_{w\ge 0}$. We also easily prove the following: 
if $S$ is a semi-local scheme (and $k$ is infinite),  $S_0$ is its dense sub-pro-scheme, then $\om(S_+)$ is a direct summand of $\om(S_{0+})$;
$\om(\spe K_+)$ (for a function field $K/k$)
 contains (as retracts)
    the pro-spectra of semi-local schemes whose generic point
is $\spe K$,
    as well as the twisted pro-spectra of residue
 fields of $K$ (for all geometric valuations). 
Note that all our arguments easily carry over to the 'stable' setting (i.e., to $\sht$ instead of $\sh$); we discuss this matter in \S\ref{ssupl}.

\S\ref{sapcoh} is central for this paper.
We translate the results  of the previous section to cohomology; in particular, we prove that the 
augmented Cousin complex for the cohomology of a semi-local pro-scheme splits. We also establish 
that weight spectral sequences and filtrations corresponding to $w$ are canonically isomorphic to the 'usual' coniveau ones (for the cohomology of smooth varieties). On the other hand, the general theory of weight spectral sequences yields that the corresponding spectral sequence $T(H,M)$ converging to $H^*(M)$ (for $M$ being an object of $\shc$ or $\gd$) is $\shc$-functorial in $M$ starting from $E_2$; this is far from being trivial from the 'classical' definition of coniveau spectral sequences. Next we construct a {\it nice duality} $\Phi:\gd^{op}\times \sh\to \ab$ (one may say that this is a 'regularization' for the corresponding restriction of the bifunctor $\gdb(-,-)$). Since  $w$ is {\it orthogonal} to $t$ with respect to $\Phi$,  our (generalized) coniveau spectral sequences can be expressed in terms of $t$ (starting from $E_2$); this vastly generalizes the corresponding results of Bloch and Ogus. We %finish the section by noting 
also explain that  for certain varieties and spectra one can choose quite 'economical' 
versions of weight Postnikov towers and (hence) of  generalized coniveau spectral sequences for cohomology.
We conclude the section by a description of all {\it pure extended} cohomology theories in terms of $\hw$, and %establish a duality 
%prove that objects of 
prove the equivalence of $\hrt$ with the category of contravariant additive functors $\hw\to\ab$ converting all products into coproducts. 

In \S\ref{sconstr}
 %We will only describe the plan of the proofs of results mentioned.
we consider certain model categories and construct the categories $\gdp$ and $\gdb=\ho(\gdp)$ satisfying the properties listed in \S\ref{scomot}.

In \S\ref{ssupl} we describe %some more properties of comotives, as well as 
possible variations of our methods and results. In particular, we prove the natural analogues  of our main results for the triangulated categories of ($\tau$-positive) $T$-spectra and $\mgl$-modules. It turns out that for the corresponding pro-spectral categories $\gdtpl$ and $\gdmgl$ the hearts of the corresponding Gersten weight structures contain the pro-spectra of {\it primitive} schemes; so we obtain 'more splitting results' than in the $\sh$-setting.  We also (briefly) compare our methods with the ones of \cite{bger}; the current methods yield that the results of ibid. are also valid in the case where $k$ is not (necessarily) countable.

Next we explain the consequences of our results for 'concrete' cohomology theories (algebraic, topological, and Hermitian $K$-theory, Balmer's Witt groups, singular, algebraic and complex cobordism, motivic and \'etale cohomology).
We finish the paper %Lastly we discuss
by a discussion of some other possible methods for constructing (certain modifications of) our pro-spectral categories.

Our notation below partially follows the one of  \cite{morintao}. 
 All 'concrete' $t$-structures considered in this paper are certain versions of (Morel's) homotopy ones (yet our general notation for $t$-structures is quite distinct from the one of ibid., and we introduce it in \S\ref{dtst}).

The author is deeply grateful to  prof.
M. Levine for his very useful remarks and also for his hospitality during the author's staying in the Essen University. 
He would also like to express his gratitude to prof. A. Ananyevskiy, prof. B. Chorny, prof. F. Deglise, prof. G. Garkusha, prof. A. Gorinov,  prof. F. Muro,  prof. I. Panin, prof. T. Porter, and prof. D. White.

\section{Preliminaries: $t$-structures, Postnikov towers, and $\af^1$-connectivity of spectra}\label{sra1}

%\begin{notat}

In this section we recall some notation, introduce some new one, and state some facts on motivic homotopy categories.

% (in particular, for triangulated categories, $t$-structures, and Postnikov towers) and introduce some (more or less) new one; we also recall certain properties of the $\af^1$-stable homotopy category, the homotopy $t$-structure for it, and of the corresponding cohomology of smooth semi-local schemes.

In \S\ref{snot} we introduce some (mostly, categorical) notation.

In \S\ref{dtst} we
recall the notion of  $t$-structure
(and introduce some notation for it) and of a Postnikov tower for an object of a triangulated category. 

In \S\ref{sprs} we define the category $\opa$ (of open embeddings in $\sv$) and $  \popa$ (certain objects of the latter are called pro-schemes). % are defined ;  
 
In \S\ref{ssh}
 we recall some %(well-known)
 %definitions and results  of Morel.
properties of $\sh$ and the homotopy $t$-structure on it.

%In \S\ref{suger} we recall 

In \S\ref{sprim} we define primitive (pro)schemes (we will need them in \S\ref{ssupl}).

\subsection{Notation and conventions}\label{snot} %{notat}

For categories $C,D$ we write 
$D\subset C$ if $D$ is a full %strict
subcategory of $C$.

 For a category $C,\ X,Y\in\obj C$, we denote by
$C(X,Y)$ the set of  $C$-morphisms from  $X$ to $Y$.
We will say that $X$ is  a {\it
retract} of $Y$ if $\id_X$ can be factored through $Y$. Note that if $C$ is a triangulated or abelian category
then $X$ is a  retract of
$Y$ if and only if $X$ is its direct summand. 

For any $D\subset C$
the subcategory $D$ is called {\it Karoubi-closed} in $C$ if it
contains all retracts of its objects in $C$. We will call the
smallest Karoubi-closed subcategory of $C$ containing $D$  the {\it
Karoubi-closure} of $D$ in $C$; sometimes we will use the same term
for the class of objects of the Karoubi-closure of a full subcategory
of $C$ (corresponding to some subclass of $\obj C$).

The {\it Karoubization} $\kar(B)$ (no lower index) of an additive
category $B$ is the category of ``formal images'' of idempotents in $B$
(so $B$ is embedded into an idempotent complete category; it is
triangulated if $B$ is).
We will say that $B$ is Karoubian if the canonical embedding $B \to \kar
(B)$ is an equivalence, i.e.,
if any idempotent morphism yields a direct sum decomposition in $B$.

For a category $C$ we denote by $C^{op}$ its opposite category.

 For an additive category $\cu$ an  $X\in \obj\cu$ is called cocompact if
$\cu(\prod_{i\in I} Y_i,X)=\bigoplus_{i\in I} \cu(Y_i,X)$ for any
set $I$ and any $Y_i\in\obj \cu$ (below we will only consider cocompact objects in categories closed with respect to arbitrary small products). %such that the product exists (here
%we don't need to demand all products to exist, though they actually
%will exist below).
Dually, a compact object of $\cu$ is a cocompact object of $\cu^{op}$, i.e., $M$ is compact (in a $\cu$  closed with respect to arbitrary small coproducts) if $\cu(M,-)$ respects coproducts.

For $X,Y\in \obj \cu$ we will write $X\perp Y$ if $\cu(X,Y)=\ns$.
For $D,E\subset \obj \cu$ we will write $D\perp E$ if $X\perp Y$
 for all $X\in D,\ Y\in E$.
For $D\subset \cu$ we will denote by $D^\perp$ the class
$$\{Y\in \obj \cu:\ X\perp Y\ \forall X\in D\}.$$
Sometimes we will denote by $D^\perp$ the corresponding
 full subcategory of $\cu$. Dually, ${}^\perp{}D$ is the class
$\{Y\in \obj \cu:\ Y\perp X\ \forall X\in D\}$. %This convention is opposite to the one of \S9.1 of \cite{neebook}.

In this paper all complexes will be cohomological, i.e., the degree of
all differentials is $+1$; respectively, we will use cohomological
notation for their terms. We will need the following easy observation on the homotopy category of complexes for an arbitrary 
additive category $B$.

\begin{lem}\label{lrcomp}
Suppose %that
 a complex $M=(M^i)\in \obj K(B)$ is a $K(B)$-retract of an object of $B$ (i.e., of a complex of the form $\dots 0\to 0\to M'\to 0\to 0\to \dots$ for some  $M'\in \obj B$; we denote it by $M'[0]$). Then $M$ is also a retract of $M^0[0]$.  

\end{lem}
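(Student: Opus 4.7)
The plan is to construct the desired retraction explicitly from the given retraction data. Write $f \colon M \to M'[0]$ and $g \colon M'[0] \to M$ for the two retraction maps, with $gf - \id_M = d_M h + h d_M$ for some chain homotopy $h = (h^i)$, $h^i \colon M^i \to M^{i-1}$.

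The key preliminary observation is that since $M'[0]$ is concentrated in degree $0$, the only possibly nonzero components of $f$ and $g$ are $f^0 \colon M^0 \to M'$ and $g^0 \colon M' \to M^0$, and the chain map conditions degenerate to $f^0 \circ d_M^{-1} = 0$ and $d_M^0 \circ g^0 = 0$. Consequently the endomorphism $e := g^0 \circ f^0$ of $M^0$ satisfies $e \circ d_M^{-1} = 0$ and $d_M^0 \circ e = 0$, so the assignments $\alpha^0 := e$ and $\beta^0 := e$ (with all other components zero) define honest chain maps $\alpha \colon M \to M^0[0]$ and $\beta \colon M^0[0] \to M$.

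The main step is then to verify $\beta \circ \alpha \simeq \id_M$ in $K(B)$. In every degree $i \ne 0$ one has $(\beta\alpha)^i = 0$, so the original components of $h$ already give the required homotopy relation in every degree $i \notin \{0,1\}$. For the two remaining degrees one needs only an asymmetric perturbation of a single component: set $\tilde h^i := h^i$ for $i \ne 1$ and $\tilde h^1 := (\id_{M^0} + e) \circ h^1$. A direct computation, using $e \circ d_M^{-1} = 0$, $d_M^0 \circ e = 0$ and the original identity $e - \id_{M^0} = d_M^{-1} h^0 + h^1 d_M^0$, then yields $\beta\alpha - \id_M = d_M \tilde h + \tilde h d_M$.

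The only real obstacle is spotting the correction $h^1 \leadsto (\id_{M^0} + e)\circ h^1$ and confirming that it meshes with the unchanged $h^i$ at the neighbouring degrees; after that the proof is a short bookkeeping exercise, and in particular no Karoubianness assumption on $B$ is used.
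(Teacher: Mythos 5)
Your proof is correct, and it builds exactly the same retraction as the paper: both $\alpha$ and $\beta$ have $e = g^0 \circ f^0$ as their only nonzero component. The paper's verification that $\beta\circ\alpha\simeq\id_M$ is slightly more economical — since $\beta\circ\alpha$ equals $(g\circ f)\circ(g\circ f)$ as a chain self-map of $M$ and homotopy classes compose, $g\circ f\simeq\id_M$ immediately gives $\beta\circ\alpha\simeq\id_M\circ\id_M=\id_M$, with no need for the explicit corrected homotopy $h^1\leadsto(\id_{M^0}+e)\circ h^1$; but this is a cosmetic difference in the final step, not a different route.
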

\begin{proof}
Suppose that the factorization $M\stackrel{f}{\to} M'[0] \stackrel{g}{\to}M$ yields the $K(B)$-retraction mentioned. Then $h=g\circ f$ cannot have non-zero components in non-zero degrees, and we can consider it as a morphism $M\to M^0[0]$ and vice versa. Hence $h$  yields the retraction in question.
 %'considered componentwisely'(note that its only possible non-zero component is in degree $0$) 
\end{proof}

$\cu$ and $\du$ will usually denote some triangulated categories.
 We will use the
term {\it exact functor} for a functor of triangulated categories (i.e.,
for a  functor that preserves the structures of triangulated
categories).

A class $D\subset \obj \cu$ will be called {\it extension-closed} if $0\in D$ and for any
distinguished triangle $A\to B\to C$  in $\cu$ we have the implication $A,C\in
D\implies B\in D$. In particular, an extension-closed $D$ is strict (i.e., contains all
objects of $\cu$ isomorphic to its elements).

The smallest extension-closed $D$ containing a given $D'\subset \obj \cu$ will be called the {\it extension-closure} of $D'$.

We will call the smallest Karoubi-closed triangulated subcategory $\du$ of $\cu$ such that $\obj \du$ contains $D'$ the {\it triangulated subcategory generated by} $D'$.

$\au$ will usually denote some abelian category.
We will %always
usually assume that $\au$ satisfies AB5, i.e., %that it 
 is closed with respect to all
small coproducts, and  filtered direct limits of exact sequences in
$\au$ are exact.

We will call a covariant additive functor $\cu\to \au$
for an abelian $\au$ {\it homological} if it converts distinguished
triangles into long exact sequences; homological functors
$\cu^{op}\to \au$ will be called {\it cohomological} when considered
as contravariant functors $\cu\to \au$.

$H:\cu\to \au$ below will always %be additive; it will usually
be cohomological. %tochno??!

We will often specify a distinguished triangle by two of its
arrows.
For $f\in\cu (X,Y)$, $X,Y\in\obj\cu$, we will call the third vertex
of (any) distinguished triangle $X\stackrel{f}{\to}Y\to Z$ a cone of
$f$. %Note that different choices of cones are connected by non-unique isomorphisms, cf. IV.1.7 of \cite{gelman}. Besides, in $C(B)$ we have canonical cones of morphisms (see section \S III.3 of ibid.).
%?!? cone?

%When dealing with triangulated categories we (mostly) use conventions and auxiliary statements of \cite{gelman}. 
For a class of
objects $C_i\in\obj\cu$, $i\in I$, we will denote by $\lan C_i\ra$
the smallest strict (see above) full triangulated subcategory containing all $C_i$; for
$D\subset \cu$ we will write $\lan D\ra$ instead of $\lan C:\ C\in\obj
D\ra$.  

%We will say that  $C_i$ {\it generate} $\cu$ if $\cu$ equals the Karoubi-closure of $\lan C_i\ra$ in $\cu$. 
We will say that $C_i$ {\it  cogenerate} $\cu$ if
$\cu$ is closed with respect to small products and  coincides with its smallest strict triangulated subcategory that fulfills
this property and contains $C_i$ (in  \cite{bger} $C_i$ were called {\it weak cogenerators} of $\cu$. %; dualizing the notions considered in \S3.2 of \cite{neebook}, one could have called). %essentially??!

 For additive categories $C,D$ we denote by $\adfu(C,D)$ the
category of additive functors from $C$ to $D$
(we will ignore set-theoretic difficulties here since
we will mostly need the categories of functors from those $\cu$ that are skeletally small).
 %they do not affect our arguments seriously). % (we will be always
%able to assume that $C$ is small). %%%##

$\ab$ is the category of abelian groups.

For a category $C$ and 
a 
partially ordered 
 %n 
 index set $I$
 we will call a 
 set $X_i\subset \obj C$ a (filtered)
{\it projective system}
 if for any $i,j\in I$ there exists some maximum, i.e., an $l\in I$
 such that $l\ge i$ and $l\ge j$, and for all $j\ge i$ in $I$ there are fixed morphisms $X_j\to X_i$ satisfying the natural functoriality property. For such a system we have the natural notion of an inverse limit. Dually, we will call the inverse limit of a system of $X_i\in \obj C^{op}$ the direct limit of $X_i$ in $C$.
 
 All limits, colimits, and pro-objects in this paper will be filtered ones.
 
$k$ will be our perfect %??!!
 base field of characteristic $p$; $p$ could be $0$. % the symbol $[1/p]$ should be ignored if this is the case. %Below we will usually demand $k$ to be countable. Note: this yields that for any variety the set of its closed (or open) subschemes is countable.

We also list some more definitions and the main notation of this paper.

$t$-structures (along with $\hrt$) and Postnikov towers are considered in \S\ref{dtst}; the categories $\opa\subset \popa$,  pro-schemes, and their 'twists' $X_+\brj$ are defined in \S\ref{sprs} (whereas some additional conventions for them are contained in Remark \ref{rpgysin});  $\sh$ and the $t$-structure $t$ on it, the functor $\om$,   %(as well as 
$\sinf$, the 'twists' $\brj$ and $\brjj$, semi-local pro-schemes, and normal bundles $N_{X,Z}$  are mentioned in \S\ref{ssh}; 
primitive (pro)schemes are defined in \S\ref{sprim},
%(cf. also Remark \ref{rpgysin} on pro-schemes); 
weight structures (along with $\cu_{w\le i}$, $\cu_{w= i}$, $\cu_{w\ge i}$, $\hw$, $\cu_{[i,j]}$, and weight complexes), negative subcategories and (positive) weight Postnikov towers are defined in \S\ref{swr}; coenvelopes and countable homotopy limits are introduces in \S\ref{snews}; weight filtrations $W^kH^m$, weight spectral sequences $T(H,M)$, and  pure cohomology theories %, and  virtual $t$-truncations $\tau_{\ge i}H,\,\tau_{\le i}H$ 
are defined in  \S\ref{swfs}; (nice) dualities $\Phi$ of triangulated categories and orthogonal weight and $t$-structures are defined in \S\ref{sort}; the categories $\psh$, $\gdp$, and $\gdb=\ho(\gdp)$ (along  with the extension of $\om$ to $\popa$) are introduced in \S\ref{comot} (and constructed in \S\ref{scgd}); our main (Gersten) weight structure $w$ (for the category $\gd$) is constructed in \S\ref{scwger};
we introduce extended cohomology theories in \S\ref{sextkrau}; Cousin complexes $T_H(-)$ are studied in \S\ref{sext}; cocompact functors $\hw\to \au$ and strictly homotopy invariant Nisnevich sheaves are considered in \S\ref{spure};  $\sht$, $\omt$,  $\shtpl$,  $\shtpl$, $\eta$, and $\tau$ are considered in \S\ref{sht}; (weakly) orientable spectra  are studied in \S\ref{sshinvp} (and related to the Hopf element  $\eta$); $\mgl$, $\mglmod$, $\shmgl=\ho(\mglmod)$, and $\gdmgl$ are considered in \S\ref{smgl}; the category $\gdm$ of comotives is mentioned in \S\ref{sdm}.

\subsection{%Reminder:
$t$-structures, Postnikov towers, and 
idempotent completions}
\label{dtst}

In order to fix the notation, let us recall the definition of a $t$-structure.

\begin{defi}\label{dtstr}

A pair of subclasses  $\cu^{t\ge 0},\cu^{t\le 0}\subset\obj \cu$
for a triangulated category $\cu$ will be said to define a
$t$-structure $t$ if $(\cu^{t\ge 0},\cu^{t\le 0})$  satisfy the
following conditions:

(i) $\cu^{t\ge 0},\cu^{t\le 0}$ are strict (i.e., contain all
objects of $\cu$ isomorphic to their elements).

(ii) $\cu^{t\ge 0}\subset \cu^{t\ge 0}[1]$, $\cu^{t\le
0}[1]\subset \cu^{t\le 0}$.

(iii) {\bf Orthogonality}. $\cu^{t\le 0}[1]\perp
\cu^{t\ge 0}$.

(iv) {\bf $t$-decomposition}. For any $X\in\obj \cu$ there exists
a distinguished triangle
\begin{equation}\label{tdec}
A\to X\to B[-1]{\to} A[1]
\end{equation} such that $A\in \cu^{t\le 0}, B\in \cu^{t\ge 0}$.

\end{defi}

We will need some more notation. % for $t$-structures.

\begin{defi} \label{dt2}

1. A category $\hrt$ whose objects are $\cu^{t=0}=\cu^{t\ge 0}\cap
\cu^{t\le 0}$, $\hrt(X,Y)=\cu(X,Y)$ for $X,Y\in \cu^{t=0}$,
 will be called the {\it heart} of
$t$. Recall (cf. Theorem 1.3.6 of \cite{bbd}) that $\hrt$ is abelian
(short exact sequences in $\hrt$ come from distinguished triangles in $\cu$).

2. $\cu^{t\ge l}$ (resp. $\cu^{t\le l}$) will denote $\cu^{t\ge
0}[-l]$ (resp. $\cu^{t\le 0}[-l]$).

\end{defi}

\begin{rema}\label{rts}

1. Recall (cf. Lemma IV.4.5 in \cite{gelman}) that (\ref{tdec})
defines additive functors $\cu\to \cu^{t\le 0}:X\to A$ and $C\to
\cu^{t\ge 0}:X\to B$. We will denote $A,B$ by $X^{t\le 0}$ and
$X^{t\ge 1}$, respectively.

The triangle (\ref{tdec}) will be called the {\it t-decomposition} of $X$. If
$X=Y[i]$ for some $Y\in \obj\cu$, $i\in \z$, then we will denote $A$
by $Y^{t\le i}$ (it belongs to $\cu^{t\le 0}$) and $B$ by $Y^{t\ge
i+1}$ (it belongs to  $\cu^{t\ge 0}$), respectively. %Sometimes we will denote $Y^{t^\le i}[-i]$ by $t_{\le i}Y$; $t_{\ge i+1}Y=Y^{t^\ge i+1}[-i-1]$. 
Objects of the type $Y^{t^\le i}[j]$ and
$Y^{t^\ge i}[j]$ (for $i,j\in \z$) will be called {\it
$t$-truncations of $Y$}.

2. We denote by $X^{t=i}$ the $i$-th cohomology of $X$ with respect
to $t$, that is $(X^{t\le i})^{t\ge 0}$ (cf. part 10 of \S IV.4 of
\cite{gelman}). %@@Note that t-truncations commute??

3.  The following statements are obvious (and well-known): $\cu^{t\le 0}={}^\perp
(\cu^{t\ge 1})$; $\cu^{t\ge 0}= (\cu^{t\le -1})^\perp$.

4. Our conventions for $t$-structures and 'weights' (see Remark \ref{rstws}(3) below) follow the ones of \cite{bbd}. So, for any $n\in \z$ 'our' $(\cu^{t\le n},\cu^{t\ge n})$ corresponds to $(\cu_{t\ge -n},\cu_{t\le -n})$ in the notation of the papers \cite{morintao}, \cite{tmodel}, and \cite{degorient}.

\end{rema}

Below we will need the notion of  Postnikov tower in
a triangulated category several times (cf. \S IV.2 of \cite{gelman}; Postnikov towers are closely related (in an obvious way) to {\it triangulated exact couples} of \S2.4.1 of \cite{degorient}).

\begin{defi}\label{dpoto}
Let $\cu$ be a triangulated category.

1. Let $l\le m\in \z$.

We will call a {\it bounded Postnikov tower} for $M\in\obj\cu$ the
following data: a sequence of $\cu$-morphisms $(0=)Y_l\to
Y_{l+1}\to\dots \to Y_{m}=M$ along with distinguished triangles
\begin{equation}\label{wdeck3}
 %\stackrel{s_{k}}
 %\stackrel{c_{k}}
 %\to
%Y^{p}%\stackrel{d_{k}}
 Y_{ i-1} \to Y_{i}\to M_{i}
\end{equation}
for some $M_i\in \obj \cu$;
here $l<i\le m$.

2. An unbounded Postnikov tower for $M$ is a collection of $Y_i$ for
$i\in\z$ that is equipped (for all $i\in\z$) with the following: connecting arrows
$Y_{i-1}\to Y_{i}$ (for $i\in\z$), morphisms $Y_i\to M$ such that all
the corresponding triangles commute, and distinguished triangles
(\ref{wdeck3}).

In both cases, we will denote  $M_{-p}[p]$ by $M^p$; we will call $M^p$ the {\it factors} of our Postnikov tower.

\end{defi}

\begin{rema}\label{rwcomp}
1. Composing (and shifting) arrows from   triangles (\ref{wdeck3})
for two subsequent $i$ one can construct a complex whose terms are
$X^p$ (it is easily seen that this is a complex indeed; 
see Proposition 2.2.2 of \cite{bws}). %??! This observation will be important for us below when we will consider certain weight complex functors. 

2. Certainly, a bounded Postnikov tower can be easily completed to
an unbounded one. For example, one can take $Y_i=0$ for $i<l$,
$Y_i=M$ for $i>m$; then $M^i=0$ if $i<l$ or $i\ge m$.
\end{rema}

\subsection{On open pairs and pro-schemes}\label{sprs}

$\var\supset \sv\supset \spv$ will denote the class of all varieties
over $k$, resp. of smooth varieties, resp. of smooth projective
varieties. $\sm$ is the category of smooth $k$-varieties.

We consider the category $\opa$ %whose objects are
of  open %??
 embeddings of smooth varieties over $k$ (we will denote the object corresponding to an embedding $U\to X$ as $X/U$).
We have the obvious (componentwise) disjoint union operation for $\opa$.

%localize?! non-empty??! pointed??! $U$ possesses a rational point??! add it on sections if there are none??!

For $X\in \sv$ we will denote $X\sqcup \pt/\pt$ ($\pt$ is our notation for $\spe k$) by $X_+$. We also define certain (``shifted Tate'') twist on $\opa$ as follows: for $Z=X\setminus U$ we set
$X/U\lan 1\ra=X\times \afo/(X\times \afo\setminus Z\times \ns)$, so that for any $j>0$ we have $X/U\lan j\ra=(X\times \af^j/(X\times \af^j\setminus Z\times \ns))$. This is a functor from $\opa$ into itself. %; it is given by the $\opa$-product by $\pt\{j\}$.
Another functor is $\times Y$ for any fixed $Y\in \sv$ that sends $X/U$ into $X\times Y/U\times Y$; we will be interested in the case $Y=\afo$.

%\wedge?
%We will denote $(\sqcup X_i)_+$ by $\sqcup X_i_+$?

%Similarly

We will also need the category $\popa$ of (filtered) %??
 pro-objects of $\opa$. %??!

Obviously, any functor $H:\opa\to \au$ can be naturally extended to a functor from $\popa$  (via direct limits) if $\au$ is an abelian category satisfying AB5.

Below we will treat a special sort of objects of $\popa$. Those may be called 'disjoint unions of intersections of smooth varieties'. Being more precise, we consider inverse limits of objects of the type $X_{i+}$ (for $X_i\in \sv$) such that the restriction of the  structural %connecting %?
 morphism $X_i'\to X_{i'}$ (for any $i\ge i'$) to any connected component of $X_i$ is either an open embedding or the projection to the 'extra point' of $X_{i'+}$. The corresponding object will be denoted by $X_+$; we will also write $X=\prli X_i$. Note that one can %'discard the extra point' and 
 speak of the connected components of $X$ omitting the extra point (we will call them {\it connected pro-schemes}; they are projective limits of smooth connected varieties connected with open dense embeddings); we will write $X=\sqcup_{\al\in A} X^\al$.
 Note that connected pro-schemes (and their finite disjoint unions) often yield actual $k$-schemes.

We also note that any such union yields a pro-scheme; more generally, for any $j\ge 0$, $X^\al=\inli_{i_\al\in I_\al} X_{i_\al}$, we will consider  the object 
 %projective limits of this sort
%$X^\al=\prli X^\al_{i+}$  and $j\ge 0$ we associate an object $X_+$ of $\popa$: %Tate twist then disjoint union??!
%More generally, for $j\ge 0$ and a non-empty $\al$,
$$ X_+\lan j\ra=(\sqcup_{\al\in A} X^\al)_+\lan j\ra=\prli_{B,i_\be\in I_\be \forall \be\in B} (\sqcup_{\be \in B} X_{i_\be})_+\lan j\ra $$ for $B$ running through finite subsets of  $A$. 
Here the transition maps go from $X_{i_\be}\lan j\ra$ either to $X_{i_\be'}\lan j\ra$ 
 for $i_\be' \le_{I_\be} i_\be\in I_\be$ or to the 'extra' $\pt\lan j\ra $
(if the corresponding $B'$ does not contain $\be$).
We also obtain that  $(\sqcup X^\al)_+\brj$  can be presented as the inverse  limit of the pro-objects $\sqcup_{\be\in B}X^\be_+\brj$. % for $\be$ running through finite subsets of $A$.

  One can also speak of  Zariski points of a pro-schemes $X$. % (moreover, points have well-defined codimensions). 
  We will say that $X$ is of dimension $\le d$ if $X_+\cong \inli X_{i+}$ for some $X_i$ of dimension at most $d$. Alternatively, one can check whether the transcendence degrees of all residue fields of $X$ %are of dimension $\le d$ 
  over $k$ are at most $d$.

%localize??!! %the partially defined disjoint union, for propairs;  \sqcup_? ??!!
%Disjoint union + 'cancellation'; a remark??! 

\subsection{%The $\afo$ homotopy category 
$\sh$ and the homotopy $t$-structure on it: reminder}\label{ssh}

We do not give the definition of $\sh$ here; we only recall that it can be defined via a chain of functors
$\dopshs\to \doshs\to \hk\to \sh$; %adjoint to the localization functor??!
%in $\dosh$ all objects are cofibrant??! Rost \S6??!
 those are homotopy functors for certain left Quillen functors of  closed model categories. Here
 the (underlying category for) the model category corresponding to  $\dopshs$ (resp. to $\doshs$ and $\hk$) is $\dopsh$ (resp. $\dosh$), where
  $\pspt$ (resp. $\spt$) denotes the category of  presheaves (resp. Nisnevich sheaves) of pointed sets on $\sm$.  
We will say more on these model categories and fill in the corresponding details of the proofs in \S\ref{srsmc}
 below.

For $X/U\in\obj \opa$ we define $\om(X/U)$ as the image in $\sh$ of the  %'constant'
discrete pointed simplicial presheaf $\sm(-,X_+)/\sm(-,U_+)$, 
  i.e., of the presheaf  sending $Y\in \sv$ into $\sm(Y,X)/\sm(Y,U)$ pointed by the 'image' of $\sm(Y,U)$ if the latter is non-empty, and into $\sm(Y,X)\sqcup\pt$ pointed by this point in the opposite case (cf. \S2.3.2 of \cite{degdoc}).
 Recall also that $\sh$ is triangulated monoidal with respect to the operation $\wedge$ compatible with the obvious $\wedge$ for pointed presheaves; %see the proof of Proposition 2.4 of \cite{voa1}??!
  it contains the Thom spectrum $T=\om(\afo/\gmm)$ (corresponding to the line bundle $\afo\to \pt$).
We denote the operation $\wedge T^j[-j]$ on $\sh$ by $\{j\}$ (for any $j\ge 0$).

We will need %recall??!
 the following properties of $\sh$. %the functor obtained. 

\begin{pr}\label{psh}

\begin{enumerate}

\item\label{ish1} 
%For any $X\in \sv$ %
The functor $\sm\to \sh:X\mapsto \om(X_+)$ is exactly the 'usual' one considered in \S4.2 of \cite{morintao}.
 Besides, there is a natural isomorphism $\om(-\brj)\to \om(-)\wedge T^j$ for the spectrum $T$ being the one considered in ibid. %; it is induced by a transformation on the level of model categories (cf. \S\ref{srsmc} below). %??!

%twists??! products??!

\item\label{ish6} For any $U\in \sv$ we have $\om (U/U)=0$; for any $X/Y\in \obj \opa$ the functor $\om$ converts the natural morphism
$X/Y\to X\sqcup U/Y\sqcup U$ into an isomorphism.

\item\label{ish3} 
For any $j\ge 0$ and  open embeddings $Z\to Y\to X$ of smooth varieties the %result of the application of $\om$ to natural morphisms %$Y/X\brj \to Z/X\brj \to Y/Z\brj$ 
natural morphisms $\om(Y/Z\brj) \to \om(X/Z\brj) \to \om(X/Y\brj)$
 can be completed to a distinguished triangle.

%twist?!

\item\label{ish4}
For any finite set of $X_i\in \sv$ %$1\le i\le n$, 
 the obvious morphisms $(\sqcup X_i)_+\brj\to X_{i+}\brj$ yield an isomorphism $\om((\sqcup X_i)_+\brj)\cong \prod \om(X_{i+}\brj) $. %without twist??!

\item\label{ish7} $\om$ is {\it homotopy invariant}, i.e., %the natural transformation $\om\circ(\times \afo)\to \om$ is an isomorphism. 
for any $X\in \sv$ we have $\om(X_+)\cong \om (X\times \afo_+)$

\item\label{ish8} $\om(-_+)$ converts Nisnevich distinguished squares of smooth varieties into distinguished triangles (see Example 4.1.11 of \cite{morintao} for more detail).
 
 %\item\label{ish7} 
 
\item\label{ish5} %DNC! HI??! {movo}; {degl?}
Let $j\ge 0$; let $i:Z\to X$ be a closed embedding of smooth varieties,  %delete \brj; doc4??!
%If 
and denote by  $B(X,Z)$  the corresponding deformation to the normal cone variety (see the proof of Theorem 3.2.23 of \cite{movo} or \S4.1 of \cite{degdoc}). Then  the natural %embeddings 
$\opa$-morphisms $X/X\setminus Z\brj \to B(X,Z)/B(X,Z)\setminus Z\times \afo\brj$ and $N_{X,Z}/N_{X,Z}\setminus Z\brj\to B(X,Z)\setminus Z\times \afo\brj$ (where $N_{X,Z}$ is the normal bundle for $i$) become isomorphisms after we apply $\om$.

% If $U$ is a connected component of 

%Cohomological dimension: Lemma 4.3.1??!
 
 \end{enumerate}

\end{pr}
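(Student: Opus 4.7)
My plan is to unwind the construction of $\sh$ as the homotopy category of the chain of Quillen localizations $\dopshs\to\doshs\to\hk\to\sh$ and reduce each item either to a statement at the level of pointed (simplicial) Nisnevich presheaves or to a defining property of the Morel--Voevodsky model structure. In particular, items (\ref{ish6}) and (\ref{ish4}) are essentially tautologies on the presheaf side. For (\ref{ish6}), the pointed presheaf $\sm(-,U_+)/\sm(-,U_+)$ is literally the terminal (hence zero) pointed presheaf, so $\om(U/U)=0$; and $\sm(-,X_+)/\sm(-,Y_+)\to\sm(-,(X\sqcup U)_+)/\sm(-,(Y\sqcup U)_+)$ is already an isomorphism of pointed presheaves, since the extra copy of $U$ contributes matching sections in numerator and denominator. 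For (\ref{ish4}), the pointed presheaf attached to $(\sqcup_i X_i)_+$ is the wedge $\bigvee_i \sm(-,X_{i+})$, and finite wedges coincide with finite products in the triangulated category $\sh$; the $\brj$-twists are compatible because both $-\times\af^j$ and the removal of $Z\times\ns$ commute with disjoint unions.

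For (\ref{ish3}), the pointed presheaf $\sm(-,X_+)/\sm(-,Y_+)$ is the cofiber of the obvious monomorphism $\sm(-,Y_+)/\sm(-,Z_+)\to\sm(-,X_+)/\sm(-,Z_+)$, and such cofiber sequences pass to distinguished triangles in $\sh$; multiplying the whole configuration by $\af^j$ and removing $Z\times\ns\times\af^j$ handles the $\brj$-twist. Items (\ref{ish7}) and (\ref{ish8}) express the defining localizations built into $\hk$: $\af^1$-invariance is imposed by Bousfield localization at the projection maps $X\times\afo\to X$, while Nisnevich descent is built into the local model structure on $\dosh$, so a Nisnevich distinguished square is an $\hk$-homotopy pushout and yields a distinguished triangle after $T$-stabilization. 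The first half of (\ref{ish1}), the identification of $X\mapsto\om(X_+)$ with the usual $\sinf X_+$ of \cite{morintao}, is then a direct comparison of definitions.

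The remaining and most substantial input is the purity-type statement (\ref{ish5}), from which the $T^j$-suspension isomorphism $\om(-\brj)\cong\om(-)\wedge T^j$ asserted in (\ref{ish1}) is deduced by applying (\ref{ish5}) to the zero section $X\hookrightarrow X\times\afo$, combined with (\ref{ish7}) and the definition $T=\om(\afo/\gmm)$, then iterating on $j$. For (\ref{ish5}) itself, the $\brj$-twisted statement reduces to $j=0$ by multiplying all varieties in the diagram by $\af^j$ and removing $Z\times\ns\times\af^j$, so it suffices to treat the untwisted case. The plan here is to appeal directly to the Morel--Voevodsky deformation argument: $B(X,Z)\to\afo$ has special fibers $X$ over $1$ and $N_{X,Z}$ over $0$, and both inclusions $X/(X\setminus Z)\to B(X,Z)/(B(X,Z)\setminus Z\times\afo)$ and $N_{X,Z}/(N_{X,Z}\setminus Z)\to B(X,Z)/(B(X,Z)\setminus Z\times\afo)$ are $\af^1$-weak equivalences in $\hk$, as established in Theorem 3.2.23 of \cite{movo} (and in \S 4.1 of \cite{degdoc}). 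The hard Nisnevich-local contractibility computations underlying that result are where the genuine difficulty lies, and I would cite rather than reproduce them.
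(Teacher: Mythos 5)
Your plan for the untwisted ($j=0$) cases of (\ref{ish3}), (\ref{ish4}), (\ref{ish6}), and for (\ref{ish7}), (\ref{ish8}), and the first half of (\ref{ish1}) is sound and coincides with the paper's approach (the paper likewise cites \cite{movo} for (\ref{ish5}), and treats (\ref{ish1}) and (\ref{ish6})--(\ref{ish8}) as standard). The gap is in how you handle the $\brj$-twists. The paper takes the isomorphism $\om(-\brj)\cong\om(-)\wedge T^j$ from (\ref{ish1}) as a known property of $\sh$, and then reduces (\ref{ish3}), (\ref{ish4}), (\ref{ish5}) to $j=0$ simply by applying the exact functor $-\wedge T^j$ to the $j=0$ triangles and isomorphisms. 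You instead try to \emph{derive} the $T$-suspension isomorphism from (\ref{ish5}), which forces you to prove (\ref{ish3}) and (\ref{ish5}) for general $j$ by a direct geometric reduction (``multiply everything by $\af^j$ and remove $Z\times\ns\times\af^j$''), and that reduction does not go through.

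Concretely, for (\ref{ish3}) with opens $Z\subset Y\subset X$: a non-basepoint section of the twisted middle term $\sm(-,(X\times\af^j)_+)/\sm(-,(X\times\af^j\setminus(X\setminus Z)\times\ns)_+)$ lies in the image of the first term precisely when it factors through $Y\times\af^j$, i.e.\ when it misses $(X\setminus Y)\times\af^j$; so the presheaf cokernel consists of maps hitting $(X\setminus Z)\times\ns$ and hitting $(X\setminus Y)\times\af^j$. But the third term $X/Y\brj$ consists of maps hitting the smaller set $(X\setminus Y)\times\ns$. These do not agree (there are maps hitting, say, $(Y\setminus Z)\times\ns$ and $(X\setminus Y)\times\gmm^j$ but not $(X\setminus Y)\times\ns$), so one does not obtain a strict cofibration sequence in $\dopsh$ by this route; one would need a separate $\af^1$-invariance argument that you do not supply. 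Similarly for (\ref{ish5}) with $j>0$: the two outer terms do coincide with the $j=0$ data for $Z\times\ns\hookrightarrow X\times\af^j$, but the middle term $B(X,Z)\brj$ is built from $B(X,Z)\times\af^j$, which is \emph{not} the deformation space $B(X\times\af^j,\,Z\times\ns)$ (these are different blow-ups of different ambient varieties), so the asserted $\opa$-morphisms do not match after the reduction. The clean route is the paper's: take the suspension isomorphism in (\ref{ish1}) as input from \cite{morintao}, which both kills the circularity (you no longer need to deduce (\ref{ish1}) from (\ref{ish5})) and makes the twist reductions one-line consequences of exactness of $-\wedge T^j$.
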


\begin{proof}
Assertions \ref{ish1}, \ref{ish6}, \ref{ish7}, and \ref{ish8} are obvious from the well-known properties of $\sh$ (see \cite{morintao}).

\ref{ish3}. By assertion \ref{ish1}, we can assume that $j=0$. In this case it suffices to note that the pair of morphisms in question 
yields a cofibration sequence in $\dopsh$; cf. also \S2.4.1 of \cite{degorient}. %, whereas the functor 
%See the proof of Lemma 3.3.7 of \cite{moa1homotop}??! Postnikov towers for $\wger$: similar to the construction in Morel's book??!

\ref{ish4}. Again, assume that $j=0$. Obviously, it suffices to prove the statement for the union of two varieties. By assertion \ref{ish6}, in this case it suffices to verify that the distinguished triangle corresponding to $\pt\to \pt\sqcup X_1\to \pt \sqcup X_1\sqcup X_2$ (by the previous assertion) splits. The latter is an easy consequence of assertion \ref{ish6}.

%\ref{ish7}. Immediate from the assertions \ref{ish3} and \ref{ish1}, combined with the %homotopy invarifact that (by definition) the image of $\sinf$ consists of $\afo$-local objects (in the terms of ibid.).

\ref{ish5}. %The 
Again, we can assume that $j=0$.
%The natural analogue of 
In this case it suffices to note that the %natural analogue of this
%statement 
corresponding $\opa$-morphisms  map into isomorphisms already in  $\hk$  (this is exactly Proposition 3.2.24 %p. 115
of \cite{movo}); all the more 
we obtain isomorphisms in $\sh$.
%it is true in $\sh$. Our assertion follows easily.%??!
%is exactly the argument used for the ``standard'' proof of Theorem 3.3.4 of \cite{morintao}). Certainly, the result is all the more valid in $\sh$.

 % See \S? of \cite{}.
\end{proof}

The following observation relates twists to shifts (somehow).

\begin{rema}\label{rpsh}
1. In particular, we obtain a distinguished triangle $\om (\gmmpl)\to \om(\afo_+) \to \om (\afo/\gmm)\to \om (\gmmpl)[1]$. % (here we can 'point' $\gmm$ by $1$). 
Similarly, for any $X\in \sv,\ j\ge 0$, we have $\om (X_+\lan j+1\ra )\cong \co (\om (X_+\brj)\to \om(X\times \gmmpl\lan j\ra ))[1] $.  Besides, $\om (X_+\brj)$ is %the complement of
obviously a retract of $\om(X\times \gmmpl\lan j\ra ))$; hence  $\co (\om (X_+\brj)\to \om(X\times \gmmpl\lan j\ra ))$ is the 'kernel' of the corresponding projection.

2. Alternatively, for the proof of assertion \ref{ish4} one can note that disjoint union of pro-schemes yields the bouquet operation in $\dopsh$.
\end{rema}

%We also 
Let us recall the basic properties of Morel's homotopy $t$-structure on $\sh$ (see Theorem 4.3.4
%Lemma 6.2.11 
of \cite{morintao}). We will denote it just by $t$; this is the ``main'' $t$-structure of this paper. Recall that in ibid. for any $n\in \z$ the class $\sh^{t\le n}$ was called the one of $1-n$-connected ($\afo$-local) spectra %(note that in ibid. the 'homological' convention for $t$-structures is used, 
(note  that our $\sh^{t\le n}$ is $\sh{}_{t\ge -n}$ in the notation of ibid.; see Remark\ref{rts}(4)). 
%By an abuse of notation, 
For an $E\in \obj \sh$ we will denote by $E^n$ (resp. by $E^n_j$) the application of the functor represented by $E[n]$ to $\opa\subset \popa$ via $\om$ (resp. via $\om \circ \brj$).

%, and the stable connectivity property of ibid. holds when the base scheme is the spectrum of a perfect field (see Theorem 6.1.8 %3? of ibid.).

\begin{pr}\label{psht}

Let $E\in \obj \sh$. Then the following statements are valid.

1. For any $X\in \sv$ we have $\om(X_+)\in \sh^{t\le 0}$.

2. $E\in  \sh^{t\ge 0}$ if and only if $E^n(X_+)=\ns $ for any $X\in \sv$, $n<0$.

3.  $E\in  \sh^{t\le 0}$ if and only if $E^n(\spe K_{+})=\ns$ for all $n>0$ and for all function fields $K/k$. 
If this is the case, we also have $E_j^{n+j}(\spe K_+)=\ns$ for any such $K,n$, and any $j\ge 0$.
%$X_0$ being the generic point of any $X\in \sv$. 
%and also if and only if $E^n(X_{0+}\brj)=0$ for any 

4. %The internal Hom functor $\ihom(\gmm,-):\sh\to\sh$ is $t$-exact with respect to $t$. 
For a pro-scheme $S$ and some $i\in \z$ assume that $E^0(S_+)=\ns$ for any $E\in \sh^{t\le i}$.
Then we also have $E_j^{n+j}(S_+)=\ns$ for any such $E$ and any $j\ge 0$.
% for the opposite sign??!

5. If $E\in  \sh^{t\le 0}$ then there exist some $E_i$
belonging to the extension-closure of $\{\coprod \om(S_{i+})[n_i]\}$ for $S_i\in \sv$, $n_i\ge 0$, and a distinguished triangle 
$\coprod E_i\to E\to \coprod E_i[1]$. %$\om$??! Envelope. Dual to coenvelope??! In $\shc$: only finite coproducts needed??! Via Lemma 4.4.5 of \cite{neebook}??! 

% $n_i\ge 0$, and $e_i\in E^{n_i}(X_i)$ such that the corresponding morphism $\coprod_i \sh(-,\sinf X_{i+}[n_i])\to E$ is surjective.

\end{pr}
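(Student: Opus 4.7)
The plan is to recognize that parts 1--3 are essentially a translation of Morel's Theorem 4.3.4 from \cite{morintao} into the BBD-style conventions used here (see Remark \ref{rts}(4)), while parts 4 and 5 are essentially formal consequences. For part 1, Morel's stable $\af^1$-connectivity theorem directly asserts that $\om(X_+)$ is connective, i.e., belongs to $\sh^{t\le 0}$ in our notation. Part 2 then follows formally from the $t$-structure orthogonality $\sh^{t\ge 0} = (\sh^{t\le -1})^\perp$ together with the fact that $\sh^{t\le -1}$ is generated as a localizing subcategory by $\{\om(X_+)[m] : X\in \sv,\ m\ge 1\}$ (which itself follows from part 1 and the construction of the homotopy $t$-structure); unwinding orthogonality gives $E^{-m}(X_+) = \ns$ for all such $X, m$, i.e., the stated condition with $n = -m$.

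Part 3 is the deepest step: the equivalence $E \in \sh^{t\le 0} \iff E^n(\spe K_+) = \ns$ for all function fields $K$ and all $n>0$ is essentially Morel's description of the heart as the category of strictly $\af^1$-invariant Nisnevich sheaves, combined with the vanishing of Nisnevich cohomology of such a sheaf on $\spe K_+$ (fields have no nontrivial Nisnevich covers). The twist assertion is formal: by Proposition \ref{psh}(\ref{ish1}), $\om(\spe K_+ \brj) \cong \om(\spe K_+)\wedge T^j$, and the tensor--hom adjunction (using that $T$ is invertible with $T = S^1\wedge \gmm$) gives $E_j^{n+j}(\spe K_+) = (E\wedge \gmm^{-j})^n(\spe K_+)$; since Morel's theorem also implies $\sh^{t\le 0}$ is preserved by $-\wedge \gmm^{-1}$, applying the main implication to $E\wedge \gmm^{-j}$ yields the vanishing. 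Part 4 proceeds by the same adjunction manipulation with the pro-scheme $S$ in place of $\spe K$: the identity $E_j^{n+j}(S_+) = (E\wedge \gmm^{-j})^n(S_+)$ together with $E \wedge \gmm^{-j} \in \sh^{t\le i}$ shows that the hypothesis $(E\wedge \gmm^{-j})^0(S_+) = \ns$ applies and gives the conclusion.

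For part 5, I would carry out a standard cellular approximation with respect to the compact generators $\{\om(S_+)[m] : S \in \sv,\ m \ge 0\}$ of $\sh^{t\le 0}$. Namely, inductively construct a tower $0 = F_0 \to F_1 \to F_2 \to \cdots$ whose successive cofibers are coproducts of shifted generators, arranged at step $n$ so that the cone of $F_n \to E$ lies in $\sh^{t\le -n}$ (possible by a surjectivity argument on degree $-n$ cohomology supplied by compact generation). Then $E = \operatorname{hocolim}_n F_n$, and the Milnor exact triangle $\coprod_n F_n \xrightarrow{1 - \mathrm{shift}} \coprod_n F_n \to E$ rotates to the desired distinguished triangle $\coprod F_n \to E \to \coprod F_n[1]$, with $E_i := F_i$ belonging to the stated extension-closure by construction. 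The main obstacle is part 3, which rests on Morel's deep stable $\af^1$-connectivity theorem and cannot be derived from formal $t$-structure manipulations; beyond that, the main technical burden lies in bookkeeping indexing conventions between \cite{morintao} and the BBD framework.
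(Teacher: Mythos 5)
Your overall strategy---translate Morel's results into BBD conventions for parts 1--3, formal arguments for parts 4--5---matches the paper, which simply cites Lemma 4.3.3, Definition 4.3.1(1), and Lemmas 4.2.7/4.3.11 of \cite{morintao} for the first four parts and Theorem 12.1 of \cite{kellerw} for part 5 (your cellular-tower-plus-Milnor-triangle argument is exactly the content of that citation). However, your treatment of the twist assertions in parts 3 and 4 contains a genuine gap. You write $E_j^{n+j}(\spe K_+) = (E\wedge \gmm^{-j})^n(\spe K_+)$ and appeal to "$T$ is invertible". The ambient category here is $\sh = SH^{S^1}(k)$, the $S^1$-stable (but \emph{not} $\p^1$-stable) homotopy category, in which neither $T$ nor $\gmm$ admits a smash-inverse; the paper signals this by only allowing $j\ge 0$ in the twist $\brjj$ on $\sh$, reserving $j\in\z$ for $\sht$. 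So $E\wedge\gmm^{-j}$ does not exist and the adjunction step is meaningless. One can replace it by the internal hom $\ihom(\gmm^{\wedge j}, E)$, but then one needs the nontrivial fact that this right adjoint preserves the connectivity bound---which is exactly what Morel's Lemma 4.3.11 provides and is not a formal consequence of the $t$-structure axioms. The same issue infects part 4 ("$E\wedge\gmm^{-j}\in\sh^{t\le i}$" is ill-posed). In short: the "adjunction with inverse twist" shortcut is a confusion with $\sht$, and you must instead invoke the cited lemmas of \cite{morintao}.

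A minor point on part 2: the paper notes that the stated condition \emph{is} Morel's Definition 4.3.1(1) of (what translates to) $\sh^{t\ge 0}$, so there is nothing to prove. Your reconstruction via orthogonality rests on the claim that $\sh^{t\le -1}$ is the localizing subcategory generated by $\{\om(X_+)[m]: m\ge 1\}$; this is a fact about how the homotopy $t$-structure is \emph{built}, not something that follows formally once the $t$-structure is given, and asserting it in order to derive part 2 is essentially circular.
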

\begin{proof}

1. Immediate from %loc. cit.
Lemma 4.3.3 of ibid.

2. This is the just the definition of  $\sh^{t\ge 0}$ (see Definition 4.3.1(1) of ibid.).

3. Immediate from Lemmas 4.2.7  and 4.3.11 of ibid.

4. Immediate from %Lemma 4.3.11 of ibid.
loc. cit.

5. Immediate from assertion 2 along  with Theorem 12.1 of \cite{kellerw} (here we use the compactness of $\sinf X_{i+}[n_i]$ in $\sh$). 

%Theorem 6.1.8.
%Lemma 6.1.7
%Theorem 6.2.7: t-structure! 6.4.1

\end{proof}

Below $\shc$ will denote the triangulated subcategory generated (in the sense described in \S\ref{snot}) by %+ Karoubization
 $\{\om (X_+):\ X\in \sv\}$. % (i.e. the smallest Karoubi-closed . 
 Note that %this is exactly 
  the objects of this  subcategory are  exactly the compact objects of $\sh$.

Now recall some %simple facts and definitions.
properties of semi-local schemes.

\begin{rema}\label{rsemiloc}
In this paper all semi-local schemes that we consider will be affine essentially smooth ones. % (and their disjoint unions considered as pro-schemes).
 Such an $S$ is the semi-localization of a smooth affine variety $V/k$ at a finite collection of Zariski points. %??! 
%Certainly, if $f:X\to Y$ is a 
Obviously, if $f:V'\to V$ is a finite morphism of smooth varieties (in particular, a closed embedding) then $S\times_V V'$ is (affine essentially smooth) semi-local also. %A well-known fact is 
 It is well-known that all vector bundles over connected semi-local schemes (in our sense) are trivial.

Certainly, any semi-local scheme yields a pro-scheme (with a finite number of connected components). We will call an $\opa$-disjoint union of an arbitrary set of (affine smooth) semi-local schemes   a {\it semi-local pro-scheme}.
 \end{rema}

%over finite fields: use Lemma 3.4.1 of http://mathserver.neu.edu/~levine/publ/HomotopyConiveau.pdf??!!

We will need the following important %(and well-known) 
result (that is essentially well-known also).

\begin{pr}\label{pshinv}
Assume that $k$ is infinite; let $X$ be a %connected primitive 
semi-local pro-scheme, $E\in \sh^{t\le 0}$. %, $X_0$ is its generic fibre. 
Then for any $j\ge 0, i>j$, we have $E^{i}_j(X_+)=\ns$ (see the notation of the previous subsection).
%(this is  the extension of the $i$-th relative cohomology given by $S$).

\end{pr}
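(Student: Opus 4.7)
The strategy is to decouple the Tate twist from the essential geometric content: one first reduces the twisted vanishing to the untwisted one by an application of Proposition \ref{psht}(4), and then establishes the untwisted semi-local acyclicity via Morel's Nisnevich-acyclicity theorem for essentially smooth semi-local schemes over an infinite field.

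For the first reduction, fix $n\ge 1$ and apply Proposition \ref{psht}(4) with its parameter $i$ taken to be $-n$. The hypothesis of that part reads ``$E^0(X_+)=0$ for every $E\in \sh^{t\le -n}$''; setting $F=E[n]\in \sh^{t\le 0}$, this is precisely the untwisted vanishing $F^n(X_+)=0$ for every $F\in\sh^{t\le 0}$. The conclusion, after the same shift, gives $F^{n+j}_j(X_+)=0$ for every $F\in\sh^{t\le 0}$ and every $j\ge 0$; writing $i=n+j$, so that $i>j$ since $n\ge 1$, this is exactly the statement of the proposition. So I need only prove the untwisted claim: $F^n(X_+)=0$ for every $F\in\sh^{t\le 0}$, every $n\ge 1$, and every semi-local pro-scheme $X$.

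For the untwisted case, I would first use Proposition \ref{psh}(\ref{ish4}) together with the description of $X$ as an $\opa$-disjoint union of connected semi-local schemes (and the fact that the cohomology of a pro-scheme is the colimit of the cohomologies of finite disjoint unions of approximations) to reduce to the case in which $X$ is a single connected affine essentially smooth semi-local scheme $S$. Next, Proposition \ref{psht}(5) fits $F$ into a distinguished triangle $\coprod_i F_i\to F\to \coprod_i F_i[1]$ with each $F_i$ belonging to the extension-closure of objects of the form $\coprod \om(Y_+)[m]$, $Y\in\sv$, $m\ge 0$. Splicing the two associated long exact sequences of $\sh(\om(S_+),-)$, the required vanishing reduces to the single claim $\sh(\om(S_+),\om(Y_+)[l])=0$ for $S$ as above, $Y\in\sv$, and $l\ge 1$.

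This last vanishing is the main obstacle; it is precisely Morel's theorem that the stable $\af^1$-cohomology groups vanish in positive degrees on affine essentially smooth semi-local schemes over an infinite field (proved via Gabber's geometric presentation lemma, the function-field input being exactly Proposition \ref{psht}(3)). The hypothesis that $k$ be infinite enters the argument solely through this step; once it is granted, the two reductions above assemble into the full statement.
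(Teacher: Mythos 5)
Your first reduction (untwisting via Proposition \ref{psht}(4)) is fine and parallels the final step of the paper's own argument, but the remainder of your proposal has a real gap. The detour through Proposition \ref{psht}(5) does not lower the difficulty: after the coproduct/extension-closure bookkeeping you arrive at the ``single claim'' $\sh(\om(S_+),\om(Y_+)[l])=\ns$ for $l\ge 1$, and this is exactly the original untwisted statement specialized to $E=\om(Y_+)$ --- the hardest case. Nothing has been gained, and the compactness argument is an unnecessary detour. You then dispatch this remaining claim by appealing to ``Morel's theorem that the stable $\af^1$-cohomology groups vanish in positive degrees on affine essentially smooth semi-local schemes,'' but no such result is stated in that form in \cite{morintao} (the reference used in \S\ref{ssh}); Morel proves connectivity of $\om(X_+)$ and characterizes the homotopy $t$-structure via function fields, which is precisely what you \emph{already have} as Proposition \ref{psht}(1,3). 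The vanishing on semi-local schemes is the content one must still establish, and leaving it as a vague citation is exactly the hole one needed to fill.

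What the paper actually does is shorter and more honest about the mechanism. After reducing to connected $X$, it observes that Proposition \ref{psh}(\ref{ish3}) makes $X\mapsto E^*_j(X_+)$, $(X,U)\mapsto E^*_j(X/U)$ into a ``cohomology theory with supports'' in the sense of Definition 5.1.1(a) of \cite{suger}, with Nisnevich excision giving axiom COH1 and Proposition \ref{psh}(\ref{ish7}) giving axiom COH3. It then applies Theorem 6.2.1 of \cite{suger} (the Bloch--Ogus--Gabber theorem, whose proof uses Gabber's geometric presentation lemma and is where the infiniteness of $k$ enters) to obtain an injection of $E^{i+j}_j(X_+)$ into the corresponding group at the generic point, which vanishes by Proposition \ref{psht}(3,4). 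So the underlying geometric engine is the one you gesture at, but the correct and precise citation is \cite{suger}, applied directly to the twisted theory for an arbitrary $E\in\sh^{t\le 0}$ --- making both your reduction in step 3 and your appeal to ``Morel's theorem'' in step 4 unnecessary and, as written, insufficient.
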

\begin{proof}
%Let $k$ be infinite??! Ad to the statement??!

%Since $X_0$ is a point in the Nisnevich topology, it suffices to prove the second part of the statement.
Obviously, we can assume that $X$ is connected.

By Proposition \ref{psh}(\ref{ish3}),
 $X\mapsto E_j^*(X_+)$ %(this is relative cohomology for $j>0$ and 'ordinary' one for $j=0$),
 along  with $(X,U)\mapsto E_j^*(X/U)$  yields a {\it cohomology theory with supports} in the sense of Definition 5.1.1(a) of \cite{suger}. The Nisnevich excision for 
 $\om(-_+)$
 %??! $E_j^*$ (see Lemma 4.3.7 of \cite{morintao})
  yields axiom COH1 of loc. cit., whereas Proposition \ref{psh}(\ref{ish7}) 
%the strong homotopy invariance condition immediately
is precisely %??!
 axiom COH3 of (\S5.3 of) ibid. Hence we can apply Theorem 6.2.1 of ibid.; we obtain an injection $E^{i+j}_j(X_+\brj)\to E^{i+j}_j(X_{0+}\brj)$. In order to conclude the proof it suffices to apply Proposition \ref{psht}(3,4).

%over finite fields??!! Exclude; deduce from the generalization below; with restrictions??!! We only need 'field extension transfers' by Theorem 6.2.5 of ibid.??! So it's ok for semi-locals; a separate statement??! Convention over finite fields??!

\end{proof}

\subsection{Primitive (pro)schemes: definition}\label{sprim}

in \S\ref{ssupl} below we will consider the stable motivic homotopy category $\sht$ and its '$\tau$-positive part' $\shtpl$. It turns out that in the latter category one can formulate a connectivity result for a class of (pro)schemes   wider than that of semi-local ones. So, we will need the following definition.

\begin{defi}\label{dprim}

If $k$ is infinite then a pro-scheme will be called primitive if all
of its connected components are affine (essentially smooth) $k$-schemes and their coordinate rings
$R_j$ satisfy the following primitivity criterion: for any $n>0$ %n=1 here???!! {prim}
every polynomial in $R_j[X_1,\dots,X_n]$ whose coefficients generate
$R_j$ as an ideal over itself, represents an $R_j$-unit.

If $k$ is finite, %???!!
then we will  (by an abuse of notation, in this paper) call a pro-scheme primitive
whenever it is semi-local in the sense of  Remark \ref{rsemiloc} (i.e., if all of its connected components are %smooth
affine essentially smooth semi-local). % (in the sense of \S4.4 of \cite{3}).

\end{defi}

\begin{rema}\label{rabprim}

Recall that in the case of infinite $k$ all semi-local $k$-algebras
satisfy the primitivity criterion (see Example 2.1 of
\cite{walker}).
\end{rema}

We will need the following properties of primitive pro-schemes.

\begin{pr}\label{pprim}

1. Assume that a primitive pro-scheme $S$ is the projective limit of some open subvarieties of a $V\in \sv$. Let 
$f:V'\to V$ be a finite morphism of smooth varieties. Then $S\times_V V'$ is primitive also.

2. Any finitely generated module of constant rank over a primitive %or a semi-local??!!
ring is free.
\end{pr}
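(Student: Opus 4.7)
The plan is to treat the two assertions separately, with the core content of each reducing to essentially ring-theoretic facts about the primitivity criterion; I expect to cite \cite{walker} for these.

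For part 1, the case of finite $k$ is immediate from Remark \ref{rsemiloc}: primitive then means semi-local in our sense, and the preimages of finitely many Zariski points of $V$ under a finite morphism $f:V'\to V$ form a finite set of Zariski points of $V'$, so $S\times_V V'$ is again affine essentially smooth semi-local. For infinite $k$, write $R = \mathcal{O}(S)$ and $R' = \mathcal{O}(S\times_V V')$; since $f$ is finite, $R'$ is a finite $R$-algebra. What must then be verified is the purely ring-theoretic claim that a finite extension of a ring satisfying the primitivity criterion again satisfies the criterion. The strategy would be as follows: given a polynomial $q\in R'[X_1,\ldots,X_n]$ whose coefficients $b_1,\ldots,b_\ell$ generate $R'$ as an ideal, fix a finite $R$-generating set $\alpha_1,\ldots,\alpha_s$ of $R'$ and expand each $b_j = \sum_i r_{ij}\alpha_i$ with $r_{ij}\in R$; then the $r_{ij}$ generate $R$ as an ideal, and one assembles from the multiplication action of $q$ on $\bigoplus R\alpha_i$ an auxiliary polynomial $\tilde q \in R[Y_{11},\ldots,Y_{sn}]$ whose coefficients also generate $R$. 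An application of the primitivity of $R$ to $\tilde q$ then produces, after a routine lifting argument, the required $R'$-unit value of $q$.

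For part 2, let $M$ be a finitely generated module of constant rank $n$ over a primitive $R$; $M$ is automatically projective under the standing smoothness assumptions. A standard reduction (see \cite{walker}) shows that freeness of all such $M$ follows from the completability of every unimodular row over $R$ to an invertible matrix, equivalently from the reducibility of every unimodular row to $(1,0,\ldots,0)$ by elementary operations. The latter is a direct consequence of the primitivity criterion: given unimodular $(a_1,\ldots,a_m)\in R^m$, the polynomial $p(X_2,\ldots,X_m) = a_1 + a_2 X_2 + \ldots + a_m X_m$ has coefficients $a_1,\ldots,a_m$ generating $R$ as an ideal, so primitivity produces $c_2,\ldots,c_m\in R$ with $p(c_2,\ldots,c_m)\in R^\times$; the elementary operation $a_1\mapsto a_1+\sum_{i\geq 2}c_i a_i$ makes the first entry a unit, after which the remaining entries can be cleared in the usual way. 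Iterating and peeling off free summands yields $M\cong R^n$.

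The main obstacle is the finite-extension-preserves-primitivity assertion in part 1; once the auxiliary polynomial $\tilde q$ is set up correctly, the argument is bookkeeping, but choosing $\tilde q$ so that its coefficients genuinely generate $R$ (and not merely some proper ideal) is the delicate step. Part 2 unpacks essentially directly from the criterion.
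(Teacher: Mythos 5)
Your overall framing is reasonable, but the paper's actual proof is nothing like what you attempt: it is a one-line citation. For finite $k$ it reduces to the semi-local case via Remark \ref{rsemiloc} (as you correctly note for part 1), and for infinite $k$ it simply invokes Theorems 4.6 and 2.4 of \cite{walker}, which assert precisely these two statements. So you are not giving an alternative route to the paper's proof -- you are attempting to \emph{reprove} Walker's theorems, and that is where the problems lie.

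For part 1 there is a genuine gap, and you half-admit it. Your plan rests on the claim that if $b_1,\dots,b_\ell$ generate the unit ideal of $R'$ and $b_j=\sum_i r_{ij}\alpha_i$ for $R$-module generators $\alpha_i$ of $R'$, then the $r_{ij}$ generate the unit ideal of $R$. This is not a bookkeeping step; it is close to the whole content. The implication $(r_{ij})R'=R' \Rightarrow (r_{ij})=R$ holds when $R\to R'$ is faithfully flat, but a finite morphism of smooth varieties need not be flat (the dimensions can drop -- e.g.\ a closed embedding), so one cannot argue this way in general. And even granting the ideal claim, you have not explained how the values of the auxiliary polynomial $\tilde q$ in $R$ produce an $R'$-unit value of $q$; that transfer is nontrivial (a unit of $R$ sitting in $R'$ need not be relevant, and a unit of $R'$ is what one needs). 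The closed-embedding subcase can be handled directly by lifting coefficients and padding with generators of the kernel ideal in extra variables, but the general finite case requires the actual argument of Walker's Theorem 4.6.

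For part 2, the unimodular-row step is correct as you state it: given unimodular $(a_1,\dots,a_m)$, apply the primitivity criterion to $a_1+a_2X_2+\cdots+a_mX_m$ to make the first entry a unit and then clear the rest. But the reduction you label ``standard'' -- from ``every finitely generated projective of constant rank is free'' to ``unimodular rows complete'' -- is not standard and is in fact the substantive half. Unimodular-row completion gives that \emph{stably free} modules of positive rank are free; it does not by itself produce a unimodular element in an arbitrary projective of constant rank, which is what one needs to peel off free summands. Producing that unimodular element (via a polynomial built from an idempotent presenting $M$) is exactly what Walker's Theorem 2.4 supplies. As written, your argument silently cites Walker for the hard part and proves only the easy part, so it is not independent of the citation the paper already makes.

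In short: the paper's proof is a clean reference to \cite{walker}; your proposal replaces that reference with a partial re-derivation that leaves the crux of both assertions unestablished. If you want a self-contained argument, the place to invest is (i) proving the ideal-generation lemma in part 1 in a way that does not assume flatness, and (ii) actually constructing the unimodular element in part 2 rather than deferring to a ``standard reduction.''
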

\begin{proof}
Essentially, we have already noted (in Remark \ref{rsemiloc}) that these statements are valid if $k$ is finite (since in this case primitive pro-schemes are just semi-local ones by our convention). In the case where $k$ is infinite (so that our notion of primitivity coincides with the one used by Walker) these assertions are given by 
 Theorems 4.6 and 2.4 of \cite{walker}, respectively.
\end{proof}

\section{Weight structures: reminder  and %a new existence theorem
the case of cocompactly cogenerated categories}\label{sws}

In this section we recall the formalism of weight structures and their relation to ({\it orthogonal}) $t$-structures. The only %(somewhat)
 new results of this section are Theorem \ref{tnews}(II,III) % on the properties of weight structures in cocompactly cogenerated triangulated categories, 
 and Corollary \ref{cwss}. The author suspects that the dual to Theorem \ref{tnews} (on weight structures in  compactly %weakly??!
 generated triangulated categories) can  have interesting applications beyond the scope of the current paper.
%A reader that is not interested in cohomology (of motivic spectra and pro-spectra) could restrict himself to subsections \ref{swr}--\ref{snews}.

%In \S\ref{sws}?? we recall more complicated topics:

\subsection{%Weight %and $t$-
Basic definitions and properties}\label{swr}

\begin{defi}\label{dwstr}

I For a triangulated category $\cu$, a pair of classes $\cu_{w\le 0},\cu_{w\ge 0}\subset\obj \cu$ %(of {\it $w$-negative} and {\it $w$-positive} objects, respectively)
will be said to define a weight
structure $w$ for $\cu$ if 
they  satisfy the following conditions:

(i) $\cu_{w\ge 0},\cu_{w\le 0}$ are %additive and
 Karoubi-closed in $\cu$
(i.e., contain all $\cu$-retracts of their objects).

(ii) {\bf Semi-invariance with respect to translations.}

$\cu_{w\le 0}\subset \cu_{w\le 0}[1]$, $\cu_{w\ge 0}[1]\subset
\cu_{w\ge 0}$.

(iii) {\bf Orthogonality.}

$\cu_{w\le 0}\perp \cu_{w\ge 0}[1]$.

(iv) {\bf Weight decompositions}.

 For any $M\in\obj \cu$ there
exists a distinguished triangle
\begin{equation}\label{wd}
B\to M\to A\stackrel{f}{\to} B[1]
\end{equation} 
%\end{equation} 
such that $A\in \cu_{w\ge 0}[1],\  B\in \cu_{w\le 0}$.

II The full category $\hw\subset \cu$ whose object class is
$\cu_{w=0}=\cu_{w\ge 0}\cap \cu_{w\le 0}$ %, $\hw(Z,T)=\cu(Z,T)$ for $Z,T\in \cu_{w=0}$,
 will be called the {\it heart} of %the weight structure
$w$.

%Also, for $p\in \z$ we will denote $\cu^{w=0}[-p]$ by $\cu^{w=p}$.

III $\cu_{w\ge i}$ (resp. $\cu_{w\le i}$, resp.
$\cu_{w= i}$) will denote $\cu_{w\ge
0}[i]$ (resp. $\cu_{w\le 0}[i]$, resp. $\cu_{w= 0}[i]$).

IV We denote $\cu_{w\ge i}\cap \cu_{w\le j}$
by $\cu_{[i,j]}$ (so it equals $\ns$ for $i>j$).

V We will  %say that $(\cu,w)$ is {\it  bounded}  if
call
$\cu^b=(\cup_{i\in \z} \cu_{w\le i})\cap (\cup_{i\in \z} \cu_{w\ge i})$ the class of {\it bounded} %from ??!!
objects of $\cu$. We will say that $w$ is bounded if $\cu^b=\obj \cu$.

Besides, we will call $\cup_{i\in \z} \cu_{w\le i}$ the class of {\it bounded above} %from ??!!
objects. %triangulated subcategories (in {pbw})??!

VI $w$ will be called %{\it non-degenerate} if $\cap_l \cu^{w\ge l}=\cap_l \cu^{w\le l}=\ns.$ %
{\it non-degenerate from below (resp. from above)} if $\cap_{l\in z} \cu_{w\le l}=\ns$ (resp. $\cap_{l\in \z} \cu_{w\ge l}=\ns$).
%for w: from below? ok since $\shc$ generates $\gd$; The author suspects that $w$ is also bounded from above. 

\begin{comment}
VII Let $\cu$ and $\cu'$ 
be triangulated categories endowed with
weight structures $w$ and
 $w'$, respectively; let $F:\cu\to \cu'$ be an exact functor.

$F$ will be called {\it left weight-exact} %??!!!!
(with respect to $w,w'$) if it maps
$\cu_{w\le 0}$ into $\cu'_{w'\le 0}$; it will be called {\it right weight-exact} if it
maps $\cu_{w\ge 0}$ to $\cu'_{w'\ge 0}$. $F$ is called {\it weight-exact}
if it is both left %weight-exact
and right weight-exact.
\end{comment}

%\begin{comment}
VII Let $C$ be a %strict
full additive subcategory of a triangulated $\cu$.

We will say that $C$ is {\it negative} if
 $\obj C\perp (\cup_{i>0}\obj (C[i]))$.

VIII We will call a Postnikov tower for $M$
(see Definition \ref{dpoto})
 a {\it weight Postnikov tower} if
all $Y_j$ are some choices for $w_{\le j}M$.
In this case we will call the corresponding complex whose terms are $M^p$
(see Remark \ref{rwcomp})
a {\it weight complex} for $M$.

We will call a weight Postnikov tower for $M$ {\it positive} if %$M\in \cu^{w\le 0}$ and we choose $w_{\ge j}M$ to be $0$ for all $j>0$ here.
we take $w_{\le r}M=0$ for all $r<0$ (and so, $M\in \cu_{w\ge 0}$).

\end{defi}

\begin{rema}\label{rstws}

1. A weight decomposition (of any $M\in \obj\cu$) is (almost) never canonical;
still we will sometimes denote (any choice of) a pair $(B,A)$ coming from in (\ref{wd}) by $(w_{\le 0}M,w_{\ge 1}M)$. 
For an $l\in \z$ we denote by $w_{\le l}M$ (resp. $w_{\ge l}M$) a choice of  $w_{\le 0}(M[-l])[l]$ (resp. of $w_{\ge 1}(M[1-l])[l-1]$).

 %??! We will call (any choices of) $(w_{\le l}M,w_{\ge l}M)$  {\it weight truncations} of $M$.

2. A  simple (and yet  useful) example of a weight structure comes from the stupid
filtration on the homotopy categories of cohomological complexes
$K(B)$ for an arbitrary additive  $B$. %; we will call it the 'stupid' weight structure.
In this case
$K(B)_{w\le 0}$ (resp. $K(B)_{w\ge 0}$) will be the class of complexes that are
homotopy equivalent to complexes
 concentrated in degrees $\ge 0$ (resp. $\le 0$).  The heart of this weight structure %(either for $K(A)$ or for $K^b(A)$)
is the Karoubi-closure  of $B$ in $K(B)$.
 in the corresponding category.  %We will use the notation $K(A)_{[i,j]}$ below following Definition \ref{dwstr}(IV).

3. In the current paper we use the 'homological convention' for weight structures; 
it was previously used in \cite{hebpo}, \cite{wildic},  and %\cite{bzp}, %
\cite{brelmot},
  whereas in 
\cite{bws} and in \cite{bger} the 'cohomological convention' was used. In the latter convention 
the roles of $\cu_{w\le 0}$ and $\cu_{w\ge 0}$ are interchanged, i.e., one considers   $\cu^{w\le 0}=\cu_{w\ge 0}$ and $\cu^{w\ge 0}=\cu_{w\le 0}$. So,  a complex $M\in \obj K(B)$ whose only non-zero term is the fifth one %(i.e. $X^5\neq 0$; cf. Definition \ref{dwstr}(5) below)??!
 has weight $-5$ in the homological convention, and has weight $5$ in the cohomological convention. Thus the conventions differ by 'signs of weights'; %whereas
 $K(B)_{[i,j]}$ is the class of retracts of complexes concentrated in degrees $[-j,-i]$. %, %nado??!!
%$K(B)^{[i,j]}$ consists of retracts of complexes concentrated in degrees $[i,j]$ (we will use this notation below). %??!!
  
   %The homological convention is not very natural for the case $C=K(B)$; in particular, we will denote $K(B)^{[i,j]}$ 
 
4. In \cite{bws} the axioms of a weight structure also %contained the additivity of
required $\cu_{w\le 0}$ and $\cu_{w\ge 0}$ to be additive. Yet it is not necessary to include this property into the axioms since it follows from the remaining ones; see Proposition 1.3.3(1,2) of ibid.

5. Also, in the current paper we shift the numeration for $Y_i$ (in the definition of weight Postnikov tower) by $[1]$ if compared with \cite{bger}.  
 
\end{rema}

Now  we recall some basic %those 
properties of weight structures. % that will be needed below (and that can be easily formulated).
%We will not mention more complicated matters (weight spectral sequences and virtual $t$-truncations) %nado?? and $K_0$??

\begin{pr} \label{pbw}
Let $\cu$ be a triangulated category endowed with a weight structure $w$, $M\in \obj \cu$, $i\in \z$. Then the following statements are valid.

\begin{enumerate}

\item \label{idual}
The axiomatics of weight structures is self-dual, i.e., for $\du=\cu^{op}$
(so $\obj\cu=\obj\du$) there exists the (opposite)  weight
structure $w'$ for which $\du_{w'\le 0}=\cu_{w\ge 0}$ and
$\du_{w'\ge 0}=\cu_{w\le 0}$.

\item\label{iextw} %nado??!!
 $\cu_{w\le i}$, $\cu_{w\ge i}$, and $\cu_{w=i}$
are Karoubi-closed and extension-closed in $\cu$ (and so, additive). %??!(see  Definition \ref{dses}(1)). 

Besides, if $M\in \cu_{w\le 0}$, then $w_{\ge 0}M\in \cu_{w=0}$ (for any choice of $w_{\ge 0}M$).

\item \label{isump}
If (in $\cu$) we have a distinguished triangle $A\to B\to C$ for $B\in \cu_{w=0}$, $C\in \cu_{w\ge 1}$, then $A\cong B\bigoplus C[-1]$.

    \item\label{iwpost} For any choice of $w_{\le j}M$ for $j\in \z$
    there exists  a weight Postnikov tower for $M$.
%(see  Definition \ref{d2}(\ref{idwpost})). 
Besides, for any weight Postnikov
tower we have
     $\co(Y_i\to M)\in \cu_{w\ge i+1}$;
    $M^i\in \cu_{w=0}$.

\item\label{iwpostc}
    Conversely, any bounded Postnikov tower (for $M$)
with $M^j\in \cu^{w=0}$ for all $j\in \z$ is a weight Postnikov
    tower for it.

\item\label{iwefun}%For $\cu,w$ as in the previous assertions, 
Weight decompositions are {\it weakly functorial}, i.e., any $\cu$-morphism of objects has a (non-unique) extension to a morphism of (any choices of) their weight decomposition triangles.

 \item\label{iwprod}   $\cu_{w\ge i}$ is closed with respect to
all those small
 products that  exist in $\cu$. %; all $\cu_{w\le i}$ and $\cu_{w=i}$ are additive.

 \item\label{iort}
 $\cu_{w\ge i}=(\cu_{w\le i-1})^{\perp}$.
 
 \item\label{icub} $w$ induces
a bounded  weight structure for $\cu^b$ (i.e., we consider the corresponding full subcategory of $\cu$ and the classes $\cu_{w\le 0}\cap \cu^b$ and $\cu_{w\ge 0}\cap \cu^b$ in it), whose
heart equals $\hw$. 

\item\label{igenlm}

For any $l\le m \in \z$ the class $\cu_{[l,m]}$ is the smallest Karoubi-closed extension-stable subclass of $\obj\cu$
containing $\cup_{l\le j\le m}\cu_{w=j}$.

\item\label{impostp} 
Assume that 
 $M,M'\in \cu_{w\ge 0}$; suppose that
 $f\in\cu(M,M')$ can be extended to a morphism of
 (some of) their positive Postnikov towers  that establishes an isomorphism
$M^0\to M'{}^0$. Assume also that $M'\in \cu_{w=0}$.
Then $f$ yields a projection of $M$ onto $M'$ (i.e., $M'$ is a retract of $M$ via $f$).

\item\label{iwc} $M$ determines its weight complex $t(M)$ up to homotopy equivalence (i.e., up to a $K(\hw)$-isomorphism).
In particular, if $M\in \cu_{w=0}$, then any choice of %a weight complex for it 
$t(M)$ is $K(\hw)$-isomorphic to $\dots\to 0\to M\to 0\to\dots$.

\item\label{iwcex} 
If $M_0\stackrel{f}{\to} M_1\to M_2$ is a distinguished triangle in $\cu$, then $f$ induces some morphism $t(M_0)\to t(M_1)$ (for any possible choices of these complexes) such that a cone of this morphism yields a weight complex for $M_2$.  

\item\label{iwcoh} Let $H_0:\hw^{op}\to\au$ ($\au$ is an arbitrary abelian category) be an additive functor. Choose a weight complex $t(M)=(M^j)$ for each $M\in \obj \cu$, and denote by %$T_{H_0}(M)$ the complex obtained by applying $H_0$ to %it termwisely. $t(M)$ (termwisely). Then the zeroth (or any other) cohomology of $T_{H_0}(-)$
$H(M)$ the zeroth cohomology of the complex $H_0(M^{-j})$. Then $H(-)$ yields a cohomological functor %$H:\cu\to \au$ 
that does not depend on the choices of weight complexes.  

\item\label{isplwc} For $M\in \cu_{w=0}$ consider some (other) positive %??! 
weight Postnikov tower for it; let $t(M)=(M^j)$ denote the corresponding weight complex. Then there exists some  $N^j\in \cu_{w=0},\ j\le 0$, such that 
$t(M)$  is $C(\hw)$-isomorphic to $M^0\bigoplus (\bigoplus_{j\le 0}(N^j\stackrel{\id_{N^j}}{\to}N^j)[-j])$ (i.e., $N^j$ is put in degrees $j-1$ and $j$). 

%Easy in the Karoubian case; from {gs}??!

\item\label{ipostn} For $M,M'\in\obj\cu$  suppose %that
 $f\in \cu(M,M')$ is compatible with an isomorphism $w_{\le i}M\to
w_{\le i}M'$. % (for some $i\in \z$). 
Then $\co f\in \cu_{w\ge i+1}$.

\item\label{ikar1} If $\cu$ is Karoubian, then  $\cu_{w\le i}$, $\cu_{w\ge i}$, $\cu_{w=i}$ also are. In particular, this is the case if $\cu$ is closed with respect to all countable products.

\item\label{ikar2} Assume that $\cu$ and $\cu_{w\le 0}$ are closed with respect to arbitrary small products. Then % arbitrary 
the same is true for all $\cu_{w=j}$,  small products of weight decompositions are weight decompositions, and small products of weight Postnikov towers (resp. of weight complexes for some $M_l\in \obj \cu,\ l\in L$) are weight Postnikov towers (resp. is a weight complex for $\prod_l M_l$).

\item\label{icharge} If $M\in \cu^b$ then $M\in \cu^{\ge i}$ if and only if $\cu^{w=j}\perp M$ for all $j<i$. 

\end{enumerate}
\end{pr}

\begin{proof} %(\ref{}--\ref{}): see Proposition ? of \cite{brelmot}

%Almost all
%All of the 
Assertions \ref{idual}--\ref{impostp} %\ref{iwprod} %expect the last one %check??!
%were essentially stated in \S2.2 of \cite{bger} %except assertion \ref{igluwse}
 are essentially contained in Theorem 2.2.1 of \cite{bger} (whereas their proofs relied on  \cite{bws}; pay attention to Remark \ref{rstws}(3)!). %Some more information can be found in Proposition 1.2.3  of \cite{brelmot} (and in its proof).

\ref{iwc}. The homotopy equivalence of all possible weight complexes is given by Theorem 3.2.2(II) of \cite{bws} (along  with Proposition 3.1.8 of ibid.). It remains to note that for an $M\in \cu_{w=0}$ one obtains $\dots\to 0\to M\to 0\to\dots$ ($M$ is on the $0$-th position) as a weight complex by setting $(w_{\le j}M=) Y_j=M$ for $j\ge 0$ and $Y_j=0$ otherwise.

\ref{iwcex}. Immediate from Theorem 3.3.1(I) of ibid.

\ref{iwcoh}. Immediate from two previous assertions
 (see also Remark 3.1.7(2) of ibid.).

%\ref{iwcoh}. Immediate from the combination of Theorem 3.3.1(I) and Remark 3.1.7(2) of ibid.

\ref{isplwc}. %The previous 
Assertion \ref{iwc} yields that that $t(M)$ is $K(\hw)$-isomorphic to the complex $\dots\to 0\to M\to 0\to\dots$. Hence the corresponding complex $M'=\dots M^{-2}\to M^{-1}\to M^{0}\to M\to 0\to\dots$ is zero in $K(\hw)$. Thus  
it suffices to verify by induction the following fact: if a bounded above complex $N=\dots\to N^{j-2}\to N^{j-1} \stackrel{d^{i-1}}\to N^{j}\to 0\to\dots$ is zero in $K(\hw)$, then $d^{j-1}$ is a projection of  
$N^{j-1}$ onto a direct summand in $\hw$. Now, %since $N$ is homotopy equivalent to $0$,
  $d^{j-1}$ is split by the corresponding component of (any) contracting homotopy for $N$. Hence $\co d^{j-1}[-1]$ is the complement of $N^j$ to $N^{j+1}$ in $\cu$. Since $\hw$ is Karoubi-closed in $\cu$, we obtain the result.

\ref{ipostn}. Note first that the octahedral axiom of triangulated categories yields that $\co f\cong \co(w_{\ge i+1}M\to
w_{\ge i+1}M')$. Hence the result follows from the extension-closedness of $\cu_{w\ge i+1}$ (see assertion \ref{iextw}).

\ref{ikar1}. Certainly, a Karoubi-closed class of objects in a Karoubian category is Karoubian. It remains to note that $\cu$ is Karoubian if it is closed with respect to countable products; this is %the dual to 
Remark 1.6.9 of \cite{neebook}.

\ref{ikar2}. The first part of the assertion immediately follows from assertion \ref{iwprod}. In order to establish the second one it suffices to note that products of distinguished triangles are distinguished; see Remark 1.2.2 of \cite{neebook}.
The third part of the assertion follows immediately.

\ref{icharge}. Certainly, for any $M\in \cu_{\ge i}$ we have $\cu_{w=j}\perp M$ for all $j<i$ (by the orthogonality axiom of weight structure).
So, we should prove the converse application for a bounded $M$. By assertion \ref{icub} we can assume that $w$ is bounded. 
%Next, assertion \ref{iort} yields: it suffices to prove that $N\perp M$ for any (bounded)  $N\in \cu_{w\le i-1}$. We fix such an $N$, and choose a bounded Postnikov tower for it with 
Next, assertion \ref{iort} yields that it suffices to prove that $\cu_{[l,i-1]}\perp M$ for any $l<i$. Hence assertion \ref{igenlm} yields the result.

\end{proof}

\begin{rema}\label{rwgen} 

1. Certainly, in assertion \ref{isplwc}  the corresponding $N^i$ is an $\hw$-direct summand both of $M^i$ and of $M^{i-1}$ (for any $i\in \z$). 

2. A very useful statement (that was %already
 applied in several papers)
is that any negative %??!
generating %Karobi-closed 
subcategory  $N\subset \cu$ (here we assume that $\cu$ does not contain any proper strict triangulated subcategories that contain $N$) yields a weight structure $w$ for $\cu$  such that $N\subset \hw$. Yet this is rather a tool for constructing bounded weight structures for 'small' triangulated categories; we will have to prove an alternative existence statement below. 

3. Actually, one does not have to pass to the Karoubi-closure in assertion \ref{icub}. Besides, assertion \ref{icharge} is valid for any $M\in \cup_{j\in z}\cu_{w\ge j}$. In the proof we could have considered weight Postnikov towers (for objects of $(\cup_{j\in z}\cu_{w\ge j})\cap \cu_{w\le i-1}$).
\end{rema}

\subsection{Weight structures on cocompactly cogenerated %triangulated 
categories}\label{snews}

%The 'existence of weight structures' results of this subsection were
 %essentially

This subsection is mostly dedicated to (the proof of) 
Theorem \ref{tnews}. 
 Part I of it was essentially proved in \cite{paukcomp} (in the dual form); the dual to this statement %s to all of them 
can also be found in \cite{brmz}. Yet other parts of %Theorem \ref{tnews}
the Theorem are new. %??!

Our construction of weight structures requires countable 'triangulated' homotopy limits (in general triangulated categories). The latter are dual to countable homotopy colimits  introduced in \cite{bockne} (so in order to prove basic properties of the construction we dualize the corresponding results of ibid. and of \cite{neebook}). We will only apply the results of this subsection to triangulated categories closed
with respect to arbitrary small products; so %we will not mention this restriction below.
the homotopy limits will always exist.

\begin{defi}\label{dcoulim}%[]

%Suppose that we have 
For a sequence of objects $Y_i$ (starting from some
$j\in\z$)  and maps $\phi_i:Y_{i}\to Y_{i-1}$, $D=\prod Y_i$,  %Let there exist $D=\coprod Y_i$  in $\cu$. 
we consider the morphism $d:\prod
\id_{Y_i}+ \prod (-\phi_i): D\to D$. \begin{comment} (we can define it since
%its $i$-th 
components 
 can be  factorized %through the projection
 as the corresponding compositions
$D\to Y_i\bigoplus Y_{i+1}\to Y_i$).\end{comment}
 Denote  a cone of $d[-1]$ by $Y$. We will write $Y=\prli Y_i$ and
 call $Y$ the {\it homotopy limit} %triangulated??!!
  of $Y_i$. %; we will not consider any other homotopy limits in this paper??!!

 %We will say that the colimit exists (in $\cu$) if the coproduct $D$ exists.
\end{defi}

\begin{rema}\label{rcoulim}

1. Note that these homotopy limits are not really canonical and functorial in $Y_i$ since the choice of a cone is not canonical, i.e., limits are only defined up to non-canonical isomorphisms.

2. By Lemma 1.7.1 of \cite{neebook}, the homotopy limit of
$Y_{i_j}$ is the same for any subsequence of $Y_i$. In particular,
we can discard any (finite) number of first terms in $Y_i$.

3. By Lemma 1.6.6 of \cite{neebook}, the homotopy limit of
$M\stackrel{\id_M}{\leftarrow}M\stackrel{\id_M}{\leftarrow}
M\stackrel{\id_M}{\leftarrow} M\stackrel{\id_M}{\leftarrow}\dots$ is $M$. Hence
%we obtain: 
if  $\phi_i$ are
isomorphisms for $i\gg 0$ and $M_i\cong M$ then  $\prli M_i\cong M$.

\end{rema}

We also recall the behaviour of homotopy limits  under (co)representable
functors.

\begin{lem}\label{coulim} %@@!!
1.  For any $C\in\obj\cu$ we have a natural surjection $
\cu(C,Y)\to \prli \cu(C,Y_i)$.

%[1]?!
2. This map is bijective if all $\phi_i[-1]_*:
\cu(C,Y_{i+1}[-1])\to \cu(C,Y_i[-1])$  are surjective for %all 
$i\gg
0$.

3. If $C$ is cocompact then $\cu(Y,C)= \inli \cu(Y_i,C)$.
\end{lem}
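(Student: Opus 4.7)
The plan is to apply the corepresentable functor $\cu(C,-)$ (for parts 1 and 2) and the representable functor $\cu(-,C)$ (for part 3) to the distinguished triangle
$$Y \to D \stackrel{d}{\to} D \to Y[1]$$
obtained by rotating the defining triangle of $Y$. Here $d = \id_D - \phi$, where $\phi:D\to D$ is the map whose $j$-th coordinate is $\phi_{j+1}\circ p_{j+1}$ (realizing $\prod\phi_i$ as an endomorphism of $D=\prod Y_i$ via reindexing).

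For parts 1 and 2, applying $\cu(C,-)$ and using that this functor sends products to products gives $\cu(C,D[n])=\prod \cu(C,Y_i[n])$; under this identification $d_\ast$ becomes the Mittag--Leffler map $\id-\phi_\ast$, sending $(f_j)\mapsto(f_j-\phi_{j+1}\circ f_{j+1})$. Its kernel on $\cu(C,D)$ is exactly the inverse limit $\prli\cu(C,Y_i)$ of compatible families, while its cokernel on $\cu(C,D[-1])$ is $\prli^1\cu(C,Y_i[-1])$ by the standard description of the derived limit of a tower indexed by $\mathbb{N}$. The long exact sequence associated to the triangle thus yields a short exact sequence
$$0 \to \prli{}^1\cu(C,Y_i[-1]) \to \cu(C,Y) \to \prli \cu(C,Y_i) \to 0.$$
Part 1 is immediate, and part 2 reduces to the vanishing of $\prli{}^1$ under the assumption that the transition maps in the tower $\cu(C,Y_i[-1])$ are eventually surjective --- the first few non-surjective terms being harmless by Remark \ref{rcoulim}(2).

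For part 3, apply $\cu(-,C)$ instead. Cocompactness of $C$ turns products into direct sums, giving $\cu(D[n],C)=\bigoplus\cu(Y_i[n],C)$, and the induced $d^\ast$ acts on $\bigoplus\cu(Y_i,C)$ by $(g_j)\mapsto(g_j-g_{j-1}\circ\phi_j)$ on finitely supported tuples. Its cokernel is immediately the sequential colimit $\inli\cu(Y_i,C)$ along the transition maps $\phi_i^\ast$, since the relations $g_{j-1}\sim g_{j-1}\circ\phi_j$ imposed are precisely those defining that direct limit. The crucial additional point, absent in the product setting, is that $\id-\phi^\ast$ is \emph{injective} on such direct sums: if $(g_j)$ is nonzero with finite support and $j_{\min}$ is its smallest index, the $j_{\min}$-coordinate of $(\id-\phi^\ast)(g_j)$ equals $g_{j_{\min}}\neq 0$. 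The same applies to $\cu(D[-1],C)$, so the analogous short exact sequence collapses to $\cu(Y,C)\cong \inli\cu(Y_i,C)$. The argument is otherwise routine; the injectivity of $\id-\phi^\ast$ on the direct sum (which would fail for arbitrary products) is the one place where genuine care is required, and explains why the cocompactness hypothesis enters in an essential way.
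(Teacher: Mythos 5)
Your proposal is correct, and it is essentially the standard argument that the paper's own treatment delegates to the references: the text introducing Lemma \ref{coulim} explicitly says these basic properties of homotopy limits are obtained by dualizing the corresponding homotopy-colimit results of \cite{bockne} and \cite{neebook}, and your computation is exactly that verification written out. The Milnor exact sequence in parts~1--2 and the collapse of the long exact sequence via injectivity of $\id-\phi^*$ on the direct sum in part~3 are both right.

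One small imprecision worth noting: you write that the injectivity of $\id-\phi^*$ ``would fail for arbitrary products,'' but in fact the same lowest-index argument gives injectivity on the product as well, since the tower is bounded below (it starts at some fixed $j\in\z$ by Definition \ref{dcoulim}). The genuine role of cocompactness in part~3 is not to make $d^*$ injective but to identify $\cu(D,C)$ with $\bigoplus_i\cu(Y_i,C)$ in the first place --- without it there is no useful description of $\cu(\prod Y_i,C)$ at all. This does not affect the correctness of your proof, only the explanation of where the hypothesis enters.
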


Below we will also need the following new (simple) piece of homological algebra: the  definition of a coenvelope and some of its properties. % of this notion.

%, and a statement that mentions it.

\begin{defi}\label{dcoe}

For a class $C'\subset \obj\cu$
its {\it coenvelope} is the smallest subclass of $\obj\cu$ that contains $C'\cup\ns$, is closed with respect to arbitrary (small) products,
 and satisfies the following property: 
for any $\phi_i:Y_{i}\to Y_{i-1}$ such that $Y_0\in C$, $\co(\phi_i)[-1]\in C$ for all $i\ge 1$, we have $\prli_{i\ge 0} Y_i\in C$ (i.e., $C$ contains all possible cones of the corresponding distinguished triangle; note that those are isomorphic).
\end{defi}

Now let us prove %certain properties of these notions. %; below we will %only apply 
%mostly need assertion \ref{isesperp}. %??!!
a simple property of this notion.

\begin{lem}\label{lbes}

Suppose that for $C',D\subset \obj \cu$, %all objects of $D$ are compact, 
we have %compact??!! 
$ D\perp C'$. Then for the %strongly extension-closed envelope
 coenvelope $C$ of $C'$ we also have $D\perp C$.

\end{lem}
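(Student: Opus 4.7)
The plan is to show that the class $C'' = \{Y \in \obj \cu : \cu(X, Y) = \ns \text{ for all } X \in D\}$ contains $C$; since $C$ is by definition the smallest subclass of $\obj \cu$ containing $C' \cup \ns$ that is stable under arbitrary small products and under the cone-hypothesis homotopy-limit operation of Definition \ref{dcoe}, it suffices to verify that $C''$ itself enjoys those two stability properties. The inclusion $C' \cup \ns \subseteq C''$ is immediate from the hypothesis $D \perp C'$ and the triviality $\cu(X, 0) = \ns$, while stability under products follows from the universal property $\cu(X, \prod_\alpha Y_\alpha) = \prod_\alpha \cu(X, Y_\alpha)$.

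For the homotopy-limit step, suppose that $\phi_i : Y_i \to Y_{i-1}$ is given with $Y_0 \in C''$ and $\co(\phi_i)[-1] \in C''$ for every $i \geq 1$, and fix $X \in D$. Applying $\cu(X, -)$ to the distinguished triangle $\co(\phi_i)[-1] \to Y_i \to Y_{i-1}$ and using the vanishing $\cu(X, \co(\phi_i)[-1]) = \ns$, a straightforward induction on $i$ yields $\cu(X, Y_i) = \ns$ for every $i \geq 0$, so $\prli_i \cu(X, Y_i) = 0$. The same long exact sequence shifted by $[-1]$ places this vanishing immediately after $\cu(X, Y_{i-1}[-1])$, which forces each transition map $\phi_i[-1]_*$ to be surjective. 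By Lemma \ref{coulim}(2), the natural map $\cu(X, \prli Y_i) \to \prli_i \cu(X, Y_i)$ is then a bijection; as its target is zero, we obtain $\cu(X, \prli Y_i) = \ns$, i.e., $\prli Y_i \in C''$, completing the verification.

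The only genuinely subtle point is the control of the $\varprojlim^1$ contribution that is embodied in the surjectivity hypothesis of Lemma \ref{coulim}(2); since $D \perp C'$ is only a single-degree statement, one has no direct handle on $\cu(X, Y_i[-1])$. The saving observation is that the very input $\co(\phi_i)[-1] \in C''$ driving the induction for $\cu(X, Y_i) = \ns$ simultaneously delivers the required surjectivity one degree below, so no auxiliary orthogonality like $D[1] \perp C'$ (which is not granted) needs to be invoked.
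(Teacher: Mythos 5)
Your proof is correct and takes essentially the same route as the paper: reduce to a single $X\in D$ via the product universal property, then verify closure under the homotopy-limit step by applying $\cu(X,-)$ to the triangle $\co(\phi_i)[-1]\to Y_i\to Y_{i-1}$, observing that the vanishing $\cu(X,\co(\phi_i)[-1])=\ns$ simultaneously drives the induction giving $\cu(X,Y_i)=\ns$ and the surjectivity of the degree-$[-1]$ transition maps required by Lemma~\ref{coulim}(2). Your explicit framing via the class $C''$ and your remark highlighting that the single orthogonality hypothesis controls the $\varprojlim^1$ obstruction are welcome expository additions, but the mathematical content matches the paper's argument exactly.
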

\begin{proof}

%\ref{isesperp}.
 Since for any $d\in D$ the functor $\cu(d,-)$ converts arbitrary products into products,
%Obviously, 
it suffices to verify  (for any $d\in D$) the following statement: if for $Y_i$ as in
 Definition \ref{dcoe} %??(2) 
 we have $d\perp Y_0 $, $d\perp \co(\phi_i)[-1]$ for all $i\ge 1$, then $p\perp \prli Y_i$.
 Now, for any $i\ge 1$ we have a long exact sequence
 %\begin{equation}\label{} \end{equation}
 $$\dots \to \cu(d, Y_{i}[-1]) \to \cu(d,Y_{i-1}[-1])  \to \cu(d,\co(\phi_i)[-1]) (=\ns) \to \cu(d, Y_{i}) \to \cu(d,Y_{i-1})\to \dots  $$
 Hence $\cu(d,Y_{i}[-1])$ surjects onto $  \cu(d,Y_{i-1}[-1])$, whereas the obvious induction yields that $\cu(d,Y_j)=\ns$ for any $j\ge 0$. Thus Lemma \ref{coulim}(2) yields that  $\cu(d,\prli Y_i)\cong \prli \cu(d,Y_i)=\ns$. 

\end{proof}

\begin{theo}\label{tnews}

Let $\cu$ be triangulated category that is closed with respect to all small products;
let $C'\subset \cu$ be a %(proper!) small?! 
 set of cocompact objects such that $C'[1]\subset C'$. 
Then for the classes $C_1={}^{\perp}(C'[1])$ and 
$C_2$ being the coenvelope of $C'$ the following statements are valid.

I They yield a weight structure on $\cu$, i.e., there exists a $w$ such that $\cu_{w\le 0}=C_1$, $\cu_{w\ge 0}=C_2$.

 %we have: there exists a weight structure for $\cu$ such that $\cu^{w\ge 0}=C'$, $\cu^{w\le 0}$ is . 

II
 Denote by $\cu'$ the full subcategory of $\cu$ whose class of objects %set
  is the coenvelope of $\cup_{i\le 0}C'[i]$, $\cuperp$ is  
the subcategory whose objects are ${}^\perp \lan C'\ra$. %Then the following statements are valid.

1. $\cupr$ and $\cuperp$ are triangulated and closed with respect to all small products.

2. $C_1'=C_1\cap \obj \cupr$ and $C_2\subset \obj \cupr$ yield a weight structure $w_{\cupr}$ on $\cupr$.

3. %$C'$ cogenerates $\cupr$, and
 $w_{\cupr}$ is non-degenerate from below. %??!

 4. $C_1$ is the extension-closure of $C_1'\cup \cuperp$ in $\cu$.

5. $C_1'$ is (also) closed with respect to all products. 

6. The heart of %the weight structure 
$w_{\cu'}$ is closed with respect to all small products; products of weight decompositions in $\cupr$ are weight decompositions;  small products of weight Postnikov towers (resp. of weight complexes $t(M_i)$ for $M_i\in \obj \cupr$) are weight Postnikov towers (resp. %is a weight complex for $\prod M_i$).
yield $t(\prod M_i)$).

7. $\cu$, $\cu'$, $C_1$, $C_1'$, and $C_2$ are Karoubian.
 
III For each $c\in C'$ %and $i\in \z$ 
fix a weight complex $(c^i)$ (with respect to $w_{\cupr}$). Then $\hw_{\cu'}$ is equivalent to the Karoubization of the category of all (small) products of $c^i$ for $c$ running through  all objects $C'$, $i\in \z$.

\end{theo}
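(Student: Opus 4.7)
The plan is to establish Part I by verifying the weight structure axioms directly and constructing weight decompositions via an iterative cone-and-limit procedure; Part II then follows by restricting the construction to $\cupr$; and Part III is deduced by combining weight complexes with the splitting properties of the heart.

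For Part I, axioms (i)--(iii) are elementary: Karoubi-closure of $C_1={}^{\perp}(C'[1])$ is automatic for a left orthogonal; the semi-invariance $C_1[-1]\subset C_1$ reduces to $C'[2]\subset C'[1]$, a consequence of $C'[1]\subset C'$; and $C_2[1]\subset C_2$ holds because shifts commute with products and homotopy limits, so $C_2[1]$ coincides with the coenvelope of $C'[1]$, which is contained in the coenvelope of $C'$. Orthogonality $C_1\perp C_2[1]$ is Lemma \ref{lbes} applied to $D=C_1$ and the class $C'[1]$. For the weight decomposition axiom, I would use the following dual of the compactly-generated construction: given $M$, set $M^{(0)}=M$ and inductively let $M^{(i+1)}$ be the fiber of the canonical morphism $M^{(i)}\to A^{(i)}:=\prod_{c\in C',\,f\in\cu(M^{(i)},c[1])}c[1]$; then set $B=\prli_i M^{(i)}$. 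Cocompactness of objects of $C'[1]$ together with Lemma \ref{coulim}(3) gives $\cu(B,c[1])=\inli_i \cu(M^{(i)},c[1])$, and each component $f:M^{(i)}\to c[1]$ of $M^{(i)}\to A^{(i)}$ becomes zero upon composition with $M^{(i+1)}\to M^{(i)}$; hence $B\in C_1$. For $\co(B\to M)\in C_2[1]$, the octahedral axiom assembles the cofibers $C^{(i)}=\co(M^{(i)}\to M)$ into a tower $0=C^{(0)}\leftarrow C^{(1)}\leftarrow\dots$ whose successive fibers are $A^{(i)}\in C_2[1]$; a Milnor-type argument identifies $\co(B\to M)\cong\prli_i C^{(i)}$, and the latter lies in the coenvelope of $C'[1]=C_2[1]$ by Definition \ref{dcoe}.

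Part II follows by restriction. Both $\cupr$ (via the coenvelope formation) and $\cuperp$ (as a left orthogonal) are triangulated and closed under arbitrary small products. The construction from Part I, applied to $M\in\cupr$, stays inside $\cupr$ because each $A^{(i)}$ is a product of objects of $C'[1]\subset\obj\cupr$ and $B$ arises as a homotopy limit of the coenvelope type. Non-degeneracy from below follows because an $X\in\cap_l\cupr_{w\le l}$ is orthogonal to every $C'[j]$, hence lies in $\cuperp\cap\obj\cupr$; but $\cupr$ is cogenerated (in the coenvelope sense) by $\cup_{i\le 0}C'[i]$, so $X\perp X$ and $X=0$. The extension-closure description (item 4), preservation of products (items 5--6), and Karoubian-ness (item 7, from closure under countable products via Proposition \ref{pbw}(\ref{ikar1}, \ref{ikar2})) follow by combining the general weight-structure formalism with the closure properties of the coenvelope.

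For Part III, each $c^i$ lies in $\hw_{\cu'}$ by Proposition \ref{pbw}(\ref{iwpost}), products of heart objects remain in $\hw_{\cu'}$ by Part II.6, and $\hw_{\cu'}$ is Karoubian by Part II.7; hence the Karoubization of the subcategory spanned by products of the $c^i$ embeds into $\hw_{\cu'}$. Conversely, given $M\in\hw_{\cu'}$, I would consider the universal heart morphism $M\to P:=\prod_{c\in C',\,i\in\z,\,g\in\cu(M,c^i)}c^i$, apply the construction of Part I (together with Part II.6) to produce a weight Postnikov tower whose zeroth layer matches that of $P$, and then invoke Proposition \ref{pbw}(\ref{impostp}) together with the splitting statement Proposition \ref{pbw}(\ref{isplwc}) to conclude that $M$ is a retract of $P$. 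The main obstacle is verifying the identification $\co(B\to M)\cong\prli_i C^{(i)}$ in the weight decomposition: although intuitively clear, it requires a careful compatibility check between the Milnor-type triangle for $\prli$ and the octahedral diagram relating the towers $(M^{(i)})$ and $(C^{(i)})$. A secondary difficulty is the retract argument in Part III, where one must genuinely split the map $M\to P$ — this relies on the cocompactness of $C'$ and on the fact that $M$ lies in the heart (which forces higher-weight contributions to cancel).
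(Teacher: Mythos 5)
Your construction of the weight decomposition in Part~I is the mirror image of the paper's. You build the $\cu_{w\le 0}$-part $B$ as a homotopy limit $\prli M^{(i)}$ and then want to identify $\co(B\to M)$ with the homotopy limit of the cofibre tower $C^{(i)}$; the paper instead constructs the cofibre side directly. Explicitly, it sets $P_k=\prod_{(c,f):\,c\in C',f\in \cu(M_k,c)}c$, passes to $A_k=\co(M_k\to M)$, forms $A=\prli A_k$ (which is then in $C_2$ by the very definition of the coenvelope), lifts the morphisms $M\to A_k$ to a single $a:M\to A$ via Lemma~\ref{coulim}(1), and finally defines $B$ as the fibre of $a$ and checks $B\perp C'$ by cocompactness. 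This completely sidesteps the identification $\co(B\to M)\cong\prli C^{(i)}$ that you correctly single out as the main obstacle: that identification is indeed provable (by a $3\times 3$-type argument, making compatible choices in the two Milnor triangles), but it is genuine work, and the paper's ordering of the construction makes it unnecessary. Your treatment of II.2 is, on the other hand, arguably cleaner than the paper's: you observe that the iterative construction applied to $M\in\obj\cupr$ never leaves $\cupr$ (products of objects of $C'[1]$ lie in $\cupr$, and $\cupr$ is triangulated and closed under products, hence under homotopy limits), whereas the paper invokes the dual Brown representability theorem to produce a left adjoint $F:\cu\to\cupr$ and then pushes the $\cu$-decomposition forward. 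Both are correct; yours is more elementary for this one item.

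The real gaps are in II.4 and Part~III. For II.4 you write that it ``follows by combining the general weight-structure formalism with the closure properties of the coenvelope,'' but to decompose an arbitrary $N\in C_1$ as an extension of an object of $\cuperp$ by one of $C_1'$ you need a projection of $N$ onto $\cupr$, and this is exactly what the adjoint $F$ from the Brown representability argument provides (the adjunction unit gives the triangle $N'\to N\to F(N)$ with $N'\in\obj\cuperp$ and $F(N)\in C_1'$); since your version of II.2 avoids constructing $F$, you have no replacement for it here, and extension-closedness of $C_1$ alone will not produce the required decomposition. For Part~III your plan does not work as stated: you want to show $M\in\hw_{\cupr}$ is a retract of $P=\prod_{c,i,g}c^i$ via Proposition~\ref{pbw}(\ref{impostp}), but that lemma requires the morphism to induce an \emph{isomorphism} on zeroth terms of positive weight Postnikov towers, and for $M$ in the heart any weight complex of $M$ is homotopy equivalent to $M[0]$, so the zeroth term is $M$, not $P$; the universal map $M\to P$ is nowhere close to inducing an isomorphism there. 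What the paper actually does is prove that \emph{every} object of $\cupr$ admits a weight complex all of whose terms are products of the $c^i$: the class of such objects contains $C'$, is triangulated by Proposition~\ref{pbw}(\ref{iwcex}) (a cone of two complexes with ``product'' terms again has product terms), and is closed under small products by II.6; hence it is all of $\cupr$. Then for $M\in\hw_{\cupr}$ one picks such a weight complex $(M^j)$, and Lemma~\ref{lrcomp} together with Proposition~\ref{pbw}(\ref{isplwc}) exhibit $M$ as a retract of $M^0$, which is a product of the $c^i$. That ``the subcategory with nice weight complexes is all of $\cupr$'' step is the key idea you are missing.
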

\begin{proof}

I Obviously, $(C_1,C_2)$ are Karoubi-closed in $\cu$, $C_1\subset C_1[1]$, $C_2[1]\subset C_2$.
Besides, $C_1\perp C_2[1]$ by Lemma \ref{lbes}. %(\ref{isesperp}).

It remains to verify that any $M\in \obj \cu$ possesses a weight decomposition with respect to $(C_1,C_2)$. We apply (a certain modification of) the method used in the proof of Theorem 4.5.2(I) of \cite{bws} (cf. also the construction of  crude cellular towers in \S I.3.2 of \cite{marg}). %crude cellular approximation??!!

  We construct a certain sequence of $M_k$ for $k\ge 0$ by induction on $k$ starting
from $M_0=M$. Assume that $M_k$ (for some $k\ge 0$) is already constructed. Then we take $P_k=\prod_{(c,f):\,c\in C',f\in \cu(c,M_k)}c$ and  $M_{k+1}[1]$ being a cone of the morphism $\prod_{(c,f):\,c\in C',f\in \cu(c,M_k)}f:M_k\to P_k$.

Let us 'assemble' $P_k$. 
The compositions of the morphisms $h_k:M_{k+1}\to M_{k}$ given by this construction yields morphisms $g_i:M_i\to M$ for all $i\ge 0$. Besides, the octahedral axiom of triangulated categories %easily
immediately yields that %: there exist morphisms 
$\co (h_k)\cong P_k$. Now, we complete $g_k$ to distinguished triangles $M_k\stackrel{g_k}{\to}M \stackrel{a_k}{\to}A_k$.
 The octahedral axiom yields the existence of morphisms $s_i:A_{i+1}\to A_{i}$ that are compatible with $a_i$, such that $\co (a_i)\cong P_{i+1}[1]$ for all $i\ge 0$.

We consider $A=\prli A_k$; by Lemma \ref{coulim}(1) $(a_k)$ lift to a certain morphism $a: M\to A$.
 We complete $a$ to a distinguished triangle $B\stackrel{b}{\to} M\stackrel{g}{\to} A\stackrel{f}{\to} B[1]$.
This triangle will be our candidate for a weight decomposition of $M$.

First we note that $A_0=0$; since $\co (a_i)\cong P_{i+1}[1]$, we have
 $A\in C_2$ by the definition of the latter.

It remains to prove that $B\in C_1[-1]$, i.e.,  $B\perp C'$. 
For a $c\in C'$ we should check that
$\cu(B,c)=\ns$. The long exact sequence  $$\dots  \to \cu(A,c)\to  \cu(M,c)\to \cu(B,c)\to \cu(A[-1], c)\to \cu(M[-1],c)\to\dots $$
translates this into the following: $\cu(c,-)(a)$ is surjective and $\cu(-,c)(a[-1])$ is injective. %bijective??!! surjectivity ok??!!
Now, by Lemma \ref{coulim}(3), $\cu(A,c)\cong\inli \cu(A_i,c)$ and $\cu(A[-1],c)\cong\inli \cu(A_i[-1],c)$. Hence the long exact sequences
$$\dots  \to \cu(A_k,c)\to  \cu(M,c)\to \cu(M_k,c)\to \cu(A_k[-1], c)\to \cu(M[-1],c)\to\dots $$
yield that it suffices to verify that $\inli\cu(M_k,c)=\ns$ (note here that $h_k$ are compatible with $s_k$). 
%(note here that $h_k$ are compatible with $b_k$). 
Lastly, %It remains to note that
 $\cu(P_k,c)$ surjects onto $\cu(M_k,c)$; hence %$\cu(c,-)(h_k)=0$ for any $k\ge 0$.
the group $\cu(M_k,c)$ dies in $\cu(M_{k+1},c)$ for any $k\ge 0$ and we obtain the result.

II 1. Obvious. Note here that the cocompactness of elements of $C'$ yields that $\cuperp$ is closed with respect to products; this also yields assertion II.5.

2. Certainly, in order to verify the existence of $w_{\cu'}$ it suffices to verify that the corresponding weight decompositions exist in $\cu'$. We check it for an $M\in \obj \cupr$ %. We apply 
via applying the duals to several results of
%statements from 
\cite{neebook}.

Theorem 8.3.3 of %\cite{neebook}
% (applied in the dual form) 
ibid. yields that 
$\cupr$ satisfies the dual to the Brown representability condition (see Definition 8.2.1 of ibid.). Indeed, $\cupr$ is 
%the subcategory 
cogenerated by $C'$ in the sense described in \S\ref{snot}, %exactly the {\it colocalizing subcategory} of $\cu$ cogenerated by $C'$ (
 whereas the latter notion is dual to Definition 3.2.9 of ibid..
Hence (the dual to) Theorem 8.4.4 of ibid. yields the existence of  
 an exact functor $F:\cu\to \cupr$ that is left adjoint to the embedding %$\pi$ of 
of $\cupr$ into $\cu$ (the exactness of $F$ is follows from Lemma 5.3.6 of ibid.). %(see Proposition 3.3.3(III) of \cite{bsosn} where the dual to this statement is justified). %So, $F$ is identical on $\cu'$.
%Indeed,  ibid. yields that $\cu^{op}$ satisfied the  %+ $\cupr{}^{op}$ is localizing?!!
%Hence  Propositions 9.1.19 and 9.1.18 of ibid. yields the existence of  $F$.
Certainly, $F$ is isomorphic to the Verdier localization of $\cu$ by $\cuperp$. %why?!
 Since $\cuperp$ is a colocalizing subcategory in $\cu$ (i.e., %satisfies the condition dual to Definition 3.2.6 of ibid.
it is closed with respect to small products), %we also obtain: 
 %Moreover, loc. cit. yields 
  $F$ also respects all products (see Corollary 3.2.11 of ibid).  

Now  apply $F$ to (\ref{wd}). We certainly have $F(M)\cong M$, $F(A)\cong A$. The adjunction for $F$ also yields that $F(B)\in C_1$; hence $F(B)\in C_1'$ and we obtain a weight decomposition for $M$ in $\cupr$.

3. In order to verify that $w_{\cu'}$ is non-degenerate from below we should verify for a non-zero  $M\in \obj \cupr$ %is non-zero then 
that there exist   
$i\in \z$ and $c\in C$ such that $\cu(M,c[i])\neq 0$. %, i.e. that $\obj \cupr\bigoplus \obj \perp =\ns$.
%????!! {tgw}9????!!!!!!!!!!!!!
%Certainly, this also follows from the existence of $F$.
%This yields the equivalence of the two parts of the assertion.
%Now, in order to verify that $C'$ cogenerates $\cupr$ it suffices to apply Theorem A of \cite{krauwg} (see the version of this theorem formulated at the end of ibid.).
The latter is immediate from the fact that %for a fixed $o\in \obj \cu$
 the full subcategory of $\cu$ whose objects are
$\{M[-i],\ i\in\z\}^{\perp}$  is triangulated, strict, and closed with respect to all small products.

%To this end is suffices to note that $F$ is identical on $\cupr$ and kills $\cuperp$; this finishes the proof of II2.

4. %Lastly, 
As we have already noted, $F$ projects $C_1$ onto $C_1'$. Besides, the adjunction transformation yields for any $N\in C_1$ a distinguished triangle $N'\to N\to F(N)$ for some $N'\in \obj \cuperp$. %We obtain assertion II3.
On the other hand, $C_1$ certainly contains $C_1'$ and $\cuperp$, and is extension-closed.

%2. Immediate from the .

6. Immediate from the previous assertion by Proposition \ref{pbw}(\ref{ikar2}).

7. Immediate from Proposition \ref{pbw}(\ref{ikar1}).

  %ibid.?

III As we have just shown, $\hw_{\cupr}$ is Karoubian and closed with respect to all products. Since it also contains all $c^i$, it contains the Karoubization in question. 
  
  Now let $M\in \cupr_{w_{\cupr}=0}$. Lemma \ref{lrcomp}  (along  with Proposition \ref{pbw}(\ref{isplwc})) yields that %there 
  it suffices to verify the existence of a choice for  $t(M)=(M^i) %a weight complex for
  $   such that $M^0$ belongs to the category of products of all $c^i$.
  
  Now consider the full subcategory of $\cupr$ whose objects possess weight complexes all of whose terms are products of certain $c^i$. Obviously, this category contains $C'$. % and is stable with respect to shifts.
   Hence Proposition \ref{pbw}(\ref{iwcex}) yields that it is triangulated. Moreover, assertion II6 yields that it is closed with respect to all small products. Hence this subcategory coincides with all $\cu'$, and so contains $M$. This concludes the proof.

\end{proof}

Below we will need two %(more)
 simple methods of producing a new category with a weight structure out of an old one.

\begin{rema}\label{rrcoeff}

1. Let %$R$ be a commutative (unitial associative) flat %= torsion-free $\z$-algebra.
$S\subset \z$ be a set of prime numbers. Then for any triangulated $\cu$ that is  cocompactly cogenerated  by a set of objects one can consider the (following version) of the category $\cu[S\ob]$. Take the colocalizing subcategory (cf. the proof of part II.2 of the theorem) $\cu_{S-tors}$ cogenerated by cones of $c\stackrel{\times s}{\to} c$ for $c\in \obj \cu,\ s\in S$ (i.e., we consider the smallest triangulated subcategory of $\cu$ that contains these objects and is closed with respect to small products; it is easily seen 
to be cocompactly cogenerated by $\co(c\stackrel{\times s}{\to} c)$ for $c$ being compact); then set $\cu[S\ob]=\cu/\cu_{S-tors}$ (the Verdier localization). 

The dual to this construction was described in detail in Appendix A.2 of \cite{kellyth} (and also in Appendix B of \cite{levconv}). So, the results of ibid. yield that the localization functor $l:\cu\to \cu[S\ob]:\ M\mapsto M[S\ob]$ exists (i.e., the morphism groups in the target are sets); $l$ respects (small) products and cocompact objects (and so, cocompact cogenerators); for a cocompact object $c$ of $\cu$ and any $c'\in \obj \cu$ we have $\cu[S\ob](c'[S\ob],c[S\ob])\cong \cu(c',c)\otimes_\z \z[S\ob]$.

Hence all the statements of Theorem  \ref{tnews} will remain true if replace $\cu$ by $\cu[S\ob]$. Indeed, note that %$C'\subset \obj \cu[S\ob] =\obj \cu$ yields a set of cocompact cogenerators for $\cu[S\ob] $, and 
all our statements and constructions "commute with $l$". 

%We will need this statement below in the case $R=\zoh$.

2. %Now let $\pr_{C'}$ be an idempotent endofunctor on the triangulated category $\lan C'\ra$ such that $\pr_{C'}(M)$ is a retract of $M$ for any $M\in \obj \lan C'\ra$. Then the same Brown (co)representability argument as the one used in the proof of part II2 of ibid. yields: $\pr_{C'}$ extends uniquely %up to a canonical isomorphism??!
%to an idempotent endofunctor $pr':\cupr\to\cupr$ that commutes with all small products. Moreover,  
Now assume that the category $C^0=\lan l(C')\ra_{\cu[S\ob]}$ decomposes into the direct sum of two (triangulated) subcategories $C'^1,C'^2$; denote by $pr'$ the projection of $C^0$ onto %the first component
$C'^1$ (this is an exact functor, and $pr'(M)$ is a retract of $M$ for any $M\in \obj C^0$).  Then the same %Brown (co)representability
 argument as the one used in the proof of part II.2 of ibid. yields the following: $pr'$ extends uniquely to a projection $pr$ of $\cu[S\ob]$ onto %the subcategory $C^1$ of $\cu$ 
  its subcategory $C^1$ that is cogenerated by $C'^1$ (i.e., to the smallest triangulated subcategory of $\cu[S\ob]$ that contains $C'^1$ and is closed with respect to all products). Moreover, since $pr$ yields a retraction of each object of $\cu[S\ob]$, it is $w_{\cu[S\ob]}$-exact; hence $w_{\cu[S\ob]}$ restricts to a weight structure on $C^1$.
 
%Next, assume that a triangulated $\du$ is compactly generated by a subcategory isomorphic to $C^0$, and a duality $\Phi:\cu[S\ob]^{op}\times \du\to \ab$ is obtained by extending $\du$-representable functors from $C^0$ to $\cu[S\ob]$ (cf. Proposition \ref{pdualsh} below). Then this duality factors through the corresponding component $\du^1$ of $\du$.

Lastly, note  that both of these constructions %of this remark 
do not require any 'models'. 

3. Another interesting statement that can be proved in %this 
the general context of Theorem \ref{tnews} is the description of all {\it pure extended} cohomology theories for $\cupr$; cf. \S\ref{spure} below.

4. The author suspects that dualizing Theorem \ref{tnews} would %simplify its 
be useful for its application in settings not related to Gersten weight structures.
Indeed, $\cu^{op}$ is closed with respect to arbitrary small coproducts, whereas $\cupr^{op}$ is compactly generated (and it seems that compactly generated triangulated categories are "more popular'' than cocompactly cogenerated ones).

\end{rema}

\subsection{Weight filtrations and spectral sequences} %, and virtual $t$-truncations} %, and the relation with orthogonal $t$-structures}
\label{swfs}

%(At least) 
Till the end of this section $\cu$ will be endowed with a weight structure $w$; $H:\cu\to \au$ is a cohomological functor. For any $r\in \z$ denote $H\circ [-r]$ by $H^r$.

Let us recall some theory developed in (\S2 of) \cite{bws} and \cite{bger}. %; some more precise references can be found in \S4 of \cite{brelmot}. %??!

\begin{pr}\label{pwss}

1. For any $m\in \z$ the object $(W^{m}H)(M)=\imm (H(w_{\ge m}M)\to H(M))= \ke (H(M)\to H(w_{\le m-1}M))$
does not depend on the choice of $w_{\ge m}M$; it is $\cu$-functorial in $M$.

We call the filtration of $H(M)$ by $(W^{m}H)(M)$ its {\it weight} filtration.

2. For any $M\in \obj \cu$ there is a spectral sequence $T(H,M)$
with $E_1^{pq}=
H^{q}(M^{-p})$. It comes from (any possible) weight Postnikov tower of $M$; so the boundary of $E_1$ is obtained by applying $H^*$ to the corresponding choice of a weight complex for it. 

3. $T$ is (canonical and) naturally functorial in $H$ (with respect to exact functors between the target abelian categories) and in $M$ starting from $E_2$.

$T(H,M)$ converges to $H^{p+q}(M)$ if $M$ is bounded. Moreover, the  filtration step given by ($E_{\infty}^{l,m-l}:$ $l\ge k$)
 on $H^{m}(X)$ equals $(W^k H^{m})(M)$ (for any $k,m\in \z$). 

4. $T(H,M)$  satisfies all the properties %mentioned in 
of the previous assertions (for an arbitrary $M\in \obj \cu$) also in the case where $H$ kills $\cu_{w\le -i}$ and 
$\cu_{w\ge i}$ for some (large enough) $i\in \z$. 

\end{pr}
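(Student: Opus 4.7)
These statements are essentially a careful recollection of results from \S2 of \cite{bws} (see also \cite{bger}), and I would prove them in the order stated, with the bulk of the work in parts 2--3 and a short adaptation argument for part 4.

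For part 1, I would fix $M$ and consider any weight decomposition triangle $w_{\le m-1}M\to M\to w_{\ge m}M\to w_{\le m-1}M[1]$. Applying the cohomological $H$ already yields, for any fixed choice, an exact piece $H(w_{\ge m}M)\to H(M)\to H(w_{\le m-1}M)$; this identifies the image with the kernel at once. To check that this subobject of $H(M)$ does not depend on the choice of decomposition, I would invoke the weak functoriality of weight decompositions (Proposition \ref{pbw}(\ref{iwefun})) applied to $\id_M$: it produces a morphism between any two decomposition triangles covering $\id_M$, whose image under $H$ identifies the two candidate subobjects. The same recipe applied to an arbitrary $f\in\cu(M,M')$ gives the claimed functoriality in $M$.

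For parts 2 and 3, my plan is the standard exact-couple construction. A weight Postnikov tower exists by Proposition \ref{pbw}(\ref{iwpost}); the triangles (\ref{wdeck3}) assemble into an exact couple with $E_1^{pq}=H^q(M^{-p})$, and $d_1$ is computed by applying $H^*$ to the associated weight complex. To obtain functoriality from $E_2$ onwards and independence from the choice of tower, I would invoke Proposition \ref{pbw}(\ref{iwc}, \ref{iwcex}, \ref{iwcoh}): the weight complex is canonical in $K(\hw)$, any $\cu$-morphism lifts to a morphism of weight complexes with the expected cone property, and two such lifts differ by a homotopy and hence induce the same map on cohomology of $H^*\circ t(-)$, which is exactly the $E_2$-page. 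If $M$ is bounded then only finitely many $M^p$ are nonzero, which yields termwise convergence to $H^{p+q}(M)$; the identification of the induced filtration with $W^*H^*$ then reduces to unwinding the definitions (the $Y_j$ are choices of $w_{\le j}M$, so $\co(Y_{k-1}\to M)$ is a choice of $w_{\ge k}M$) and applying part 1.

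The main obstacle, and the only genuinely new input, is part 4. Here I would argue that the assumed vanishing of $H$ on $\cu_{w\le -i}$ and $\cu_{w\ge i}$, together with $\co(Y_j\to M)\in\cu_{w\ge j+1}$ (Proposition \ref{pbw}(\ref{iwpost})) and the triangles (\ref{wdeck3}), forces the natural maps $H^*(Y_j)\to H^*(M)$ to be isomorphisms for $j\gg 0$ and to vanish for $j\ll 0$ in every fixed degree. This yields only finitely many nonzero terms in each total degree on the $E_1$-page, so the same exact-couple construction converges; the statements of parts 1--3 then apply verbatim to a sufficiently long bounded truncation $Y_j$ and transport back to $M$, giving the full claim.
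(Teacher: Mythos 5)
Your proposal is a reasonable reconstruction of the cited results: the paper's own proof is simply a pointer to Proposition~2.1.2(2) and Theorem~2.4.2 of \cite{bger} (built on \S2 of \cite{bws}), and those proofs are indeed organized around exactly the exact couple coming from a weight Postnikov tower plus weak functoriality of weight decompositions, as you do. The one place where your route differs from the source is the canonicity/functoriality ``starting from $E_2$'': you argue via the weight complex and Proposition~\ref{pbw}(\ref{iwc},\ref{iwcex},\ref{iwcoh}), which directly controls only the $E_2$-page (the cohomology of $H_0\circ t(-)$), whereas \cite{bws}/\cite{bger} exhibit the entire derived exact couple intrinsically in terms of \emph{virtual $t$-truncations}. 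Your approach is more elementary, but to make it complete you must add the (standard, and worth stating) remark that any two morphisms of spectral sequences which agree on $E_2$ agree on $E_r$ for all $r\ge 2$, since $E_{r+1}$ is a subquotient of $E_r$ and the maps commute with $d_r$; this is what lets the $E_2$-level canonicity propagate and gives the full claim of part~3. The paper's route via virtual $t$-truncations buys a cleaner intrinsic description of the $D_2$-terms (used later in the orthogonality comparison of Proposition~\ref{pdual}), while yours is shorter if one only wants Proposition~\ref{pwss} itself.

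Two small corrections in part~4. First, since $H$ is contravariant, the natural maps run $H^*(M)\to H^*(Y_j)$, not the other way round; with that fixed, your degree-by-degree stabilization argument is fine and does yield that $E_1^{pq}=H^{p+q}(M_p)$ vanishes outside a bounded band ($|q|<i$, say), hence finitely many nonzero terms on each antidiagonal. Second, the phrase ``apply verbatim to a sufficiently long bounded truncation $Y_j$'' is imprecise: $Y_j\in\cu_{w\le j}$ need not lie in any $\cu_{w\ge l}$, so $Y_j$ is not $w$-bounded in general. What actually carries the argument is the degree-by-degree stabilization of the exact couple you already established, together with the observation that $W^kH^m(M)$ is exhaustive and Hausdorff under the hypothesis on $H$ (for fixed $m$, $H^m(w_{\le k-1}M)=0$ once $k\ll 0$ and $H^m(w_{\ge k}M)=0$ once $k\gg 0$); it would be better to state this directly rather than invoke a bounded replacement.
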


\begin{proof}
%I

1. This is (a particular case of) Proposition 2.1.2(2) of ibid.

2,3.4. This is (most of) Theorem 2.4.2 of ibid.

%II  Immediate from Theorem 2.3.2 of ibid.
\end{proof}

\begin{rema}\label{rintel}

1. Actually, $(W^{m}H)(M)=\imm (H(w_{\ge m}M)\to H(M))$
does not depend on the choice of $w_{\ge m}M$ and is functorial in $M$ for any contravariant $H:\cu\to \au$, %; see Proposition 2.1.2(2) of \cite{bws}.

2. Certainly, weight spectral sequences can also be constructed for a homological $H$; see Theorem 2.3.2 of ibid. % \cite{bws}.

\end{rema}

Now we  derive a simple (new) consequence from Proposition \ref{pwss}.

\begin{defi}\label{dpure}
We will call a cohomological functor $H:\cu \to \au$ (for an arbitrary abelian $\au$) a {\it pure} one if $H$ kills $\cu_{w\le -1}$ and $\cu_{w\ge 1}$.

\end{defi}

\begin{coro}\label{cwss}
Assume that $\cu$ is endowed with a weight structure $w$; let $\au$ be an abelian category. Then the restriction of functors to $\hw$ yields an equivalence of the "big category" of pure (cohomological) functors $\cu\to \au$ with the one of additive contravariant functors $\hw\to \au$. The converse correspondence is given by  Proposition \ref{pbw}(\ref{iwcoh}).

\end{coro}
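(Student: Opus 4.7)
The plan is to exhibit the advertised equivalence by taking restriction $R : H \mapsto H|_{\hw}$ as one functor and the extension $E$ from Proposition~\ref{pbw}(\ref{iwcoh}) as its inverse: for an additive contravariant $H_0 : \hw^{op} \to \au$, $E(H_0)(M)$ is the zeroth cohomology of $H_0$ applied to any weight complex $t(M)$ of $M$, which by the cited Proposition is a well-defined cohomological functor. Additivity of $R(H)$ and functoriality of $R$ and $E$ in natural transformations are automatic (natural transformations extend term-by-term along weight complexes), so the real content is the two composition identities together with the fact that $E$ produces \emph{pure} functors.

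First I verify that $E(H_0)$ is pure and that $R \circ E = \id$. For $M \in \cu_{w=0}$, Proposition~\ref{pbw}(\ref{iwc}) allows us to choose $t(M)$ to be $M$ concentrated in degree $0$, so $E(H_0)(M) = H_0(M)$ and hence $R \circ E = \id$. For $M \in \cu_{w \le -1}$, choose the weight Postnikov tower with $Y_j = M$ and $Y_{j-1} \to Y_j$ the identity for all $j \ge -1$ (this is legitimate since $M$ lies in every $\cu_{w \le j}$ with $j \ge -1$); then the factors $M_j = \co(Y_{j-1} \to Y_j)$ vanish for $j \ge 0$, so the weight complex $M^p = M_{-p}[p]$ vanishes in non-positive degrees and $E(H_0)(M) = 0$. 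Symmetrically, for $M \in \cu_{w \ge 1}$ take $Y_j = 0$ for all $j \le 0$; the weight complex is then supported in strictly negative degrees, so again $E(H_0)(M) = 0$. Thus $E(H_0)$ kills $\cu_{w \le -1}$ and $\cu_{w \ge 1}$, as required.

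For the opposite composition, let $H$ be pure and $H_0 = R(H)$. Consider the weight spectral sequence $T(H, M)$ from Proposition~\ref{pwss}(2): its $E_1$-term equals $H^q(M^{-p}) = H(M^{-p}[-q])$, and since $M^{-p}[-q] \in \cu_{w = -q}$, purity of $H$ forces $E_1^{pq} = 0$ for every $q \ne 0$. The hypothesis of Proposition~\ref{pwss}(4) is satisfied with $i = 1$ (it is precisely purity of $H$), so $T(H, M)$ converges to $H^{p+q}(M)$ for every $M \in \obj \cu$. Being concentrated in row $q = 0$, the abutment degenerates to a single term on each diagonal, giving $H(M) = H^0(M) \cong E_\infty^{0,0} = E_2^{0,0}$; by construction $E_2^{p,0}$ is the $p$-th cohomology of the complex $(H_0(M^{-p}))_p$, so $H(M) \cong E(H_0)(M)$, naturally in $M$ by the $\cu$-functoriality of $T$ from $E_2$ onward asserted in Proposition~\ref{pwss}(3). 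The main technical point is precisely this naturality for \emph{unbounded} $M$; the extended convergence in Proposition~\ref{pwss}(4), rather than the bounded one in Proposition~\ref{pwss}(3), is what makes the argument work in full generality.
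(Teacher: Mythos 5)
Your proof is correct and follows essentially the same route as the paper's (which is extremely terse): purity of the extension $E(H_0)$ and the identity $R\circ E = \id$ come from the freedom in choosing weight Postnikov towers (Proposition~\ref{pbw}(\ref{iwc})), while $E\circ R=\id$ on pure functors comes from the degeneration of the weight spectral sequence for a pure $H$ together with the extended convergence statement in Proposition~\ref{pwss}(4). You have simply spelled out in detail what the paper's one-sentence citations compress.
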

\begin{proof}
The functors given by 
%Obviously, 
Proposition \ref{pbw}(\ref{iwcoh})  are pure by part \ref{iwc} of  the proposition; it also yields that the corresponding composition is equivalent to the identity of the big category of pure functors. The converse composition is equivalent to the identity of $\adfu(\hw^{op},\au)$ by 
%gives a cohomological functor for any functor $\hw^{op}\to \au$. Then 
Proposition \ref{pwss}(4). % yields that that this 'big' functor is the inverse equivalence desired (see also Proposition \ref{pbw}(\ref{iwc})). 

\end{proof}

\begin{rema}\label{rpure}
For any $\cu,H$ the functor (see Proposition \ref{pwss}(2)) $M\mapsto E_2^{0,0}T(H,M)$ is pure. Indeed, it suffices to verify that it is cohomological. The latter statement %is given by of \cite{bger}.
follows from the isomorphisms $E_2^{0,0}T(H,-)\cong \tau_{\le 0}(\tau_{\ge 0} H)\cong \tau_{\ge 0}(\tau_{\le 0} H)$ given by Theorem 2.4.2(II) of \cite{bger} (see Remark \ref{rdualn}(3) below and \S2.3 of ibid.). %One also obtains that this functor preserves cohomology theories that  

\end{rema}

\subsection{The relation to orthogonal $t$-structures}\label{sort}

%Now we describe a situation when virtual $t$-truncations correspond to 'actual' ones. The results mentioned in this subsection were proved in \S\S2.5--2.6 of \cite{bger}.

Let $\cu,\du$  be triangulated categories.
We consider certain pairings %of triangulated categories
$\cu^{op}\times \du\to \au$.
In the following definition we consider a general $\au$, yet
below we will mainly need $\au=\ab$. 

\begin{defi}\label{ddual}

1. We will call a (covariant) bi-functor
  $\Phi:\cu^{op}\times\du\to
\au$ a {\it duality} if  it is bi-additive, homological with respect
to both arguments, and is equipped with a (bi)natural bi-additive transformation
$\Phi(A,Y)\cong \Phi (A[1],Y[1])$.

%%##triangle rotation??

2. We will say that $\Phi$ is {\it nice} if for any distinguished
triangles $T=A\stackrel{l}{\to} B \stackrel{m}{\to} C\stackrel{n}{\to} A[1]$ in $\cu$ and $X\stackrel{f}{\to} Y\stackrel{g}{\to} Z\stackrel{h}{\to}X[1]$ in $\du$ 
we have the following:
 the natural morphism $p$:
%\begin{equation}\label{ecompat} ?????????????!!!!!!!!!!!!!!!!!!!
$$ \begin{gathered} %{\small
 \ke (\Phi(A,X)\bigoplus \Phi(B,Y) \bigoplus \Phi(C,Z))\\
\xrightarrow{\begin{%small
pmatrix}
\Phi(A,-)(f) & -\Phi(-,Y)(l) &0  \\
0& g(B) &-\Phi(-,Z)(m)  \\
- \Phi(-,X)([-1](n)) & 0 &\Phi(C,-)(h)
\end{%small
pmatrix}}
%{\Longto}
\\ (\Phi(A,Y) \bigoplus \Phi(B,Z) \bigoplus \Phi(C[-1],X))
 \stackrel{p}{\to} \ke ((\Phi(A,X)\bigoplus \Phi(B,Y))\\ \xrightarrow{\Phi(A,-)(f)\oplus - \Phi(-,Y)(l)}
 %{\Longto}
 \Phi(A,Y)) %}
 \end{gathered}$$
%\end{equation}
is epimorphic.

 3. Suppose %that
  $\cu$ is endowed with a weight structure $w$,
 $\du$ is endowed with a $t$-structure $t$. Then we will say that $w$
 is (left) {\it orthogonal} to $t$ with respect to $\Phi$
 if the following
 {\it orthogonality condition} is fulfilled:
 %\begin{equation}\label{edort}
 $\Phi (X,Y)=0$ if $X\in \cu_{w\ge 0}$
and $Y\in \du^{t \ge 1}$ or if $X\in \cu_{w\le 0}$ and $Y\in \du^{t \le -1}$.

\end{defi}

'Natural' dualities are nice; we will justify this thesis now.

\begin{pr}\label{pnice}
1. If $\au=\ab$, %an exact 
$F:\du\to \cu$ is an exact functor, then 
 $\Phi(X,Y)=\cu(X,F(Y))$ is nice.

2. For triangulated categories $\du$, $\cu'\subset \cu$, $\cu'$ is skeletally small,  %as above, 
and $\au$ satisfying AB5,
let $\Phi':\cu'^{op}\times \du\to \au$ be a duality.
For any $Y\in \obj \du$ we extend the functor $\Phi'(-,Y)$ from
$\cu'$ to
$\cu$ by the method of Proposition \ref{pextc} below; we denote the functor obtained by
$\Phi(-,Y)$. Then the corresponding bi-functor $\Phi$ is a duality
($\cu^{op}\times \du\to \au$).
It is nice whenever $\Phi'$ is.

\end{pr}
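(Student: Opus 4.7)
The plan is to treat the two parts separately. Part~1 is essentially a repackaging of axiom (TR3) for morphisms of distinguished triangles in $\cu$; part~2 will reduce to the properties of the extension from Proposition~\ref{pextc} combined with AB5.

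For part~1, I would first verify quickly that $\Phi(X,Y) = \cu(X, F(Y))$ is indeed a duality: bi-additivity is automatic, homologicity in $X$ is the usual fact for $\cu(-,-)$, homologicity in $Y$ uses exactness of $F$, and the shift-compatibility $\Phi(A,Y)\cong \Phi(A[1],Y[1])$ comes from the shift on $\cu$. Now to check niceness, I would apply $F$ to the given triangle $X\to Y\to Z\to X[1]$ to obtain a distinguished triangle $F(X)\to F(Y)\to F(Z)\to F(X)[1]$ in $\cu$. Using the identification $\Phi(C[-1],X)\cong \cu(C, F(X)[1])$ (via the shift isomorphism), a triple in the big kernel is exactly the data of a morphism of distinguished triangles
\[ (A\to B\to C\to A[1]) \;\longrightarrow\; (F(X)\to F(Y)\to F(Z)\to F(X)[1]), \]
while a pair in the small kernel is exactly a commutative square formed by its first two components. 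Consequently, surjectivity of $p$ is precisely (TR3). What remains is a diagrammatic check that the sign conventions in the $3\times 3$ matrix agree with those in a standard morphism of triangles.

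For part~2, I would begin by verifying that the bifunctor $\Phi$ obtained by applying the extension of Proposition~\ref{pextc} pointwise in $Y$ is a duality. Bi-additivity and the shift-compatibility descend from $\Phi'$ because the extension is realized by a filtered colimit of values of $\Phi'$; AB5 makes such colimits commute with the relevant kernels and cokernels. Homologicity in the first variable is built into the extension, while homologicity in $Y$ follows from homologicity of $\Phi'$ together with AB5. To transfer niceness, I would fix the pair of triangles and observe that each vertex in the two kernels defining $p$ is computed as a filtered colimit over the index system approximating $X\in \obj\cu$ by objects of $\cu'$; the $p$ for $\Phi$ is the filtered colimit of the corresponding $p'$ for $\Phi'$. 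Since AB5 preserves both kernels and epimorphisms under filtered colimits, the surjectivity of each $p'$ (i.e.\ niceness of $\Phi'$) gives surjectivity of $p$.

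The main obstacle for Part~1 is the bookkeeping of signs and the identification of $\Phi(C[-1],X)$ with morphisms into $F(X)[1]$ in such a way that the three matrix conditions become exactly the three commutativity conditions for a morphism of triangles. For Part~2, the principal difficulty is checking that the extension of Proposition~\ref{pextc} is computed by a genuinely \emph{filtered} colimit, so that AB5 can be applied to kernels and epimorphisms simultaneously; the skeletal smallness of $\cu'$ should make the indexing system cofinally filtered, but this needs to be confirmed from the construction in Proposition~\ref{pextc}.
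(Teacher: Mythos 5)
Your treatment of part~1 matches the paper's argument: niceness for $\Phi(X,Y)=\cu(X,F(Y))$ unwinds (using exactness of $F$, so that $F(X)\to F(Y)\to F(Z)\to F(X)[1]$ is distinguished) to the axiom (TR3) for $\cu$, and your identification of triples in the big kernel with morphisms of distinguished triangles, and of pairs in the small kernel with commutative squares, is correct modulo the sign bookkeeping you already flag.

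For part~2 the paper simply cites Proposition~2.5.6(3) of \cite{bger}; you attempt a self-contained filtered-colimit argument, and you correctly sense in your closing paragraph that the filteredness is the weak point. Two concrete difficulties make this route break down as written. First, the Krause-style extension of Proposition~\ref{pextc} is not in general a filtered colimit of $\Phi'$-values: what Proposition~\ref{pextc} actually provides is a cokernel-of-coproducts presentation (Part~II.1, a surjection $\bigoplus_l \Phi'(A_l,W)\twoheadrightarrow\Phi(A,W)$) together with the ``continuity'' statements of Part~III, and the latter apply only to those objects of $\gd$ that arise as $\gdp$-inverse limits of $\shc$-objects --- an arbitrary vertex of a triangle in $\cu$ need not be of that form. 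Second, and more fundamentally, the niceness of $\Phi$ quantifies over \emph{all} distinguished triangles $T$ of $\cu$, while the niceness of $\Phi'$ speaks only of triangles of $\cu'$. Even if the groups $\Phi(A,X)$, $\Phi(B,Y)$, $\Phi(C,Z)$ each admitted filtered presentations by $\Phi'$-values, approximating the three vertices of $T$ separately by objects of $\cu'$ does not produce a filtered system of $\cu'$-distinguished triangles $T_\lambda$ mapping compatibly to $T$; hence the epimorphism $p_T$ is not realized as a filtered colimit of the epimorphisms $p_{T_\lambda}$ covered by the hypothesis on $\Phi'$, and AB5 has nothing to bite on. A working proof must exploit the exactness-preserving behaviour of the extension functor itself (compare Parts~II.1--2 of Proposition~\ref{pextc}) rather than a termwise colimit; you should consult Proposition~2.5.6(3) of \cite{bger} for the intended argument.
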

\begin{proof}
1. It suffices to note that  the niceness restriction is a generalization 
of the axiom (TR3) of
triangulated categories
(any
commutative square can be completed to a morphism of distinguished
triangles) to the setting of dualities of triangulated categories. %change sign when shift??!!

2. This is Proposition 2.5.6(3) of ibid.
\end{proof}

Now let us  describe the relation of weight spectral sequences
to orthogonal structures.

\begin{pr}\label{pdual}
Assume that $w$ on $\cu$ and $t$ on $\du$ are orthogonal with respect
 to a nice duality $\Phi$; 
 $M\in \obj \cu$,
  $Y\in \obj \du$. % be arbitrary objects.

%I

 Consider  the spectral sequence $S$  coming from
    the following exact couple: $D_2^{pq}(S)=\Phi(M,Y^{t\ge
q}[p-1])$, %%@@??1234
$E_2^{pq}(S)=\Phi(M,Y^{t=q}[p])$ (we start $S$ from $E_2$). 

This spectral sequence is naturally isomorphic to $T(H,M)$ for $H:\ N\mapsto
\Phi(N,Y)$ 
if we consider the latter as starting from $E_2$.

\end{pr}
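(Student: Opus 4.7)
The plan is to realize both spectral sequences as arising from a common bifiltered setup and to use the orthogonality condition to match the two filtrations at the $E_2$-page. I will first reformulate $T(H,M)$ in a way that makes the $t$-truncations of $Y$ visible, then identify the resulting description with the exact couple defining $S$.

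First, I fix a weight Postnikov tower $\{w_{\le j}M\}_{j \in \z}$ for $M$, with factors $M^{-j}[j] \in \cu_{w=0}$, and the canonical $t$-truncation tower $\{Y^{t\le q}\}$ of $Y$. By Proposition \ref{pwss}(2), $T(H,M)$ arises from the exact couple produced by applying $H = \Phi(-,Y)$ to the weight Postnikov tower, with $E_1^{pq} = \Phi(M^{-p}, Y[q])$. The key observation is that since $M^{-p} \in \cu_{w=0}$, applying $\Phi(M^{-p},-)$ to the triangles $Y^{t\le q-1} \to Y^{t\le q} \to Y^{t=q}$ and $Y^{t\le q} \to Y \to Y^{t\ge q+1}$, all shifted by $[q]$, and invoking the orthogonality conditions $\Phi(\cu_{w=0}, \du^{t\ge 1}) = 0 = \Phi(\cu_{w=0}, \du^{t\le -1})$, yields a natural isomorphism $\Phi(M^{-p}, Y[q]) \cong \Phi(M^{-p}, Y^{t=q}[q])$. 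Thus $E_1^{pq}(T) \cong \Phi(M^{-p}, Y^{t=q}[q])$ naturally in both arguments.

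Next I analyze $S$ in a parallel way. By construction $E_2^{pq}(S) = \Phi(M, Y^{t=q}[p])$, and applying the weight Postnikov tower of $M$ to the cohomological functor $\Phi(-, Y^{t=q}[p])$ produces an auxiliary spectral sequence converging to $\Phi(M, Y^{t=q}[p])$ (in the sense of Proposition \ref{pwss}(4), since $Y^{t=q}[p]$ lies in $\du^{t=q-p}$ and the orthogonality condition forces $\Phi(-, Y^{t=q}[p])$ to vanish on $\cu_{w\le -1}$ and $\cu_{w\ge 1}$ shifted suitably). Orthogonality makes this auxiliary spectral sequence degenerate: only one row survives, so $\Phi(M, Y^{t=q}[p])$ is computed as the appropriate cohomology of the complex $\Phi(M^{-\bullet}, Y^{t=q}[q])$, and this identification is compatible with the one obtained in the previous paragraph for $E_1(T)$. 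Passing to $E_2$, the cohomologies of the two complexes agree.

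Finally, I match the differentials of the two exact couples from $E_2$ onwards. Here is where the niceness of $\Phi$ enters essentially: the compatibility of a morphism between a $\cu$-triangle (coming from the weight Postnikov tower of $M$) and a $\du$-triangle (coming from a $t$-truncation of $Y$) is exactly the kind of diagram-chase that Definition \ref{ddual}(2) is designed to make tractable, by allowing one to lift partial compatibilities between entries of $\Phi$ to coherent morphisms of exact couples. Using niceness one constructs a morphism of the two exact couples (shifted appropriately to agree in indexing at $E_2$) whose component on $E_2$ is the isomorphism established above; by the standard comparison theorem for spectral sequences this extends uniquely to the desired natural isomorphism from $E_2$ onwards. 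The principal obstacle, and the step that requires the most care, is precisely this lifting of the $E_2$-identification to a morphism of the underlying exact couples compatibly with both the weight and $t$-filtrations; this is where the hypothesis of niceness of $\Phi$, as opposed to an arbitrary duality, is indispensable.
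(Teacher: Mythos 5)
Your steps 1--3 are essentially sound, modulo some indexing. (A caution: in this paper's convention $Y^{t\ge q+1}\in\du^{t\ge 0}$ and $Y^{t=q}\in\du^{t=0}$, not $\du^{t\ge q+1}$ and $\du^{t=q}$, so your shifts ``$[q]$'' should be rechecked against Remark \ref{rts}(1); the argument survives this.) The orthogonality computation in step 2 does yield $E_1^{pq}(T)\cong\Phi(M^{-p},Y^{t=q})$, and the degeneration in step 3 is correct: the $E_1$ page of the auxiliary spectral sequence for $\Phi(-,Y^{t=q}[p])$ has a single non-vanishing row, because $\Phi(M^{-p'},Y^{t=q}[m])=0$ for $m\neq 0$ by orthogonality against $M^{-p'}\in\cu_{w=0}$.

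The genuine gap is step 4. An isomorphism of $E_2$ pages is far from determining a spectral sequence from $E_2$ onward, and the assertion that ``using niceness one constructs a morphism of the two exact couples \dots by the standard comparison theorem'' is a statement of the remaining problem, not a solution. Concretely: the weight Postnikov tower of $M$ is only weakly functorial, and you have not exhibited any coherent choices making the weight couple and the $t$-truncation couple into a single bifiltered object to which a comparison argument could be applied; the niceness condition guarantees certain liftings triangle-by-triangle, but assembling these into a map of exact couples is precisely the nontrivial content. The paper does not attempt this direct object-level comparison. Instead (Theorem 2.6.1 and Proposition 2.5.4(1) of \cite{bger}, cf.\ Remark \ref{rdualn}(3)), it works one level higher: it first proves an isomorphism of \emph{cohomological functors} identifying the virtual $t$-truncations $\tau_{\le i}(\Phi(-,Y))$, $\tau_{\ge i}(\Phi(-,Y))$ of $H=\Phi(-,Y)$ with $\Phi(-,(Y^{t\le i})[-i])$, $\Phi(-,(Y^{t\ge i})[-i])$, and it is \emph{there} that niceness is consumed. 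Once that functorial identification is in hand, the comparison of $T(H,M)$ and $S$ is formal, since $T(H,M)$ from $E_2$ onwards is built from virtual $t$-truncations of $H$ while $S$ is built from actual $t$-truncations of $Y$. Your route avoids virtual $t$-truncations entirely; to make it into a proof you would need to actually produce the morphism of exact couples whose existence you invoke, and as written that construction is missing.
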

\begin{proof}
This is (a part of) Theorem 2.6.1 of ibid.
\end{proof}

\section{Pro-spectra and the Gersten weight structure} %Postnikov towers}
\label{scomot}

We embed $\sh$ into a certain %("big")
 triangulated motivic homotopy category
$\gdb$; we will call its objects {\it pro-spectra}. Below we will need several
properties of $\gdb$; yet our arguments usually  will %not use its description directly.
not rely on its explicit definition.
For this reason, in \S\ref{comot} we only list the main properties of
$\gdb$ and of related categories. We use them in \S\ref{scgersten} for the construction of certain Gysin distinguished triangles and Postnikov towers for the pro-spectra of pro-schemes.

%In \S\ref{spsch} we associate certain comotives to objects of 

 Next we construct (in \S\ref{scwger}) %{sdgersten}) 
  certain {\it Gersten
weight structures}  on  $\gdb\supset \gd$. $\gd$ is the subcategory of $\gdb$ cogenerated by $\shc$; one may say that it is the largest subcategory of $\gdb$ 'detected by the Gersten weight structure'.
The %main property is that
(common) heart of these Gersten weight structures is 'cogenerated' (via products and direct summands) by certain twists of pro-spectra of functions fields, whereas the pro-spectra of arbitrary pro-schemes belong to $\gd_{w\ge 0}$. It follows immediately that
the Postnikov tower $Po(X)$
  provided by %Corollary 
  Proposition
\ref{post} is a
  {\it weight Postnikov tower} with respect to $w$.

  If $k$ is infinite, then $\hw$ also contains 
 the  pro-spectra of  semi-local pro-schemes over $k$.
   Using this fact, in \S\ref{stds}  we prove the following statement:
if $S$ is a semi-local pro-scheme (and $k$ is infinite),  $S_0$ is its dense sub-pro-scheme, then $\om(S_+)$ is a direct summand of $\om(S_{0+})$;
$\om(\spe K_+)$ (for a function field $K/k$)
 contains (as retracts)
    the pro-spectra of (affine essentially smooth) semi-local schemes whose generic point
is $\spe K$,
    as well as the twisted pro-spectra of residue
 fields of $K$ (for all geometric valuations). It follows %??!(see Remark \ref{rdualn}(2)) 
 that the 'Gersten' weight complex  
of any semi-local pro-scheme splits.

\subsection {Pro-spectra: an 'axiomatic description'} %{The main properties of comotives} % and their homotopy limits}
\label{comot}

%%Homotopy category??
In \S\ref{scgd} below we will construct a triangulated category
 $\gdb$
   as the 
homotopy category of a certain stable model category  $\gdp$. % of pro-objects of a  
In this section we  only describe certain properties of %model 
the categories $\gdb $ and $ \gdp$ that we will apply  somewhat 'axiomatically'. %; we will specify them \S\ref{rcs} below.

%We will need some conventions and definitions introduced in \S\ref{snot}. %; in particular,  $I$  will be  projective systems. 

Let $\psh$ denote the model %hoose??!
for $\sh$ constructed in \cite{jard} (we will say more on it  %that we will mention  
in \S\ref{srsmc}  %{ssh} 
below).

\begin{pr}\label{pprop}

\begin{enumerate}

\item\label{ifun}
There exists a commutative diagram of categories and functors
\begin{equation}\label{efun}
\begin{CD}
\opa@>{\pom}>>\psh  @>{c}>>\gdp  \\
@. @VV{\hosh}V@VV{\hogd}V \\
@.\sh  @>{\ho(c)}>>\gdb
\end{CD}\end{equation}
 %the subcategories of fibrant objects??! closed wrt. limits??!
such that $\hosh\circ \pom=\om$. %, and $\ho(c)$ is an exact %functor??! embedding of triangulated categories.

%\item{ibrjpfun}??! 

\item\label{ilim} $\gdp$ is closed with respect to all small filtered limits; $\gdb$ is closed with respect to all small products.

  \item\label{iemb} 
 %The restriction of $\ho(c)$ to $\shc$ yields 
 $\ho(c)$ is a full exact embedding; it yields a family of  cocompact %weak%??!
  cogenerators for $\gdb$.

We will often %say that 
write $\sh\subset \gdb$ %is a full subcategory of $\gdb$ 
(without mentioning the embedding functor).

\item\label{icoclim} For any projective system %$I$, 
$X_i\in \obj \gdp$, $C\in \obj \sh$ we have $\gdb(\hogd(\prli %_{i\in I}
 X_i),C)\cong \inli_{i\in I} \gdb(\hogd(X_i),C)$. 

\item\label{itriang} Extend $\om$ to $\popa$ via the rule $\om(\prli P_i)=\hogd(\prli(c\circ \pom (P_i)))$.
 Then for any %open embeddings of pro-schemes $\prli Z_i\to \prli Y_i\to \prli X_i
for any $j\ge 0$ and for any compatible system
of open embeddings  $Z_i\to Y_i\to X_i$ the natural morphisms $\om ((\prli Y_i/\prli Z_i\brj))\to \om ((\prli X_i/\prli Z_i\brj))\to \om((X_i/\prli Y_i \brj))$ extend to a distinguished triangle.  
%Prove via the method of \S1.2 of \cite{neebook}??! Difficult??!

\end{enumerate}

\end{pr}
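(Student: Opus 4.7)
The plan is to construct $\gdp$ as a model-categorical category of pro-objects built on top of Jardine's pointed motivic model category $\psh$ for $\sh$. Concretely, I would equip $\proo \psh$ with a strict (or essentially-levelwise) stable model structure in the style of Isaksen and Fausk--Isaksen: cofibrations are defined levelwise, fibrant objects are pro-systems of fibrant replacements, and the resulting homotopy category $\gdb = \ho(\gdp)$ inherits a triangulated structure from the stability of $\psh$. The functor $c:\psh\to\gdp$ is the constant-diagram inclusion (post-composed with fibrant replacement if needed), and $\pom:\opa\to\psh$ is the pointed simplicial presheaf functor $X/U\mapsto \sm(-,X_+)/\sm(-,U_+)$ from \S\ref{ssh}, so that the diagram (\ref{efun}) commutes by construction and $\hosh\circ\pom=\om$; this handles \ref{ifun}.

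Part \ref{ilim} is formal: every pro-category has all small filtered limits, and these descend to $\gdp$; for arbitrary small products in $\gdb$ one uses that products of fibrant pro-objects remain fibrant in the strict model structure, together with Remark 1.2.2 of \cite{neebook} on products of distinguished triangles. For \ref{iemb}, fully faithfulness of $\ho(c)$ reduces, via the pro-category formula $\proo\psh(c(A),c(B))=\psh(A,B)$ for constant $A$, to a statement about morphism sets between constant pro-objects; exactness is automatic from the exactness of $c$ at the model-category level. Cocompactness of each $\hogd(c(C))$ for $C\in\shc$ follows immediately from \ref{icoclim} applied to compact $C$, and the family of such objects cogenerates $\gdb$ since every pro-object is tautologically a filtered limit of its constant components.

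Part \ref{icoclim} is the technical heart of the proposition: for a fibrant compact $C\in\obj\sh$ and any pro-system $(X_i)_i\in\proo\psh$, the underived identity $\proo\psh((X_i)_i,c(C))=\inli_i\psh(X_i,C)$ is built into the very definition of a pro-category, and this identity passes to homotopy classes precisely because the compactness of $C$ makes the filtered colimit on the right commute with passage to $\sh$-morphisms. Part \ref{itriang} is then obtained by taking the levelwise cofibration sequences in $\psh$ provided by Proposition \ref{psh}(\ref{ish3}) for each index, assembling them into a compatible pro-object of cofibration sequences, and checking that the strict model structure on $\proo\psh$ converts such compatible levelwise sequences into genuine distinguished triangles in $\gdb$. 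The hardest part is the coherent choice of model structure on $\proo\psh$: it has to be simultaneously proper, stable, compatible with both arbitrary products and filtered limits, and such that $c$ becomes fully faithful with cocompact image. The strict and essentially-levelwise versions each have their virtues, and the homotopical refinement of \ref{icoclim} depends sensitively on which one is used and on the (co)fibrancy of the objects involved; this is presumably why the paper defers the actual construction to \S\ref{sconstr} and relies here on an axiomatic description.
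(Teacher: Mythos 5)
Your overall strategy coincides with the paper's: take $\gdp$ to be the category of filtered pro-objects of $\psh$ with the Fausk--Isaksen strict model structure, let $c$ be the constant-diagram embedding, define $\pom$ via pointed presheaves of sets, and deduce \ref{ifun} and \ref{ilim} formally. Where you and the paper use the same key input (the fundamental hom formula for constant targets) you cite it for the wrong reason, and your argument for cogeneration does not work.

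Concerning \ref{icoclim}: the formula $\gdb(\hogd(\prli X_i),C)\cong\inli\gdb(\hogd(X_i),C)$ holds for arbitrary $C\in\obj\sh$, not merely compact ones; it is the homotopy-category analogue of the pro-object hom formula, and it is precisely Corollary 8.7 of \cite{tmodel}. The mechanism there is entirely model-categorical (an analysis of fibrant and cofibrant replacements in the strict model structure on $\proo\psh$), not a consequence of $C$ being compact. Your appeal to compactness is a misattribution; if you actually tried to run your argument you would find that the compactness of $C$ does not help you pass the underived pro-category identity to homotopy classes, since the obstruction lives in the choice of (co)fibrant replacements of the pro-object, not in the target. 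Compactness of $C$ in $\sh$ enters only when deducing cocompactness in $\gdb$ of the embedded objects of $\sh$, and even there it is not strictly needed once one has \ref{icoclim} (finite biproducts plus the cofinality of finite subproducts already do it).

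The serious gap is your cogeneration argument. Saying that "every pro-object is tautologically a filtered limit of its constant components" does not establish that $\ho(c)(\sh)$ cogenerates $\gdb$ in the required sense (smallest strict triangulated subcategory closed under small products). Filtered limits in $\gdp$ are not a triangulated operation in $\gdb$; only (set-indexed) products survive. So exhibiting a pro-object as a filtered limit of constants in the model category gives you nothing about triangulated generation. The paper instead observes (via Theorem 6.1 of \cite{chor}) that $\gdp$ admits a generalized cosmall object argument with respect to the generating fibrations $c(f)$, and then dualizes the proof of Theorem 7.3.1 of \cite{hovey} to conclude that the fibers of the generating fibrations cogenerate $\gdb$. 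You need an argument of this kind; the tautological-limit observation cannot replace it.

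Parts \ref{ifun}, \ref{ilim}, and \ref{itriang} are handled correctly and essentially as in the paper (for \ref{itriang} you invoke levelwise cofibration sequences and the fact that the strict model structure sends compatible levelwise cofibration sequences to cofibration sequences of pro-objects, which is exactly Proposition \ref{pgdb}(2) combined with Proposition \ref{ppsh}(3)).
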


We will usually call the objects of $\gdb$ {\it pro-spectra}.

We describe some %??! two easy
 consequences of the 'axioms' listed.
%related statements??!

\begin{coro}\label{css}

\begin{enumerate}

 %\item \label{ipid} $\gd$ is idempotent complete.
 
 \item \label{iisom}
 A $\gdb$-morphism $f:X\to Y$  is an isomorphism if and only if $\gdb(-,C)(f)$ is for any $C\in \obj \sh$.

\item \label{iisomil} In particular, for any projective system $I$ and any compatible system of morphisms $f_i\in \gdp(X_i,Y_i)$ we have the following fact: $\hogd(\prli f_i)$ is an isomorphism if and only if $\inli(\gdb(-,C)(\hogd(f_i))$ is an isomorphism for any $C\in \obj \sh$.  
 
 \item \label{icopr} Let $X=\sqcup X^\al$ be a decomposition of a pro-scheme. % into a set of its connected components. 
 Then for any $j\ge 0$ %we have: 
 the object $\om(X_+\brj)$ is naturally isomorphic to $\prod \om(X^\al_+\brj)$.

\end{enumerate}
 %%indices??
\end{coro}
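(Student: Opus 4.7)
The plan is to reduce all three parts to the cocompact cogeneration of $\gdb$ by $\obj\sh$ (Proposition \ref{pprop}(\ref{iemb})), combined with the filtered-limit Hom formula of Proposition \ref{pprop}(\ref{icoclim}) and the finite disjoint union formula for $\om$ on $\opa$ (Proposition \ref{psh}(\ref{ish4})).

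For (\ref{iisom}), the ``only if'' direction is trivial; for the converse, given $f: X\to Y$ with $\gdb(-,C)(f)$ bijective for all $C\in\obj\sh$, I would let $Z$ be a cone of $f$ and consider the full subcategory $D$ of $\gdb$ whose objects $Y'$ satisfy $\gdb(Z, Y'[i])=\ns$ for all $i\in\z$. This $D$ is strict, triangulated (since $\gdb(Z,-)$ is cohomological) and closed under small products (since $\gdb(Z,-)$ preserves them). The hypothesis yields $\obj\sh\subset D$, so the cogeneration property from (\ref{iemb}) forces $D=\obj\gdb$; taking $Y'=Z$ gives $\id_Z=0$, hence $Z=0$ and $f$ is an isomorphism. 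Part (\ref{iisomil}) then follows immediately by applying Proposition \ref{pprop}(\ref{icoclim}) to identify $\gdb(-,C)(\hogd(\prli f_i))$ with $\inli \gdb(-,C)(\hogd(f_i))$ and invoking (\ref{iisom}).

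For (\ref{icopr}), I would first construct the comparison map: for each $\al\in A$ the evident pro-morphism $X_+\brj\to X^\al_+\brj$ (collapse all other connected components onto the extra point and fix the relevant indices) yields, after applying $\om$, a canonical map to $\om(X^\al_+\brj)$, and collecting them produces $\phi:\om(X_+\brj)\to\prod_\al \om(X^\al_+\brj)$. By (\ref{iisom}) it suffices to check $\gdb(-,C)(\phi)$ is bijective for every $C\in\obj\sh$. On the target, cocompactness of $C$ together with (\ref{icoclim}) identifies $\gdb(\prod_\al\om(X^\al_+\brj),C)$ with $\bigoplus_\al \inli_{i_\al}\gdb(\om(X_{i_\al+}\brj),C)$. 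On the source, (\ref{icoclim}) gives $\inli_{B,\{i_\be\}}\gdb(\om((\sqcup_{\be\in B}X_{i_\be})_+\brj),C)$, and (\ref{ish4}) rewrites the inner group as the finite direct sum $\bigoplus_{\be\in B}\gdb(\om(X_{i_\be+}\brj),C)$; commuting the filtered colimit over finite $B\subset A$ with the direct sums yields again $\bigoplus_\al\inli_{i_\al}\gdb(\om(X_{i_\al+}\brj),C)$. The main bookkeeping obstacle will be checking that the resulting identification of iterated (co)limits of direct sums is precisely the map induced by $\phi$; this is a diagram chase tracing how the projections defining $\phi$ interact with the finite-coproduct isomorphism of (\ref{ish4}) and with the cocompactness formula.
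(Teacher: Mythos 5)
Your proof follows the same route as the paper: part (1) uses the cogeneration of $\gdb$ by $\obj\sh$ to conclude that the cone of $f$ vanishes, part (2) is the combination with Proposition \ref{pprop}(\ref{icoclim}), and part (3) reduces to $H=\gdb(-,C)$ via (\ref{iisom}), then uses cocompactness of $C$ and the presentation of $(\sqcup X^\al)_+\brj$ as a filtered inverse limit over finite subsets of $A$. The only difference is that you explicitly invoke Proposition \ref{psh}(\ref{ish4}) to pass from the finite disjoint union to the finite product, a step the paper leaves implicit; this is a helpful clarification rather than a departure.
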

\begin{proof}
1. For $Z=\co f$ it suffices to note that (by Proposition \ref{pprop}(\ref{iemb})) we have $Z=0$ if and only if $Z\perp \sh$.

2. It suffices to combine the previous assertion with part \ref{icoclim} of the proposition.

3. The natural $\popa$-morphisms $X\to X^\al$ yield a canonical morphism $\om(X_+\brj)\to\prod \om(X^\al_+\brj)$. We have to check that this is an isomorphism. To this end (by assertion \ref{iisom}) we can fix a $C\in \obj \sh$ and verify for $H=\gdb(-,C)$ that $H(\om(X_+\brj)\cong H(\prod \om(X^\al_+\brj))$. Next, the cocompactness of $C$ in $\gdb$ yields that $H((\prod \om(X^\al_+\brj))\cong \bigoplus H(\om(X^\al_+\brj))$. It remains to recall that  $\sqcup X^\al_+\brj$ (for $\al\in A$) can be presented as the inverse  limit of  $\sqcup_{\al\in \be}X^\al_+\brj$ for $\be$ running through finite subsets of $A$ (in $\popa$; see \S\ref{sprs}); hence Proposition \ref{pprop}(\ref{icoclim}) yields the result.

\end{proof}

%Sometimes 

\subsection{%
The Gysin distinguished triangle
%Purity for pro-schemes; %Thom spectra??!
and 'Gersten' Postnikov towers for the pro-spectra of pro-schemes}\label{scgersten}

We %introduce certain conventions for pro-schemes.
make some more remarks on pro-schemes.

\begin{rema}\label{rpgysin}
For pro-schemes $U=\prli U_i$ and $V=\prli V_j$ we will call an
element of $\popa(U_+,V_+) $
 a closed (resp. open) % %resp. open with  codimension of the complemenet $\ge c$) 
 embedding if it can be obtained as the limit of closed (resp. open) %, resp. open with  codimension of the complemenet $\ge c$) 
 embeddings of pointed varieties (so, we consider only pro-smooth sub-pro-schemes of pro-schemes). 
 We define a general pro-embedding $U\to V$ similarly;  we will say that $U$ is of codimension $c$ in $V$
 if $U_i$ is of codimension $c$ in $V_i$ for any $i$; in particular, we will use this convention for defining the codimension of Zariski points.
 Similarly, we will  say that an inverse limit of open embeddings such that all complements have codimensions $\ge c$ is an open embedding of pro-schemes with complement of codimension $\ge c$.
 Certainly, the complement of a closed sub-pro-scheme of $V$ is always an open sub-pro-scheme. Also, the generic point of a connected pro-scheme is its open sub-pro-scheme. %??!!!! If $U=V\setminus W$  for some pro-scheme $W$, we will say that $W$ is a closed sub-pro-scheme of $V$. Note that in this case any connected component of  $W$ is a closed subscheme of some connected component of $V$; yet some components of $V$ could contain an infinite set of connected components of $W$. %infinitely many components into one - needed!
 
%Note also that one can 
Now let us define normal bundles for closed embeddings of pro-schemes. For an embedding of a connected pro-scheme
$X=\prli X_i$ a normal bundle is an element of the direct limit of the sets of isomorphism classes of vector bundles over $X_i$. 
%for an embedding of a connected pro-scheme $X$ into $Y$ it yields an element of the limit of the corresponding projective limit of $K_0$-groups.
If a pro-scheme $X$ is actually a  scheme, then any closed embedding of $X$ into $Y$ does yield a (normal) %actual
 vector bundle over it; this is a projective module over the coordinate ring of $X$ if the latter is affine. Lastly, if $X$ is connected then the rank of this module is the codimension of $X$ in the corresponding component of $Y$, where the codimension of $X=\prli X_i$ in $X$ equals the one of any  $X_i$ that is connected.
%Continuity of \{isomorphism classes of vector bundles}\??! Of $K_0$??!

Moreover, these observations are compatible with the isomorphisms given by Proposition \ref{psh}(\ref{ish5}). Besides, we can also pass to inverse limits for distinguished triangles given by part \ref{ish3} of the proposition if the connecting morphisms come from $\opa$; see Proposition \ref{pprop}(\ref{itriang}).
 
\end{rema}

%??!!
\begin{pr}\label{pinfgy}

Let $Z,X$ be pro-schemes, $Z=\sqcup Z^\al$ ($Z^\al$ are connected) %nafig??!
 be a closed %affine??!
 sub-pro-scheme 
 of $X$. Then the following statements are fulfilled. 
 
 1. For any
 $j\ge 0$ the natural morphism $\om(X\setminus Z_+)\to \om(X_+\brj)$
  extends to the following  distinguished triangle (in $\gdb$):
  $\om(X\setminus Z_+)\to \om(X_+\brj)\to \prod \om(N_{X,Z^\al}/N_{X,Z^\al}\setminus Z^\al \brj) $. %; here $\al$ runs through the set of connected components of $Z$.

2. Assume that all $Z^\al$ come from semi-local $k$-schemes (or just from $k$-schemes such that all vector bundles over them are trivial), and that $Z$ (and so, all $Z_\al$) are of codimension $c$ in $X$. Then the latter product converts into $\prod \om(Z^\al_+\lan j+c\ra )$. 
\end{pr}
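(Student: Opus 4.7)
The plan is to build the triangle by combining three tools from the excerpt: the open-embedding distinguished triangle (Proposition \ref{psh}(\ref{ish3})), the Gysin isomorphism (Proposition \ref{psh}(\ref{ish5})), and the passage to pro-schemes from Proposition \ref{pprop}(\ref{itriang}) together with Remark \ref{rpgysin}. I would proceed in three cases, then handle part 2 by a direct trivialization.

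First I would treat the case where $Z=Z^1$ is connected. Present $X=\prli X_i$ with $X_i\in\sv$ and $Z=\prli Z_i$ with $Z_i$ smooth closed of codimension $c$ in $X_i$, compatibly with the transition maps. At each level $i$, Proposition \ref{psh}(\ref{ish3}) applied to the open embedding $X_i\setminus Z_i\hookrightarrow X_i$ yields a distinguished triangle whose third term, via Proposition \ref{psh}(\ref{ish5}), is identified with $\om(N_{X_i,Z_i}/(N_{X_i,Z_i}\setminus Z_i)\brj)$. Passing to the inverse limit in $\gdp$ via Proposition \ref{pprop}(\ref{itriang}) and the pro-scheme Gysin isomorphism of Remark \ref{rpgysin} gives the triangle in $\gdb$ with a single Thom-space term.

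Next I would do the case of finitely many components $Z=\sqcup_{\alpha\in A}Z^\alpha$ by the same limit argument, but additionally decomposing the Thom space at each level as a finite product $\prod_{\alpha\in A}\om(N_{X_i,Z_i^\alpha}/(N_{X_i,Z_i^\alpha}\setminus Z_i^\alpha)\brj)$. This decomposition uses that $N_{X_i,Z_i}=\sqcup_\alpha N_{X_i,Z_i^\alpha}$ together with a variant of Proposition \ref{psh}(\ref{ish4}) for quotients (which follows from \ref{psh}(\ref{ish3}), (\ref{ish4}) and (\ref{ish6}) by a short five-lemma argument, noting that finite products equal finite coproducts in $\sh$). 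For arbitrary $A$, I would then write $X\setminus Z=\prli_B(X\setminus\sqcup_{\alpha\in B}Z^\alpha)$ as a pro-object indexed by finite $B\subset A$ (recall \S\ref{sprs}), apply the finite case levelwise to get a compatible system of distinguished triangles, and pass to the inverse limit using Proposition \ref{pprop}(\ref{itriang}) and Corollary \ref{css}(\ref{icopr}) to identify $\prli_B\prod_{\alpha\in B}(\cdots)$ with $\prod_{\alpha\in A}(\cdots)$.

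Part 2 is a direct identification: under the hypothesis each $N_{X,Z^\alpha}$ admits a trivialization $\cong Z^\alpha\times\af^c$ (in the pro-scheme sense, by the assumption and the discussion in Remark \ref{rpgysin}), and the resulting Thom space combines with the $\brj$-twist, via Propositions \ref{psh}(\ref{ish1}) and (\ref{ish6}), to yield $\om(Z^\alpha_+\lan j+c\ra)$. The main obstacle is the limit step in case 3: one must justify that an inverse limit of distinguished triangles along the poset of finite subsets $B\subset A$ really yields a distinguished triangle in $\gdb$, and that the passage from finite to infinite products is correctly handled. This should follow from the existence of limits in $\gdp$ (Proposition \ref{pprop}(\ref{ilim})), the cocompactness of the cogenerators of $\gdb$ (Proposition \ref{pprop}(\ref{icoclim})), and the isomorphism criterion of Corollary \ref{css}(\ref{iisom}).
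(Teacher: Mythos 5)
Your proof matches the paper's own: both pass the level-wise triangles from Propositions \ref{psh}(\ref{ish3}) and (\ref{ish5}) to the pro-limit via Proposition \ref{pprop}(\ref{itriang}) together with Remark \ref{rpgysin}, and then decompose the Thom-space vertex as a product over connected components using Corollary \ref{css}, with part 2 settled by trivializing the normal bundles exactly as in Remark \ref{rsemiloc}. The only difference is presentational — you spell out the finite-to-infinite bootstrap over finite $B\subset A$, whereas the paper absorbs that step into Corollary \ref{css}(\ref{icopr}) (whose proof already handles arbitrary index sets via cocompactness of the $\sh$-cogenerators).
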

\begin{proof}

1. As we have just noted, one can pass to the inverse limits of distinguished triangles given by Proposition \ref{psh}(\ref{ish3}). It remains to note that Corollary \ref{css} enables us to %compute 
rewrite the third vertex of this triangle in the form desired.

2. If the normal bundles over all $Z^\al$ are trivial, then certainly $N_{X,Z^\al}/N_{X,Z^\al}\setminus Z^\al\brj \cong Z^\al_+\lan j+c\ra $ (in $\popa$). It suffices to note that this is the case for semi-local $Z^\al$ %by Proposition \ref{pshinvp}(3).
(see Remark \ref{rsemiloc}).

\end{proof}

\begin{rema}
1. It seems that instead of Corollary \ref{css} (which relies on $\sh$-cogenerators) we could have used a formal model category argument in the proof. 

2. The isomorphism of the second assertion is (usually) not canonical.
\end{rema}

Now let us construct a certain Postnikov tower $Po(M)$  for $M$
being the (twisted) pro-spectrum of a pro-scheme $Z$ that will be related
to the coniveau spectral sequences for (the cohomology of) $Z$. %; our method was described in \S\ref{sprom} above.
 Note that we consider the general case
of an arbitrary pro-scheme $Z$ (since in this paper pro-schemes play
an important role);  yet this case is not much distinct from the
(particular) case of $Z\in\sv$.

\begin{pr}\label{post}

%We denote the dimension of $Z$ by $d$ (recall the conventions of \S\ref{spsch}).

Let $Z=\sqcup Z^\al$ be a pro-scheme of dimension $\le d$; for all $i\ge 0$ we denote by $Z^i$ the set of
points of $Z$ of codimension $i$.

Then any $j\ge 0$ there exists a Postnikov tower
for $M=\om(Z_+\brj)$ such that $l=j-1$, $m=d+j$, $M_{i+j}\cong \prod
_{z\in Z^i}\om(z_+\lan j+i\ra )$ for $0\le i\le d$.
\end{pr}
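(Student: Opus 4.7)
The plan is to build the Postnikov tower by filtering $Z$ by codimension of points and applying Proposition \ref{pinfgy} iteratively. Specifically, for each $-1\le i\le d$, I would set $U_i$ to be the sub-pro-scheme of $Z$ consisting of the points of codimension $\le i$; formally, $U_i$ is the filtered inverse limit (in $\popa$) of those open sub-pro-schemes of $Z$ whose complement has codimension $\ge i+1$, with the convention $U_{-1}=\emptyset$ (realized as the base point) and $U_d=Z$. Each inclusion $U_{i-1}\hookrightarrow U_i$ is a pro-open embedding whose complement is precisely $\sqcup_{z\in Z^i}z$, where each codimension-$i$ point $z$ is viewed as a closed sub-pro-scheme of $U_i$ of codimension $i$; here one uses that $k$ is perfect, so each residue field $\kappa(z)$ is essentially smooth over $k$ and $z\cong \spe\kappa(z)$ is an honest pro-scheme.

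Next I would apply Proposition \ref{pinfgy} to the closed embedding $\sqcup_{z\in Z^i}z\hookrightarrow U_i$. Since each $z=\spe\kappa(z)$ has trivial vector bundles (being the spectrum of a field), part (2) of that proposition gives a distinguished triangle
\[
\om((U_{i-1})_+\brj)\to \om((U_i)_+\brj)\to \prod_{z\in Z^i}\om(z_+\lan j+i\ra).
\]
Setting $Y_{i+j}=\om((U_i)_+\brj)$ for $-1\le i\le d$ (so $Y_{j-1}=0$ and $Y_{d+j}=M$) and $M_{i+j}=\prod_{z\in Z^i}\om(z_+\lan j+i\ra)$ for $0\le i\le d$, with connecting morphisms $Y_{i+j-1}\to Y_{i+j}$ induced functorially (via the extension of $\om$ to $\popa$ from Proposition \ref{pprop}(\ref{itriang})) by the open embeddings $U_{i-1}\hookrightarrow U_i$, I obtain a Postnikov tower of the shape required by Definition \ref{dpoto}.

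The main obstacle is purely technical rather than conceptual: one must justify that the $U_i$ are genuinely objects of $\popa$ (i.e.~expressible as filtered inverse limits of pointed smooth varieties with the structure maps of \S\ref{sprs}) and, crucially, that each codimension-$i$ point $z$ appears as a closed sub-pro-scheme of $U_i$ compatibly across the defining inverse system. The latter uses perfectness of $k$ to trim each open $V_\alpha\subset Z$ in the defining system so that $\overline{\{z\}}\cap V_\alpha$ is smooth and meets $V_\alpha$ in ever smaller neighborhoods of $z$, so that in the limit one recovers $\spe\kappa(z)$. Once this bookkeeping is in place, the Gysin triangle provides the required distinguished triangles automatically, and the verification that $Y_{j-1}=0$ and $Y_{d+j}=M$ is immediate from the definitions.
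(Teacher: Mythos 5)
Your proposal is correct and matches the paper's own argument: both filter $Z$ by codimension of points, realize the codimension-$i$ stratum $\sqcup_{z\in Z^i}z$ as a closed sub-pro-scheme (using perfectness of $k$ plus removal of singular loci to keep the closed strata pro-smooth), and apply the Gysin triangle of Proposition \ref{pinfgy}(2) since points are local so their normal bundles are trivial. The only cosmetic difference is bookkeeping: the paper packages the entire coniveau filtration into a single projective index system $L$ of chains $\varnothing=Z_{d+1}\subset\dots\subset Z_0$ and takes all the inverse limits over $L$ at once, whereas you construct each $U_i$ as its own pro-scheme, but the resulting towers coincide.
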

\begin{proof}

Since any product of distinguished triangles is distinguished (see Remark
1.2.2 of \cite{neebook}), we can assume $Z$ to be connected.

We consider a projective system $L$
whose elements are sequences of closed subschemes
$\varnothing=Z_{d+1}\subset Z_d\subset Z_{d-1}\subset \dots \subset
Z_0$. Here $Z_0\in\sv$, $Z_i\in \var$ for all $i>0$, $Z$ is (pro)-open in
$Z_0$,   %(see \S\ref{spsch}; 
$Z_0$ is connected, and %(in the case when $Z\in \sv$ we only take $Z_0=Z$); 
for all $i>0$ we have the following: %$Z_{i}$ is everywhere of codimension  $\ge i$ in $Z_0$; 
(all irreducible
components of) all $Z_i$ are everywhere
  of codimension $\ge i$ in $Z_0$; 
 $Z_{i+1}$ contains the singular locus of $Z_i$ (for $i\le d$).
The  ordering in $L$ is given by open
 embeddings of varieties $U_i=Z_0\setminus Z_i$ for $i>0$.
%We denote the projective system described by $L$ ;
For $\lambda\in L$ we will denote the corresponding sequence by
$\varnothing=Z^\lambda_{d+1}\subset Z^\lambda_d\subset Z^l_{d-1}\subset
\dots \subset Z^\lambda_0$. 

Now, for any $i\ge 0$ the limit $\prli_{\lambda\in L} Z^\lambda_i\setminus Z^\lambda_{i+1}$ equals $\sqcup_{z\in Z^i}z$. %delete +?! 
%Then the previous proposition for any $i$ we have
Hence the previous proposition yields a distinguished
triangle $\om (\prli(Z^\lambda_0\setminus Z^\lambda_i)_+\brj)\to
\om(\prli(Z^\lambda_0\setminus Z^\lambda_{i+1})_+\brj)\to \prod
_{z\in Z^i}\om(z_+\lan j+i\ra )$. So, setting $Y_{i+j}=\om (\prli(Z^\lambda_0\setminus Z^\lambda_{i+1})_+\brj)$ for $-1\le i\le d$, %$Y_{j-1}=0$, 
one obtains %the tower in question.
a tower as desired.

\end{proof}

\begin{rema}\label{lger}

1. The same reasoning also yields an unbounded positive Postnikov tower for $M=\om(Z_+)$ in the case where $Z$ is an arbitrary pro-scheme.

2. Certainly, if we shift a Postnikov tower for
 $\om(Z_+\brj)$ by $[c]$ for some $c\in \z$,
we obtain a Postnikov tower for $\om(Z_+\brj)[c]$. Yet if we want it to be a weight Postnikov tower with respect to the Gersten weight structure that we will construct below, we will also have to shift the indices by $c$.

\end{rema}

%\subsection{Some more properties of prospectra for pro-schemes}

Now let us prepare to the construction of the Gersten weight structure.

%First we prove that prospectra of pro-spectra belong to the coenvelope of $\om(\sv_+\brj)$ (in $\gd$).

\begin{pr}\label{pcoen}
Let $X$ be a pro-scheme, $j\ge 0$. Then $\om(X_+\brj)$ belongs to the coenvelope of $\om(U_+\brj)[i]$ for $U\in \sv$, $i\ge 0$.

\end{pr}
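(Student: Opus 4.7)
The plan is to first reduce to the case of connected $X$ using Corollary~\ref{css}(\ref{icopr}): writing $X=\sqcup_\alpha X^\alpha$ as the decomposition into connected components yields $\om(X_+\brj)\cong\prod_\alpha \om(X^\alpha_+\brj)$, and the coenvelope $C$ is closed under small products by Definition~\ref{dcoe}. For connected $X$, I would write $X=\prli_n X_n$ with $X_n\in\sv$ connected and transitions $X_{n+1}\hookrightarrow X_n$ open dense embeddings (passing to a cofinal countable subtower if necessary), and then identify $\om(X_+\brj)$ with the homotopy limit $\prli_n \om(X_{n+}\brj)$ of Definition~\ref{dcoulim} in $\gdb$. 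For this identification, Proposition~\ref{pprop}(\ref{icoclim}) and Lemma~\ref{coulim}(3) show that on any cocompact $c\in\shc$ both objects afford the same functor $\gdb(-,c)\cong \inli_n\gdb(\om(X_{n+}\brj),c)$, and since $\shc$ cogenerates $\gdb$, Corollary~\ref{css}(\ref{iisom}) forces the natural comparison morphism to be an isomorphism.

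Next I would establish two closure properties of the coenvelope. For extension-closure: given a distinguished triangle $A\to B\to D$ with $A,D\in C$, apply the tower property of Definition~\ref{dcoe} to the sequence $Y_0=D$, $Y_i=B$ for $i\ge 1$, with $\phi_1\colon B\to D$ the triangle morphism and $\phi_i=\id_B$ for $i\ge 2$. Then $\co(\phi_1)[-1]\cong A\in C$ and $\co(\phi_i)[-1]=0\in C$ for $i\ge 2$, while by Remark~\ref{rcoulim}(2,3) (discarding the first term $Y_0$ and using the stabilization of the tail) the homotopy limit of this tower equals $B$, whence $B\in C$. Closure of $C$ under the shift $[1]$ is obtained by observing that the class $\{M:M[1]\in C\}$ contains $C'$ (since $C'[1]\subset C'\subset C$), is closed under products, and is closed under the tower operation (because $[1]$ commutes with products and cones), so by minimality it contains all of $C$.

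With these tools I would apply the tower property of $C$ to $Y_n=\om(X_{n+}\brj)$ and the natural transitions $\phi_n$: here $Y_0\in C'\subset C$, and it only remains to verify $\co(\phi_n)[-1]\in C$ for each $n\ge 1$. By Proposition~\ref{psh}(\ref{ish3})(\ref{ish6}) we have $\co(\phi_n)\cong \om(X_{n-1}/X_n\brj)$, so the target becomes $\om(X_{n-1}/X_n\brj)[-1]\in C$. The crucial device is Remark~\ref{rpsh}(1), which for $V\in\sv$ and $k\ge 1$ presents $\om(V_+\lan k\ra)[-1]$ as the cone of a morphism between the $C'$-objects $\om(V_+\lan k-1\ra)$ and $\om((V\times\gmm)_+\lan k-1\ra)$, so that $\om(V_+\lan k\ra)[-1]\in C$ by extension-closure. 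Combining the Gysin identification of Proposition~\ref{psh}(\ref{ish5}) with a smooth stratification of the closed complement $Z_n=X_{n-1}\setminus X_n$ and Nisnevich/Zariski descent to accommodate possibly nontrivial normal bundles, the spectrum $\om(X_{n-1}/X_n\brj)$ is presented as a finite iterated extension of twisted spectra $\om(W_+\lan j+c\ra)$ with $W\in\sv$ smooth and $c\ge 1$; shifting termwise by $[-1]$ and iterating extension-closure yields $\om(X_{n-1}/X_n\brj)[-1]\in C$, completing the verification of the tower hypothesis and hence the proposition.

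The hard part is the explicit Gysin/Thom-space bookkeeping in the last step. If the normal bundle of a smooth stratum of $Z_n$ in $X_{n-1}$ is nontrivial, one must trivialize it Zariski-locally on the stratum and glue the resulting Thom spaces via Mayer--Vietoris triangles inside $\sh$; if $Z_n$ itself is singular, the Gysin triangles of Proposition~\ref{psh}(\ref{ish5}) must be iterated along a smooth stratification. The delicate point is that the coenvelope $C$ is not a priori closed under the shift $[-1]$, so one cannot simply shift a decomposition of $\om(X_{n-1}/X_n\brj)$ after the fact; rather, the $[-1]$-shift must be distributed to each building block $\om(W_+\lan j+c\ra)$ (with $c\ge 1$) and absorbed via the twist-shift identity of Remark~\ref{rpsh}(1) before reassembling the extension.
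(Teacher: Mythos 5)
Your strategy is genuinely different from the paper's, and it contains a gap that is not repairable as stated. You realize $\om(X_+\brj)$ as the homotopy limit (Definition~\ref{dcoulim}) of a \emph{countable} tower $\om(X_{n+}\brj)$ by ``passing to a cofinal countable subtower if necessary,'' but the defining index set of a pro-scheme need not be countably cofinal. For instance, for $X=\spe\mathcal{O}_{V,x}$ with $V$ of finite type over an uncountable $k$, the filtered poset of affine opens $U\ni x$ does not in general admit a countable cofinal subset. Since the tower operation in Definition~\ref{dcoe} is $\mathbb{N}$-indexed, your argument cannot even get off the ground for such $X$. The paper circumvents this by inducting on the dimension $d$ of a connected component and using the \emph{finite-length} Postnikov tower of Proposition~\ref{post}: there the (possibly uncountable) inverse limits are absorbed inside the model-level distinguished triangles coming from Proposition~\ref{pprop}(\ref{itriang}) and inside the closure of the coenvelope under arbitrary small products, so only finitely many stages are needed for the tower operation of Definition~\ref{dcoe}.

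A second, related advantage of the paper's route is that passing to the pro-scheme limit \emph{before} stratifying means the Gysin factors $M_{i+j}\cong\prod_{z\in Z^i}\om(z_+\lan j+i\ra)$ are indexed by points of $Z$, which are automatically essentially smooth over the perfect base field $k$ and have trivial normal bundle data to worry about; this replaces the ``explicit Gysin/Thom-space bookkeeping'' you flag as the hard part (smooth stratification of a possibly singular $Z_n=X_{n-1}\setminus X_n$, Zariski-local trivialization of normal bundles, Mayer--Vietoris gluing) by a clean appeal to Proposition~\ref{pinfgy} and the inductive hypothesis on dimension. On the positive side: your reduction to connected components, your verification that the coenvelope is extension-closed and $[1]$-closed, and your observation that the $[-1]$-shift in $\co(\phi_n)[-1]$ must be absorbed via Remark~\ref{rpsh}(1) rather than shifted after the fact, are all correct and are things the paper's terse proof leaves implicit; it is precisely these pieces, combined with the dimension induction and Proposition~\ref{post} instead of the countable-tower representation, that yield a complete argument.
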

\begin{proof}
%We will prove the statement for a fixed value of  $j$.

By Corollary \ref{css}(\ref{icopr}) it suffices to verify the statement for the connected components of $X$ (and for $j$ being fixed). Hence it is sufficient to prove %a similar statement 
that the result is valid for all pro-schemes of dimension $\le d$ by induction on $d$.

So, for some $d>0$ we can assume  that $\om(V_+\brj)$ belongs to the coenvelope of $\om(U_+\brj)[i]$ if $V$ is any pro-scheme of dimension $\le d-1$. It suffices to verify that the same is true for a fixed connected $X$ of dimension $d$.

Next, using Proposition \ref{post} (along  with Remark \ref{rpsh}(1)) one can easily see that $X$ can be replaced by its generic point, i.e., it suffices to verify the statement for $X$ being the generic point of some smooth (connected) $Z$ of dimension $d$. 

Now apply Proposition \ref{post}. % to $M=\om (T_+\brj)$ (i.e. we set $X=T$ in it). 
We obtain distinguished triangles (see the notation of Definition \ref{dpoto}) $Y_{i+j-1}\to Y_{i+j}\to \prod
_{z\in Z^i}\om(z_+\lan j+i\ra )$ for $0\le i\le d$. Now, %$Y_{-1}=0$ and 
$Y_{d+j}=\om (Z_+\brj)$ belongs to the coenvelope in question, and the same is true for $\prod
_{z\in Z^i}\om(z_+\lan j+i\ra )$ for all $0<i\le d$ by the inductive assumption. Hence $Y_j=\om(X_+\brj)$ also belongs to this coenvelope. 
  
\end{proof}

\subsection{The %construction of the 
Gersten weight structure: construction and basic properties}\label{scwger}

Now we describe the main weight structure of this paper.

We apply Theorem \ref{tnews} in the case $\cu=\gdb$, $C'=\{\om(X_+)[i]\} $ for $X\in \sv,\ i\ge 0$. %twist for T-spectra??! 
Denote the corresponding category $\cu'$ by $\gd$; note that the full embedding $\ho(c):\sh\to \gdb$ restricts to an embedding $\shc\to \gd$ (we will just write $\shc\subset \gd$).
We obtain a weight structure $w$ on $\gd$. We will call it the {\it Gersten} weight structure, since it is
closely connected with Gersten resolutions of cohomology (cf.
%\S\ref{sconi}
Remark \ref{rdualn}(2) below). %Below we will denote it just by $w$
By default, %if $\cu=\gds$ then
below $w$ will denote
the Gersten weight structure.

Now, we  easily prove several important properties of this structure.

\begin{theo}\label{tgw}

%Let $X$ be a pro-scheme, $j\ge 0$.
The %n the 
following statements are valid.
\begin{enumerate}

\item\label{iwg-1}
$\gd_{w\ge 0}$ is the coenvelope %in $\gd\subset \gdb$?!
of $C'=\{\om(X_+)[i]\}$ for $X\in \sv,\ i\ge 0$, $\gd_{w\le 0}={}^\perp (C'[1])$.

\item\label{iwg0}
$\gd$, $\gd_{w\le 0}$, and $\gd_{w\ge 0}$ are closed with respect to all small products (both in $\gd$ and in $\gdb$).

\item\label{iwgprod} Small products of weight decompositions in $\gd$ are weight decompositions, and small products of weight Postnikov towers (resp. of weight complexes $t(M_i)$ for $M_i\in \obj \gd$) are weight Postnikov towers (resp. %is a weight complex for $\prod M_i$). 
a choice for $t(\prod M_i)$).

\item\label{iwg1} %For any pro-scheme $X$ we have: 
$\om(X_+)\brj\in\gd_{w\ge j}$ for any pro-scheme $X$ and any $j\ge 0$. 

\item\label{iwg2} If $X$ is %the %spectrum of a function field over $k$ 
the spectrum of a function field over $k$, or if $X$ is semi-local and $k$ is infinite then  $\om(X_+\brj)\in\gd_{w=j}$. 

\item\label{iwg5}  For any pro-scheme $X$ and any $j\ge 0$ the Postnikov tower for $\om(X_+\brj)$ given by
Proposition \ref{post} is a weight Postnikov tower for it. In particular, if $X$ is of dimension $\le d$, then  $\om(X_+\brj)\in \gd_{[j,j+d]}$.

\item\label{iwgh} $\hw$ is %isomorphic 
equivalent to the Karoubization of the category of all $\prod \om(\spe K_{i+})\{ j_i \}$ for $K_i$ %being %a set of
%some
% running over
being function fields over $k$, $j_i\ge 0$.

\item\label{iwshcb} All objects of $\shc$ are bounded in $\gd$ (recall that we assume $\shc$ to be a subcategory of $\gd$).
 
\item\label{iwshc} We have $\obj\shc\cap \sh^{t\le 0}=\obj\shc\cap \gd_{w\ge 0}$.

\item\label{iwg3} If $f:U\to X$ is an open embedding of pro-schemes such that the complement is of codimension $\ge i$ in $X$ (see Remark \ref{rpgysin}), then $\om(X/U\brj)\in \gd_{w\ge i+j}$.

\item\label{iwg6} $\{\om(U_+)\}$ for $U\in \sv$ %weakly 
cogenerate $\gd$ (i.e., $\gd$ is the smallest triangulated subcategory of $\gdb$ that contains all $\{\om(U_+)\}$  and is closed with respect to all products); $w$ is non-degenerate from below.

\end{enumerate}

\end{theo}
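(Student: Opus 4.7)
Parts (\ref{iwg-1})--(\ref{iwgprod}) and the cogeneration clause of (\ref{iwg6}) follow formally from Theorem \ref{tnews} applied to $\cu=\gdb$ with the set of cocompact cogenerators $C' = \{\om(X_+)[i]: X\in \sv,\ i\ge 0\}$; cocompactness comes from Proposition \ref{pprop}(\ref{iemb}), and the subcategory $\cupr$ produced by the theorem coincides with $\gd$ by construction. Non-degeneracy from below in (\ref{iwg6}) is Theorem \ref{tnews}(II.3).

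The main geometric input is part (\ref{iwg1}). For $X\in\sv$ I would argue by induction on $j\ge 0$: the base $j=0$ is immediate since $\om(X_+)\in C'\subset \gd_{w\ge 0}$. For the inductive step, Remark \ref{rpsh}(1) exhibits $\om(X_+\lan j+1\ra)[-1]$ as a direct summand of $\om(X\times\gmmpl\lan j\ra)$; since $X\times\gmm\in\sv$ the inductive hypothesis places the latter in $\gd_{w\ge j}$, and Karoubi-closedness of $\gd_{w\ge j}$ (Proposition \ref{pbw}(\ref{iextw})) gives $\om(X_+\lan j+1\ra)\in\gd_{w\ge j+1}$. To pass from smooth varieties to pro-schemes one invokes Proposition \ref{pcoen} together with the observation that $\gd_{w\ge j}$, being itself the coenvelope of $C'[j]$, absorbs any coenvelope of a class already contained in it.

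Part (\ref{iwg2}) combines (\ref{iwg1}) with the orthogonality criterion $\gd_{w\le j}={}^\perp(C'[j+1])$ (Proposition \ref{pbw}(\ref{iort})): the required vanishing $\gdb(\om(X_+\brj),\om(Y_+)[i])=0$ for $i>j$ and $Y\in\sv$ is, via Proposition \ref{pprop}(\ref{icoclim}), a filtered colimit of $\om(Y_+)[i]$-cohomology groups of smooth approximants to $X$, and this colimit vanishes by Proposition \ref{psht}(3,4) when $X$ is a function field and by Proposition \ref{pshinv} for semi-local $X$ over infinite $k$. Part (\ref{iwg5}) then follows from Proposition \ref{pbw}(\ref{iwpostc}) applied to the Postnikov tower of Proposition \ref{post}: its factors lie in $\gd_{w=0}$ after the appropriate shift by (\ref{iwg2}) together with the product-closure from (\ref{iwg0}), and the containment in $\gd_{[j,j+d]}$ is Proposition \ref{pbw}(\ref{igenlm}). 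Part (\ref{iwgh}) specializes Theorem \ref{tnews}(III) once one records, using (\ref{iwg5}), that the cogenerators $\om(X_+)[i]$ admit weight complexes whose terms are products of $\om(\spe K_+)\{j\}$'s. Part (\ref{iwshcb}) is immediate from (\ref{iwg5}) applied to $X\in\sv$; and for (\ref{iwg3}) one runs the argument of Proposition \ref{post} only over the codimension-$\ge i$ stratification of the complement $Z=X\setminus U$, so every factor of the resulting Postnikov tower lies in $\gd_{w=c+j}\subset\gd_{w\ge i+j}$ with $c\ge i$.

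The subtlest part is (\ref{iwshc}). The inclusion $\shc\cap\gd_{w\ge 0}\subseteq \sh^{t\le 0}$ should follow from the fact that the generators $\om(X_+)[i]$ of $\gd_{w\ge 0}$ lie in $\sh^{t\le 0}$ together with the product-stable characterization of $\sh^{t\le 0}$ via function-field cohomology vanishing (Proposition \ref{psht}(3)); one must verify that this characterization is also preserved under the cone-of-shifted-product construction defining the coenvelope. For the converse, given $E\in \sh^{t\le 0}\cap\shc$ one verifies $E\perp\gd_{w\le -1}$ using the identity $\gd_{w\le -1}={}^\perp\{\om(X_+)[i]: X\in\sv,\ i\ge 0\}$ and Proposition \ref{psht}(5) to build $E$ from $\om(S_+)[n]$ with $n\ge 0$; compactness of $E$ should then allow reduction to a finite construction staying inside the desired orthogonality class.
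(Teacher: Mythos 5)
Most of the proposal tracks the paper closely: parts (\ref{iwg-1})--(\ref{iwgprod}), (\ref{iwg6}), and the cogeneration clause are, as you say, formal consequences of Theorem~\ref{tnews}; part (\ref{iwg1}) via the induction from Remark~\ref{rpsh}(1) and Proposition~\ref{pcoen} is exactly the paper's route (just spelled out more explicitly); parts (\ref{iwg2}), (\ref{iwg5}), (\ref{iwgh}), (\ref{iwshcb}) match. For (\ref{iwg3}) you propose running the Postnikov-tower construction directly on $\om(X/U\brj)$ over the codimension-$\ge i$ stratification of $X\setminus U$; this is a reasonable alternative, but it requires a relative variant of Proposition~\ref{post} which is not in the paper. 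The paper's route is shorter: $U\hookrightarrow X$ induces a bijection on points of codimension $<i$, so the corresponding truncations $w_{\le i+j-1}$ of the towers for $\om(U_+\brj)$ and $\om(X_+\brj)$ agree, and Proposition~\ref{pbw}(\ref{ipostn}) immediately places the cone $\om(X/U\brj)$ in $\gd_{w\ge i+j}$.

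The genuine gap is in part (\ref{iwshc}), and it is in both directions. For the inclusion $\obj\shc\cap\gd_{w\ge 0}\subseteq\sh^{t\le 0}$ you propose to \emph{track} the function-field-vanishing criterion through the cone-of-products construction that defines the coenvelope. That is unnecessary and does not really work as stated, since the intermediate objects of the coenvelope live only in $\gd$ and have no $t$-structure membership to track. The correct argument bypasses this entirely: by part (\ref{iwg2}), $\om(\spe K_+)\in\gd_{w=0}$, so $\om(\spe K_+)[-n]\in\gd_{w\le -1}$ for $n>0$; the orthogonality axiom gives $\om(\spe K_+)[-n]\perp E$; via Proposition~\ref{pprop}(\ref{icoclim}) this translates to $E^n(\spe K_+)=0$ for all $n>0$; and Proposition~\ref{psht}(3) concludes $E\in\sh^{t\le 0}$.

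For the converse you invoke Proposition~\ref{psht}(5) and ``compactness of $E$''. This is the wrong tool in the wrong category. $E\in\obj\shc$ is \emph{cocompact} in $\gd$, not compact, so there is no ``reduction to a finite construction''. Moreover, orthogonality to $\gd_{w\le -1}$ cannot be checked by decomposing $E$ as a homotopy colimit of spectra $\om(S_{i+})[n_i]$ in $\sh$: the class $M^\perp$ (for $M\in\gd_{w\le -1}$) is not closed under $\sh$-coproducts. The paper's argument instead works from $\hw$: from $E\in\sh^{t\le 0}$ and Proposition~\ref{psht}(3) one gets $E^{n+j}_j(\spe K_+)=0$ for all $n>0$, $j\ge 0$, i.e.\ $\om(\spe K_+\lan j\ra)[-n-j]\perp E$; cocompactness of $E$ in $\gd$ together with the description of $\hw$ in part (\ref{iwgh}) then gives $\hw[-n]\perp E$ for $n>0$; finally Proposition~\ref{pbw}(\ref{icharge}) (which is applicable because $E$ is $w$-bounded by part (\ref{iwshcb})) yields $E\in\gd_{w\ge 0}$. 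This is the step where boundedness and the explicit computation of $\hw$ earn their keep, and neither appears in your outline.
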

\begin{proof}
\ref{iwg-1}. This is exactly the description of $w_{\cu'}$ given by Theorem \ref{tnews}.

\ref{iwg0}. The first part of the assertion is given by part II.1 of the theorem, the second one is given by part II.5 of the theorem, the third one is contained in Proposition \ref{pbw}(\ref{iwprod}).

\ref{iwgprod} Immediate from %the previous assertion by part \ref{ikar2} of ibid.
Theorem \ref{tnews}(II.6). %??!

\ref{iwg1}. For any $U\in \sv$ we have $\om(U_+\brj)\in \gd_{w\ge j}$ by Remark \ref{rpsh}(1). Hence the statement follows immediately from Proposition \ref{pcoen} (recall the definition of coenvelope).

\ref{iwg2}. It remains to verify that $\om(X_+\brj)\in \gd_{w\le j}$.
  By the definition of the latter class, for any $U\in \sv,\ i>j$ we should verify that $\om(X_+\brj)\perp \om(U_+)[i]$. 
  
  Now,  we have $\om(U_+)[i]\in \sh^{t\le -i}$ by Proposition \ref{psht}(1). Next,  Proposition \ref{pprop}(\ref{icoclim}) yields that for $E=\om(U_+)$ we have $\gd(\om(X_+\brj), \om(U_+)[i])\cong E^i_j(X)$ (in the notation used in Proposition \ref{pshinv}). Hence the proposition cited  yields the result in the case of an infinite $k$; if $k$ is finite (and $X$ is the spectrum of a function field) then one should apply Proposition \ref{psht}(3).

%Note that in the case when $P$ is the spectrum of a function field, it suffices to apply Proposition \ref{?} here.

\ref{iwg5}. By the previous assertion, for this tower we have $M^i\in \gd_{w=0}$ (see Definition \ref{dpoto}(2)). Hence Proposition
\ref{pbw}(\ref{iwpostc}) yields the result. %Note  here that we have $Y_{0}=0$  in the notation  of Definition \ref{d2}(9). %???

\ref{iwgh}. Immediate from the previous assertion by Theorem \ref{tnews}(III). 

\ref{iwshcb}. Immediate from the  previous two assertions.

\ref{iwshc}. We have $\gd_{w\le -1}\perp \gd_{w\ge 0}$. In particular, if $E\in \gd_{w\ge 0}$ then assertion \ref{iwg2} yields that for any function field $K/k$  we have $\om (\spe K_+)[-n]\perp E$ for any $n>0$. If $E$ also belongs to $\obj \shc$, this translates into 
$E^n(\spe K_{+})=0$ for all $n>0$ (see Proposition \ref{pprop}(\ref{icoclim})).
Hence $E\in \sh^{t\le 0}$ (see Proposition \ref{psht}(3)).

Now let us prove the converse implication. If $E\in \sh^{t\le 0}\cap \obj \shc$ then  $E_j^{n+j}(\spe K_+)=\ns$ for all $n>0$, $j\ge 0$, and function fields over $k$
 (see Proposition \ref{psht}(3)). Certainly this translates into $\om (\spe K_+\lan j\ra)[-n-j]\perp E$. Since $E$ is cocompact in $\gd$, %see?
 it follows (by assertion \ref{iwgh}) that $\hw[-n]\perp E$. Hence Proposition \ref{pbw}(\ref{icharge}) (along  with the previous assertion) yields that 
 $E\in \gd_{w\ge 0}$.

\ref{iwg3}. Since $f$ induces a bijection of the corresponding sets of points of codimension $<i$, it suffices to combine %the previous 
assertion \ref{iwg5} with part \ref{ipostn} of Proposition \ref{pbw}.

%loc. cit.

\ref{iwg6}. Immediate from Theorem \ref{tnews}(II.3).
\end{proof}

\begin{rema}

1. Describing weight
decompositions for arbitrary objects of
$\shc\subset \gd$ explicitly seems to be difficult. Still, one can say something about these weight
decompositions and
weight complexes using their functoriality properties.
In particular, knowing
weight complexes for $X,Y\in \obj \shc$
(or just $\in \obj \gd$) and $f\in \gd(X,Y)$ one can
describe the weight complex of $\co(f)$ up to a homotopy
equivalence as the
corresponding cone. %?? (see Lemma \ref{lwc} below).
 Besides,
let $X\to Y\to Z$
be a distinguished triangle (in $\gd$). Then for any choice of
 $(X_{w\le 0}, X_{w\ge 1})$ and $(Z_{w\le 0}, Z_{w\ge 1})$
there exists a choice
of $(Y_{w\le 0}, Y_{w\ge 1})$ such that there exist
distinguished triangles
 $X_{w\le 0}\to Y_{w\le 0}\to Z_{w\le 0}$ and
$X_{w\ge 1}\to Y_{w\ge 1}\to Z_{w\ge 1}$;
 see Lemma 1.5.4 of \cite{bws}.

2. The author suspects that $w$ is also non-degenerate from above. In any case, we will mostly be interested in bounded objects.

3. Certainly, we could also have considered the Gersten weight structure on the whole $\gdb$. Yet this does not seem to make much sense, since this weight structure will not 'detect' the objects of the corresponding 'extra summand' 
$\cuperp$; cf. 
Theorem \ref{tnews}(II.4). 

Besides, the proof of Theorem \ref{tnews}(II.2) provides us with a {\it weight-exact} 'projection'  $F:\gdb\to \gd$ (i.e., for $E\in \obj \sh$ %functor
we have $E\in  \gdb_{w\ge 0}$ if and only if $F(E)\in \gd_{w\ge 0}$ and $E\in  \gdb_{w\le 0}$ if and only if $F(E)\in \gd_{w\le 0}$). Moreover, if  $E\in \gdb_{w\ge 0}\cap \obj \sh$ then for any function field $K/k$ and $n> 0$ we have $E^n(\spe K_+)=\ns$; hence $E\in \sh^{t\le 0}$ (see Proposition \ref{psht}(3)). 
\end{rema}

%\subsection{Some direct summand results}
\subsection{Direct summand results for  pro-spectra of semi-local schemes}\label{stds}

%Proposition \ref{pexw}
Theorem \ref{tgw} easily implies the following
interesting result.

\begin{theo}\label{tds1n}

1. Assume that $k$ is infinite; let $S$ be a semi-local pro-scheme and $S_0$ be its dense sub-pro-scheme.
Then $\om(S_+)$ is a direct summand of $\om(S_{0+})$.

2. Suppose moreover that $S_0=S\setminus T$, where  $T$ is a closed
sub-pro-scheme of $S$. % everywhere of codimension $r>0$. 
Then we have $\om(S_{0+})\cong
\om(S_+) \bigoplus \om(N_{S, T}/N_{S, T}\setminus T)[-1]$.

3. Assume in addition that $T$ is of codimension $j$ (everywhere) in $S$. Then this decomposition takes the form
$\om(S_{0+})\cong
\om(S_+) \bigoplus \om(T_+\brj) [-1]$.

 \end{theo}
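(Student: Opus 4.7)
The plan rests on the splitting criterion of Proposition \ref{pbw}(\ref{isump}): a distinguished triangle $A \to B \to C$ with $B \in \gd_{w=0}$ and $C \in \gd_{w\ge 1}$ automatically yields $A \cong B \oplus C[-1]$. The key input is Theorem \ref{tgw}(\ref{iwg2}), which, since $k$ is infinite and $S$ is semi-local, places $\om(S_+)$ in the heart $\gd_{w=0}$.

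For part 1, I would apply Proposition \ref{pprop}(\ref{itriang}) to the compatible system of open embeddings $\emptyset \hookrightarrow S_{0,i} \hookrightarrow S_i$ (writing $S = \prli S_i$ and $S_0 = \prli S_{0,i}$ with each $S_{0,i}$ open dense in $S_i$). This produces a distinguished triangle $\om(S_{0+}) \to \om(S_+) \to \om(S/S_0)$ in $\gdb$. Since $S_0$ is dense in $S$, the complement has codimension $\ge 1$ in the sense of Remark \ref{rpgysin}, so Theorem \ref{tgw}(\ref{iwg3}) delivers $\om(S/S_0) \in \gd_{w\ge 1}$. Proposition \ref{pbw}(\ref{isump}) then gives the splitting $\om(S_{0+}) \cong \om(S_+) \oplus \om(S/S_0)[-1]$, which proves part 1.

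For part 2, writing $T = \sqcup_\al T^\al$, I would invoke the Gysin distinguished triangle from Proposition \ref{pinfgy}(1) with $j = 0$:
\[
\om(S_{0+}) \to \om(S_+) \to \prod_\al \om(N_{S,T^\al}/N_{S,T^\al} \setminus T^\al),
\]
identifying the product with $\om(N_{S,T}/N_{S,T}\setminus T)$ via Corollary \ref{css}(\ref{icopr}). The third term sits in $\gd_{w\ge 1}$ by Theorem \ref{tgw}(\ref{iwg3}) applied to the open embedding $N_{S,T}\setminus T \hookrightarrow N_{S,T}$ (the complement has positive codimension). Another application of Proposition \ref{pbw}(\ref{isump}) produces the decomposition claimed. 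For part 3, each connected component $T^\al$ is a closed sub-pro-scheme of the semi-local $S$, hence semi-local itself by Remark \ref{rsemiloc}; thus every vector bundle on it is trivial, and Proposition \ref{pinfgy}(2) identifies $\om(N_{S,T}/N_{S,T}\setminus T)$ with $\om(T_+\brj)$, giving part 3 by direct substitution into part 2.

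The main conceptual obstacle is part 1, where $S_0$ has no obvious description as the complement of a smooth closed sub-pro-scheme and the cone $\om(S/S_0)$ lacks a geometric interpretation through a normal bundle. What rescues the argument is that Theorem \ref{tgw}(\ref{iwg3}) requires only a codimension bound on the complement, not smoothness; this makes the reduction from the Gysin setting of part 2 to the general setting of part 1 essentially formal, and avoids any need to stratify $S\setminus S_0$.
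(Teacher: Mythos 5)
Your proofs of parts~2 and~3 follow the paper's argument essentially verbatim: the Gysin triangle of Proposition~\ref{pinfgy}, the weight bounds from Theorem~\ref{tgw}(\ref{iwg1},\ref{iwg2}), and the splitting criterion Proposition~\ref{pbw}(\ref{isump}), followed by the normal-bundle triviality for semi-local $T$.

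Part~1 is where you diverge, and the divergence is genuine. The paper handles this case with a different tool: it notes that both $\om(S_{0+})$ and $\om(S_+)$ lie in $\gd_{w\ge 0}$ and admit positive weight Postnikov towers whose common zeroth factor is $\om(\spe k(S)_+)$ (using the factorization $\spe k(S)\to S_0\to S$), and then invokes Proposition~\ref{pbw}(\ref{impostp}), the criterion that a morphism inducing an isomorphism on the degree-zero factors of positive Postnikov towers yields a retraction onto an object of the heart. You instead reuse the same cone-splitting route as parts~2 and~3: form the distinguished triangle $\om(S_{0+})\to\om(S_+)\to\om(S/S_0)$ via Proposition~\ref{pprop}(\ref{itriang}), place $\om(S/S_0)$ in $\gd_{w\ge 1}$ via Theorem~\ref{tgw}(\ref{iwg3}), and apply Proposition~\ref{pbw}(\ref{isump}). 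This works — and you correctly identify the enabling observation, that \ref{iwg3} only needs a codimension bound and not a geometric description of the complement as a closed sub-pro-scheme. Your route is more uniform (one splitting lemma suffices for all three parts) and slightly stronger, since it explicitly identifies the complementary summand as $\om(S/S_0)[-1]$, which the paper's statement of part~1 does not assert. The paper's route via \ref{impostp} has the independent merit of being the mechanism that also proves the finite-$k$ variant in the proof of Corollary~\ref{tds1}(II), where one passes through $\om(\spe k(\mathbb{G}_m^r(K))_+)$ and no Gysin triangle of the required shape is available.
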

\begin{proof}

We can assume that $S$ and $S_0$ are connected.

1. By Theorem \ref{tgw}(\ref{iwg1}), we have $\om(S_{0+}),\om(S_+)\in
\gd_{w\ge 0}$. Besides, $\om(\spe k(S)_+)$ could be assumed to be the zeroth term
of their weight complexes for a choice of weight complexes compatible with some  positive Postnikov weight towers for them;
 the embedding $S_0\to S$ is compatible with
 $\id_{\om(\spe k(S)_+)}$ (since we have a commutative triangle $\spe k(S)\to S_0\to S$ of pro-schemes).
 Hence  Proposition \ref{pbw}(\ref{impostp}) yields the result.

2. By Proposition \ref{pinfgy}(1) we have a distinguished triangle
$\om(S_{0+})\to \om(S_+)\to \om(N_{S, T}/N_{S, T}\setminus T)[-1]$. By Theorem \ref{tgw}(\ref{iwg1},\ref{iwg2})
 we have $\om(S_{0+})\in \gd_{w\ge 0}$, $\om(S_+)\in \gd_{w= 0}$,
$\om(N_{S, T}/N_{S, T}\setminus T) \in \gd_{w\ge 1}$. Hence 
 Proposition \ref{pbw}(\ref{isump}) yields the result.

3. By Remark \ref{rsemiloc}, $T$ is semi-local itself.
Hence it suffices to combine the previous assertion with  Proposition  \ref{pinfgy}(2).

% + 'no twists needed' in the description of the heart??! A corollary for an infinite $k$; use the duality result in the case of a finite $k$??! Analyze the proof of 4.2.7 {morintao}??!

\end{proof}

\begin{coro}\label{tds1} \label{tds2}

I Assume that $k$ is infinite.

1. Let $S$ be a connected (affine essentially smooth) semi-local scheme; let $S_0$ be its generic
point.
Then $\om(S_+)$ is a retract of $\om(S_{0+})$.

2. Let $K$ be a function field  over $k$; let $K'$ be
the residue field for a geometric valuation $v$ of $K$ of rank $r$.
%(that is trivial on $k$).
Then $\om(\spe K'_+)\{r\}$ is a retract
of $\om(\spe K_+)$.

II $\hw$ is equivalent to the Karoubization of the category of all  $\prod \om(\spe K_{i+})$ for $K_i$ being %a set of 
function fields over $k$. %, $j_i\ge 0$. 
\end{coro}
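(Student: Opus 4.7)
Part I.1 follows immediately from Theorem \ref{tds1n}(1): the generic point $S_0 = \spe k(S)$ of a connected essentially smooth semi-local scheme $S$ is by definition a dense sub-pro-scheme of $S$, so the theorem gives $\om(S_+)$ as a retract of $\om(S_{0+})$.

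For Part I.2, I would argue by induction on $r$. In the base case $r=1$ the valuation is divisorial, so there is a DVR $\mathcal{O}_v\subset K$ with fraction field $K$ and residue field $K'$, realized as the local ring of a smooth $k$-variety at a smooth codimension-one point; in particular $\spe\mathcal{O}_v$ is a connected essentially smooth semi-local pro-scheme, and $T:=\spe K'$ sits inside it as a codimension-one closed sub-pro-scheme with complement $S_0 = \spe K$. Applying Theorem \ref{tds1n}(3) with $j=1$ then gives
$$\om(\spe K_+) \cong \om((\spe\mathcal{O}_v)_+) \oplus \om(\spe K'_+\lan 1\ra)[-1].$$
The identification $\om(-\lan j\ra)\cong \om(-)\wedge T^j = \om(-)\{j\}[j]$ coming from Proposition \ref{psh}(\ref{ish1}) and the definition of $\{j\}$ rewrites the second summand as $\om(\spe K'_+)\{1\}$, the desired retract. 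For the inductive step I would factor a rank-$r$ geometric valuation on $K$ as a rank-$1$ divisorial valuation with residue field $K_1$ followed by a rank-$(r-1)$ geometric valuation from $K_1$ to $K'$. The inductive hypothesis produces $\om(\spe K'_+)\{r-1\}$ as a retract of $\om(\spe K_{1+})$; applying the exact (twist) functor $\{1\}$ shows $\om(\spe K'_+)\{r\}$ is a retract of $\om(\spe K_{1+})\{1\}$, and the base case embeds the latter into $\om(\spe K_+)$.

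For Part II, the starting point is Theorem \ref{tgw}(\ref{iwgh}), which identifies $\hw$ with the Karoubization of the category of all products $\prod\om(\spe K_{i+})\{j_i\}$ (with $K_i$ function fields over $k$ and $j_i\ge 0$). It suffices to eliminate the twists, i.e.\ to exhibit each such product as a retract of some $\prod\om(\spe L_{i+})$, the reverse inclusion being immediate from the case $j_i=0$. Writing $K_i = k(Z_i)$ for a smooth connected $Z_i\in\sv$, I would take $L_i = k(Z_i\times\af^{j_i})$: the chain of closed smooth subvarieties $Z_i\times\{0\} \subset Z_i\times\af^1 \subset\dots\subset Z_i\times\af^{j_i}$ (using the standard coordinate embeddings at the origin) realizes a rank-$j_i$ geometric valuation on $L_i$ with residue field $K_i$, so Part I.2 gives $\om(\spe K_{i+})\{j_i\}$ as a retract of $\om(\spe L_{i+})$. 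Since products preserve retracts in any additive category, the desired factorization of $\prod\om(\spe K_{i+})\{j_i\}$ through $\prod\om(\spe L_{i+})$ follows, and the two Karoubizations coincide.

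The main delicate point is the bookkeeping of the twists: the $\lan j\ra$-operation on $\opa$ corresponds under $\om$ to $\wedge T^j = \{j\}[j]$ on $\sh$, and this extra $[j]$-shift must be tracked carefully against the $[-1]$-shift appearing in the Gysin splitting of Theorem \ref{tds1n}(3) and against the codimension-versus-rank numerology in the induction for Part I.2. Once the indices are verified, both parts reduce cleanly to Theorems \ref{tds1n} and \ref{tgw}(\ref{iwgh}).
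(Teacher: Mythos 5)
Your Part I is correct and follows essentially the same route as the paper. For I.1 you invoke Theorem \ref{tds1n}(1), exactly as the paper does. For I.2 you reduce to $r=1$, take the DVR $\mathcal{O}_v$ (which the paper describes as the two-point local pro-scheme $S$ whose points are $\spe K$ and $\spe K'$), and apply the Gysin splitting; the index bookkeeping $\om(\spe K'_+\lan 1\ra)[-1]=\om(\spe K'_+)\wedge T[-1]=\om(\spe K'_+)\{1\}$ is exactly right. One small remark: you cite Theorem \ref{tds1n}(3), the paper cites part 2 of that theorem; since for a DVR the normal bundle is trivial, part 3 is the cleaner and more direct reference, and it yields the needed formula without an extra trivialization step. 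Your induction on $r$ via a rank-$1$ divisorial valuation followed by a rank-$(r-1)$ one is a correct unpacking of the paper's terse ``it suffices to treat $r=1$.''

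There is, however, a genuine gap in your Part II. Read the Corollary's statement again: the hypothesis ``Assume that $k$ is infinite'' is placed under heading I and governs only I.1 and I.2; Part II is asserted for an arbitrary perfect $k$, and the paper's proof explicitly splits into two cases. Your argument — deducing Part II from Theorem \ref{tgw}(\ref{iwgh}) by using Part I.2 to present $\om(\spe K_{i+})\{j_i\}$ as a retract of $\om(\spe L_{i+})$ — only works when $k$ is infinite, because Part I.2 (both your proof and the paper's) goes through Theorem \ref{tds1n}, and that theorem needs $k$ infinite: the crucial ingredient $\om(S_+)\in\gd_{w=0}$ for a semi-local pro-scheme $S$ is Theorem \ref{tgw}(\ref{iwg2}), whose semi-local case is established only for infinite $k$ (via Proposition \ref{pshinv}). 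When $k$ is finite your chain of subvarieties $Z_i\times\ns\subset\dots\subset Z_i\times\af^{j_i}$ produces intermediate local rings that you cannot place in $\gd_{w=0}$, and the retraction does not follow. The paper patches this by arguing directly that $\om(\spe K_+)\{r\}$ is a retract of $\om(\spe k(\gmm^r_K)_+)$ using the Postnikov-tower comparison of Theorem \ref{tds1n}(1)'s proof — this works for any $k$ because the objects involved, $\om(\spe k(\gmm^r_K)_+)$ and $\om(\spe K_+)\{r\}\cong\om(\spe K_+\lan r\ra)[-r]$, are built from function fields, and the function-field clause of Theorem \ref{tgw}(\ref{iwg2}) holds unconditionally. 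You need a separate argument of this kind for finite $k$, rather than routing through Part I.2.
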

\begin{proof}
 I.1. This is just a particular case of part 1 of the the theorem.

2. Obviously, it suffices to prove the statement in the case $r=1$.
Next,  $K$ is the function field  of some normal projective variety over
$k$. Hence there exists a $U\in \sv$ such that $k(U)=K$ and $v$ yields
a non-empty closed subscheme of $U$ of codimension $1$ everywhere. % (blow up?! since the singular locus has codimension $\ge 2$ in a normal variety). 
It easily follows that there exists a
pro-scheme $S$ (i.e., an inverse limit of smooth varieties) whose
only points are the spectra of $K$ and $K'$. So, $S$ is local affine; %!!! more general???!!
hence it is semi-local.

By part 2 of the previous theorem we have
$$\om(\spe K_+)=\om(S_+)\bigoplus \om(\spe K'_{+})\{ 1\};$$
this concludes the proof.

II If $k$ is infinite, %The previous
 assertion I.2 yields that we can get rid of the twists mentioned in the (very similar) Theorem \ref{tgw}(\ref{iwgh}).

In $k$ is finite, one should apply the fact that $\om(\spe K'_+)\{r\}$ is a retract of $\om(\spe (k(G_m^r(K)))$ instead (whereas the statement mentioned can be easily established using the method of the proof of Theorem \ref{tds1n}(1)). %this easily follows from ).

\end{proof}

\begin{rema}\label{rstds} 
1. Note that we do not construct any explicit splitting morphisms in
the decompositions above. Probably, one cannot choose any canonical
splittings here (in the general case); so there is no (automatic)
compatibility for any pair of related decompositions. Respectively,
though the pro-spectra %of (spectra of)
coming from  function fields contain tons of
direct summands, there seems to be no general way to decompose them
into indecomposable summands.

2. Yet Proposition \ref{pinfgy} easily yields that
$\om(\spe k(t)_+)\cong \om (\pt_+)\bigoplus \prod\om(z_+)\{ 1\}$; here $z$
runs through all closed points of $\af^1$ (considered as a scheme over $k$; note here that $\om(\af^1_+)\cong \om (\pt_+)$). %, (the sheaf) $\z\cong\om(\af^1_+)\cong \om (\pt_+)$. %%??? $\p^1$??!!

%3. The author does not know whether the results above can be extended to the case of a finite $k$. Yet certain splitting results can be proven in this case also; to this end (essentially smooth) semi-local schemes should be replaced by Henselian ones. Even these versions of our results do not seem to be trivial. %Pro-varieties needed??!!

%4. In particular, even in the case of a finite $k$

\end{rema}

%\subsection{On the splitting of Gersten weight complexes for primitive schemes}\label{splger}

Now recall that any Postnikov weight tower for an $M\in \obj\gd$ defines an $\hw$-complex 
that is well defined  up to homotopy. It turns our that  the {\it augmented Gersten weight complex} for a semi-local pro-scheme %is %'almost split'.
splits.

\begin{pr}\label{psplger}
For a semi-local pro-scheme $S$ over an infinite $k$ consider the Postnikov tower of $M=\om (S_+)$
given by Proposition \ref{post}; denote the corresponding complex by $t(M)=M^i$. Then there exists some  $N^i\in \cu_{w=0},\ i\le 0$, such that 
$t(M)$  is $C(\hw)$-isomorphic to $M^0\bigoplus (\bigoplus_{i\le 0}(N^i\stackrel{\id_{N^i}}{\to}N^i)[-i])$.
\end{pr}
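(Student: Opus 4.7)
The plan is to reduce this directly to Proposition \ref{pbw}(\ref{isplwc}), the general splitting statement for the weight complex arising from any positive weight Postnikov tower of an object lying in the heart. Only two preliminary observations are needed: that $M=\om(S_+)$ belongs to $\gd_{w=0}$, and that the tower produced by Proposition \ref{post} (applied with $j=0$) is a \emph{positive} weight Postnikov tower in the sense of Definition \ref{dwstr}(VIII).

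For the first observation, note that $S$ is a semi-local pro-scheme and $k$ is assumed infinite, so Theorem \ref{tgw}(\ref{iwg2}) (taken with the twist $j=0$) yields $M=\om(S_+)\in \gd_{w=0}$.

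For the second, recall that the tower given by Proposition \ref{post} with $j=0$ has $l=-1$ and $m=d$, i.e., $Y_{-1}=0$ and $Y_0,Y_1,\dots,Y_d=M$, with factors $M_i\cong \prod_{z\in S^i}\om(z_+)\lan i\ra$ for $0\le i\le d$. Theorem \ref{tgw}(\ref{iwg5}) asserts that this is a weight Postnikov tower with respect to the Gersten weight structure. Extending it by setting $Y_r=0$ for all $r<-1$ (see Remark \ref{rwcomp}(2)), we see that $w_{\le r}M=0$ for all $r<0$; hence the tower is positive.

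Having verified both hypotheses, we apply Proposition \ref{pbw}(\ref{isplwc}) directly to $M$ with this positive weight Postnikov tower, which yields the existence of objects $N^i\in \gd_{w=0}$ ($i\le 0$) together with a $C(\hw)$-isomorphism $t(M)\cong M^0\oplus \bigoplus_{i\le 0}(N^i\stackrel{\id}{\to}N^i)[-i]$. There is no genuine obstacle: the statement is essentially a combination of the heart-membership result \ref{iwg2}, the identification of the Postnikov tower in \ref{iwg5} as a weight Postnikov tower, and the abstract splitting \ref{isplwc}. If anything warrants checking, it is the mild bookkeeping point that the tower of Proposition \ref{post} is positive in the precise sense required by \ref{isplwc}, which is immediate from $Y_{-1}=0$ as above.
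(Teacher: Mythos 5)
Your proof is correct and follows exactly the paper's own argument: cite Theorem \ref{tgw}(\ref{iwg5}) to identify the tower from Proposition \ref{post} as a weight Postnikov tower, then invoke Proposition \ref{pbw}(\ref{isplwc}). The only difference is that you spell out the verification that $M\in\gd_{w=0}$ (via Theorem \ref{tgw}(\ref{iwg2})) and that the tower is positive, both of which the paper leaves implicit.
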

\begin{proof}
  Theorem \ref{tgw}(\ref{iwg5}) yields that $t(M)$ is a weight complex for $M$. Hence the result is given by Proposition \ref{pbw}(\ref{isplwc}).

\end{proof}

Below we will use this statement in order to deduce a similar result for cohomology.

\section{On cohomology and coniveau spectral sequences}  \label{sapcoh}

This is the central section of the paper.

In \S\ref{sextkrau} we describe (following H. Krause) a natural
method for extending cohomological
functors from a triangulated (small) $\cu'\subset\cu$ to $\cu$.
This method is compatible with the usual definition of cohomology for pro-schemes.

In \S\ref{sext} we  (easily) translate the results of the previous section to cohomology; in particular,
the cohomology of (the spectrum of) a function field
    $K/k$ contains direct summands
    corresponding to the cohomology of semi-local (affine essentially smooth) schemes whose
generic point is $K$,
    as well as twisted cohomology of residue
 fields of $K$. Note that %in this case
 here  the cohomology of
 pro-schemes mentioned
 is calculated in the 'usual' way.

In \S\ref{sdconi} we consider  weight spectral sequences corresponding
to (the Gersten weight structure) $w$. We observe that these
spectral sequences generalize naturally  the classical coniveau spectral
sequences. Besides, for a fixed $H:\gd\to \au$ our (generalized) coniveau
spectral sequence converging to $H^*(M)$ (where $M$ could be any object of $\shc$  or a bounded object of $\gd$) is $\gd$-functorial in $M$ (in particular, 
it is $\shc$-functorial if restricted to $\shc$); this
 fact is non-trivial even when restricted to the spectra of smooth varieties.
Besides, we prove the following fact: any Noetherian subobject in the  image of %the $i$-th 
any extended cohomology of a compact motivic spectrum belonging to $\sh^{t\le -r}$ (with respect to any $\shc$-morphism) in the cohomology of a smooth variety is supported in codimension $\ge r$ (for any $r> 0$).

In \S\ref{sconi} we construct a nice duality $\Phi:\gd^{op}\times \sh\to \ab$; we prove that $w$ is orthogonal to $t$ with respect to $\Phi$. It follows that  our {\it generalized coniveau} spectral sequences can be expressed in terms of $t$ (starting from $E_2$); this vastly generalizes the corresponding seminal result of \cite{blog}.

In \S\ref{sat} we note that for certain varieties and spectra one can choose quite 'economical' 
versions of weight Postnikov towers and (hence) of the (generalized) coniveau spectral sequences for cohomology.

In \S\ref{spure} we prove that all {\it pure extended} cohomological functors $\gd\to \au$ 
come (via the correspondence provided by Corollary \ref{cwss}) from those (contravariant additive)
functors $\hw\to \au$ that convert all products into coproducts. As a consequence, we prove (if $k$ is infinite)
 %in the case 
  that  %that 
  this result (applied for $\au=\ab$) yields a complete description of the heart of $t$ (i.e., of the category of strictly homotopy invariant Nisnevich sheaves on $\sm$).

\subsection{Extending cohomology from $\shc$ to $\gd$}\label{sextkrau}

Certainly, we would like to apply the results of the previous sections to the cohomology of pro-spectra. The problem is that cohomology is 'usually' defined on $\sh$ (or on $\shc$). So
we describe (and apply) a general method for extending cohomological
functors from a full triangulated $\cu'\subset\cu$ to $\cu$
(after H. Krause). Its advantage is that it  yields functors that are 'continuous' with respect to inverse limits in $\gdp$.
%$\popa$ (sent to $\gd$ via $\om$). 

The construction requires $\cu'$ to be
skeletally small, i.e., there should exist a %(proper)
 subset (not just a subclass!) $D\subset \obj \cu'$ such that any object of $\cu'$ is isomorphic to some element of $D$; this is certainly true for $\shc$.
%For simplicity, we will sometimes (when writing  sums over $\obj\cupr$) assume  that $\obj \cupr$ is a set itself.
Since the distinction between small and skeletally small categories will not affect our  arguments and results,  we will ignore it in the rest of the paper.
%??!!the construction does not depend on the choice of $D$??!

Recall that for an abelian category  $\au$ and any small $\cu'$ the category $\adfu(\cupr^{op},\au)$
 is abelian also; complexes
in it are exact if and only if they are exact when applied to any object of $\cu'$, and the same is true for coproducts.

  %, $X\in \obj \gds$.

Now suppose that $\au$ satisfies AB5. % i.e. it is closed with respect to all small coproducts, and  filtered direct limits of exact sequences in $\au$ are exact.

\begin{pr}\label{pextc}
I Let $\au$ be fixed; consider an $H'\in \adfu(\cupr^{op},\au)$. % be an additive functor (it will usually be cohomological).

1. One can construct an  extension of $H'$ %in an unique way
to an additive functor $H:\cu\to \au$ (i.e., the restriction of $H$ to $\cu'$ is equal to $H'$). It is cohomological
 if and only if $H$ is. The correspondence $H'\mapsto H$ given by this construction is functorial and additive in the obvious sense. %defines an additive functor $\adfu(\cupr^{op},\au)\to \adfu(\cu^{op},\au)$.

2. Moreover, suppose that in $\cu$ we have a projective
system $X_l,\ l\in L$, equipped with a compatible system of
morphisms $X\to X_l$, such that the latter system for any
$Y\in \obj \cupr$ induces an isomorphism
$\cu(X,Y)\cong \inli \cu(X_l,Y)$. Then we have $H(X)\cong \inli H(X_l)$.

II Let $X\in \obj \cu$ be fixed.

1. For a family of $X_l\in \obj \cu'$ and
$f_l\in \cu(X,X_l)$ assume that $(f_l)$ induce a surjection
$\bigoplus \cu(X_l,-)\to \cu(X,-)$ in $\adfu(\cu',\ab)$. Then 
$f_l$ also yield a surjection $\bigoplus H'(X_l)\to H(X)$. 

Moreover, such a set of $(X_l,f_l)$ exists for any $X\in \obj \cu$.

%$\bigoplus H'(X_l)\to H(X)$ for any $H',\au$, and $H$ as in assertion I1.

2. Let $F'\stackrel{f'}{\to} G' \stackrel{g'}{\to} H'$
be a (three-term) complex in $\adfu(\cupr^{op},\au)$ that is exact in
the middle; suppose that $H'$ is cohomological. Then the
complex $F\stackrel{f}{\to} G \stackrel{g}{\to} H$
(here $F,G,H,f,g$ are the corresponding extensions) is exact
 in the middle also.

3. If $H'\cong \coprod H_i'$ in $\adfu(\cupr^{op},\au)$, then $H(X)\cong \coprod H_i(X)$ for the corresponding extension functors $H_i$.

III Apply the previous assertions for $\cu=\gd$, $\cu'=\shc$ (note that $\shc$ is skeletally small). Then 
the extension of  $H':\shc\to \au$ to
$H:\gd\to \au$ satisfies the following properties. 

1. $H$ converts  those inverse
limits  in $\gdp$ that are mapped by $\ho(c)$ inside $\gd$ into the corresponding direct limits in $\au$. 

2. $H$ converts products in $\gd$ into coproducts in $\au$.

3. $H$ % %fulfills this condition.
can be characterized (up to a canonical isomorphism) as the only extension of $H'$ that 
satisfies %the latter restriction.
the  previous two assumptions.

4. $H$ converts countable homotopy limits in $\gd$ into the corresponding direct limits in $\au$.

%%?? that converts   inverse limits  in $\gd'$ to the corresponding direct limits in $\au$.

\end{pr}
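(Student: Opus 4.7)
The plan is to adapt H. Krause's approach for extending cohomological functors from a skeletally small triangulated subcategory. For $X \in \obj \cu$, consider the restricted covariant Yoneda functor $\cu(X,-)|_{\cupr}: \cupr \to \ab$, which lies in the abelian functor category $\adfu(\cupr, \ab)$. By the co-Yoneda formula this functor is canonically the filtered colimit of representables $\cu(Y,-)$ indexed by $(Y, X \to Y) \in X \downarrow \cupr$; the indexing category is filtered because $\cupr$ is triangulated (direct sums and cones of morphisms produce common upper bounds). Define $H(X)$ by evaluating the diagram $(Y, X \to Y) \mapsto H'(Y)$ (contravariant in the indexing data, since $H'$ is contravariant on $\cupr$) and taking the appropriate limit in $\au$; independence of the presentation and functoriality are standard homological arguments in $\adfu(\cupr, \ab)$. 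Restriction to $\cupr$ recovers $H'$ since $(Y, \id_Y)$ is initial in $Y \downarrow \cupr$ for $Y \in \obj \cupr$, and cohomologicality of $H$ on $\cu$ follows by AB5-exactness of filtered colimits in $\au$. Functoriality and additivity in $H'$ are formal, yielding Part I.1.

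Part I.2 is immediate from the construction: the hypothesis $\cu(X,Y) \cong \inli \cu(X_l, Y)$ for $Y \in \obj \cupr$ says that $\cu(X,-)|_{\cupr}$ is the filtered colimit of the $\cu(X_l,-)|_{\cupr}$ in $\adfu(\cupr, \ab)$, and AB5 transfers this through the construction to give $H(X) \cong \inli H(X_l)$. For Part II.1, any surjection $\bigoplus_l \cu(X_l,-) \twoheadrightarrow \cu(X,-)|_{\cupr}$ in the functor category (always possible since the representables generate $\adfu(\cupr, \ab)$) gives a cofinal family in the diagram defining $H(X)$, so the induced $\bigoplus H'(X_l) \twoheadrightarrow H(X)$ is surjective. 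Parts II.2 and II.3 follow respectively by applying the construction termwise to $F' \to G' \to H'$ (using AB5-exactness) and by the commutation of coproducts with filtered colimits in $\au$.

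Part III specializes $\cupr = \shc \subset \cu = \gd$. Assertion III.1 is immediate from Part I.2 together with Proposition \ref{pprop}(\ref{icoclim}), which supplies the required isomorphism $\gdb(\hogd \prli X_i, Y) \cong \inli \gdb(\hogd X_i, Y)$ for $Y \in \obj \shc$. Assertion III.2 uses cocompactness of $\shc$-objects in $\gdb$ (Proposition \ref{pprop}(\ref{iemb})): from $\gdb(\prod X_i, Y) \cong \bigoplus \gdb(X_i, Y)$ for $Y \in \obj \shc$ one deduces $H(\prod X_i) \cong \bigoplus H(X_i)$ by pushing through the presentation-based construction. Assertion III.4 applies cohomologicality of $H$ to the defining triangle (Definition \ref{dcoulim}) of the countable homotopy limit $\prli Y_i$; the resulting long exact sequence involves $H(\prod Y_i) \cong \bigoplus H(Y_i)$ by III.2, and collapses to $\inli H(Y_i)$ using AB5-exactness of countable filtered colimits in $\au$.

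The main obstacle is the uniqueness assertion III.3. The plan is to exploit the explicit construction of $\gdp$ in \S\ref{scgd} to show that every object of $\gd$ is realized, up to isomorphism, as $\hogd(\prli c(\pom(P_i)))$ for some projective system $(P_i)$ in $\opa$, and that every morphism in $\gd$ between two such objects is represented by a compatible morphism of systems. Given this, any extension $H^\ast$ of $H'$ satisfying (III.1) and (III.2) is forced on every such pro-presentation to be $\inli H'(c(\pom(P_i)))$, which equals $H$ applied to the same object; functoriality and canonicity of the resulting natural isomorphism $H^\ast \cong H$ on arbitrary $\gd$-morphisms then follow from the functoriality of these pro-presentations. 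Establishing the existence and functoriality of such presentations is the technical crux, and relies entirely on the model-categorical construction deferred to \S\ref{scgd}.
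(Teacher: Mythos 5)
Your proof of Part III.3 does not work. You propose to deduce uniqueness by realizing every object of $\gd$ as $\hogd(\prli c(\pom(P_i)))$ for a projective system $(P_i)$ in $\opa$, with $\gd$-morphisms between such objects lifting to compatible morphisms of systems. Neither claim is true. By Theorem \ref{tgw}(\ref{iwg6}), $\gd$ is the smallest strict triangulated subcategory of $\gdb$ containing the $\om(U_+)$ that is closed under all small products; hence it contains shifts and cones of arbitrary $\gd$-morphisms (for instance $\co(\om(\pt_+)\stackrel{\times 2}{\to}\om(\pt_+))$), and such objects are not pro-spectra of objects of $\popa$. Moreover, $\gdb = \ho(\gdp)$ is a homotopy category, so $\gd$-morphisms between $\om$-images of pro-systems in $\opa$ need not be represented by level morphisms of the pro-systems. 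The paper's proof of III.3 is entirely different and purely formal: it invokes (the dual of) Lemma 2.3 of \cite{krause}, which yields uniqueness from the single fact that $\shc$ cogenerates $\gd$ --- roughly, the collection of $M\in\obj\gd$ on which two extensions (both cohomological, both converting products into coproducts, both restricting to $H'$) agree is a strict triangulated subcategory closed under small products and containing $\shc$, hence coincides with $\gd$. No pro-presentation of general objects of $\gd$ is required, and none is available.

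A secondary slip occurs in Part I.1: the comma category $X \downarrow \cupr$ is cofiltered, not filtered. For $(Y_1,f_1),(Y_2,f_2)$ the object $(Y_1 \oplus Y_2,(f_1,f_2))$ with the two projections is a common \emph{source}; the coprojections into $Y_1\oplus Y_2$ do not assemble into a single morphism out of $X$ compatible with both $f_i$. So the extension is $\inli_{(X\downarrow\cupr)^{op}} H'(Y)$, a \emph{filtered colimit}, and it is there that AB5-exactness is used; your phrase ``taking the appropriate limit'' hides the variance. Aside from these points: your treatments of I, II, III.1 and III.2 run along the same lines as the paper's (which simply cites Proposition 1.2.1 of \cite{bger} for I and II), and your III.4 via the long exact sequence of the defining triangle of $\prli Y_i$ is a correct alternative to the paper's route, which instead verifies the hypothesis of I.2 via Lemma \ref{coulim}(3).
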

\begin{proof}
Assertions I, II are simple  applications of the results of \cite{krause}; see Proposition 1.2.1 of \cite{bger}. %Say more on II1, II3??!

III %All of 
These statements are easy consequences of assertion I.2.

In order to obtain assertion III.1 this result should be combined with   
%Immediate from assertion I2 combined with 
Proposition \ref{pprop}(\ref{icoclim}). 
%One should apply 
Next, part \ref{iemb} of the proposition (the cocompactness of objects of $\shc$ in $\gd$) %in order to obtain 
immediately yields assertion III.2. %whereas 

Assertion III.3 is given by (the dual to) Lemma 2.3 of \cite{krause} (note that $\shc$ cogenerates $\gd$; see Theorem \ref{tgw}(\ref{iwg6})).

In order to obtain assertion III.4 %is follows from .
 one should %also 
 apply assertion III.2 and recall Lemma \ref{coulim}(3).
\end{proof}

\begin{rema}\label{rcohp}

1.   In the setting of assertion III we will call  $H$ an
{\it extended} cohomology theory. Note that assertion III.3 yields a complete characterization of extended theories.

2. As a particular case of assertion III.1,  for any $j\ge 0$ and any pro-scheme $X=\inli X_i$ we have $H(\om(X_+\brj))\cong \inli H(\om(X_{i+}\brj))$. This is certainly compatible with the usual way %properties??!
 of extending cohomology from varieties to their inverse limits; so  all of the results below can be applied to the 'classical' definitions of $K$-theory, algebraic cobordism, motivic cohomology, etc. of semi-local %and primitive
 schemes. For example, the value of an extended theory at the %('pointed')??!
 spectrum of an (essentially smooth) local (resp. Henselian) ring will be the corresponding Zariski (resp. Nisnevich) residue.

Also, recall that $H$ coincides with $H'$ on $\shc$ (see assertion I.1); %of loc. cit.); 
hence we obtain 'classical' values of cohomology for all compact objects of $\sh$ also.

%??!3. The construction immediately yields: for $H'=\shc(-,Y)$, $Y\in \obj \shc$, we have $H=\gd(-,Y)$. %??!; see (\ref{ekrause}).

3. We will construct $\gdp$ in \S\ref{scgd} below as the category of (filtered) pro-objects of $\psh$. Hence (by part I.2 of the Proposition) all extended cohomology theories factor through the category $\proo - \sh$ of 'naive' pro-objects of $\sh$ (that is certainly not triangulated). %In particular, this means that
Thus  the pairing $\Phi:\gd^{op}\times \sh$ that we will construct in \S\ref{sconi} below factors through $(\proo - \sh)^{op}\times \sh$.

4. Certainly, the direct summand results of Theorem \ref{tds1n} also yield  similar statements  in $\proo-\shc\subset\proo-\sh$.

\end{rema}

\subsection{On cohomology
of pro-schemes, and its direct summands}\label{sext}

We easily prove that the results of the previous section
%proved above immediately
easily imply 
%corresponding results
similar assertions for extended cohomology theories. %The case of
 % are especially interesting.

\begin{pr}\label{cdscoh}
Suppose %that 
$k$ is infinite.

Let  $S$ be a semi-local pro-scheme; let $H:\gd\to \au$ be an extended cohomology theory. %a limit??! 

1. Let $S_0$ be a dense sub-pro-scheme of  $S$. Then $H(\om(S_+))$
  is a direct summand of $H(\om(S_{0+}))$.

2. Suppose moreover that $S_0=S\setminus Z$, where  $Z$ is a
closed subscheme of $S$ of codimension $j>0$. Then we have
$H(\om(S_{0+}))\cong H(\om(S_{+}))
\bigoplus H(\om(Z_+\brj)[-1])$.

3. Suppose %that
 $S$ is connected and %that
  $S_0$ is the generic point of $S$.
Then $H(\om(S_+))$ is a retract of $H(\om(S_{0+}))$ in $\au$.

4. Let $K$ be a function field  over $k$. Let $K'$
be the residue field for a geometric valuation $v$ of $K$ of rank
$j$.
Then $H(\om(\spe K'_+)\{j\})$ is a retract of $H(\om(\spe K_+))$ in $\au$.

5. %For any $S$ and $H$ call the application of $H$ to
%For a primitive scheme $S$ 
Consider the Postnikov tower of $M=\om (S_+)$
given by Proposition \ref{post}; denote the corresponding complex by $t(M)=M^i$. 
Denote by $T_H(S)$ the {\it Cousin} %corresponding
 complex %obtained by applying $H$ to the complex $t(S)$ 
$(H(M^{-i}))$. Then there exist some $A^i\in \obj \au$ for $i\ge 0$ such that $T_H(S)$ is $C(\au)$-isomorphic to $H(S)\bigoplus A^0\to A^0\bigoplus A^1\to A^1\bigoplus A^2\to \dots$.

\end{pr}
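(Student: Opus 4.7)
The plan is to deduce all five assertions by applying the extended cohomological functor $H$ to the corresponding pro-spectrum-level statements of \S\ref{stds}, using that any such $H:\gd\to\au$ is additive (Proposition \ref{pextc}(I.1)) and therefore preserves retractions and direct sum decompositions in $\gd$.

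Parts 1--4 are immediate. Part 1 follows by applying $H$ to the retraction $\om(S_+)\hookrightarrow\om(S_{0+})$ of Theorem \ref{tds1n}(1). Part 2 follows from the splitting $\om(S_{0+})\cong\om(S_+)\oplus\om(Z_+\brj)[-1]$ given by Theorem \ref{tds1n}(3). Parts 3 and 4 are the special cases of Corollary \ref{tds1}(I.1) and (I.2) respectively, pushed through $H$ (in part 4, $\{j\}$ is an exact endofunctor of $\gd$, so $H$ may be evaluated on $\om(\spe K'_+)\{j\}$ directly).

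For part 5 the plan is to apply $H$ term-by-term to the $C(\hw)$-isomorphism of Proposition \ref{psplger}. Reading the proof of Proposition \ref{pbw}(\ref{isplwc}), the splitting takes the concrete form
\[
t(M)\;\cong\; M[0]\ \oplus\ \bigoplus_{i\le 0}\bigl(N^i\stackrel{\id}{\to}N^i\bigr)[-i],
\]
where $M=\om(S_+)$ occupies cohomological degree $0$ and each acyclic summand $(N^i\to N^i)[-i]$ is concentrated in degrees $i-1,i$; here the first inductive step of that proof peels $M$ off the augmented complex $\dots\to M^{-1}\to M^0\to M\to 0$ and leaves $N^0=\co(M^0\to M)[-1]$ as residue. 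Since $H$ is additive and contravariant on $\hw$, applying it term-by-term reverses degrees and yields a $C(\au)$-isomorphism. Setting $A^j:=H(N^{-j})$ for $j\ge 0$, the summand $(N^{-j}\to N^{-j})[j]$ transforms into the acyclic complex $A^j\stackrel{\id}{\to}A^j$ placed in degrees $j$ and $j+1$ of $T_H(S)$; combined with the $H(M)=H(S)$ contribution in degree $0$, collecting contributions produces $H(S)\oplus A^0$ in degree $0$ and $A^{j-1}\oplus A^j$ in degree $j\ge 1$, with the ``staircase'' differentials as required.

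The main obstacle is the identification of the leading summand in Proposition \ref{psplger} as $M=\om(S_+)$ rather than the weight-complex term $M^0=\om(\spe k(S)_+)$; without it one would only split off $H(\om(\spe k(S)_+))$ from the Cousin complex, not $H(S)$ itself. Once this identification is in place (via the induction in the proof of Proposition \ref{pbw}(\ref{isplwc})), the remaining degree bookkeeping is routine.
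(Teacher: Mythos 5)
Your proposal is correct and coincides with the paper's own proof, which deduces all five parts by applying the additive extended functor $H$ to Theorem \ref{tds1n}, Corollary \ref{tds1}, and Proposition \ref{psplger}, exactly as you do; your reference to Theorem \ref{tds1n}(3) for part 2 is in fact the more precise one. The extra bookkeeping you supply for part 5 (the contravariant degree reversal and the identification of the leading summand with $\om(S_+)$ via the induction in Proposition \ref{pbw}(\ref{isplwc})) is what the paper leaves implicit behind the word ``immediate.''
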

\begin{proof}

1. Immediate  from
Theorem \ref{tds1n}(1).

2. Immediate  from
 Theorem \ref{tds1n}(2).

3. Immediate  from
 Corollary \ref{tds1}(I.1).

4. Immediate  from
 Corollary \ref{tds2}(I.2).
 
 5. Immediate from Proposition \ref{psplger}.

\end{proof}

\begin{rema}\label{runiv}
Certainly, assertion 5 for $H$ being a 'motivic homotopy' theory is stronger then the universal exactness Theorem 6.2.1 of \cite{suger}. A caution: the definition of the universal exactness given in ibid. is not quite correct; see \cite{suzain}.
\end{rema}

\subsection{Coniveau spectral sequences for the cohomology of (pro)spectra}
\label{sdconi}

Let $H:\gd\to \au$ be a cohomological functor, $M\in \obj \gd$.

%###

\begin{pr} \label{rwss}

I.1. Any choice of a weight spectral sequence $T(H,M)$
(see Proposition \ref{pwss}) corresponding to the Gersten weight
 structure $w$ is canonical and $\gd$-functorial in
 $X$ starting from $E_2$.

2. $T(H,M)$ converges to $H(M)$ if $M$ is bounded with respect to $w$.

3. Let $H$ be an extended theory (see Remark \ref{rcohp}),
 $M=\om(Z_+)$ for a $Z\in \sv$.
Then any choice of $T(H,M)$ starting from $E_2$ is canonically
isomorphic to the classical coniveau spectral sequence (converging
to the $H$-cohomology of $Z$; see  \S1 of \cite{suger}). %Delete??! compare relative cohomology??! Remark??! Additional assumptions on relative cohomology.
In particular, the corresponding filtration is the coniveau one.

II.  Let $M'\in \obj \shc\cap \sh^{t\le -r}$ for some $r\in \z$. Then the following statements are valid.

1. $H(M')=(W^{r}(H))(M')$ (see Remark \ref{rintel}(1)).

2. For any $g\in \gd(M,M')$ we have $\imm(H(g))\subset (W^{r}(H))(M)$.

3. Now let $H$ be an extended theory,
 $M=\om(Z_+)$ for a $Z\in \sv$. For a $g$ as above %and $i\in \z$, 
consider a Noetherian subobject $A$ of  $\imm(H(g))$ (i.e., we assume that any ascending chain of subobjects of $A$ in $\au$ becomes stationary). Then $A$  is {\it supported in codimension $r$}, i.e.,
 there exists an open $U\subset Z$ such that $Z\setminus U$ is of codimension $\ge r$ in $Z$ 
and $A$ is killed by the restriction morphism $H(\om(Z_+))\to H(\om(U_+))$.  

\end{pr}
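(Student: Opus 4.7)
The plan is to derive everything from the weight-structure machinery of \S\ref{sws} (chiefly Propositions \ref{pwss} and \ref{pbw}) together with the explicit Gersten weight Postnikov tower for $\om(Z_+)$ built in \S\ref{scgersten}--\ref{scwger}. Parts I.1 and I.2 are immediate from Proposition \ref{pwss}(3): the weight spectral sequence associated to any weight Postnikov tower is canonical and $\gd$-functorial from $E_2$ and converges for bounded objects (note that $M = \om(Z_+) \in \gd^b$ by Theorem \ref{tgw}(\ref{iwg5}), and for $M \in \obj\shc$ by Theorem \ref{tgw}(\ref{iwshcb})). For part I.3, Theorem \ref{tgw}(\ref{iwg5}) says the codimension-flag Postnikov tower of Proposition \ref{post} \emph{is} a weight Postnikov tower for $\om(Z_+)$. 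Since the classical coniveau spectral sequence is by construction the spectral sequence of the cohomological exact couple attached to this same codimension filtration (the filtered colimit over flags is absorbed by $H$ via Remark \ref{rcohp}(2)), the spectral sequences agree term-by-term; the canonical identification from $E_2$ then follows from I.1.

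Part II rests on Theorem \ref{tgw}(\ref{iwshc}), which shifted by $-r$ reads $\obj\shc \cap \sh^{t\le -r} = \obj\shc \cap \gd_{w\ge r}$. Thus $M' \in \gd_{w\ge r}$, and one may choose $w_{\ge r}M' = M'$. Assertion II.1 is then immediate: $(W^{r}H)(M') = \imm(H(M') \to H(M')) = H(M')$. For assertion II.2, given $g \in \gd(M,M')$, orthogonality $\gd_{w\le r-1} \perp \gd_{w\ge r}$ kills $w_{\le r-1}M \to M \xrightarrow{g} M'$, so $g$ factors as $M \to w_{\ge r}M \to M'$. Applying $H$ gives $\imm(H(g)) \subset \imm(H(w_{\ge r}M) \to H(M)) = (W^{r}H)(M)$.

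The main new content is assertion II.3. Combining II.2 with I.3 we have $A \subset (W^{r}H)(\om(Z_+))$; the explicit tower of Proposition \ref{post} identifies this with $\ke(H(\om(Z_+)) \to H(Y_{r-1}))$, where $Y_{r-1} = \om(\prli_{\lambda \in L} (Z \setminus Z^\lambda_r)_+)$ ranges over the filtered system of flags with $Z^\lambda_r$ everywhere of codimension $\ge r$. Since $H$ is extended, Remark \ref{rcohp}(2) gives $H(Y_{r-1}) = \inli_\lambda H(\om((Z\setminus Z^\lambda_r)_+))$; using AB5 in $\au$, the kernel commutes with the filtered colimit, so $(W^{r}H)(\om(Z_+))$ equals the union (in the lattice of subobjects of $H(\om(Z_+))$) of the individual kernels $A_\lambda := \ke(H(\om(Z_+)) \to H(\om((Z\setminus Z^\lambda_r)_+)))$. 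For the Noetherian subobject $A \subset \bigcup_\lambda A_\lambda$, the ascending chain $A \cap A_\lambda$ stabilizes at some $\lambda_0$, hence $A \subset A_{\lambda_0}$, and $U := Z \setminus Z^{\lambda_0}_r$ is the required open dense subscheme.

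The main obstacle is the comparison step in part I.3: one must check that the weight spectral sequence built from the Gersten weight Postnikov tower agrees, as an $E_1$-term-with-differentials, with the classical coniveau exact couple. Once this identification of underlying exact couples is verified, the rest of the proof is formal consequence of the weight-structure framework combined with the Noetherian colimit argument in II.3.
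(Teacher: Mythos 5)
Your overall strategy matches the paper's: I.1 and I.2 follow from Proposition \ref{pwss}, II.1 from Theorem \ref{tgw}(\ref{iwshc}) and Remark \ref{rintel}(1), II.2 from the functoriality of weight filtrations, and II.3 from the kernel description of $(W^rH)$ combined with AB5 and Noetherianity. Those parts are correct, and your explicit unpacking of II.2 (the factorization of $g$ through $w_{\ge r}M$ via the orthogonality axiom) and of II.3 (the lattice-of-subobjects/stabilizing-chain argument) is a faithful, and in places more detailed, version of the paper's terse reasoning.

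The genuine gap is in I.3, and you have in fact flagged it yourself as ``the main obstacle'' without closing it. You write that the classical coniveau spectral sequence is ``by construction'' the one attached to the codimension exact couple obtained by passing to colimits of the geometric Postnikov towers. This is not tautological: the construction in \S1 of \cite{suger} proceeds via cohomology with supports, indexed differently from the tower of Proposition \ref{post}, and the statement that the colimit of the geometric towers yields the same exact couple (not just the same $E_2$-page, but the same $E_1$-level data up to canonical isomorphism) is a substantive comparison. The paper closes exactly this hole by appealing to \S3 of \cite{ndegl}, where D\'eglise constructs both exact couples and proves they produce the same spectral sequence (cf. also Remark 2.4.1 of \cite{bws}); only after that identification does Remark \ref{rcohp}(2) allow one to recognize the colimit as the $H$-value of $Po(M)$. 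So to make your argument complete you must either cite D\'eglise's comparison or reproduce it; as written, ``by construction'' is carrying weight it cannot bear, and saying ``once this is verified the rest is formal'' acknowledges rather than fills the gap.

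One small additional point in your favor: for I.2 the paper's proof reads ``Immediate since $w$ is bounded,'' which is a slip (the Gersten weight structure on $\gd$ is not bounded); what is used is that the object $M$ is bounded. Your invocation of Theorem \ref{tgw}(\ref{iwg5}) for $\om(Z_+)$ and Theorem \ref{tgw}(\ref{iwshcb}) for $M \in \obj\shc$ is the correct justification.
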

\begin{proof}
I.1. This is just a particular case of  %Theorem \ref{pssw}(I).
Proposition \ref{pwss}.

2. Immediate since $w$ is bounded;
see part 3 of the proposition.

 3. Recall that in Proposition \ref{post}  a 
   Postnikov tower $Po(M)$ for $M$
 was obtained from certain 'geometric' Postnikov towers
%(in $j(C^b(\smc))$) 
by passing to the inverse limit in $\gdp$.

Now, in %\S5.3 of \cite{deggenmot}  
\S3 of \cite{ndegl}  two exact couples (for the $H$-cohomology of $Z$) were constructed. 
One of them  was obtained by applying $H$ to 
our geometric towers %mentioned
and then passing to the inductive limit (in $\au$).
Moreover, it was shown that this couple yields 
the same (coniveau) spectral sequence as the other one mentioned in loc. cit. (see \S2.1 of ibid.; cf. also Remark 2.4.1 of \cite{bws}), whereas the latter couple coincides with the one considered in  \S1.2 of \cite{suger}. 
 Furthermore,
  Remark \ref{rcohp}(2) yields that the  limit mentioned is
 (naturally) isomorphic to %the %$H$-cohomology of
the spectral sequence obtained via  $H$ from $Po(M)$. 
Next, since $Po(M)$ is a weight Postnikov tower for $M$, %(see  Proposition \ref{pbw}(5)), 
%we obtain: 
 the latter spectral
 sequence  is one of the possible choices for $T(H,M)$.

 Lastly, assertion 1 yields that all other possible
$T(H,M)$ (they depend on the choice of a weight
 Postnikov tower for $X$) starting from $E_2$ are also
canonically isomorphic to the classical coniveau spectral
sequence mentioned.

II.1. By Theorem \ref{tgw}(\ref{iwshc}) we have $M'\in \gd_{w\ge r}$. Hence we can take $w_{\ge r}M'=M'$, and the result is immediate from Remark \ref{rintel}(1).

2. Immediate from the $\gd$-functoriality of our weight filtration (given by the remark) along  with the previous assertion.

3. According to the previous  assertion,
$A$ lies in $(W^r(H))(M)$.
 %(applied to $H^i$)
By  assertion I.3 %(recall the usual definition of the coniveau filtration).
this means that $A$ dies in $\inli_{U\subset Z,\ \codim_Z(Z\setminus U)\ge r}H(\om(U_+))$. Since $A$ is noetherian, it also vanishes in some particular $U$ of this sort.
\end{proof}

\begin{rema}\label{rrwss}

1. Assertion II.3 along  with its motivic analogue (see \S\ref{sdm} below) could be quite actual for the study of 'classical' motives. Note that one can apply it for $M'$ being a %retract of $M$,
cone of some morphism $\om(Z'_+)\to \om(Z_+)$ for some $Z\in \sv$ (here $Z$ could be a point),  
whereas $H$ can be the $i$-th cohomology for some 'standard' cohomology theory  and an  $i\in \z$.

Besides, one can easily prove %assertion II 
the assertion for any (not necessarily additive) functor $\gd\to \au$ that converts homotopy limits %coming from $\gdp$?
into direct limits and sends zero morphisms into zero maps.

Certainly, the statement is interesting only if $r>0$.
 
2. Assertion I.3 of the proposition yields a good reason to
call (any choice of)   $T(H,M)$ a
{\it generalized coniveau spectral
sequence} (for a general $H,\au$, and $M\in \obj \gd$); this
will also distinguish (this version of) $T$ from weight spectral
 sequences corresponding to other weight structures. We will
 give more justification for this term in Remark \ref{rconiv} below.
  So, the corresponding filtration
can be called the (generalized) coniveau filtration (for a general $M$).

3. %Note here:
Actually, in order to obtain a coniveau spectral sequence for $(H,Z)$
using the recipe of %\S1.2 of 
\cite{deggenmot} and \cite{suger} it is not sufficient to 
compute just the cohomology of (the spectra of smooth) varieties. One also needs to apply $H$ to certain objects of $\opa$
in order to compute the $E_1$-terms of the exact couple, whereas the connecting morphisms of the couple come from the natural comparison morphisms between relative cohomology and the cohomology of varieties 
(see \S1.1 and Definition 5.1.1(a) of ibid.). So, for those 'classical' cohomology theories for which %all of
 all of this information has an 'independent' definition, one should check whether it
 can be 'factored through $SH$'. This seems to be true for all of 'well-known' theories. %Witt??! Non-orientable! 
For $K$-theory this fact is given by Corollary 1.3.6 of \cite{papi}; for \'etale cohomology the proof is easy; one could use an argument from the proof of Theorem 
4.1 of \cite{ndegl}.
%6.4 of \cite{ndegl}. %??!
%We will justify this statement for \'etale and motivic cohomology in Remark \ref{r?) below. 

%suffices to have a 'model category' realization, since then the 'cone distinguished triangle' contains all the information required??! 
%yet this fact requires certain 'extra' references
%the author hopes to add the corresponding references 
%that will possibly be added to a subsequent version of this text??!!

On the other hand, in order to calculate the coniveau filtration it suffices to know the restriction of $H$ to the (spectra of smooth) varieties; so this does not require any of this complicated extra information.
Besides, our (pretty standard) arguments yield that for any $i\in\z,\ j\ge 0$ and any 
cohomology theory satisfying axioms 5.1.1(a), COH1, and COH3 of \cite{suger} (which is certainly the case for all of the examples we are interested in) the $E_1$-terms of the 'standard' coniveau spectral sequences are isomorphic to our ones.

\end{rema}

\subsection{A duality of motivic spectral categories; % and an application to coniveau spectral sequences}
%\subsection{An 
comparing  spectral sequences}\label{sconi}\label{sdual}

In order to apply the formalism of orthogonal structures we need the following statement.

\begin{pr}\label{pdualsh}
For each $M\in \obj \sh$ consider the (cohomological) functor $H_M:\gd\to \ab$ obtained by extending $\gd(-,M)$ %functor??!
via Proposition \ref{pextc}(III). %??

Then the following statements are valid.

I.1. The  pairing $\Phi:\gd^{op}\times \sh\to \ab:\ \Phi(X,M)=H_M(X)$ is a nice duality of triangulated categories.

2. For any $X\in \obj \gd$ the functor $\Phi(X,-)$ converts coproducts into coproducts.

II The Gersten weight structure $w$  on $\gd$ is orthogonal to the homotopy $t$-structure $t$ on $\sh$. 

III For any $M\in \obj \sh$ the functor $\Phi(-,M)$ converts  filtered inverse limits in $\gdp$ into direct
 limits in $\ab$. %nafig??!

\end{pr}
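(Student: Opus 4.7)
My plan is to handle the three parts in sequence, making essential use throughout of the identification $H_M \cong \gdb(-,M)|_{\gd}$ for $M\in\obj\sh$, which reduces most verifications to standard facts about $\gdb$-Hom.

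For part I.1, I first note that $\gdb(-,M)|_{\gd}$ satisfies both characterizing properties of the Krause extension (Proposition \ref{pextc}(III.3)): it converts inverse limits in $\gdp$ landing in $\gd$ into direct limits in $\ab$ by Proposition \ref{pprop}(\ref{icoclim}), and products in $\gd$ to coproducts because $M$ is cocompact in $\gdb$ by Proposition \ref{pprop}(\ref{iemb}). Hence $H_M = \gdb(-,M)|_{\gd}$, and bi-additivity, bi-cohomologicality, and the shift isomorphism for $\Phi$ are then immediate. For niceness, the bifunctor $\sh^{op}\times\sh\to\ab$, $(X,M)\mapsto\sh(X,M)$, is nice by Proposition \ref{pnice}(1) (taking the functor $F$ there to be the identity of $\sh$); its restriction to $\shc^{op}\times\sh$ gives a nice $\Phi'$, and Proposition \ref{pnice}(2) transports niceness to the extension $\Phi$. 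For part I.2, since any $X\in\obj\shc$ is compact in $\sh$, we have $\sh(-,\coprod M_i)|_{\shc}\cong\coprod\sh(-,M_i)|_{\shc}$ in $\adfu(\shc^{op},\ab)$, and Proposition \ref{pextc}(II.3) propagates this isomorphism to the extensions, giving $\Phi(X,\coprod M_i)\cong\coprod\Phi(X,M_i)$.

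For part II, I treat the two orthogonality conditions separately. Given $Y\in\sh^{t\ge 1}$, the characterization of $\sh^{t\ge 0}$ in Proposition \ref{psht}(2) applied to $Y[1]\in\sh^{t\ge 0}$ yields $\gdb(\om(U_+)[i],Y)=\sh(\om(U_+),Y[-i])=0$ for all $U\in\sv$ and $i\ge 0$, so $\Phi(-,Y)$ vanishes on the generators $C'=\{\om(U_+)[i]\}$ of the coenvelope presenting $\gd_{w\ge 0}$ (Theorem \ref{tgw}(\ref{iwg-1})). I then verify that $D':=\{Z\in\obj\gd:\gdb(Z,Y)=0\}$ has the defining closure properties of the coenvelope: closure under small products is immediate from the cocompactness of $Y$ in $\gdb$; and for closure under the $\prli$-construction of Definition \ref{dcoulim}, given $Y_0,\co(\phi_i)[-1]\in D'$, I bootstrap $\gdb(Y_i,Y)=0$ by induction on $i$ from the triangle $\co(\phi_i)[-1]\to Y_i\to Y_{i-1}$ via the cohomological long exact sequence, and then apply Lemma \ref{coulim}(3) (available since $Y$ is cocompact) to conclude $\gdb(\prli Y_i,Y)=\inli\gdb(Y_i,Y)=0$. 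Now given $Y\in\sh^{t\le -1}$, applying Proposition \ref{psht}(5) to $Y[-1]\in\sh^{t\le 0}$ yields a distinguished triangle $\coprod E_i[1]\to Y\to\coprod E_i[2]$ whose outer terms lie in the extension-closure of $\coprod\om(S_+)[l]$ with $l\ge 1$. For $X\in\gd_{w\le 0}={}^\perp(C'[1])$, $\Phi(X,\om(S_+)[l])=\gdb(X,\om(S_+)[l])=0$ for $l\ge 1$; combining the coproduct-preservation of part I.2 with the cohomologicality of part I.1, this vanishing propagates through coproducts and through the extension-closure, so $\Phi(X,\coprod E_i[k])=0$ for $k\ge 1$, and then $\Phi(X,Y)=0$ by the long exact sequence of the triangle.

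Part III is immediate from the identification $\Phi(-,M)=\gdb(-,M)|_{\gd}$ combined with Proposition \ref{pprop}(\ref{icoclim}). The main obstacle will be organizing the coenvelope argument for the first orthogonality condition: the intermediate objects $Y_i$ in the $\prli$-construction are not themselves among the generating data, so $\gdb(Y_i,Y)=0$ must be established inductively via the long exact sequence from the data for $Y_0$ and the $\co(\phi_i)[-1]$ before cocompactness of $Y$ (via Lemma \ref{coulim}(3)) can close the loop at the homotopy limit.
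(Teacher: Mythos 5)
Your proof is correct and follows essentially the same route as the paper's: identify $H_M$ with $\gdb(-,M)|_{\gd}$ (via the uniqueness criterion Proposition \ref{pextc}(III.3)), transport niceness via Proposition \ref{pnice}, and prove the two orthogonality conditions by reducing to the generators $C'$ of the coenvelope (resp.\ to the objects of Proposition \ref{psht}(5)). One small improvement over the published text: for part~I.2 you correctly invoke Proposition \ref{pextc}(II.3) together with the compactness of objects of $\shc$ in $\sh$ — the paper's citation of Proposition \ref{pextc}(III.2) addresses coproducts in the $X$-variable rather than in $M$ and appears to be a slip. Your explicit verification of the coenvelope closure properties for $\{Z:\gdb(Z,Y)=0\}$ (mirroring the proof of Lemma \ref{lbes} for the right variable) is just an unwound version of the paper's terse appeal to Proposition \ref{pextc}(III.4) and Theorem \ref{tgw}(\ref{iwg-1}).
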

\begin{proof}
I.1. Immediate from Proposition \ref{pnice}.

I.2. Immediate from Proposition \ref{pextc}(III.2).

II We should check that 
 $\Phi (X,Y)=\ns$ if either (1) $X\in \gd_{w\ge 0}$
and $Y\in \sh^{t \ge 1}$ or if (2) $X\in \gd_{w\le 0}$ and $Y\in \sh^{t \le -1}$.

We verify %??!
the orthogonality in question in the setting (1). Proposition \ref{pextc}(III.4) yields that it suffices to present $X$ as a countable homotopy limit of certain $X_i\in \obj \gd$
such that $\Phi(X_i,Y)=\ns$. Since $\Phi(-,Y)$ is a cohomological functor, Theorem \ref{tgw}(\ref{iwg-1}) yields the following: 
it suffices to verify that $\Phi(\prod \om(S_{i+})[n_i],Y)=\ns$ for any family of $S_i\in \sv$, $n_i\ge 0$. %The latter is immediate from 
Next, Proposition \ref{pextc}(III.2) yields that it suffices to verify this statement for a single $S_i$. In this case the assertion is immediate from the definition of  $t$ (see %Proposition \ref{pextc}(I1) and 
Proposition \ref{psht}(2)) and the fact that the restriction of $\Phi(-,Y)$ to $\shc$ (by definition) is just $\sh(-,Y)$.

Lastly let us verify the orthogonality assertion for the setting (2). 
Since $\Phi(X,-)$ is homological and respects coproducts, Proposition \ref{psht}(5) yields the following: it suffices to verify that 
$\Phi(X, \om(S_{i+}) [n_i])=\ns$ for any $S_i\in \sv$, $n_i\ge 1$. %Again, 
The latter follows from Proposition \ref{pextc}(II.1) along  with the orthogonality axiom for $w$.

III Immediate from Proposition \ref{pextc}(III.1).
\end{proof}

\begin{rema}\label{rhocolim}
1. Suppose %that 
  we have an inductive system $M_i\in \obj\sh$, $i\in I$, connected
by a compatible family of morphisms with some $M\in \sh$  that satisfy the following condition:
for any $Z\in \obj \shc$ %Not in $\sh$??!
we have $\sh(Z,M)\cong\inli \sh(Z,M_i)$
(via these morphisms $M_i\to M)$. In such a situation it is
reasonable to call $M$ the homotopy colimit of $M_i$. Note that all (small) coproducts are examples of homotopy colimits of this sort (by the definition of compact objects).

Then Proposition \ref{pextc}(I.2) yields that for
$X\in \obj \gd$ we have $\Phi(X,M)=\inli \Phi(X,M_i)$. So, one may
say  that all objects of $\gd$ are 'compact with respect to $\Phi$',
whereas part 3 of the proposition yields that all objects of $\sh$
are 'cocompact with respect to $\Phi$'. Note that both of these  properties %statements % would fail %not be
%true if we would have taken fail are wrong
 fail for the duality $\Phi'(X,M)=\gdb(X,M)$. %the first one only??!

2. Now we continue pursuing the issue of "localizing scalars"; see Remark \ref{rrcoeff}(1). So, $S\subset \z$ is a set of primes, and we consider $\gd[S\ob]$.

Next we note (dually to the remark cited and following  Appendix A.2 of \cite{kellyth} as well as Appendix B of \cite{levconv}) the following: since $\sh$ is compactly generated, one can construct a certain compactly generated category 
$\sh[S\ob]$ by localizing $\sh$ by  cones of $c\stackrel{\times s}{\to} c$ for $c\in \obj \sh,\ s\in S$. The localization functor $l:M\mapsto M[S\ob]$ respects (small) coproducts and compact objects; for  any $c\in \obj  \shc$ and  $c'\in \obj \sh$ we have $\sh[S\ob](c[S\ob],c' [S\ob])\cong \cu(c,c')\otimes_\z \z[S\ob]$.

This formula easily implies that %Next we note that
 the homotopy $t$-structure for $\sh$ "induces" a homotopy $t$-structure on $\sh[S\ob]$. Indeed, we note that $t$ has a "description in terms of compact objects" (see Proposition \ref{psht}(3)); thus it suffices to apply the general $t$-structure results of (the Appendix of) \cite{talosa}.

Next, the "bicontinuity" of $\Phi$ (as described above) easily yields that  the group $\Phi(X,M)$ is $S$-torsion (i.e., any of its elements is killed by some product of elements of $S$) if $X$ or $M$ is a "generalized $S$-torsion object" (i.e., if $X$ is an object of the corresponding $\gd_{S-tors}$ or if $M$ belongs to $\sh_{S-tors}$ defined as the corresponding localizing subcategory of $\sh$).  Hence $\Phi':(X,M)\mapsto \Phi(X,M)\otimes_\z \z[S\ob]$ induces a well-defined duality $\Phi[S\ob]$ of  $\gd[S\ob]$ with $\sh[S\ob]$.  %it  coincides with the "extended version"??!
Since $w$ is orthogonal to $t$, the corresponding $w[S\ob]$ for $\gd[S\ob]$  is obviously orthogonal to $t[S\ob]$ on $\sh[S\ob]$. 

Certainly, these remarks can also be applied to the categories $\gdt$ and $\sht$ that we will consider below (and we will need $S=\{2\}$ and $S=\p$; the latter choice gives certain $\q$-linear categories).

\end{rema}

Now let us  relate generalized coniveau spectral sequences to the
homotopy $t$-structure (in $\sh$). This is a
vast extension of the
seminal results of \S6 of \cite{blog} (i.e., of the calculation
by Bloch and
Ogus of the $E_2$-terms of coniveau spectral sequences)
and of \S4 of \cite{ndegl}.

\begin{coro}\label{ccompss}

 If
$H$ is represented by a $Y\in \obj\sh$ (via our $\Phi$) then for a motivic
(pro)spectrum $M$ our generalized coniveau spectral sequence $T(H,M)$ starting from
$E_2$ can be naturally and $\gd^{op}\times \sh$-functorially expressed in terms of the cohomology of $M$
with coefficients in the
$t$-truncations of $Y$ (as in Proposition \ref{pdual}). %%## Decribe all E?? D?

%In particular, the coniveau filtration for $H^*(X)$ could be described as in part \ref{ispostn} of loc. cit.

\end{coro}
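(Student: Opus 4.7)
The plan is to reduce the statement directly to Proposition \ref{pdual}. All of its hypotheses have been verified in Proposition \ref{pdualsh}: part (I.1) produces the nice duality $\Phi:\gd^{op}\times\sh\to\ab$, and part (II) asserts that the Gersten weight structure $w$ on $\gd$ is left orthogonal to the homotopy $t$-structure $t$ on $\sh$ with respect to $\Phi$. Given a representing object $Y\in\obj\sh$, our cohomology theory on $\gd$ is by definition $H(-)=\Phi(-,Y)$. Applying Proposition \ref{pdual} with $\cu=\gd$, $\du=\sh$, and this $Y$ produces a spectral sequence $S$ built from the exact couple
$$D_2^{pq}=\Phi(M,Y^{t\ge q}[p-1]),\qquad E_2^{pq}=\Phi(M,Y^{t=q}[p]),$$
together with a canonical isomorphism $S\cong T(H,M)$ starting from $E_2$.

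The $E_2$-terms are precisely the cohomology of $M$ (with respect to $\Phi$) with coefficients in the $t$-truncations $Y^{t=q}$ of $Y$, which is what the statement claims. The couple itself is assembled from the $t$-decomposition triangles $Y^{t\ge q+1}\to Y^{t\ge q}\to Y^{t=q}$ by applying $\Phi(M,-)$; hence the whole target spectral sequence is manifestly an invariant of the homotopy $t$-structure data of $Y$.

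For the $\gd^{op}$-functoriality in $M$ starting from $E_2$: on the $T(H,M)$ side this is Proposition \ref{pwss}(I.3), and on the truncation side it is immediate because $\Phi$ is a bifunctor and the couple is constructed from the fixed truncation triangles of $Y$. For the $\sh$-functoriality in $Y$: any morphism $Y\to Y'$ in $\sh$ induces, by Remark \ref{rts}(1), a morphism between the $t$-decomposition towers of $Y$ and $Y'$, hence a morphism of the associated exact couples and thus of the resulting spectral sequences (from $E_2$ on, the ambiguity in the choice of $t$-truncation functors disappears). The naturality of the comparison isomorphism $S\cong T(H,M)$ in $Y$ follows by inspection of its construction in Theorem 2.6.1 of \cite{bws}, where the comparison is built out of the same truncation data.

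The one point requiring a bit of care is that the bifunctoriality in $(M,Y)$ must be compatible: the comparison of $T(H,M)$ with $S$ is produced using the orthogonality of $w$ and $t$ which is a statement symmetric in both variables, and the construction of \cite{bws} proceeds by applying $\Phi(-,-)$ to (weight and $t$-) Postnikov towers of each factor separately. Since weight Postnikov towers are weakly functorial in $M$ (Proposition \ref{pbw}(\ref{iwefun})) and $t$-decompositions are strictly functorial in $Y$ (Remark \ref{rts}(1)), the resulting $E_2$-level isomorphism is bi-natural, yielding the claimed $\gd^{op}\times\sh$-functoriality.
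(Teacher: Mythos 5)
Your proposal is correct and follows exactly the paper's route: the paper's entire proof is ``Immediate from Proposition \ref{pdualsh},'' which is shorthand for feeding the nice duality $\Phi$ and the orthogonality $w\perp_\Phi t$ established there into Proposition \ref{pdual}. Your elaboration on the bi-functoriality (via Proposition \ref{pwss}(I.3), Remark \ref{rts}(1), and the construction from \cite{bws}) just spells out what the paper leaves implicit.
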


\begin{proof}

 Immediate from Proposition \ref{pdualsh}.

\end{proof}

\begin{rema}\label{rdualn} \label{rconiv} 

1. Our
comparison statement is  true for the $Y$-cohomology of an
arbitrary $M\in \obj \shc$; this  extends to $\shc$ Theorem
4.1 of \cite{ndegl} (whereas the latter essentially extends the
results of \S6 of \cite{blog}). We obtain one more reason  to call
$T$ (in this case) a generalized coniveau spectral sequence for
(the cohomology of) motivic spectra.
%So, we have generalized the results of \S6 of \cite{blog} to the cohomology of motives.

Note also that the methods of Deglise do not (seem to) yield the $\shc$-functoriality of the isomorphism in question. 
%??! (cf. Remark \ref{rdual}).

2. If $Y\in \sh^{t=0}$ (i.e., it is a {\it strictly %homotopy 
$\afo$-invariant} Nisnevich sheaf with transfers; see Lemma 4.3.7 of \cite{morintao} or the proof of Corollary \ref{coextpure} below), then $E_2(T)$ yields the
Gersten resolution
for the sheaf $H(-_+)$  (when $X$ varies); this is why we called $w$ the
Gersten weight structure.

3. In \S2.3--2.5 of \cite{bger} for any weight structure $w$ on a triangulated category $\cu$, and a cohomological $H:\cu\to \au$ certain {\it virtual $t$-truncations} were studied. For any $M\in \obj \cu$, $i\in \z$, one considers $\tau_{\le i}H(M)=\imm (H(w_{\ge -i-1})\to H(w_{\ge -i})) $, $\tau_{\ge i}H(M)=\imm (H(w_{\le -i})\to H(w_{\le -i+1})) $ (the connecting morphisms here are uniquely determined by the choices of the corresponding weight decompositions). %
%These %definitions 
Virtual $t$-truncations have several nice properties; in particular, we obtain cohomological functors $\cu\to\au$. The name is justified by the following isomorphisms of functors from $\gd$ to $\ab$: % (that we formulate in our current setting): 
%Certainly, for any $i\in \z$ we also have: 
$\Phi(-,(t^{\le i}Y)[-i])\cong \tau_{\le i}(\Phi(-,Y))$ and $\Phi(-,(t^{\ge i}Y)[-i])\cong \tau_{\ge i}(\Phi(-,Y))$; see Proposition 2.5.4(1) of \cite{bger}.

4. A related observation is that virtual $t$-truncations of arbitrary extended cohomology theories are also extended (see Theorem \ref{tgw}(\ref{iwgprod}) and Remark \ref{rcohp}(1)). %??!{spure}

\end{rema}

\subsection{'Simple' coniveau spectral sequences for the cohomology of Artin-Tate spectra}\label{sat}

The formalism of weight structures yields much flexibility for the calculation of $T(H,M)$ (that certainly yield spectral sequences that are canonically isomorphic starting from $E_2$). Usually none of these spectral sequences are 'simple'; yet for certain $M\in \obj \shc\subset \obj \gd$ there exist very 'economical' weight Postnikov towers.

Indeed, assume that $M$ is an {\it Artin-Tate spectrum}, i.e.,  there exist finite extensions $k_i$ of $k$ such that $M$ belongs to the triangulated subcategory of $\shc$ generated by %Tate twists of 
$\om (\spe k_i{}_+\lan j_i\ra)$ (for $j_i\ge 0$; certainly, the number of $k_i$ can be assumed to be finite here). %Indeed, 
We note that there exist a weight structure for Artin-Tate spectra whose heart consists of (finite!) direct sums of objects of this sort (and their retracts; see Remark \ref{rwgen}(2)), whereas a weight Postnikov tower with respect to this weight structure is certainly also a Gersten weight structure (by Proposition \ref{pbw}(\ref{iwpostc})).

One can construct a vast family of varieties that yield Artin-Tate spectra. To this end it is useful to note that this category contains $\om (\spe k_i{}_+\lan j_i\ra)$, $\om (\gmmpl)$, $\om (\p^n_+)$ for any $n\ge 0$, and it is a tensor triangulated subcategory of $\shc$ (whereas the tensor product corresponds to the product of varieties). Besides, if one replaces $\sh$ by $\ho(\mglmod)$ here (see \S\ref{smgl} %??!
below) then can apply the 'usual' Gysin distinguished triangle in order to find pro-schemes whose pro-spectra are Artin-Tate ones.

Lastly, one can detect Artin-Tate spectra using the weight complex functor (cf. Proposition \ref{pbw}(\ref{iwc})); see 
Corollary 8.1.2 of \cite{bws}.

%A description of 
\subsection{On pure extended  cohomology theories} %; a 'duality' of  \texorpdfstring{$\hw$}{Hw}
%the heart of $w$  
%with %the category of strictly homotopy invariant sheaves} % and the 'duality' of $\hw$ with 
 %\texorpdfstring{$\hrt$}{Ht}}
 \label{spure} 

Corollary \ref{cwss} gives a certain description of the 'big category' of pure functors $\gd\to \au$. Yet it is certainly more interesting to analyse the category of pure extended functors (when $\au$ is an AB5-category) instead since these functors %certainly 
essentially  yield a full subcategory of  $\adfu(\shc,\au)$. 

It turns out that  it is quite easy to distinguish pure extended functors from all other pure ones; the corresponding restriction is somewhat similar to the one for Brown representability.

\begin{pr}\label{pextpure}
Suppose %that 
 $\au$ satisfies AB5.

Then restricting pure (see Definition \ref{dpure}) extended functors $\gd\to \au$ to  $\hw$
yields an equivalence of the category of these functors with the category of those additive contravariant functors from
$\hw$ to $\au$ that satisfy the following {\it cocompactness} condition: they convert all (small) $\hw$-products into $\au$-coproducts.

%sheaves; the corresponding functors are Nisnevich cohomology??! Via Nis-descent; Example 4?? of \cite{morintao}; add to \S\ref{ssh}??! There could exist non-zero $Sh_{Nis}$-cohomology in positive degrees that has zero residues at generic points??!

\end{pr}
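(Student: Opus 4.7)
The starting point is Corollary \ref{cwss}, which already identifies pure cohomological functors $\gd \to \au$ with additive contravariant functors $\hw^{op} \to \au$ via restriction $H \mapsto H|_{\hw}$ and inverse given by the weight-complex formula of Proposition \ref{pbw}(\ref{iwcoh}). The task therefore reduces to showing that this restriction sends pure extended functors exactly to cocompact $H_0$. Necessity is essentially formal: if $H$ is pure extended then by Proposition \ref{pextc}(III.2) it converts $\gd$-products into $\au$-coproducts, and by Theorem \ref{tgw}(\ref{iwg0})(\ref{iwgprod}) $\hw$-products coincide with $\gd$-products on objects of $\hw$, so $H|_{\hw}$ is cocompact.

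For sufficiency let $H_0$ be cocompact and let $H$ be the corresponding pure functor on $\gd$. The first step, and the one matching cocompactness to property (III.2) of Proposition \ref{pextc}, is to verify that $H$ itself converts $\gd$-products into $\au$-coproducts. Given $M_l \in \obj \gd$, Theorem \ref{tgw}(\ref{iwgprod}) says the degreewise product $\prod_l t(M_l)$ in $\hw$ is a weight complex for $\prod_l M_l$; applying the contravariant cocompact $H_0$ degreewise yields $H_0(\prod_l t(M_l)^j) \cong \bigoplus_l H_0(t(M_l)^j)$; and since $\au$ satisfies AB5, coproducts are exact and zeroth cohomology commutes with them, so $H(\prod_l M_l) \cong \bigoplus_l H(M_l)$.

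It remains to verify the second characterizing condition of Proposition \ref{pextc}(III.3) for $H$: continuity with respect to those inverse limits in $\gdp$ whose images lie in $\gd$. My plan is indirect: set $H' := H|_{\shc}$, let $\tilde H$ denote the Krause extension of $H'$, and check $H \cong \tilde H$. Since $\tilde H$ is extended, it converts products and countable homotopy limits appropriately (Proposition \ref{pextc}(III.2), (III.4)); combined with the coenvelope description of $\gd_{w\ge 0}$ in Theorem \ref{tgw}(\ref{iwg-1}) (and the parallel description of $\gd_{w\le 0}$) one checks that $\tilde H$ is also pure, so Corollary \ref{cwss} reduces the desired isomorphism to $\tilde H|_{\hw} \cong H_0$. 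Both sides are cocompact (the left one by the necessity direction applied to $\tilde H$), and by Corollary \ref{tds1}(II) every object of $\hw$ is a retract of a small product of pro-spectra $\om(\spe K_+)$ of function fields (with the finite-$k$ case handled by the variant in loc.cit.); hence the comparison reduces to a single generator $\om(\spe K_+) = \prli \om(U_+)$. For such a generator, Proposition \ref{pextc}(III.1) gives $\tilde H(\om(\spe K_+)) = \inli H(\om(U_+))$, while $H(\om(\spe K_+)) = H_0(\om(\spe K_+))$ follows tautologically from the weight-complex formula for an object of $\hw$. The main obstacle is verifying that these two values agree, which amounts to showing that $H$ is already continuous at these distinguished pro-spectra; the essential input is that the weight Postnikov towers of $\om(U_+)$ supplied by Proposition \ref{post} assemble, in $\gdp$, into the analogous tower for $\om(\spe K_+)$, and cocompactness of $H_0$ then carries the direct limit through the weight-complex formula.
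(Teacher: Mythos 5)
Your first two steps are exactly what the paper does: the necessity direction follows from Proposition \ref{pextc}(III.2), and the verification that the weight-complex functor $H$ attached to a cocompact $H_0$ sends $\gd$-products to $\au$-coproducts (via Theorem \ref{tgw}(\ref{iwgprod}) plus exactness of coproducts in $\au$) is precisely the one computation the paper records. At that point, however, you should already be done. Proposition \ref{pextc}(III.3) rests on (the dual of) Krause's Lemma 2.3 together with the fact that $\shc$ cogenerates $\gd$, and this yields uniqueness of the extension among \emph{cohomological} functors that convert products into coproducts; the continuity condition (III.1) comes along automatically. Since $H$ is cohomological by Corollary \ref{cwss} and you have verified (III.2), $H$ must coincide with the Krause extension of $H|_{\shc}$, hence is extended. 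Treating (III.1) as a condition in need of independent verification is a misreading of (III.3) and is where your argument veers off into unnecessary and unfinished territory.

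The detour itself has two genuine gaps. First, to see that $\tilde H$ is pure you must show it kills $\gd_{w\le -1}$, and the ``parallel description of $\gd_{w\le 0}$'' you invoke does not exist: Theorem \ref{tgw}(\ref{iwg-1}) provides a coenvelope description only for $\gd_{w\ge 0}$, whereas $\gd_{w\le 0}$ is characterized as the left orthogonal ${}^\perp(C'[1])$. Vanishing of a functor does not propagate through an orthogonal complement the way it propagates through a coenvelope, so this step is unsupported (and an attempt via the weight spectral sequence for $\tilde H$ is circular, since one would need exactly the purity being established). Second, the comparison $H(\om(\spe K_+))\cong \tilde H(\om(\spe K_+))$, which you yourself flag as ``the main obstacle,'' is left as a sketch; making it precise would require a compatibility of the weight Postnikov towers of the $\om(U_+)$ with a tower for $\om(\spe K_+)$ that the paper never states in the form you need. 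Both problems evaporate once one reads (III.3) as giving uniqueness already from (III.2).
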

\begin{proof}
%By Corollary \ref{cwss}, we only have
We only have to check that the restriction of the equivalence provided by Corollary \ref{cwss} to  pure extended functors yields the equivalence of this category with the one of cocompact functors from $\hw$ to $\ab$.

By Proposition \ref{pextc}(III.2), extended functors convert $\hw$-products into coproducts.
Moreover, part III.3 of the proposition yields the following: it remains to prove that the functor obtained from a cocompact $H_0:\hw^{op}\to \au$ (by the method given by Proposition \ref{pbw}(\ref{iwcoh})) yields a functor that converts $\gd$-products into $\au$-coproducts. The latter is immediate from Theorem \ref{tgw}(\ref{iwgprod}).

\end{proof}

%In particular, 
\begin{rema}
Note that the {\it virtual $t$-cohomology} of any extended cohomological functor is pure extended; cf. Remarks \ref{rconiv}(3,4) and %Remark 
\ref{rpure}.
\end{rema}

Applying our proposition we obtain a certain description of the heart of $t$, i.e., of the category of strictly homotopy invariant sheaves (see Definition 4.3.5 of \cite{morintao}).

\begin{coro}\label{coextpure}
Suppose %that 
$k$ is infinite.

%??!
Then the restriction of $\Phi:\gd^{op}\times \sh\to \ab$ to $\hw^{op}\times \hrt$
%$\hrt$ (i.e. the heart of the homotopy $t$-structure for $\sh$) is naturally equivalent to the
yields an equivalence of $\hrt$ with the category of cocompact (additive contravariant) functors $\hw\to \ab$.
\end{coro}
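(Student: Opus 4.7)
The plan is to factor the asserted equivalence through Proposition~\ref{pextpure}: I will exhibit $Y \mapsto \Phi(-,Y)$ as an equivalence between $\hrt$ and the category of pure extended cohomological functors $\gd \to \ab$, after which restriction to $\hw$ converts this into the claimed equivalence with cocompact functors on $\hw$.

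For each $Y \in \hrt$ the functor $\Phi(-,Y)$ is extended by construction (Proposition~\ref{pdualsh}(I.1), applying Proposition~\ref{pextc}(III)), and I claim it is pure. Combining the natural isomorphism $\Phi(X,Y) \cong \Phi(X[n],Y[n])$ of a duality with the orthogonality of $w$ and $t$ (Proposition~\ref{pdualsh}(II)): for $X \in \gd_{w \ge 1}$ one has $X[1] \in \gd_{w \ge 0}$ and $Y[1] \in \sh^{t \ge 1}$, so $\Phi(X,Y) \cong \Phi(X[1],Y[1]) = 0$; symmetrically, $X \in \gd_{w \le -1}$ forces $\Phi(X,Y) = 0$.

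For essential surjectivity, let $G : \gd \to \ab$ be pure extended. Its restriction $G|_{\shc}$ is cohomological on $\shc$, so Brown representability in the compactly generated category $\sh$ furnishes $Y \in \obj \sh$ with $G|_{\shc} \cong \sh(-,Y)|_{\shc}$; uniqueness of extension (Proposition~\ref{pextc}(III.3)) then upgrades this to $G \cong \Phi(-,Y)$ on all of $\gd$. Running the orthogonality computation in reverse, purity of $G$ forces the vanishing conditions characterizing $\hrt$: using $\om(X_+) \in \gd_{w = 0}$ for $X \in \sv$ gives $Y^n(X_+) = G(\om(X_+)[-n]) = 0$ for $n < 0$ (since then $\om(X_+)[-n] \in \gd_{w \ge 1}$), and using $\om(\spe K_+) \in \gd_{w = 0}$ for function fields $K/k$ (Theorem~\ref{tgw}(\ref{iwg2})) gives $Y^n(\spe K_+) = 0$ for $n > 0$ (since then $\om(\spe K_+)[-n] \in \gd_{w \le -1}$). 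By Proposition~\ref{psht}(2,3), $Y \in \sh^{t \ge 0} \cap \sh^{t \le 0} = \hrt$.

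Full faithfulness reduces to the statement that natural transformations of extended functors $\gd \to \ab$ are determined by, and lift uniquely from, their restrictions to $\shc$: determinacy follows from the surjection $\bigoplus H(X_l) \twoheadrightarrow H(X)$ of Proposition~\ref{pextc}(II.1) combined with naturality, while lifting is the functoriality of extension in Proposition~\ref{pextc}(I.1). Hence $\operatorname{Nat}(\Phi(-,Y),\Phi(-,Y')) \cong \operatorname{Nat}(\sh(-,Y)|_{\shc},\sh(-,Y')|_{\shc}) \cong \sh(Y,Y') = \hrt(Y,Y')$ by Yoneda/compact generation of $\sh$. The main obstacle I anticipate is the essential surjectivity step---specifically, ensuring that the $Y$ produced by Brown representability actually lies in $\hrt$; this relies on the precise match between the $\gd_{w=0}$-placements of function-field and smooth-variety spectra (Theorem~\ref{tgw}) and the function-field criterion for $\sh^{t \le 0}$ (Proposition~\ref{psht}(3)).
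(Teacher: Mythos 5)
Your overall strategy---reduce to Proposition~\ref{pextpure} and show that $Y\mapsto \Phi(-,Y)$ identifies $\hrt$ with the pure extended functors---is the same as the paper's. The purity verification for $Y\in\hrt$ is fine and matches the paper's use of Proposition~\ref{pdualsh}. The problems lie in essential surjectivity and full faithfulness.

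For essential surjectivity you invoke Brown representability on $G|_{\shc}$, but $G|_{\shc}$ is only defined on the compact objects, and $\shc$ is not closed under small coproducts, so the standard Brown representability criterion (cohomological and coproduct--to--product) does not apply. What you would need is Brown--Adams representability for functors on $\shc^{op}$, which requires cardinality hypotheses (e.g.\ that all Hom-groups in $\shc$ are countable) that are simply unavailable for $\sh$ over a general perfect field. Without this, there is no $Y\in\obj\sh$ produced, and the rest of your argument has nothing to run on. Similarly, for full faithfulness you assert $\operatorname{Nat}(\sh(-,Y)|_{\shc},\sh(-,Y')|_{\shc})\cong\sh(Y,Y')$ ``by Yoneda/compact generation,'' but restricted Yoneda on a compactly generated category is neither full nor faithful in general (phantom morphisms between non-compact objects are exactly the kernel of the map you need to be injective). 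That there are no phantoms between objects of $\hrt$ is true, but it is not a formal consequence of compact generation---it has to be argued, and doing so via the conclusion of the corollary would be circular. The paper avoids both issues entirely: it applies Lemma~4.3.7(2) of \cite{morintao} to reduce to showing that the Nisnevich sheafification $N$ of $X\mapsto H'(\om(X_+))$ is strictly homotopy invariant, and checks this via Nisnevich descent (Proposition~\ref{psh}(\ref{ish8})) together with the vanishing $H(\om(Y_+)[-i])=0$ for Henselian $Y$ and $i\neq 0$, which is where purity and the direct-summand result Proposition~\ref{cdscoh}(3) enter---and this last step is precisely why the hypothesis that $k$ be infinite appears. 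The fact that your argument, as written, never uses the infiniteness of $k$ is a symptom that a needed input has been bypassed rather than replaced.
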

\begin{proof}
The previous proposition yields the following: the restriction of $\Phi$ mentioned gives an equivalence of the category of cocompact functors $\hw\to \ab$ with the one of pure extended functors from $\gd$ to $\ab$. Applying 
Proposition \ref{pextc}(III.3) we obtain the following: it suffices to prove that $\sh(-,-)$ fully embeds $\hrt$ into the category of cohomological functors from $\shc$ to $\ab$, and $\hrt$-representable functors are % extensions of the functors obtained are exactly all pure extended ones.
exactly those ones whose extensions to $\gd$ are pure. The first statement mentioned is an immediate consequence of Lemma 4.3.7(2) of \cite{morintao} that embeds $\hrt$ into presheaves of abelian groups on $\sm$.
Next, Proposition \ref{pdualsh} yields that objects of $\hrt$ define pure functors on $\gd$ (via $\Phi$) indeed. Besides,
by Lemma 4.3.7(2) of \cite{morintao} we get the following: it suffices to check that the restriction of an $\hrt$-representable theory $H'$ from $\shc$ to the spectra of smooth varieties yields a strictly homotopy invariant sheaf $N$ after sheafification, i.e., that $H^i_{Nis}(N)$ are homotopy invariant functors $\sm^{op}\to \ab$ (cf. Proposition \ref{psh}(\ref{ish7})). %in general??!
%(actually, this restriction yields a sheaf itself). %prove; earlier??!! compute Nis-cohomology via an adjunction??!
Loc. cit. yields that it suffices to check that  $H^i_{Nis}(N)(X)\cong H'(\om(X_+)[-i])$ for $X\in \sv$. Certainly, the latter fact would be obvious if we knew that $H'^*(\om(-_+))$ satisfies Nisnevich descent (i.e., it converts Nisnevich distinguished squares into long exact sequences; cf. part \ref{ish8} of  the proposition cited) and that for the corresponding extended $H$ and any essentially smooth Henselian scheme $Y/k$, $i\neq 0$, we have $H(\om(Y_+)[-i])=0$. The first of these statements is given by the proposition, whereas the second one is immediate from Proposition \ref{cdscoh}(3).

\end{proof}

\section{Conclusion of the proofs: the construction of $\gdp$ and $\gdb$} %, and the 'spectral acyclity' of semi-local schemes} %and their properties}
\label{sconstr}
 %We will only describe the plan of the proofs of results mentioned.

In this section we construct the categories $\gdp$ and $\gdb$, and prove that they satisfy the %conditions 
 properties listed in Proposition \ref{pprop}.

\subsection{On   %a spectral 
the levelwise injective model for $\sh$: reminder}\label{srsmc}
%compatible with simplicial sheaves??!
%proper ones??! Theorem 5.4.2 of http://www.math.uiuc.edu/K-theory/0265/symm.pdf??! cones via the simplicial/spectral structure??!!
%radditive functors??!
%Abstract Homotopy and Simple Homotopy Theory
%Cones: fine for 'naive' spectra??! math.northwestern.edu/~htanaka/pretalbot2013/notes/2013-02-07-Paul-VanKoughnett-Bousfield_Localization.pdf
%RA5 Schwede??!!

Let us recall some properties of (certain) injective model structures for the categories mentioned in \S\ref{ssh}.
In this paper all the model categories will have functorial factorizations of morphisms.

\begin{pr}\label{ppsh}
There exist proper simplicial model structures for the categories $\dopsh$, $\dosh$,
for $\doshp$ that is equal to $\dosh$ as an 'abstract' category,
 %$\hk$,
 and for a certain category $\psh$ whose homotopy category is $\sh$ satisfying the following properties.

%1. All these categories possess functorial factorizations of morphisms.

1. The cofibrations %Injections 
in $\dopsh$ and $\dosh$ are exactly the (levelwise) injections; hence all objects of these categories are cofibrant. 

2. The natural comparison functors $\dopsh\to \dosh\to \doshp \to  \psh$ %notation for functors?!
are left %Bousfield localizations.
Quillen functors. %$\dosh$ nafig; in $\hk$??!

3. For any $j\ge 0$ and  open embeddings $Z\to Y\to X$ the diagram corresponding to $Y/Z\brj \to X/Z\brj \to X/Y\brj$ (cf. Proposition \ref{psh}\ref{ish3})
in $\dosh$ yields a cofibration sequence.
%weakly equivalent to fibration sequence; ok for passing to the limit??!

\end{pr}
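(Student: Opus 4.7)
The plan is to invoke standard Jardine-type injective model structures on pointed simplicial (pre)sheaves, then Bousfield-localize Nisnevich-locally and $\afo$-locally, and finally stabilize to obtain a model for $\sh$; part 3 will then fall out essentially for free once we know that cofibrations are levelwise monomorphisms.

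First I would equip $\dopsh$ with its injective (a.k.a.\ levelwise) model structure, in which cofibrations are the levelwise monomorphisms of pointed simplicial presheaves and weak equivalences are the section-wise simplicial weak equivalences; for $\dosh$ I would take the Nisnevich-local analogue, where cofibrations are still the levelwise monomorphisms and weak equivalences are the Nisnevich-local ones. Both structures are classical and known to be proper, simplicial, and cofibrantly generated, and every object is tautologically cofibrant because the basepoint monomorphically embeds into any pointed object; this gives part 1. Moreover the sheafification $\dopsh\to \dosh$ is left Quillen because it preserves levelwise monomorphisms and is a left adjoint.

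Next I would take $\doshp$ to be the left Bousfield localization of $\dosh$ at $\{X_+\times \afo_+\to X_+:X\in \sm\}$, which leaves the underlying category and the cofibrations unchanged, so that the identity functor $\dosh\to \doshp$ is automatically left Quillen. For $\psh$ I would use the stable Bousfield localization (with respect to the simplicial circle $S^1$) of the category of spectrum-objects in $\doshp$; by construction its homotopy category is $\sh$, and the suspension spectrum functor $\doshp\to \psh$ is left Quillen. Composing these four left Quillen functors establishes part 2.

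For part 3, given open embeddings $Z\to Y\to X$, the $\opa$-morphism $Y/Z\to X/Z$ is represented by the sheafification of the pointed map of discrete simplicial presheaves $\sm(-,Y_+)/\sm(-,Z_+)\to \sm(-,X_+)/\sm(-,Z_+)$, which is sectionwise a pointed injection and hence a monomorphism of pointed simplicial sheaves; thus it is a cofibration in $\dosh$, and its strict cofiber (the pushout of the basepoint along the inclusion) is visibly isomorphic to $X/Y$. The twist $\brj$, defined on $\opa$ by $X/U\brj=X\times\af^j/(X\times\af^j\setminus (X\setminus U)\times\{0\})$, commutes with this cofiber construction and preserves monomorphisms, so the twisted sequence is also a cofibration sequence in $\dosh$. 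The main obstacle is coordinating the model on $\psh$ so that both its homotopy category is $\sh$ and the entire tower $\dopsh\to \dosh\to \doshp\to \psh$ consists of left Quillen functors; once this is arranged, everything else reduces to classical facts about injective structures on simplicial (pre)sheaves.
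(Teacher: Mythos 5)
Your treatment of parts 1 and 2 is correct and essentially matches the paper's (which simply cites Jardine's injective structures and the comparison functors from \cite{movo}, \cite{morintao}, and \cite{jard}); your explicit chain --- sectionwise injective structure on $\dopsh$, Nisnevich-local injective on $\dosh$, $\afo$-local Bousfield localization $\doshp$, and $S^1$-stabilization $\psh$ --- is a reasonable elaboration of that citation.

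In part 3, however, there is a real gap. Your handling of $j=0$ is fine: $Y/Z\to X/Z$ is a sectionwise pointed injection whose presheaf cofiber is already $\sm(-,X_+)/\sm(-,Y_+)$, since any morphism $W\to Y_+$ is also a morphism $W\to X_+$. But the assertion that the twist $\brj$ "commutes with this cofiber construction" is precisely what needs to be proven, and it \emph{fails} at the presheaf level. Put $Z'=X\setminus Z$ and $Y'=X\setminus Y$ (so $Y'\subset Z'$), and set $B=X\times\af^j\setminus Z'\times\ns$, $C=X\times\af^j\setminus Y'\times\ns$, so that $X/Z\brj=(X\times\af^j)/B$ and $X/Y\brj=(X\times\af^j)/C$. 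The presheaf-level cofiber of $Y/Z\brj\to X/Z\brj$ is the quotient of $\sm(-,(X\times\af^j)_+)$ by the union of the images of $\sm(-,B_+)$ and $\sm(-,(Y\times\af^j)_+)$. Although $B\cup(Y\times\af^j)=C$ as open subschemes of $X\times\af^j$, a morphism $W\to C$ need not factor through $B$ or through $Y\times\af^j$, so this union is in general a proper subpresheaf of $\sm(-,C_+)$, and the presheaf cofiber surjects onto but is not isomorphic to $(X\times\af^j)_+/C_+$. The two only become isomorphic after Zariski sheafification, because $\{B,\,Y\times\af^j\}$ is a Zariski cover of $C$. This Zariski-descent step is exactly the input the paper's proof makes explicit (reducing, by induction on $j$, to the observation that the $\opa$-level product and the sheaf-theoretic smash product agree after Zariski sheafification). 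Without this input, ``commutes with the cofiber construction'' asserts the conclusion rather than proving it; you should supply the descent argument (or equivalently, verify the identification $\om(X/U\brj)\cong\om(X/U)\wedge T^j$ at the level of $\dosh$, which requires the same Zariski-descent input).
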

\begin{proof}
1. This is just a part of (Jardine's) definition of the injective model structures for these categories. %Jardine??!

2. This statement is well-known to experts; 
it can be %easily 
deduced from the results %mentioned in 
of \cite{movo} %Theorem 2.5?!
and \cite{morintao}; see also Theorem 2.9 of \cite{jard}. %!!! Theorem 5.30 of \cite{ronordlect}??! Theorem 3.7 of \cite{voeva1}??!
% Theorem 2.2.5 of \cite{steenr}??!

3. For $j=0$ we obviously obtain a cofibration sequence (for the injective model structure) already in $\dopsh$ (as we have already said above). %Now we verify the general statement by induction in $j$. 
In the general case $Y/Z\brj \to X/Z\brj$ also yields an injection of discrete simplicial (pre)sheaves; hence it suffices to calculate the corresponding quotient sheaf. The isomorphism in question  can be easily obtained by induction using Zariski descent for sheaves (that yields that the morphism of presheaves corresponding to $X/U\times X'/U'\to X'\times X/U\times U'$ becomes an isomorphism after Zariski sheafification). %Also, it is an easy consequence of the properties 
Note also that  the natural analogue of the latter statement in $\doshp$ (that is quite sufficient for our purposes) is mentioned in \S2.3.2 of \cite{degdoc}. %Lemma 2.16 of  \cite{ronordlect}??! Example 2.25??!  %ask Panin??!

%$Y/Z\brj \to X/Z\brj$ also yields an injection of discrete simplicial (pre)sheaves; hence it suffices to calculate the corresponding quotient sheaf. %coequializer??!
%Certainly, the morphism 
%To this end it suffices to note that the wedge product with $U/T$ is 'exact' %??! Deglise, Pelaez, {panin}??!

\end{proof}

\subsection{$\gdp$ and $\gdb$: definition and properties}\label{scgd} \label{sprgd}

%$\gdb$ and $\gdbp$; also above??!

First we define a certain (stable) model category $\gdp$ using the results of \cite{tmodel}.

As a category it will be just the category of (filtered) pro-objects of $\psh$ (see \S5 of ibid.).
 %The model structure is the strict one (see Definition 5.1 of ibid.). 
 We endow it with the {\it strict} model structure; see \S5.1 of ibid. (so, weak equivalences and cofibrations are {\it essential levelwise} weak equivalence and cofibrations of pro-objects).
  An important observation here is that this model structure is a particular case of a {\it $t$-model structure} in the sense of \S6 of ibid. if one takes the following 'degenerate' $t$-structure $t'$: $\sh^{t'\ge 0}=\obj \sh$, $\sh^{t'\le 0}=\ns$. Indeed (see Remark 6.4 of ibid.), %this $t$-structure 
  $t'$ is a $t$-model structure in the sense of Definition 4.1 of ibid. (since one has a functorial factorization of any morphism $f\in \sh(X,Y)$ (for $X,Y\in \obj \sh$) as $f\circ \id_X$; note that $\id_X$ is an {\it $n$-equivalence} and $f$ is a  {\it co-$n$-equivalence} in the sense of Definition 3.2 of ibid. for any $n\in \z$; pay attention to Remark \ref{rts}(4)!). 
%A caution: in ibid. the "homological convention'' for $t$-structures  is used i.e. our $\sh^{t\le 0}$ corresponds to $\sh_{t\ge 0}$ in the notation of ibid. 

Now let us describe some basic properties of $\gdp$ and its homotopy category $\gdb$ (that are particular cases of the corresponding general statements). We will denote the pro-object corresponding to a projective system $X_i$ by $(X_i)$. Note that $(X_i)$ is exactly the (inverse) limit of the system $X_i$ in $\gdp$ (by the definition of morphisms in this category).

\begin{pr}\label{pgdb}

Let  $X_i,Y_i,Z_i\ i\in I$, be projective systems in $\psh$. Then the following statements are valid.

1. $\gdp$ is a proper stable simplicial model category. % (possibly, without functorial factorizations for morphisms). %functorial factorizations??! Cf. with \cite{hirsh}??!

2.  %We have:
If  some morphisms $X_i\to Y_i$ for all $i\in I$ 
 yield a compatible system of cofibrations (resp. of weak equivalences; resp. some couples of morphisms 
$X_i\to Y_i\to Z_i$ yield a compatible system of cofibration sequences) then 
the corresponding 
 morphism $(X_i)\to (Y_i)$ is a cofibration also (resp. a weak equivalence; resp. the couple of morphisms $(X_i)\to (Y_i)\to (Z_i)$ is a cofibration sequence). %limits of sequences that yield distinguished triangles yield distinguished triangles; ok if compositions (of replacements??) factorize through $0$??!
 
3. The natural embedding $c:\psh\to \gdp$ is a %right
left  Quillen functor; it also respects weak equivalences and fibrations.

4. For any $M\in \obj \psh$ %and a projective system $X_i$ in it 
we have $\gdb((X_i),c(M))\cong \inli \sh(X_i,M)$.
In particular, the %natural  
 functor $\ho(c):\sh\to \gdb$ is a full embedding.

5. All objects of $\ho(c)(\sh)$  %$\sh$ (embedded into $\gdb$ by $\ho(c)$) 
are cocompact in $\gdb$. %Allready proved??! Dualize Theorem 7.4.3 of \cite{hovey} and generalize it to finitely class-fibrantly cogenerated model categories??! Not fibrantly cogenerated??! Modify the argument; use Proposition 5.5 of \cite{strict}. 

6. $\ho(c)(\sh)$  cogenerates $\gdb$.

\end{pr}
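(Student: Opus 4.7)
The plan is to deduce each part from the general theory of pro-model categories developed by Isaksen (\cite{tmodel}), applied to $\psh$ with the injective model structure from Proposition \ref{ppsh}. As already noted just above the statement, $\gdp$ carries a strict $t$-model structure for the degenerate $t$-structure on $\sh$. Part 1 follows from this: properness and the simplicial enrichment are inherited levelwise from $\psh$, while stability is a consequence of Isaksen's theorem that the strict homotopy category of a stable model category is triangulated (equivalently, the suspension and loop functors become inverse equivalences). Part 2 is essentially a reformulation of the definition of the strict model structure: compatible levelwise cofibrations (resp.\ weak equivalences) yield cofibrations (resp.\ weak equivalences) of pro-objects, and the statement on cofibration sequences follows by stability.

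For part 3, the constant-diagram functor $c$ trivially sends cofibrations and weak equivalences of $\psh$ to essentially levelwise ones in $\gdp$, hence is left Quillen; the same check shows that $c$ also preserves fibrations (cf.\ \cite{tmodel}, \S4--\S6). For part 4, the basic formula in any pro-category gives $\gdp((X_i),c(M)) = \inli_i \psh(X_i,M)$. Passing to $\gdb$ requires replacing $M$ by a fibrant resolution $\widetilde M$ in $\psh$; by part 3, $c(\widetilde M)$ is fibrant in $\gdp$, and the simplicial mapping space behaves well under the filtered colimit, so one obtains $\gdb((X_i),c(M)) \cong \inli_i \sh(X_i,M)$. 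Taking $(X_i)=c(N)$ collapses the colimit and shows $\ho(c)$ is fully faithful.

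Part 5 is the substantive step. One computes small products in $\gdp$ of pro-objects $X^\alpha = (X^\alpha_{i_\alpha})$, after reindexing, as a pro-object indexed by pairs $(B,(i_\alpha)_{\alpha\in B})$ with $B$ a finite subset of the index set, with levels $\prod_{\alpha\in B} X^\alpha_{i_\alpha}$ (this parallels the description for pro-schemes in \S\ref{sprs}). Combining this with part 4, for $M$ a compact object of $\sh$, one gets
\[
\gdb\Bigl(\prod_\alpha X^\alpha,\, c(M)\Bigr) \;\cong\; \inli_{B,(i_\alpha)} \sh\Bigl(\prod_{\alpha\in B} X^\alpha_{i_\alpha},\, M\Bigr),
\]
and since $B$ is finite the inner product is also a coproduct in $\sh$; the compactness of $M$ together with a reindexing of the colimit then yields $\bigoplus_\alpha \inli_{i_\alpha}\sh(X^\alpha_{i_\alpha},M) = \bigoplus_\alpha \gdb(X^\alpha, c(M))$. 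Lastly, for part 6, every $(X_i)\in\obj\gdp$ is tautologically the limit of the diagram of its constant components $c(X_i)$; expressing a filtered limit in the stable category $\gdb$ via the usual equalizer-of-products presentation places $(X_i)$ in the smallest strict triangulated subcategory of $\gdb$ containing $\ho(c)(\sh)$ and closed under small products.

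The main obstacle will be part 5: identifying the right explicit pro-object representing $\prod_\alpha X^\alpha$ and verifying that finite products inside $\psh$ can be traded for finite coproducts in such a way that compactness of $M$ in $\sh$ produces the direct sum decomposition. Parts 1--3 and 4 are essentially formal once the $t$-model structure framework is invoked, and part 6 is a soft limit argument.
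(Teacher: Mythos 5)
Your treatment of parts 1--4 follows the same route as the paper (Isaksen's $t$-model structure from \cite{tmodel} plus the standard formulas for mapping sets out of pro-objects), so those parts are fine in spirit; one minor difference is part 2, where the paper justifies the statement on cofibration sequences by properness of $\gdp$ rather than by stability, which is the cleaner reason when the levelwise objects need not be cofibrant. In part 5, you also take essentially the paper's route: present the product $\prod_\alpha X^\alpha$ in $\gdp$ as the cofiltered system indexed by pairs consisting of a \emph{finite} subset $B$ and a choice of levels $(i_\alpha)_{\alpha\in B}$, apply the colimit formula from part 4, trade the finite product in $\sh$ for a finite coproduct, and reindex. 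One flaw here: you phrase this as an argument ``for $M$ a compact object of $\sh$,'' but the assertion is that \emph{all} objects of $\ho(c)(\sh)$ are cocompact in $\gdb$, and your argument in fact never uses compactness of $M$ (the finite-product-equals-finite-coproduct step and the colimit/direct-sum interchange are purely formal). So the restriction you impose is both unnecessary and, as written, leaves the actual statement unproved.

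Part 6 is where there is a genuine gap, and it is the part where the paper does something different. You write that $(X_i)$ is ``tautologically the limit of the diagram of its constant components $c(X_i)$'' and then invoke ``the usual equalizer-of-products presentation.'' Two problems. First, $(X_i)$ is the $1$-categorical cofiltered limit of the $c(X_i)$ in $\gdp$, but the localization $\gdp\to\gdb$ does not preserve limits, and it is not clear that this $1$-categorical limit computes the \emph{homotopy} limit in $\gdb$ — fibrant replacement in the strict model structure changes the underlying pro-object, and the index category need not admit a cofinal countable tower. Second, even granting that $(X_i)$ is the homotopy limit of the $c(X_i)$, the ``fiber of a map between two products'' description of homotopy limits in a triangulated category is available for \emph{sequential} ($\z$-indexed) diagrams, not for arbitrary small cofiltered diagrams; for a general index category one would need a cosimplicial/Postnikov-tower resolution and a convergence argument, which is not automatic. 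The paper avoids all of this by arguing model-categorically: Theorem~6.1 of \cite{chor} supplies a set of generating fibrations for $\gdp$ (the $c(f)$ for $f$ a fibration in $\psh$) together with a generalized cosmall object argument, and one then dualizes the argument of Theorem~7.3.1 of \cite{hovey} to conclude that $\ho(c)(\sh)$ cogenerates $\gdb$. You would need to either make your holim claim precise (identifying $(X_i)$ with a homotopy limit and then controlling it by a tower) or switch to the fibrantly-generated approach.
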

\begin{proof}
1. Theorems 6.3 and 6.13 of ibid. yield everything except the existence of functorial factorizations for morphisms in $\gdp$ (see of \cite{hovey}).  The existence of functorial factorizations is given by Theorem 1.2 of \cite{fufa} (see the text following Remark 1.4 of ibid.).

2. The first two parts of the assertion %is a part of 
 are contained in the definition of the strict $t$-structure. The last part follows immediately (since $\gdp$ is proper). %??!

3. The first part of the assertion is given by Lemma 8.1 (or \S5.1) of \cite{tmodel}. %(and also mentioned in
The second part is immediate from the description of weak equivalences in $\gdp$ given in loc. cit.

4. %Certainly, we can assume that $(X_i)$ is $\pgdb$-cofibrant (i.e. that all $X_i$ are so in $\psh$) and that $M$ is fibrant in $\psh$. Then the statement follows immediately from Proposition 5.3. of ibid. %note that filtered direct limits commute with $\pi_0$ for simplicial sets??! Combinatoriality?!!
Immediate from Corollary 8.7 of ibid.

5. %Certainly, it suffices to 
We should verify that $\gdb(\prod_{i\in I} Y_i,X)=\bigoplus_{i\in I} \gdb(Y_i,X)$ for $Y_i$ being fibrant objects of  
$\gdp$, $X\in \ho(c)(\obj \sh)$. Now (see Example 1.3.11 of \cite{hovey}) the product of $Y_i$ in $\gdb$ comes from their product in $\gdp$. Certainly, if $Y_i=(Y_{ij})$ then their product in $\gdp$ %is given 
 can be presented by the projective system of all $\prod_{i\in J} Y_{ij_i}$ for $J\subset I$ (i.e., we take at most %??!
  one $Y_{ij}$ for each $i\in I$ and consider the corresponding index category; note here that products are particular cases of inverse limits). Hence the statement follows %immediately 
from the previous assertion.

Alternatively, one can combine the argument dual to the one in the proof of Theorem 7.4.3 of \cite{hovey} %together 
with the %description of a class of generating fibrations  
fact that $c(f)$ %'constant' fibrations yield a set of generating fibrations (that we will discuss now). 
for $f$ running through all fibrations in $\psh$ yield a set of %co??!
generating fibrations for $\gdp$ (see Theorem 6.1 of \cite{chor}).

6. Loc. cit. also yields that $\gdp$ admits a non-functorial version of the generalized cosmall object argument with respect to $c(f)$. Hence we can apply the dual of the argument used in the proof of Theorem 7.3.1 of \cite{hovey}.
% (since the functoriality of factorizations is not used there).
%via $F$-cell complexes??! Or the characterization of trivial cofibrations via left $F$-lifting property??! \S5 of \cite{strict}??! %Via elements of $F$ with fibrant domain, via properness??!

\end{proof}

Now let us prove Proposition \ref{pprop}. Assertions \ref{ifun} and \ref{ilim} of the proposition are given by our constructions. %describe functors??!
 Assertion
\ref{iemb} is given by parts 4 and 5 of the previous proposition. Assertion \ref{icoclim} also follows from part 4 of our proposition. Lastly, assertion \ref{itriang} is immediate from the combination of its part 2 with Proposition \ref{ppsh}(3).

\section{The $T$-spectral and $\mgl$-module versions of the main results; examples and  remarks}\label{ssupl}

In this section we describe %some more properties of comotives, as well as 
certain %(natural) 
variations of the methods and results of the previous ones, and discuss several examples of cohomology theories. We will be somewhat
sketchy sometimes.

In \S\ref{sht} we prove that the (natural analogue of the) Gersten weight structure can  be constructed for the stable motivic category of $T$-spectra also; %it fulfills 
the obvious analogues of the properties of $w$ established above can be proved without any difficulty.
Moreover,   the heart of the version of this weight structure for the '$\tau$-positive' part  $\shtpl$ of $\shtoh$ contains the corresponding spectra of all primitive (pro)schemes; hence in %this category 
the corresponding pro-spectral category $\gdtpl$ all the nice 'direct summand' properties of semi-local pro-schemes (proved above) have their 'primitive' analogues. These results rely on the '$\tau$-positive' acyclity of primitive schemes that we prove in \S\ref{sshinvp}.

In \S\ref{smgl} we verify that all our results can be carried over to the triangulated category of modules over the motivic cobordism spectrum (and the corresponding pro-category). $\ho(\mglmod)$ also supports a certain {\it Chow} weight structure (if $\cha k=0$).

In \S\ref{sdm} we briefly compare our methods with the ones of \cite{bger}, and note that the results of ibid. are also valid in the case where $k$ is not countable.

In \S\ref{sexamp} we discuss the consequences of our results for 'concrete' cohomology theories (algebraic, topological, and Hermitian $K$-theory, Balmer's Witt groups, singular, algebraic and complex cobordism, \'etale and motivic cohomology).

In \S\ref{sothergd} we mention certain alternative %other possible 
methods for constructing $\gdp$ and $\gd$.
%our pro-spectral categories.

\subsection{%The 
The $T$-spectral %versions of the 
Gersten weight structures}\label{sht}

In \S5 of \cite{morintao}   the stable model category of $\p^1$-motivic spectra was considered. By Remark 5.1.10 of ibid., this category is naturally  Quillen equivalent 
to the (similarly defined) model category of $T$-spectra, where $T$ corresponds to $\pt\lan 1\ra$. We denote the latter category by $\psht$; its homotopy category will be denoted by $\sht$. Our constructions and result can be carried over from $\psh$, $\sh$ (and other (pro)spectral categories) to this setting; we will say more on this in Theorem \ref{tshtt} below. In order to prove the theorem we start from formulating the $T$-spectral analogue of Propositions \ref{psh} and \ref{psht}. 

 To this end let us recall that $\psht$ is (also) equipped with a left Quillen functor from $\doshp$; hence one can define the natural analogues $\pomt$ and $\omt$ of $\pom$ and $\om$, respectively. Besides, $\psht$ and $\sht$ are equipped with natural functors $\wedge T$. %One of the main distinctions of 
 The main distinction of $T$-spectra from $S^1$-ones is that $\wedge T$ is a (right) Quillen auto-equivalence of $\psht$; so we will assume it to be invertible on $\sht$.
Similarly to \S\ref{ssh}, we will denote the operation $\wedge T^j[-j]$ by $\{j\}$ (for any $j\in \z$).

Besides, $\sht$ is (also) endowed with a certain homotopy $t$-structure, which we will denote by $t^T$. It is defined (see Theorem 5.2.3 of ibid.) via the functors $\pi_n(-)_m$ for $n,m\in \z$; those send $E\in \sht$ to the Nisnevich sheafification of the presheaf $U\mapsto E^{-m}_{-n}(U)=\sht(\omt(U_+)\wedge T^{-n}[m],E)$ (see Remark 5.1.3 and Definition 5.1.12 of ibid). 
 Similarly to \S\ref{ssh} we %denote the latter group by $E^{-n}_{-m}$ %signs??!
  %(for all $m\in \z$) and 
  extend %this definition 
  $E^{-m}_{-n}(-)$ to all inverse limits of objects of the type $X_+$ in $\popa$.
 
 %\cite{morintao}). 
 So, one easily obtains (most of) the connectivity  properties of $\sht$ listed below (paying attention to Remark \ref{rts}(4)). %(we also recall some basics on primitive schemes).
Moreover, it turns out that the class of $T$-spectra enjoying nice 'connectivity' properties is wider than the one %s?
coming from  semi-local pro-schemes.

% We list some basic properties of $t^T$. %; $E^n_j$ is defined as in \S\ref{ssh} (but for all $j\in \z$).
 
 %Define $\omt$ as the homotopy functor for $\sinft\circ \pom$.

 \begin{pr}\label{pshtt}
The following statements are valid.

\begin{enumerate}

\item\label{it4} %The internal Hom functor $\ihom(\gmm,-):\sh\to\sh$ is $t$-exact with respect to $t$.  
The functor $-\{j\}$ is $t$-exact with respect to $t^T$ for any $j\in\z$.

\item\label{it1} For any $X\in \sv$ we have $\omt(X_+)\in \sht^{t^T\le 0}$.

\item\label{it2} For $E\in \obj \sht$ we have $E\in  \sht^{t^T\ge 0}$ if and only if $E^{n+j}_j(X_+)=0$ for all $X\in \sv$, $n<0$.

\item\label{it3} For $E\in \obj \sht$ we have $E\in  \sht^{t^T\le 0}$ if and only if $E^{n+j}_j(\spe K_{+})=0$ for all $n>0$, $j\in\z$, and any function field $K/k$. %$X_0$ being the generic point of any $X\in \sv$. 
%and also if and only if $E^n(X_{0+}\brj)=0$ for any 

\item \label{ipshinvpsl} Suppose %that 
$k$ is infinite
 and %that 
 $S$ is semi-local;  then for any $E\in \sht^{t^T\le 0}$, $i> j$ we have $E^{i}_j(S_+)=\ns$.

\item \label{ipshinvp} Suppose %that 
 $S$ is primitive (see Definition \ref{dprim}); consider the projection $\prpl$ of $\shtoh$ (see   Remark %\ref{rrcoeff})
\ref{rhocolim}(2))
 onto its subcategory $\shtpl$ of '$\tau$-positive' objects (see the %next subsection 
proof of Theorem \ref{tshtt}(I.2) below for more details on this functor). Then for any $E\in\obj \shtoh^{t^T\le 0}$ (for the corresponding  $t^T$ for $\shtoh$; see see   Remark 
\ref{rhocolim}(2) again), $i> j$, we have $(\prpl(E))^{i}_j(S_+)=\ns$.

\end{enumerate}

\end{pr}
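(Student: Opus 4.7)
The plan is to adapt the argument of Proposition \ref{pshinv} from the semi-local to the primitive setting, replacing the Suslin--Voevodsky injectivity result (Theorem 6.2.1 of \cite{suger}) with Walker's primitive-scheme analogue. First I would reduce to the case when $S$ is connected, since $\omt(S_+\brj)$ splits as a product over connected components under any cohomology functor represented by a cocompact object (this is the $T$-spectral analogue of Corollary \ref{css}(\ref{icopr})).

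Next I would check that $X \mapsto (\prpl(E))^{*}_{j}(X_+\brj)$, together with $(X,U) \mapsto (\prpl(E))^{*}_{j}(X/U\brj)$ on pairs, defines a cohomology theory with supports in the sense of Definition 5.1.1(a) of \cite{suger}. Axiom COH1 (Nisnevich excision) follows because $\omt(-_+)$ converts Nisnevich distinguished squares into distinguished triangles (the $T$-spectral version of Proposition \ref{psh}(\ref{ish8})); axiom COH3 (homotopy invariance) follows similarly from the $T$-spectral version of Proposition \ref{psh}(\ref{ish7}). Both properties transfer from $\sht$ to the $2$-inverted localization $\shtoh$, and further to its retract $\shtpl$. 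The essential extra ingredient, beyond what suffices for the semi-local case, is a weak form of orientability/transfer for cohomology theories representable in $\shtpl$: heuristically, $\shtpl$ is the part of $\shtoh$ where $\langle -1\rangle = 1$, so the transposition of a smash product acts as the identity, which is precisely the orientation-type structure needed to extend Gabber's argument to primitive rings à la Walker. This is precisely the purpose of \S\ref{sshinvp}.

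Having verified these axioms, I would apply Theorem 4.6 of \cite{walker} (in the cohomology-with-supports formulation of \cite{suger}) to conclude that the restriction morphism
\[
(\prpl(E))^{i+j}_{j}(S_+) \longrightarrow (\prpl(E))^{i+j}_{j}(S_{0+})
\]
is injective for every dense open sub-pro-scheme $S_0 \subset S$. Passing to the limit $S_0 = \spe k(S)$, it suffices to show $(\prpl(E))^{i+j}_j(\spe K_+) = \ns$ for every function field $K/k$ and $i > 0$. The projection $\prpl$ is a direct-summand retraction of $\shtoh$ onto $\shtpl$ compatible with the homotopy $t$-structure $t^T$ (as recorded in \S\ref{sht}), hence $t^T$-exact; in particular $\prpl(E) \in \shtoh^{t^T \le 0}$, and the required vanishing on function fields follows from the $\shtoh$-analogue of part \ref{it3} of the present proposition (which itself passes from $\sht$ to its $2$-inverted localization without difficulty).

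The main obstacle is the verification, at the level of cohomology theories with supports, that representability in $\shtpl$ supplies the additional orientability-type input distinguishing primitive from semi-local pro-schemes in Walker's framework; the remaining steps parallel the proof of Proposition \ref{pshinv} almost verbatim. All the geometric input (excision, homotopy invariance, the Gysin triangle, and the reduction to the generic point via Proposition \ref{post}) is shared between the two proofs, which is why the argument goes through as soon as \S\ref{sshinvp} supplies the weak orientability of $\shtpl$.
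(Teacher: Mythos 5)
Your proposal follows the semi-local template of Proposition \ref{pshinv} too literally, and in doing so misses the real structure of the paper's argument for part \ref{ipshinvp}. The paper does \emph{not} extend the Colliot-Th\'el\`ene--Hoobler--Kahn ``cohomology theory with supports'' argument to primitive schemes; that machinery, as it stands, yields Gabber-type injectivity only for semi-local rings. For primitive rings Walker's Theorem 4.19 is what is used, and that theorem is specifically a Nisnevich-cohomology vanishing statement $H^i_{Nis}(S,F)=0$ ($i>0$) for $F$ a \emph{homotopy invariant Nisnevich sheaf with transfers}. It does not apply to an arbitrary theory with supports, even one satisfying COH1 and COH3, and there is no ``cohomology-with-supports formulation'' of it to invoke. (Also Theorem 4.6 of \cite{walker}, which you cite, is the statement that primitivity is preserved under pullback along finite morphisms --- it is what the paper uses to prove Proposition \ref{pprim}(1), not an acyclicity result.)

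Concretely, the gap is in the sentence where you say weak orientability supplies ``the orientation-type structure needed to extend Gabber's argument to primitive rings \`a la Walker.'' The orientability of $\prpl(E)$ does not equip the functor $(\prpl(E))^*_j$ itself with the transfers Walker's argument needs. What it gives you is transfers on the \emph{homotopy sheaves}: by Theorem 1.3.4 of \cite{degorient}, the components $M_n$ of an orientable homotopy module are homotopy invariant Nisnevich sheaves with transfers. To exploit this one must first reduce the statement about $E$ to a statement about its $t^T$-truncations. That is precisely what the paper does in Proposition \ref{pacycl}: (i) use the boundedness $E^i_j(S_+)=0$ for $E\in \sht^{t^T\le -d-1}$ (where $d=\dim S$; this comes from Theorem \ref{tshtt}(II.\ref{itpost},\ref{iext})) to obtain a convergent $t^T$-spectral sequence, reducing to $E=M[r]$ with $M\in\obj\hrtt$ (resp.\ $\hrtpl$); (ii) identify $E^i_j(S_+)$ with $H^{r+i-j}_{Nis}(S,M_j)$ via Remark 1.2.4 of \cite{degorient}; (iii) observe that for $M\in\obj\hrtpl$ the module $M$ is automatically orientable, because the Milnor--Witt relation $\eta\tau+\eta=0$ forces $2\eta=0$ once $\tau$ acts by the identity, so $\eta=0$ after inverting $2$; (iv) only then apply Walker's Theorem 4.19 (Proposition \ref{pwalk}) to $M_j$, which is now a homotopy invariant sheaf with transfers. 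The reduction to the generic point that you propose as the final step is not part of the argument at all; the vanishing is obtained directly.

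So your proof is not merely taking a different route --- it would require a lemma (Walker-type acyclicity for general cohomology theories with supports on primitive schemes, under some orientability hypothesis) that does not exist and that the paper is careful to avoid by working at the level of homotopy modules.
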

\begin{proof}

\ref{it4}. Immediate from the definition of $t^T$ (Definition 5.2.1 of ibid.). %follows from the subsequent assertions??!

\ref{it1}. See Example 5.2.2 of ibid.

\ref{it2}. Since $t^T$ is defined in terms of the corresponding Nisnevich sheaves, we should check whether $E^{n+j}_j(S_+)=0$
for all $n<0$, $S$ being an essentially smooth Henselian scheme. Hence the assertion follows from the previous one (along  with the orthogonality axiom of $t$-structures).

\ref{it3} Applying the argument used in the proof of the previous assertion we obtain the following fact: it suffices to verify for a sheaf of the type $\pi_n(E)_m$ that it is $0$ if it vanishes at all function fields over $k$. Now, all  $\pi_n(E)_m$ are {\it strictly 
homotopy invariant} (see Remark 5.1.13 of ibid.). Hence the statement follows from Lemma 3.3.6 of ibid.

\ref{ipshinvpsl}. The statement %can be either deduced 
follows from the previous assertions via the %same method as 
method used in the proof of Proposition \ref{pshinv}.

\ref{ipshinvp}. See the next subsection. %\cite{walker}.
\end{proof}

 %We obtain \
 %It is easily seen that the same 
 Now note that minor modifications of the methods %as the ones 
 used %above imply the following results.
 for the study of $\sh$ yield
the following results. 
 
 \begin{theo}\label{tshtt}

 I.1. There exist a  natural analogue $\gdt$ of $\gd$ that is  closed with respect to all small products,
 and is equipped with an exact auto-equivalence $-\wedge T$ %extending 
compatible with the $\sht$-version of this functor.
$\gdt$ canonically contains the triangulated subcategory $\shtc\subset \sht$ generated (in the sense described in \S\ref{snot}) by $\omt(X_+)\wedge T^j$ for $X\in \sv,\ j\in \z$, whereas 
  $\omt$ extends to a functor $\popa\to \gdt$ (that we also denote by $\omt$). 
 
 2.  $t^T$ %restricts to a
 induces a $t$-structure $t^+$ on $\shtpl$.

 3.  The functor $\prpl$ %extends to 
 yields a natural exact projection functor $\prplgd $ from the category $ \gdt[\frac{1}{2}]$ %(see Remark \ref{rhocolim}(2)) 
onto its full triangulated subcategory  $\gdtpl$ (%via the method described in 
see Remark \ref{rrcoeff}(1,2)). 
 
  II %Assume moreover that $-1$ is a sum of squares in $k$. 
 %Consider the analogue of ? as ? $\shtoh$ ($\shtoh$ is obtained from $\sht$ via tensoring all morphism groups by $\zoh$). 
 The %n the 
 following statements are  valid.

\begin{enumerate}
\item %\label{i1}
 There exists a weight structure $w^T$ on $\gdt$ that is  non-degenerate from below  such that $\gdt_{w^T\ge 0}$ is the coenvelope of $C'_T=\{\omt(X_+)[i]\{j\}\}$ for $X\in \sv,\ i\ge 0,\ j\in \z$, $\gdt_{w^T\le 0}={}^\perp (C'_T[1])$. 

\item $\{j\}$ is $w^T$-exact for any $j\in \z$.

\item $\hw^T$ is equivalent to the Karoubization of the category of all  
$\prod \omt(\spe K_{i+})\{j_i\}$ for $K_i$ running through function fields over $k$, $j_i\in \z$. 

%\item $\gdt_{w\le 0}$ contains $\omt(X_+)\{j\}$ for any $j\in \z$.

\item If $f:U\to X$ is an open embedding of pro-schemes such that the complement is of codimension $\ge i$ in $X$ (see Remark \ref{rpgysin}), then $\omt(X/U)\in \gdt_{w^T\ge i}$.

\item\label{itpost} For a pro-scheme $X$ consider the Postnikov tower for $\omt(X_+)$ given by %shifting 
the natural $T$-spectral analogue of the construction in Proposition \ref{post}. % by $[-j]$. 
Then this Postnikov tower is a weight one. % for $\omt(X_+)\brjj$. 
In particular, if $X$ is of dimension $\le d$, then  $\omt(X_+)\in \gdt_{[0,d]}$.

\item\label{iext} %For any $k$ 
%Consider the triangulated subcategory $\shtc\subset \sht\subset \gdt$ generated by $\omt(X_+)\brj$ for $X\in \sv,\ j\in \z$. 
For any  cohomological functor %theory 
$H':\shtc\to \au$ ($\au$ satisfies AB5) construct its extension to a functor $H:\gdt\to \au$ via the method mentioned in Proposition \ref{pextc}. 

In particular, %if
do so for the functor $\sht(-,M)$ (from $\shtc$ to $\ab$) 
for all   
 $M\in \obj \sht$. % extend the  
%as in assertion \ref{iext}.
%via the method described.
 Then the collection of these functors yields a nice duality $\Phi_T:(\gdt)^{op}\times \sht\to \ab$ such that $w^T\perp_{\Phi_T}t^T$.

\item For any extended cohomological functor $H:\gdt\to \au$ the weight spectral sequence $T=T(M,H)$ (for $M\in \obj \gdt$) corresponding to $w^T$ is functorial in $H$ and is $\gdt$-functorial in $M$ starting from $E_2$. In the case where $M=\omt(X_+)$, $X\in \sv$, one can choose $T(H,M)$ to be the 'standard' coniveau spectral sequence (starting from $E_1$; certainly, $T(H,M)$ does not depend on any choices starting from $E_2$; cf. \S\ref{sext}).

We will call such a $T(H,M)$ a {\it generalized coniveau spectral sequence} (as we also did in above).

 \item\label{itsht} For any $r\in \z$ we have $\obj\shtc\cap \sht^{t^T\le -r}=\obj\shtc\cap \gd_{w^T\ge r}$.
 
 Besides, for any $M'\in \obj\shtc \cap \sht^{t^T\le -r}$, $M=\omt(Z)$ (for a $Z\in \sv$), $g\in \gdt(M,M')$, and an extended $H:\gdt\to \au$ we have the following: any Noetherian subobject of $\imm(H(g))$ is supported at codimension $\ge r$ in $Z$ (%for any $i\in \z$; 
 cf. Proposition \ref{rwss}(II.3)). % Remark \ref{rrwss}(1)). 

\item  For any $N\in \obj \sht$ the generalized coniveau spectral sequences for the functor $\Phi_T(-,N)$  can be $\gdt$-functorially expressed in terms of the $t^T$-truncations of $N$ starting from $E_2$ (cf. Proposition \ref{pdual}).

\item\label{idualt} The category of pure extended cohomological functors from $\gdt$ to an abelian $\au$ (satisfying AB5) is naturally equivalent to the category of those contravariant additive functors $\hw^T\to \au$ that convert all $\hw^T$-products into coproducts.

\item\label{isht} If $k$ is infinite then %$\hrt^T$ is equivalent to 
 the category of those contravariant additive functors $\hw^T\to \ab$ that convert all $\hw^T$-products into coproducts is also equivalent to the heart of $t^T$.

\item\label{isl} If $k$ is infinite and $X$ is semi-local,  then $\omt(X_+)%\{j\}
\in \gdt_{w^T=0}$.

\item\label{ids1}
 If $k$ is infinite and $S$ is a connected semi-local pro-scheme, $S_0$ is  its dense sub-pro-scheme,
then $\omt(S_+)$ is a direct summand of $\omt(S_{0+})$. 

Moreover, if $S_0=S\setminus Z$, where  $Z$ is a closed
sub-pro-scheme of $S$, then $\omt(S_{0+})\cong
\omt(S_+) \bigoplus \omt(N_{S, Z}/N_{S, Z}\setminus Z)[-1]$. If $Z$ is of codimension $j$ (everywhere) in $S$, then this decomposition takes the form
$\omt(S_{0+})\cong
\omt(S_+) \bigoplus \omt(Z_+)\wedge T^j [-1]$.
 
\item\label{ifi} Suppose %that
 $k$ is infinite.
 Let $K$ be a function field  over $k$; let $K'$ be
the residue field for a geometric valuation $v$ of $K$ of rank $r$.
%(that is trivial on $k$).
Then $\omt(\spe K'_+)\{r\}$ is a retract
of $\omt(\spe K_+)$.
 
\item For an infinite $k$ and an extended cohomological functor  $H$ (see assertion \ref{iext}) the natural analogues of the previous two  assertions hold (for the $H$-cohomology of the corresponding pro-schemes).

\item\label{isplc} Suppose %that
 $k$ is infinite
 and %that 
 $S$ is a %connected??
 semi-local pro-scheme; consider the Postnikov tower of $M=\omt (S_+)$
given by assertion \ref{itpost} %Proposition \ref{post}; 
 and denote the corresponding complex by $t(M)=M^i$. 
Denote by $T_H(S)$ the %corresponding 
complex %obtained by applying $H$ to the complex $t(S)$ 
$(H(M^{-i}))$. Then there exist some $A^i\in \obj \au$ for $i\ge 0$ such that $T_H(S)$ is $C(\au)$-isomorphic to $H(S)\bigoplus A^0\to A^0\bigoplus A^1\to A^1\bigoplus A^2\to \dots$.

III The natural analogues of the assertions of part II hold if we replace $\gdt$ by $\gdtpl$, $\omt$ by $\omtpl=\prplgd\circ\omt[\frac{1}{2}]$, $C'_T$ by $C_+=\prplgd(C'_T)$, $w^T$ by the corresponding $w^+$, $\shtc$ by $\shctpl=\prpl(\shtc[\frac{1}{2}])$, $\Phi_T$ by  %a certain
 the corresponding $\Phi_+$, and $t^T$ by $t^+$.

Moreover, in the analogues of assertions %\ref{isl}
\ref{isht}--\ref{isplc} it is not necessary to assume that $k$ is infinite; if it is infinite, then it suffices to assume that $S$ is primitive (in all of these assertions expect \ref{isht} and \ref{ifi}).

\end{enumerate}

\end{theo}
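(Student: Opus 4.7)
The plan is to mimic the construction and argumentation carried out for $\sh$/$\gd$ in sections \ref{scomot}--\ref{sconstr}, replacing $\psh$ by $\psht$ throughout. For Part I.1, I would define $\gdt^{big}$ as the homotopy category of the strict model category of pro-objects of $\psht$; its basic formal properties (closedness under small products, the cocompact full embedding of $\sht$, compatibility of $\omt$ with the resulting pro-spectral functor, distinguished triangles for compatible systems of open embeddings) would all follow from the $T$-spectral analogue of Proposition \ref{pgdb} and hence from the general $t$-model structure results of \cite{tmodel} (note that $-\wedge T$ is already an auto-equivalence on $\sht$, so it lifts trivially to $\gdt^{big}$). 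I would then apply Theorem \ref{tnews} with $C'=C'_T$ (using that these are cocompact cogenerators and the shift-stability $C'_T[1]\subset C'_T$) to carve out $\gdt$ as the corresponding $\cu'$; this simultaneously produces $w^T$ (so items 1 and the orthogonality/coenvelope description come for free). For I.2, the functors $\pi_n(-)_m$ for $n,m\in \z$ cutting out $t^T$ are compatible with the idempotent $\prpl$, so the general $t$-structure-induction formalism (cf.\ Remark \ref{rhocolim}(2) and the $t$-structure-descent results quoted there) yields $t^+$ on $\shtpl$. For I.3, $\prpl$ respects coproducts and compact objects, so (as in Remark \ref{rrcoeff}(2)) it extends uniquely to a product-preserving exact idempotent $\prplgd$ on $\gdt[\tfrac12]$.

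For Part II I would transport the proofs of section \ref{scomot} line by line: the $T$-spectral analogue of Proposition \ref{pinfgy} (Gysin triangle for pro-schemes) and of Proposition \ref{post} (coniveau Postnikov tower for $\omt(X_+)$) follow from Proposition \ref{ppsh}(3) applied in $\psht$ together with Proposition \ref{pgdb}(2). These then feed into Proposition \ref{pcoen} (every $\omt(X_+)$ lies in the coenvelope of $\{\omt(U_+)[i]\}$) and give items 4--5, so that the geometric Postnikov tower becomes a weight Postnikov tower via Proposition \ref{pbw}(\ref{iwpostc}). Item 2 ($\{j\}$ is $w^T$-exact) reduces to the observation that $C'_T$ is stable under $\{j\}$, because $\{j\}$ permutes the cogenerators. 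Item 3 follows from Theorem \ref{tnews}(III). The key input for the non-trivial direction of item 8 ($\obj\shtc \cap \sht^{t^T\le -r}=\obj\shtc \cap \gdt_{w^T\ge r}$) is Proposition \ref{pshtt}(\ref{it3}) combined with the fact (Theorem \ref{tgw}(\ref{iwshc})-style) that a compact object in $\gdt_{w^T\ge r}$ kills $\omt(\spe K_+)\{j\}[n-r]$ for $n>0$; the converse uses $\hw^T\perp M[-n]$ plus Proposition \ref{pbw}(\ref{icharge}). Item \ref{isl} (semi-local $X$ lies in $\gdt_{w^T=0}$) is immediate from Proposition \ref{pshtt}(\ref{ipshinvpsl}) by exactly the argument of Theorem \ref{tgw}(\ref{iwg2}).

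Items 6--10 (duality, spectral sequences, Gersten-style $t$-description, pure-extended cohomology) follow by applying Propositions \ref{pnice}, \ref{pextc}, and \ref{pdualsh} verbatim to the pair $(\gdt,\sht)$ with $\Phi_T(X,M)=H_M(X)$ (the extension of $\sht(-,M)$ to $\gdt$): niceness is formal from Proposition \ref{pnice}(2), while $w^T\perp_{\Phi_T}t^T$ is the $T$-spectral repeat of the proof of Proposition \ref{pdualsh}(II), using the homotopy-limit / product reductions and the characterization of $t^T$ in Proposition \ref{pshtt}(\ref{it2}--\ref{it3}). Item \ref{itsht} on Noetherian subobjects is then exactly the argument of Proposition \ref{rwss}(II.3). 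Items \ref{isht} and \ref{ids1}--\ref{isplc} transfer the content of Corollary \ref{coextpure}, Theorem \ref{tds1n}, Corollary \ref{tds1}, Proposition \ref{cdscoh}, and Proposition \ref{psplger} respectively; the only ingredient needed beyond formalities is item \ref{isl} together with the Gysin triangle and Proposition \ref{pbw}(\ref{impostp}, \ref{isump}, \ref{isplwc}).

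For Part III, everything descends through $\prplgd$: since the projection is weight-exact (it is a direct-summand projection, hence both left and right exact in the sense of Remark \ref{rrcoeff}(2)), $w^T$ restricts to $w^+$ on $\gdtpl$, $t^T$ restricts to $t^+$, and $\Phi_T$ restricts to $\Phi_+$; the reason the primitivity hypothesis suffices (rather than only semi-locality) is Proposition \ref{pshtt}(\ref{ipshinvp}), which after applying $\prplgd$ upgrades semi-local vanishing to primitive vanishing, so $\omtpl(X_+)\in\gdtpl_{w^+=0}$ for any primitive $X$ and the direct-summand arguments apply. I expect the main obstacle to be precisely Proposition \ref{pshtt}(\ref{ipshinvp}): its proof is the content of \S\ref{sshinvp} and is where the nontrivial input beyond the $\sh$-case (connection to the Hopf element $\eta$ and to weakly orientable spectra) actually enters; once that acyclicity is in hand, everything else is formal parallel to the $\sh$-setting.
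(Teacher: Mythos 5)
Your proposal is correct and follows essentially the same route as the paper: the paper's own proof is exactly this transfer of the $\sh$/$\gd$ arguments to the $T$-spectral setting (with the twist-stability of $C'_T$ giving the $w^T$-exactness of $\{j\}$, Remarks \ref{rrcoeff}(1,2) and \ref{rhocolim}(2) handling parts I.2--3 and III, and Proposition \ref{pshtt}(\ref{ipshinvp}), proved in \S\ref{sshinvp}, as the only genuinely new input). The only small caveat is that the extension of $-\wedge T$ to the pro-category should be justified via its lift as a Quillen auto-equivalence of $\psht$ (as the paper does) rather than merely from its invertibility on $\sht$, but your construction already works at the level of $\psht$, so this is harmless.
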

\begin{proof}
I.1. The main distinction of this assertion from its $\sh$-analogue is that we want to extend $\wedge T$ to an invertible exact functor on $\gdt$. This is easy since (as we have already said) $\wedge T$ possesses a 'nice lift' to $\psht$. %??!!

2. Recall that $\prpl$ is %the composition of the %projection of 
 the natural projection of  %functor from 
the category %$\sht$ onto 
$\shtoh$ % (%see the text preceding Lemma 6.7 in \cite{levconv}; $\shtoh$  is  obtained by the dual to 
%the method described in Remark \ref{rrcoeff}(1); cf. also Appendix B of \cite{levconv}) %tensoring all objects of $\sht$ by the 'constant' object $\zoh\in \obj \sht$) 
%and the projection of $\shtoh$ 
onto the first summand in the  Morel's decomposition 
$\shtoh\cong \shtpl\bigoplus\shtmi$ (see the text preceding Lemma 6.7 of \cite{levconv}). %; here $\shtoh$ is defined via the method mentioned in Remark \ref{rhocolim}(2) ). % (see \S6 of \cite{levconv}). 
So, $\prpl$ is an idempotent endofunctor of $\shtoh$, and %it is equipped with tranformation
for any $M\in \obj \sht$ the object $\prpl(M[2\ob])$ is a canonical direct summand of $M[2\ob]$.
The assertion follows immediately.

3. %We can construct $\gdtpl$ via the method described in 
%Easy; see Remark \ref{rrcoeff}(1,2). %??! yields the result. 
%as the corresponding component of the 'formal' tensor product $\gdt\otimes \zoh$ (we tensor all morphism groups by $\zoh$; this certainly yields a triangulated category). 
Remarks \ref{rrcoeff}(1,2) and \ref{rhocolim}(2) yield the result immediately.  

II Again, we can use the $\sht$-analogues of the methods used in the previous sections (along  with Proposition \ref{pshtt}). The main distinction is given by the $-\brjj$-stability of the corresponding $C'_T$ (for any $j\in \z$); since $-\brjj$ is an automorphism of $\gdt$, %it yields the  $w^T$-exactness
it is $w^T$-exact.

III    Again, Remarks \ref{rrcoeff}(1,2) and \ref{rhocolim}(2) yield the  result. %construction of $\gdt[\frac{1}{2}]$, $\gdtpl$, and $\prplgd$. Hence 
 %These  the statements verified above are sufficient to prove %Theorem \ref{tshtt}(III) our assertion since we do not need any models for $\shtpl$ and $\gdtpl$. 
 Indeed,   all the distinguished triangles,   cocompact objects, and morphism group calculations we require 'come from' $\sht$ and $\gdt$, whereas the duality $\Phi[2\ob]:\gdt[2\ob]\times \shtoh\to \ab$ induced by $\Phi$ restricts to a duality $\Phi^+:\gdtpl\times \shtpl\to \ab$. Note here that $\Phi^+$ can be used (in particular) for the calculation of %morphisms between    
$\gdtpl(\omtpl(M),N)$ for any $M\in \obj\popa$, $N\in \obj \shctpl\subset \obj \gdtpl$.

\end{proof}

 \begin{rema}\label{rshtpl}
 
1. We could have considered some {\it bispectral} model for $\sht$; this would have given a natural left Quillen connection functor $\psh\to \psht$ that would certainly yield exact functors $\sh\to \sht$ and $\gd\to \gdt$. This would have allowed carrying over some of the properties from $S^1$-(pro)spectra to $T$-ones directly. 

We will also 
apply this observation in \S\ref{sexamp} below.

2. Note that $\shtpl=\shtoh$ if $-1$ is a sum of squares in $k$. Indeed, in the case $\cha k>0$ this fact is given by Lemma 6.8 of \cite{levconv}. For $\cha k=0$ this statement can be easily extracted from the proof of Lemma 6.7 of ibid.; we will give %some
the detail in the next subsection.

3. We conjecture that in the case where $-1$ is a sum of squares in $k$, the natural analogues of Proposition \ref{pshtt}(\ref{ipshinvp}) an (thus) of Theorem \ref{tshtt}(III) hold for $\sht$ instead of $\shtpl$.

\end{rema}

\subsection{%'Positive' $T$-spectra and  
The '$\tau$-positive acyclity' of primitive schemes}\label{sshinvp}
%+adjunctions?? Unstable version: p. 29 of http://math.univ-angers.fr/~powell/documents/2000/steenrod.pdf

%In this subsection we give some details 
%First we give some details on the functor $\prpl$; they yield the proof of 
%Theorem \ref{tshtt}(III) (modulo Proposition \ref{psht}(\ref{ipshinvp})).

Now let us prove Proposition \ref{pshtt}(\ref{ipshinvp}).
We reduce it to the 'weakly orientable spectral acyclity of primitive schemes'.

We recall that the heart of $t^T$ is the category of {\it homotopy modules} (see Definition 5.2.4 of \cite{morintao} or Definition 1.2.2 of \cite{degorient}). This category possesses an exact faithful forgetful functor to the abelian category of %strictly homotopy invariant?!
$\z$-graded Nisnevich sheaves. It sends $M\in \sht^{t^T}=0$ into the corresponding sequence $(M_n)$; here $M_n=\pi_0(M)_n$ in the notation considered in the beginning of the previous subsection. Besides, any object $M$ of $\hrtt$ is  (functorially) equipped with a %tranformation
%functorial \
 system of morphisms $\eta=(\eta_n:\, M_n\to M_{n-1})$ (for $n$ running over $\z$); %$: 1_{\hrtt}\to 1_{\hrtt}$ %\otimes? ??!
 %$\eta_n$ maps each $M_n$ into $M_{n-1}$ (for all $M\in \obj \hrtt,\ n\in \z$); \
see \S6.2 of \cite{morintao} and Definition 1.2.7 of \cite{degorient}. A homotopy module is called {\it orientable} if all of these $\eta_n$ are zero. 

%We also note that 
 Certainly, any object of $\hrtpl$ also yields a homotopy module (that is $\zoh$-linear).
 
 Now let us prove an interesting lemma that we will apply below in the case where $S$ is primitive.

 \begin{pr}\label{pacycl}
Assume that a pro-scheme $S$ possesses the following property: for any $F$ being a homotopy invariant (Nisnevich) sheaf with transfers in the sense of \S3.1 of \cite{1} we have $H^i_{Nis}(S,F)=\ns$ for all $i>0$. Then the following statements are valid.

1. Suppose %that 
$E $ belongs to $ \sht^{t^T\le 0}$ (resp. to $\shtpl^{t^+\le 0}$); assume also that $E^{t^T=m}$ (resp. $E^{t^+=m}$) is orientable for any $m<0$ (i.e., that $E$ is {\it weakly orientable} in the sense of %Definition 
\S4.2 of \cite{degorient}). Then for any $i> j\in \z$ we have $E^{i}_j(S_+)=\ns$. % (resp. $(\prpl(E))^{i}_j(S_+)=\ns$).

2. In particular, this is true for any $E\in \shtpl^{t^+\le 0}$.

%3. if $-1$ is a sum of squares in $k$, then $E^{i}_j(S_+)=\ns$ for any $E\in \sht^{t^T\le 0}$, $i> j\in \z$.

%For any orientable $E\in \obj \hrtt$ we have $E^{i}_j(S_+)=\ns$.

\end{pr}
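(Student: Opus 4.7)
The plan is to prove Part 1 by a Postnikov tower argument for the homotopy $t$-structure, reducing the desired vanishing to a statement about Nisnevich cohomology of sheaves with transfers on $S$, and then to deduce Part 2 by observing that the heart of $t^+$ on $\shtpl$ consists automatically of orientable homotopy modules.

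For Part 2, I would first note that the projection $\prpl$ associated with Morel's decomposition $\shtoh\cong\shtpl\bigoplus\shtmi$ kills the Hopf element $\eta$ (this is essentially the defining property of the $\tau$-positive part). Hence every object of $\hrtpl$ is an orientable homotopy module, and any $E\in\shtpl^{t^+\le 0}$ has \emph{all} Postnikov layers $E^{t^+=m}$ orientable (not only those of negative $t$-degree). Such an $E$ is therefore trivially weakly orientable, and Part 1 applies.

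For Part 1, I would fix $i>j$ and first truncate the Postnikov tower of $E$ to a bounded range. Since the pro-scheme $S$ has finite Nisnevich cohomological dimension (being primitive or semi-local of bounded dimension), the natural map $(\tau^{\ge -n}E)^{i}_{j}(S_+)\to E^{i}_{j}(S_+)$ becomes an isomorphism for $n$ sufficiently large: the cofiber $\tau^{\le -n-1}E$ contributes trivially because its homotopy sheaves lie in deeply negative degrees. The bounded truncation $\tau^{\ge -n}E$ is then built from the layers $E^{t^T=m}[-m]$ for $-n\le m\le 0$ by iterated distinguished triangles, and the associated long exact sequences reduce the problem to showing
$$(E^{t^T=m})^{i-m}_{j}(S_+)=0\quad\text{for each}\ m\le 0.$$
Via Morel's identification of morphism groups in $\sht$ for homotopy modules, this amounts to the vanishing of a Nisnevich cohomology group in degree $(i-m)-j\ge 1$ with coefficients in the strictly homotopy invariant sheaf $(E^{t^T=m})_j$.

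The decisive input is Deglise's theorem (cf.\ \S4 of \cite{degorient}) identifying orientable homotopy modules with homotopy modules with transfers: under the weak orientability hypothesis, for every $m<0$ the sheaf $(E^{t^T=m})_j$ inherits a canonical transfer structure and becomes a homotopy invariant Nisnevich sheaf with transfers in the sense of \cite{1}. The hypothesis imposed on $S$ then yields the required vanishing. The main obstacle is the top layer $E^{t^T=0}$, which is not covered by the weak orientability assumption in the $\sht$-setting; I expect to handle this by applying the projection $\prpl$ (after inverting $2$) to reduce to the $\shtpl$-situation of Part 2, where the top layer is automatically orientable, or by a separate auxiliary argument based directly on the characterization of $\sht^{t^T\le 0}$ on function fields.
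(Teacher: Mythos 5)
Your proposal is, at its core, the same argument as the paper's: reduce via the Postnikov tower for the homotopy $t$-structure, invoke Deglise's identification of orientable homotopy modules with homotopy modules possessing transfers, and then apply the acyclicity hypothesis on $S$. There are, however, a few differences worth noting.

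\emph{On Part 2.} The paper proves Part 2 \emph{after} Part 1, and it does so by an explicit Milnor–Witt computation: the graded ring $K^{MW}_*(k)$ acts on $\bigoplus M_j$ for $M\in\hrtpl$, the relation in Definition 6.3.1 of \cite{morintao} can be written $\eta\tau+\eta=0$, and since $\tau=\id$ on the $\tau$-positive part and $2$ is inverted, this forces $\eta=0$. Your claim that "$\prpl$ kills $\eta$" is the right conclusion, but as stated it is not a proof; the defining property of $\shtpl$ is that $\tau$ acts as the identity, and the vanishing of $\eta$ requires the algebraic step above. You should supply it.

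\emph{On convergence.} You argue convergence via "finite Nisnevich cohomological dimension of $S$." The paper establishes convergence differently: it quotes Theorem \ref{tshtt}(II.\ref{itpost},\ref{iext}), i.e.\ that $\omt(S_+)\in\gdt_{[0,d]}$ for $S$ of dimension $\le d$, so by the orthogonality of $w^T$ and $t^T$ (with respect to $\Phi_T$) one has $E^i_j(S_+)=0$ for all $E\in\sht^{t^T\le -d-1}$. Your route is morally the same thing but phrased externally; the paper's route is the one that is actually available given the machinery already set up.

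\emph{On the top layer.} You flag that $E^{t^T=0}$ is "not covered by the weak orientability assumption," and offer two tentative workarounds, neither of which is complete: inverting $2$ and projecting via $\prpl$ loses $2$-torsion and the $\shtmi$-component, so it does not give the integral vanishing; and the "separate auxiliary argument" is left unspecified. In fact, the paper's proof does not make any special provision for the top layer: after reducing to $E=M[r]$ with $r\ge 0$ and $M\in\hrtt$, it writes $E^i_j(S_+)=H^{r+i-j}(S,M_j)$ by Remark 1.2.4 of \cite{degorient} and then invokes Theorem 1.3.4 of \cite{degorient} to say $M_j$ is a homotopy invariant sheaf with transfers — uniformly for all $r\ge 0$, including $r=0$ where $M=E^{t^T=0}$. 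In other words, the paper's own argument treats the top layer $E^{t^T=0}$ as orientable. This is consistent with Deglise's actual notion of weak orientability (whose definition in \S4.2 of \cite{degorient}, which the parenthetical "(i.e.,\ that $E$ is weakly orientable\dots)" is citing, is the operative condition); the parenthetical "$m<0$" in the statement should be read in that light rather than taken as excluding $m=0$. So your concern is a sensible thing to check, but it is not an obstacle the paper's proof encounters, and your proposed ways around it would not succeed as stated.

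In summary: same strategy as the paper, but you should (a) give the $K^{MW}_*$ computation for Part 2, (b) derive convergence from the weight-bound on $\omt(S_+)$ and the orthogonality of $w^T$ and $t^T$, and (c) treat the top layer exactly as the other layers, as the paper does, rather than trying to route around it.
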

\begin{proof} Obviously, we can assume that $S$ is connected of some dimension $d$. Then $E^{i}_j(S_+)=\ns$ for any $E\in \sht^{t^T\le -d-1}$
 (resp. for $E\in \shtpl^{t^+\le -d-1}$;  %this is an easy consequence of the existence of generalized coniveau spectral sequences given by 
 immediate from Theorem \ref{tshtt}(II.\ref{itpost},\ref{iext})). %easy: consider the spectral sequence obtained from applying $E^*$ to the stable analogue of the Postnikov tower given by Proposition \ref{post}).  
Hence the natural spectral sequence $(E^{t^T=q})^{j+p}_j(S_+) \implies E^{j}_j(S_+)$ (resp. its $t^+$-analogue) %tochno?!
 converges. Thus %in both assertions 
we can assume that $E=M[r]$ for some $r\ge 0$, $M\in \obj\hrtt$ (resp. in $M\in \obj \hrtpl$).

1. By Remark 1.2.4 of \cite{degorient} for $M=(M_l)$ we have $E^{i}_j(S_+)=H^{r+i-j}(S,M_j)$; here we define $H^*(-,M_j)$ on pro-schemes %as in (\ref{dfps}).
via the method of \S\ref{psh}. %M_{-j}??! 
Next,
Theorem 1.3.4 of ibid. implies that $M_j$ is a homotopy invariant sheaf with transfers, and we obtain the result.

2. Certainly, it suffices to verify that all homotopy modules for $\hrtpl$ are orientable. Now 
recall  (see \S6 of \cite{levconv}) %{morintao}??!
that the graded ring $K^{MW}_*(k)$ (functorially) acts on $\bigoplus M_j$, whereas 
%on
for  any $M\in \obj \shtpl$ (by definition) the corresponding action of
%object of $\shtpl$ the operator
 $\tau =\id+\eta\circ [-1]$ %$=\lan -1\ra$??!
 is identical. %Since 
%by definition , 
%there is a functorial 
  %$[-1]$ is invertible in $K^{MW}_*(k)$, 
 %as ??? 
Rewriting the relation 4 in Definition 6.3.1 of \cite{morintao} as $\eta \tau+ \eta =0$ 
we obtain the result.

\end{proof}

Next let us  finish the proof of Remark \ref{rshtpl}(2).  Since $-1$ is a sum of squares, it is well known that the Witt ring $W(k)$ is annihilated by $2^N$ for some $N\ge 0$. It follows that $2^N\eta=0$ in $K^{MW}_*(k)$. Indeed, this statement is given by Lemma 6.7 of \cite{levconv} in the case $\cha k>0$, whereas 
for $\cha k=0$ one should apply the argument used in the proof  of loc. cit. (i.e., use the %bijection $K_0^{MW}\eta\to 
natural injection of groups $W(k)\eta\to K_*^{MW}(k)$).

It remains to verify the  'motivic acyclity' of primitive schemes. % we deduce their '$T$-spectral' acyclity. So, the base is the following statement (essentially proved by M. Walker).

\begin{pr} \label{pwalk}
Let $S$ be a connected primitive pro-scheme, let $S_0$ be %the collection of all of
 its generic
point; assume that $F:\sm\to \ab$ is a homotopy invariant Nisnevich sheaf with transfers (in the sense of \cite{1}). 

Then $H^i(F,S)=\ns$ for any $i> 0$. %$F(S)\subset F(S_0)$;
 %??!
\end{pr}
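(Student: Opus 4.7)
The plan is to reduce to a single connected primitive scheme $S=\spe R$ and then use the Gersten/Cousin resolution of $F$ together with Walker's extension of the Suslin--Voevodsky acyclicity theorem.

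First, I would decompose $S$ into its connected components as a pro-scheme and, using compatibility of Nisnevich cohomology with the disjoint-union decomposition as well as the inverse-limit presentation of $S$ (whose transition maps are open embeddings; see \S\ref{sprs}), reduce to the case $S=\spe R$ for a single primitive $k$-algebra $R$. In the case where $k$ is finite, our convention makes primitivity coincide with essential semi-localness (Definition \ref{dprim} and Remark \ref{rabprim}), so the statement is the classical Suslin--Voevodsky acyclicity of essentially smooth semi-local schemes for homotopy invariant Nisnevich sheaves with transfers.

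For infinite $k$, I would invoke Voevodsky's Gersten resolution theorem (see e.g.\ \cite{1}): for any homotopy invariant Nisnevich sheaf with transfers $F$ and any essentially smooth $k$-scheme $X$, the Cousin (coniveau) complex of $F$ on $X$ provides a flasque resolution of $F|_X$, so $H^i_{Nis}(X, F)$ is the $i$-th cohomology of its global sections
\[
F(X_0) \to \bigoplus_{x \in X^{(1)}} F_{-1}(k(x)) \to \bigoplus_{x \in X^{(2)}} F_{-2}(k(x)) \to \cdots
\]
It then suffices to show this complex is exact in positive degrees for $X=S$.

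The exactness in question is the content of Walker's universal exactness statement (\cite{walker}), reformulated as Theorem 6.2.1 of \cite{suger}: indeed, the cohomology theory with supports $(X, Z)\mapsto H^*_Z(X, F)$ satisfies axiom COH1 (Nisnevich excision, automatic since $F$ is a Nisnevich sheaf) and axiom COH3 ($\afo$-invariance, from homotopy invariance of $F$), and loc.\ cit.\ then yields the universal exactness of the Cousin complex on primitive pro-schemes. The main obstacle is that Theorem 6.2.1 of \cite{suger}, as used in Proposition \ref{pshinv}, primarily produces the $H^0$-injectivity $F(S)\hookrightarrow F(S_0)$; to propagate it to higher degrees I would induct on codimension, applying the same injectivity assertion to the homotopy invariant Nisnevich sheaves with transfers $F_{-n}$ on the semi-localizations of $S$ at its codimension-$n$ points, which remain primitive by Proposition \ref{pprim}(1). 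This yields exactness at every term of the Cousin complex and hence the claimed vanishing $H^i(F,S)=\ns$ for $i>0$.
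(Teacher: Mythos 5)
Your approach is genuinely different from the paper's, and unfortunately it has a real gap at the decisive step.

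The paper's own proof is a one-line citation: for infinite $k$ the statement is exactly Theorem 4.19 of \cite{walker}, while for finite $k$ primitivity coincides (by the convention of Definition \ref{dprim}) with semi-localness, so Corollary 4.18 of \cite{3} applies. There is no detour through the Cousin complex or through \cite{suger}.

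Your reduction to the exactness of the Cousin complex (via Voevodsky's Gersten resolution) is fine, but it shifts all the difficulty into proving that the Cousin complex of the primitive pro-scheme $S$ is exact in positive degrees, and this is precisely what Walker's Theorem 4.19 establishes --- it is not a lighter fact that one can peel off of it. The input you propose to use, Theorem 6.2.1 of \cite{suger}, is a universal exactness theorem \emph{for essentially smooth semi-local schemes}; it already gives the full exactness of the augmented Cousin complex there, not merely the $H^0$-injectivity (the paper only uses the $H^0$-part in Proposition \ref{pshinv} because that is all it needs there, not because that is all the theorem provides). The gap you actually need to close is semi-local versus primitive, not $H^0$ versus higher degrees, and that gap is precisely Walker's contribution: it requires a moving lemma valid for primitive rings (this is what Theorem 4.6 of \cite{walker}, recalled in Proposition \ref{pprim}(1), is for), fed into the effaceability machinery --- none of which is present in the ``induct on codimension'' sketch. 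Note also that Proposition \ref{pprim}(1) concerns base change of a primitive pro-scheme along a \emph{finite} morphism $V'\to V$, not semi-localization at a family of points, so it does not justify the claim that the semi-localizations of $S$ at its codimension-$n$ points remain primitive. As written, the inductive step neither produces nor consumes a class in the Cousin complex at position $n$, so there is no mechanism by which the proposed injectivities would force exactness. In short: the reduction is correct, but the proof of exactness of the Cousin complex over a primitive $S$ cannot be extracted from \cite{suger} plus a formal induction; it is Walker's theorem, and should simply be cited as such.
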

\begin{proof}
%We can assume that $S$ is connected (so it is a smooth primitive scheme).
%Hence 
In the case of an infinite $k$ 
our assertion follows from Theorem 4.19 of
\cite{walker}. %Now, 
If $k$ is finite, then $S_0$ is semi-local (by our convention); so we may apply Corollary 4.18 of \cite{3} instead.

\end{proof}

%+dimension argument?!

\begin{rema}
1. So, we proved the statement in question via 'decomposing' $E\in \obj \sht$ into its $t^T$-components, and using its weak orientability (in the sense of \cite{degorient}). If one wants to study the $2$-torsion part of $\sht$ (also), it could make sense to look for a filtration of $E\in \obj\sht$ such that its 'factors' (in the sense of Postnikov towers) are weakly orientable.

There is an  important and well-known candidate for such a filtration (for {\it effective} objects of $\sht$); this  is the {\it slice filtration }  
%Alternatively, we could have considered the {\it slices} of $E$ 
(as defined by Voevodsky). Note here that the {\it slices} are modules over the Eilenberg-Maclane spectrum $\mz$ (that represents motivic cohomology; see Theorem 3.2 of \cite{pela}); hence they are weakly orientable by Lemma 4.2.2 of \cite{degorient}.
Besides, passing to slices preserves $\sht^{t\le 0}$; see Lemma 4.3 of \cite{levadams}.
 %Lemma 4.1.7??! 
% those 'are motives' (i.e. belong to $\em(\obj \dm)$) by Corollary 2.3 of \cite{hoysteen} %$\zol$-coefficients??! ask?!
%and .
 The main problem with this method is the convergence question for the corresponding spectral sequence; it is only known (by Theorem 4 of \cite{levconv})  if $k$ is of finite cohomological dimension. % (and we invert the characteristic $p$ of $k$ if it is positive in $\sht$).
This is rather restrictive; yet possibly the proof of loc. cit. (for $2$-torsion spectra) could be modified in order to yield the result when $-1$ is a sum of squares in $k$ (at least, when $\cha k\neq 2$).
 
2. One could try to apply the 'slice method' in order to extend %
%the results mentioned
Theorem \ref{tshtt}(III) 
to $\gd$ (or to %'$\shop$-pro-spectra'
$\gd[\frac{1}{p}]$ if $p>0$;  hence also  
 %Possibly, this result can be extended 
 to cohomology theories that factor through $\shc$ or through $\shc[\frac{1}{p}]$, respectively) by  applying %using 
 the results of 
%By Theorem 3 of 
\cite{leslitran}. %, (weakly) converges!
Yet the author does not know of any conditions that would ensure 
the convergence of the slice spectral sequence for elements of $\shc$.

3. It is a very interesting question whether one can prove (some version of) the $\sht$-acyclity of primitive schemes 'directly', i.e., using a version of Voevodsky's split standard triple argument (as in the proof of Theorem 4.19 of \cite{walker}). Possibly this can be done using Voevodsky's framed correspondences (see \cite{garpan}).

\end{rema}

%Duality: via adjoint functors??!

\subsection{On the Gersten and Chow weight structures for modules over the algebraic cobordism spectrum}\label{smgl}

As we have seen in the previous subsection, passing from $\sht$ to $\shtpl$ enforces certain 'orientability' on $T$-spectra. Another (probably, more well-known) method for introducing 'orientations' is to consider modules over the $T$-ring spectrum $\mgl$ %??! Vezzosi??!
representing the motivic cobordism spectrum (see  Theorem 1.0.1 of \cite{papico}). % (this sort of orientability is sufficient for our purposes by Lemmas 4.2.2 and 4.1.7 of \cite{degorient}). 
Recall that $K$-theory, motivic and \'etale cohomology, and (certainly) algebraic cobordism are orientable in this sense; so that the spectra representing them are $\mgl$-modules and we can apply the results of this subsection  to these cohomology theories (see \S\ref{sexamp} below for some more details).

Now let us verify that our results can be applied to the corresponding category $\mglmod$ (of left $\mgl$-modules in $\psht$). Similarly to Proposition 38 of \cite{roe}, one can verify that %the category 
$\mglmod$ is endowed with a proper stable model structure such the 'free module functor' $\psh\to \mglmod$  ($M\mapsto \mgl\wedge M$) is a left Quillen one. 
Actually, it seems that some effort is needed in order to guarantee the left properness of $\mglmod$; %see the MO discussion??! read or ask Pelaez??!
its right properness is obvious. We denote $\ho(\mglmod)$ by $\shmgl$.

Next,  consider the category $\proo-\mglmod$; %its homotopy 
the corresponding category $\gdmgl\subset \ho(\proo-\mglmod)$ will be triangulated and will contain the naturally defined  '$\mgl$-module pro-spectra' for all pro-schemes. The natural analogues of all the assertions of Proposition \ref{pprop} will be valid.

Hence in order to ensure the existence (and nice properties) of the Gersten weight structure for %the corresponding version 
$\gdmgl$  %of $\gd$
 (cf. Theorem \ref{tshtt}(III)) it suffices to verify the '$\mgl$-module acyclity' of primitive schemes. Certainly, the right adjoint to the free $\mgl$-module functor is the natural forgetful functor. Hence one should verify  for a primitive pro-scheme $S$ and any $X\in \sv$, $E=\omt(X_+)\wedge \mgl$, that $(E)^{i}_j(S_+)=\ns$ if $i>j\in \z$. As shown in the previous subsection, to this end it suffices to check that $E$ belongs to $\sht^{t^T\le 0}$ and is weakly orientable. The first statement is immediate from the $t$-non positivity of $\omt(X_+)$ (see Proposition \ref{pshtt}(\ref{it1})) and of $\mgl$ (see Corollary 3.9 of \cite{hoycobord}), and the fact that $\sht^{t^T\le 0}$ is $\wedge$-closed (see \S1.2.3 of \cite{degorient}). The weak orientability is given by Corollary 4.1.7 of ibid. %the module structure is preserved by $t$-truncations??!

\begin{rema}\label{rchowmgl}
1. One of the advantages of $\mgl$-modules is that the corresponding spectra of Thom spaces of vector bundles only depend on the bases of the bundles and on their dimension. Hence %it is not necessary to assume the triviality of normal bundles in the corresponding analogue 
if $U=P\setminus \cup D_i$,
 where $P$ and all intersections of $D_i$ are products of $\gmm$, $\af^1$, projective spaces, and (the spectra of) finite extensions of $k$, then the $\mgl$-spectrum of $U$ is an {\it Artin-Tate} one, and one can write 'simple' coniveau spectral sequences for the cohomology of $U$ (see \S\ref{sat}; cf. also the proof of Theorem 4.27 of \cite{lewach} and Proposition 6.5.1 of \cite{mymot}).

2. Mark Hoyis has kindly written down the proof the following statement (see \cite{hoy}): if $\cha k=0$, then $\mgl^{2n+i}_n(X)=\ns$ for any $i\ge 0$ and smooth projective $X$. Applying the Poincare duality in $\sht$ (see Theorem 2.2 of \cite{pohu}), %??!
 %in the category $\shmgl=\ho(\mglmod)$ 
 we obtain %in $\mglmod$ 
the %$\ho(\mglmod)$-
$\shmgl$-negativity (see Definition \ref{dwstr}) of the category of $\mgl$-module spectra of smooth projective $k$-varieties (together with their $T$-twists). Thus these twists will belong to the heart of the weight structure 'cogenerated' by them in 
 $\gdmgl$ (see Theorem \ref{tnews} and Theorem 4.5.2 (I.2) of \cite{bws}). We obtain a certain {\it Chow} weight structure on $\gdmgl$ (that could also be defined on $\shmgl$ itself; both of these structures restrict to a bounded weight structure on the subcategory of compact objects of $\shmgl$).
Recall (from Remark 2.4.3(2) and \S6.6 of \cite{bws}) that Chow weight structures yield a mighty tool for generalizing and studying weight filtrations and weight spectral sequences a-la Deligne.

Possibly, the author will treat this weight structure in a separate paper (someday).

3. We also obtain the existence of a certain Chow $t$-structure $\tcho$ on $\shmgl$ (by  Theorem 4.5.2(I.1) of \cite{bws}; cf. also \S7.1 of ibid.) such that the corresponding Chow weight structure is {\it adjacent} (i.e., orthogonal with respect to $\shmgl(-,-)$) to $w_{\chow,\shmgl}$. In particular, it follows that the heart of $\tcho$ is isomorphic to the category of additive contravariant functors from the category of 'Chow-cobordism' motives to abelian groups (see Theorem 4.5.2(II.2) of ibid). 
Besides, the Chow weight structure on $\gdmgl$ is also orthogonal to $\tcho$ with respect to the corresponding $\Phi_{\mgl}$.

4. Note that there cannot exist any Chow weight structure on $\shtc\subset\gdt$ (whose heart will contain $\omt(P_+)$ for all smooth projective $P/k$) since $\eta$ yields a non-zero morphism from $\omt(P^1_+)$ to $\omt(\pt_+)[1]$. The author does not know whether %the
 %also doubts that
  the Chow weight structure can be defined on $\shtpl^c\subset \gdtpl$.  This depends on the vanishing of the $\tau$-positive parts of %the corresponding 
  certain '$T$-negative' stable homotopy groups of spheres over all (finitely generated) extensions of $k$. 
 % , which probably does not hold (see Corollary 10.2(3) of \cite{levadams}). %suffices to verify the positive connectivity of $\omt(\pt_+\brj)$??!
  On the other hand, $\shtpl[\p\ob]$ (i.e., the corresponding "rationall hull" of $\shtpl$; see Remark \ref{rhocolim}(2))  is isomorphic to the 'big' version $\dm(k)[\p\ob] $ of the category of Voevodsky's motives with rational coefficients (by a result of Morel; see Theorem 16.2.13 of \cite{degcis}); %published proved??!
 hence certain Chow weight structures exist on this %($\q$-linear)
  category and also on $\gdtpl[\p\ob]$. %a modification needed??!

%Besides, Proposition \ref{pnews} certainly yields that the positive part of the Chow weight structure obtained lies in the    positive part of the Gersten one; hence Proposition 2.7.3 of \cite{bger} yields functorial comparisons of the corresponding Chow-weight spectral sequences with coniveau ones; cf. % for effective spectra only!!!!!

%This statement yields the possibility to construct the {\it Chow} weight structure for $\mglmod$.

%3. Moreover, these results are valid for $\cha k>0$ if we invert $p$ in $\sht$; cf. \cite{hoysteen}

\end{rema}

\subsection{Our methods and comotives}
%$DM$-version of the main results; the duality of $\gd\to\gdm$ with $\dmeau$}
\label{sdm} %in the introduction??!

In \cite{bger} a certain category of {\it comotives} that we will here denote by $\gdm$ was constructed; this was a certain 'completion' of Voevodsky's $\dmge(k)$. The 'axiomatics' of $\gdm$ was quite similar to Proposition \ref{pprop} of the current paper (actually, no analogue of $\gdb$ was considered in ibid.; yet $\gdm$ satisfies all the properties needed for the application of natural analogues of the arguments above). Another distinction of ibid. from this paper is that the construction of $\gdm$  used differential graded categories (and was somewhat similar to the 'spectral version' of $\gd$ that we will mention in the next subsection); the formalism of model categories was not applied directly (yet essentially a model for $\gdm$ was considered). 

A serious disadvantage of the methods of \cite{bger} is that Gysin distinguished triangles were %only considered at the 'triangulated level'
constructed using a 'purely triangulated' argument. This  %allowed to establish 
 yielded the 'comotivic' analogue of Proposition \ref{pinfgy} for countable inverse limits of smooth varieties (with respect to open embeddings) only. Also, the Gersten weight structure in ibid. was constructed via a direct verification of the negativity of the category of (Tate twists) of comotives for primitive schemes. This negativity check relied on the computation of certain higher inverse limits, that the author was only able to do for a countable $k$ (since in this case $\prli^i=0$ for $i>1$).
Also, this method  allowed to construct the Gersten weight structure only on the  subcategory of $\gdm$ 
generated by the corresponding version of $\hw$. %smaller than our one??! for somewhat technical reasons??!
%of $w$-bounded objects $\gds\subset \gdm$. 
Hence the corresponding computation of the heart of $w$ was %more or less
 'automatic', whereas our one (see Theorem \ref{tgw}(\ref{iwgh}) and Theorem \ref{tnews}(III)) is %much more %'interesting'. %it is quite amusing and non-trivial to verify that the hearts coincide??!!
really non-trivial (and the author was quite amazed to find out that the result is parallel to that of \cite{bger}).
Similarly, 
%This simplfied 
the verification of the fact that $w\perp_{\Phi} t$
requires more effort in our setting (see Proposition \ref{pdualsh}(II)) than  the %one in \S 
proof of Proposition 4.5.1(2) of ibid.; hence the current version of the proof is 'much more 
conceptual'.
%interesting'. 
Also, it was more difficult to establish  the 'duality' between $\hw$ and $\hrt$ given by Corollary \ref{coextpure} than the corresponding comotivic fact (since strict homotopy invariance seems to be a more mysterious restriction on a Nisnevich sheaf than the existence of transfers).  Note also that we give a complete description of all pure extended cohomology theories (in Proposition \ref{pextpure}, whereas no analogue of this result was mentioned in ibid.).  

%We also note that 
%Now we summarize all the said above.  The
So, the  methods of the current paper can be easily applied to (various versions) of comotives (moreover, the formalism of differential graded categories gives somewhat more flexibility for this setting than the model category one, though the latter one can also be applied). %$\dm(k)$ or $\dme$ needed??!
Thus we obtain the existence of the Gersten weight structure and the 'splitting' properties for the corresponding cohomology of primitive pro-schemes (cf. Theorem \ref{tshtt}(III)) over an arbitrary perfect field $k$. 
%Besides, the corresponding analogue of our Corollary \ref{coextpure} is valid for base fields of arbitrary 
Note also that the methods of \S6.2 of \cite{bger} work over base fields of arbitrary cardinality; so we obtain the corresponding analogue of the "Brown representability'' Corollary \ref{coextpure} for this setting. 

Besides, the differential graded formalism yield the possibility of constructing dualities of comotives with certain triangulated categories of %'realizations'
'$\au$-linear motives' (with $\au\neq \ab$) %see?
via the connecting functor $\sht\to \dm(k)$ (the latter is the 'big' category of Voevodsky's motives; see Example 2.2.6 of \cite{degorient} or Remark \ref{rchowmgl}(4)); this yields the orthogonality of the Gersten weight structures with the corresponding 'homotopy' $t$-structures on these $\au$-motives.

%Gysin triangle??!

%$\au$ equipped with an exact conservative functor to $\ab$ that commutes with all (small) filtered limits??! 

Lastly,  note that  $\gdm$ supports a weight structure that extends the
Chow weight structure of $\dmge(k)$ (yet one has to invert $\cha k$  in the coefficient ring if it is positive); cf. Remark \ref{rchowmgl}(2), \S6.6 of \cite{bws}, and \cite{bzp}.

\subsection{On concrete examples of cohomology theories}\label{sexamp}

We recall that there exist natural exact connecting functors $\sh\to\sht\to \shmgl\to \dm(k)$ (see Remark \ref{rshtpl}(1)) and also $\sht\to \shtpl$. So, one may say that there are 'more' (cohomological) functors that factor through $\shc$ than those that factor through $\shtc$ or other motivic categories. Still, author does not know of many examples of cohomological functors on $\shc$ that do not factor through $\shtc$. % are the ones that are $\sh$-representable
Besides, if $f:\cu\to \du$ is one of the comparison functors mentioned, then the knowledge that $H^*:\cu\to \au$ factors through $\du$ yields that the corresponding generalized coniveau spectral sequences and filtrations are $\du$-functorial, which is certainly stronger than $\cu$-functoriality. So, it always makes sense to factor $H^*$ through a 'more structured' motivic category (if possible).

Thus, for cohomology theories that are $\sh$-representable one can apply the results of \S\ref{sapcoh}, whereas to $\sht$-representable theories  it makes (more) sense to apply Theorem \ref{tshtt} (see Remark \ref{rshtpl}(1)). Recall that %Note here:
 $\sht$-representable theories include 
Hermitian K-theory and Balmer's Witt groups (when $p\neq 2$; see Theorems 5.5 and 5.8 of \cite{horn}, respectively). Besides, 
%relative cohomology ok; Proposition 1.4 of http://www.math.uiuc.edu/K-theory/0835/chowwitt1.pdf??!
  any embedding $\si:k\to \com$ yields the corresponding $\sht$-representable semi-topological $K$-theory %, morphic cohomology, 
  and semi-topological cobordism (see Theorem 1.0.3 of \cite{kripark}); whereas
  the natural comparison functor $Re^\si_B$ from $\sht$ to the 'topological' stable homotopy category $SH$ (see \cite{ay}) yields that all %'topologically representable' cohomology theories are $\sht$-representable. %
 'topologically stable' cohomology theories
  (including complex $K$-theory and complex cobordism) also factor through $\sht$. %weakly orientable spectra??! suffices to prove for the 'preimage' of $S^0$??!
So, for all of these theories one has the $\sht$-functoriality  of (generalized) coniveau filtrations and %coniveau 
spectral sequences, as well as several direct summand results for semi-local pro-schemes (if $k$ is infinite).
 
Next we note that the adjunction $\sht\leftrightarrows \shmgl$ yields that any functor $\sht$-representable by a 'strict' $\mgl$-module (i.e., by a one %that is an $\mgl$-module in $\psht$)
coming from $\mglmod$) also factors through $\shmgl$. % (and is representable in it). 
In particular, this is the case for 
 %we pass to 
 all %{\it orientable} 
 cohomology theories that come from orientable ring spectra in $\sht$ (see Theorem 1.0.1 of \cite{papico}). %??! All of them factorize through  $\mglmod$; 
 Hence we obtain the $\shmgl$-functoriality of the corresponding (generalized) coniveau filtrations and coniveau spectral sequences, and  
 direct summand results for primitive pro-schemes (without assuming that $k$ is infinite). Orientable cohomology theories include algebraic cobordism and algebraic $K$-theory (see Example 1.2.3 of ibid.); if we fix an embedding of $k$ into $\com$, we also obtain 'algebraic' orientability of complex $K$-theory, Morava $K$-theories, and complex cobordism (cf. Example 12.2.3(3) of \cite{degcis}). %%??! \S12.2 of \cite{degcis}??!
 
 Moreover, recall that motivic, morphic (see Theorem 5.1 of \cite{chu}),  
 etale,  singular,  and 'mixed'   (see %\S21 \cite{hubook}
 \S2.3 of \cite{hu}) cohomological functors factor through $\dm(k)$. Note here that the targets of (certain versions of) the latter three cohomology theories 
are 'richer' than $\ab$; this certainly makes our direct summand results (cf. Theorem \ref{tshtt}(II\ref{ids1})-- II\ref{isplc}, III) more interesting. 
So, we obtain 'motivic' functoriality of the corresponding filtrations and spectral sequences, and certain direct summand results for the cohomology of all primitive pro-schemes.

Lastly, note that all the triangulated motivic categories we consider are tensor ones; hence for any $Y\in \sv$ the cohomology theory $H^*_Y:X\mapsto H^*(X\times Y)$ factors through a given  motivic category whenever $H^*$ does.
Hence, we can apply all the statements established above to the corresponding 
'modifications' of the cohomology theories mentioned.

\subsection{Other possibilities for %the categories of pro-spectra}
$\gd$}\label{sothergd}

The author doubts that all possible $\gdb$ are isomorphic (at least, if we do not add some additional 'axioms' to Proposition \ref{pprop}). Surely, there exist distinct models $\gdp$ for pro-spectra (possibly, not all of them are Quillen equivalent), and they could possess quite different model-theoretic properties. Luckily, the choices for these categories do not affect the cohomology
of pro-schemes  and objects of $\shc$ (see Remark \ref{rcohp}(2)).

We mention an alternative method for constructing $\gd$ and a model for it (avoiding $\gdb$). To this end one should
 %choose a {\it spectral} model for $\sh$,  
note that $\psh$ can be %easily 
turned into a {\it spectral model category} in the sense of \cite{schwmod}. 
Next one can choose cofibrant fibrant  replacements for all isomorphism classes of objects of $\shc$ (they will form a set that we will denote  by $P$). Then Definition 3.9.1 of ibid. yields the {\it full spectral category} $E(P)$ of $P$ and (after dualizing) a functor $\homm(-,P)$ from $\psh$ to the  
{\it right spectral modules} over $E(P)$. It seems that the dual to the 
argument used in the proof of Theorem 3.9.3(i) yields the following facts: $\homm(-,P)$  is a left Quillen functor; its target is a stable model category; the restriction of $\ho(\homm(-,P))$ to $\shc$ embeds it  as a full subcategory of cocompact cogenerators into the corresponding category $\gd$. %This 
In particular, this methods allows to  avoid the construction of $\gdb$.
Yet %the author was not able to find 
it seems that %proving all the properties  %carrying over this program is more difficult than the 
this method of constructing and studying $\gd$ requires more effort than the 'pro-object' one (that we have used above).
%in particular, it is not clear to the author ???? properness?? 

\begin{rema}
%can probably be used in order to study $\mgl$-modules??!! An advantage?!

1. Besides, the author suspects that $\gd$ can be obtained via a Bousfield localization of $\gdp$. It seems that 'classical' statements on Bousfield localizations  cannot be applied here; possibly {\it class-combinatorial} model categories could help.

2. Lastly, note %also??
 that our constructions of $\gdp$ and $\gdb$ can be carried over for an arbitrary $k$; the perfectness of $k$ is only required in order to ensure that residue fields of $k$-varieties are essentially smooth over $k$ (so that we can apply the Morel-Voevodsky %homotopy??!
  purity theorem).
\end{rema}

%\subsection{The proof of Proposition \ref{pprop}}\label{sprgd} %kill?!

%\subsection{The relation of $\hw$ to stictly homotopy invariant sheaves: 'Brown representability}
%We alwasys have two functors, similarly to \cite{bger}??! Not clear that the product relation is sufficient; other limits in $\hw$??!!

%doc
%\Addresses

\end{document}